\documentclass[11pt,reqno,UTF8,twoside]{amsart}
\usepackage{mathrsfs}
\usepackage{amsfonts,amssymb,amsmath,amsthm}
\usepackage{cite}
\usepackage[colorlinks=true,citecolor=red]{hyperref}
\usepackage{titletoc}
\usepackage{geometry}
\usepackage{bm}
\usepackage{indentfirst}
\usepackage{graphicx}
\usepackage{float} 
\usepackage{booktabs}
\usepackage{longtable}
\usepackage{subfigure}
\usepackage{stmaryrd}
\usepackage{enumerate}
\usepackage{tikz}
\usepackage{ulem}
\usepackage{color,soul}
\usepackage{setspace}
\usepackage{stmaryrd}
\usepackage[pagewise]{lineno} 
\allowdisplaybreaks[2]
\usepackage{appendix}
\usepackage{cases}
\usepackage{todonotes}
\usepackage{enumitem}

\numberwithin{equation}{section}
\newtheorem{theorem}{Theorem}[section]
\newtheorem{lemma}[theorem]{Lemma}

\newtheorem{proposition}[theorem]{Proposition}
\newtheorem*{maintheorem}{Main Theorem}
\newtheorem*{motivation}{Motivation}

\newtheorem{remark}[theorem]{Remark}
\newtheorem{definition}[theorem]{Definition}
\newtheorem{example}[theorem]{Example}

\newcommand{\N}{\mathbb{N}}
\newcommand{\R}{\mathbb{R}}

\newcommand{\E}{\mathbb{E}}

\newcommand{\Pb}{\mathbb{P}}

\newcommand{\supp}{\operatorname{supp}}

\newcommand{\runum}[1]{\romannumeral #1}
\hypersetup{linkcolor=blue}

    \makeatletter
    \@namedef{subjclassname@2020}{\textup{2020} Mathematics Subject
	Classification}
    \makeatother

\begin{document}

     \title[ Mixing for Allen--Cahn equation ]
	{{\Large E{\MakeLowercase{xponential mixing for the stochastic} A{\MakeLowercase{llen--}C{\MakeLowercase{ahn equation}\\
    \vspace{1mm}
    \MakeLowercase{ with localized white noise}}}}
    }}

    \author[Z. Liu, S. Xiang, Z. Zhang]{ {\small Z\MakeLowercase{iyu} L\MakeLowercase{iu}, S\MakeLowercase{hengquan} X\MakeLowercase{iang}, Z\MakeLowercase{hifei} Z\MakeLowercase{hang}}}
    
    \address[Ziyu Liu]{School of Mathematics and Physics, University of Science and Technology Beijing, 100083, Beijing, China.}
    \email{ziyu@ustb.edu.cn}

    \address[Shengquan  Xiang]{School of Mathematical Sciences, Peking University, 100871, Beijing, China.}
    \email{shengquan.xiang@math.pku.edu.cn}

    \address[Zhifei  Zhang]{School of Mathematical Sciences, Peking University, 100871, Beijing, China.}
    \email{zfzhang@math.pku.edu.cn}

    \begin{abstract}
     This paper studies the 1D stochastic Allen--Cahn equation on a bounded domain driven by localized white noise. We prove that the associated Markov process admits a unique invariant measure and is exponential mixing. The main challenge lies in the interaction between localized nature of the noise and non-trivial global dynamics of the system. To overcome this, our approach relies on two ingredients from PDE control theory: stabilization for the linearized system and global steady-state controllability for the nonlinear equation. The stabilization result is derived using the weak observability and Fenchel--Rockafellar duality, while the global controllability relies on quasi-static deformations combined with global dynamics.

    \end{abstract}

    \subjclass[2020]{
    37A25,  
    60H15, 
    60H07, 
    93B05,  
    93C20. 
    }
	
        \keywords{Exponential mixing; Allen--Cahn equation; localized white noise; controllability; Malliavin calculus}

    \maketitle
    \setcounter{tocdepth}{1}
    \tableofcontents

    \section{Introduction}\label{Sec 1}

    \subsection{Backgrounds and motivations}\label{Sec 1.1}

    Ergodicity and mixing for SPDEs, crucial for understanding the statistical behavior of  physical models, have achieved significant progress in recent decades. By now, it is well understood that when all determining modes of the underlying PDE are directly affected by noise; see e.g. \cite{EMS-01, KS-02, BKL-02, FM-95, Mattingly-02, MY-02}. On the other hand, the situation is much less clear when the random force does not transmit directly to the determining modes of the system.

    In recent years, two main directions have emerged in addressing this problem. The first concerns SPDEs driven by white noise that is highly degenerate in Fourier space. This setting is first studied by Hairer and Mattingly \cite{HM-06, HM-08, HM-11b}, where ergodicity and exponential mixing are established for the 2D Navier--Stokes system. Building on these works and the mixing theory in \cite{BBPS-22c, BBPS-22d}, exponential mixing for passive scalars with highly degenerate noise is recently established in \cite{CR-25}. See also \cite{FGRT-15, KNS-20,PZZ-24,NZZ-24} for other results in the context of highly degenerate noise.
    
    The second direction, first investigated by Shirikyan \cite{Shi-15, Shi-21}, focuses on Navier--Stokes perturbed by physically localized noise, where the noise is bounded in time. More recently, the authors and their coauthors established a mixing criterion for hyperbolic/dispersive systems; as applications, exponential mixing for nonlinear wave equations and nonlinear Schrödinger equations with localized noise are obtained in \cite{LWXZZ-24, CXZZ-25,CLXZ-24,CX-26}. 
   
    \vspace{0.6em}

    In contrast, there is a gaping lack of results for systems with physically degenerate white noise, where the white-in-time forcing is transmitted to the system only through a small region of the spatial domain.  The corresponding questions, such as the ergodicity for the Navier--Stokes equations in this setting, are to our knowledge, quite challenging and remain open. 
    
    \begin{motivation} 
        Turbulence and statistical properties of Navier--Stokes equations with localized white noise.
    \end{motivation}    
    
    \noindent As a first step toward understanding such phenomena, we investigate in this work the case of the 1D Allen--Cahn equation and establish exponential mixing.  More specifically, this paper aims to provide a systematic study of the mixing properties for SPDEs driven by localized white noise.  To tackle this problem, the novelty of our approach lies in bringing together the global dynamics of underlying system, PDE control theory and stochastic analysis in a natural and general way.  In particular, since the localized nature of the random perturbation is intrinsically connected with the control theory,  the corresponding control problems therefore play a crucial role in our study.

    \vspace{0.3em}
    
    The Allen--Cahn equation was introduced as a gradient-flow model for phase separation, associated with the Ginzburg--Landau energy
    \begin{equation*}
        E_{\varepsilon}(u)=\int_D\left(\frac{\varepsilon}{2}|\nabla u|^2+\frac{1}{\varepsilon}F(u)\right)dx,\quad F(u)=\frac{1}{4}(u^2-1)^2.
    \end{equation*}
    Here the small parameter $\varepsilon$ represents the thickness of diffuse interfaces, and in the sharp-interface limit $\varepsilon\rightarrow0$, solutions converge to motion by mean curvature. For analytical convenience the equation is often written in the form
    \begin{equation*}
        \partial_tu-\nu\Delta u+u^3-\lambda u=0,
    \end{equation*}
    where the classical scaling corresponds to $\nu=\varepsilon$, $\lambda=\varepsilon^{-1}$. The physically relevant regime is therefore the high-contrast setting $\lambda\gg\nu$, in which solutions exhibit metastable structures separated by thin transition layers. In this paper we work in the $(\nu,\lambda)$-formulation and study the ergodic behavior of the system under localized stochastic perturbations.

    \subsection{Main result}\label{Sec 1.2}
    The stochastic Allen--Cahn equation with localized white noise under consideration is given by
    \begin{equation}\label{AC equation}
    \begin{cases}
         \partial_tu-\nu \partial_x^2u+u^3-\lambda u=\frac{dW}{dt}(t,x),\quad x\in (0,\pi),\;t>0,\\
         u|_{x=0,\pi}=0,\\
        u(0,\cdot)=u_0(\cdot).
    \end{cases}
    \end{equation}
    Here $\nu,\lambda>0$ are two arbitrarily given parameters. The localized random force $W(t,x)$ is a white noise of the form  
    \begin{equation}\label{eq noise W}
        W(t,x)=\sum_{j\in\N^+}b_j\beta_j(t)\psi_j(x).
    \end{equation}
        Here $b_j$ are real numbers, $\{\beta_j(t)\}_{j\in\N^+}$ is a sequence of independent standard Brownian motions defined on a filtered probability space $(\Omega,\mathcal{F}, (\mathcal{F}_t)_{t\geq 0},\Pb)$, and $\{\psi_j\}_{j\in\N^+}$ are continuous functions {\it supported in a sub-interval} $[a,b]\subset [0,\pi]$.

    We consider the equation \eqref{AC equation} in the space of 
    \begin{equation*}
    H= L^2(0,\pi),
    \end{equation*}
    endowed with the usual $L^2$-norm $\|\cdot\|$. The eigenvectors of $-\partial_{x}^2$ on $(0,\pi)$  with Dirichlet boundary conditions are denoted by $\{e_j(x)=\sqrt{2/\pi}\sin(j x)\}_{j\in\N^+}\subset H^1_0(0,\pi)$, which form an orthonormal basis of $L^2(0, \pi)$.  The corresponding eigenvalues are  $j^2$. The Hilbert space $L^2(a,b)$ can be viewed as  a subset of $H$ by extending the functions in $L^2(a,b)$ by zero outside $(a,b)$. We consider that $\{\psi_j\}_{j\in\N^+}\subset H_0^1(a,b)$ forms an orthonormal basis on $L^2(a,b)$. Without loss of generality, let us fix any $0\leq a<b\leq \pi$ and  take 
    \begin{equation}\label{eq psi-def}
        \psi_j(x)=\begin{cases}
            \sqrt{\frac{2}{b-a}}\sin \left(j\pi\frac{x-a}{b-a}\right)\quad &\text{for }x\in[a,b],\\
            0\quad &\text{otherwise}.
        \end{cases}
    \end{equation}

    Our main result concerning exponential mixing  is contained in the following.

    \begin{maintheorem}\label{thm AC mixing}

    For the equation given by \eqref{AC equation}-\eqref{eq psi-def} and any $B_0>0$, there exists an integer $N_*=N_*(B_0)\in \N^+$ such that the following holds. If the sequence $\{b_j\}_{j\in\N^+}$ satisfies
    \begin{equation*}
       \sum_{j\in\N^+} j^2b_j^2\leq B_0\quad \text{and}\quad b_j\neq 0\quad \text{for }1\leq j\leq N_*,
    \end{equation*}
    then the Markov process associated to \eqref{AC equation} admits a unique invariant measure $\mu_*$.  Moreover,  there exist constants $C,\alpha,\gamma>0$ such that
    \begin{equation}\label{eq exponential mixing}
    \left|\E f(u(t,u_0))-\int_{H}fd\mu_*\right|\leq  Ce^{-\alpha t}\exp\left(\gamma\|u_0\|^2\right)\|f\|_{\gamma} 
    \end{equation}  
    for any $ f\in \mathcal{O}_\gamma$, $u_0\in H$ and $t\geq 0$, where
    \begin{equation*}
        \mathcal{O}_\gamma:=\left\{f\in C^1(H):\|f\|_{\gamma}<\infty\right\},\quad \|f\|_{\gamma}:=\sup_{u_0\in H}\left(\exp\left(-\gamma\|u_0\|^2\right)(|f(u_0)|
        +\|\nabla f(u_0)\|)\right).
    \end{equation*}
    \end{maintheorem}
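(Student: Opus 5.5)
The plan is to verify a Hairer--Mattingly type criterion for exponential mixing under degenerate noise. The criterion needs three ingredients: (i) Lyapunov/moment bounds, (ii) an asymptotic strong Feller type gradient estimate, and (iii) a weak form of irreducibility, namely that all trajectories can be steered close to a common point. Under these, the dual Wasserstein-type distance $d(u,v)=1\wedge \delta^{-1}\|u-v\|$, weighted by $\exp(\gamma\|u\|^2)$, becomes contracting for the Markov semigroup $P_t$. This contraction gives uniqueness of $\mu_*$ and the bound \eqref{eq exponential mixing} for $f\in\mathcal{O}_\gamma$.

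\textbf{Moment bounds.} Apply Itô's formula to $\|u\|^2$ and to $\exp(\gamma\|u\|^2)$. The cubic term gives $\int u^4\ge c\|u\|^4-C$, which dominates $\lambda\|u\|^2$. The noise contributes $\sum_j b_j^2\le B_0$. Together these yield
\[
\E \exp\bigl(\gamma\|u(t)\|^2\bigr)\le e^{-ct}\exp\bigl(\gamma\|u_0\|^2\bigr)+C
\]
for small $\gamma$, together with exponential martingale estimates. Because of the cubic dissipation, solutions also enter an $H^1_0$ ball in finite time with high probability. This supplies the compact absorbing set needed later.

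\textbf{Gradient estimate via control and Malliavin calculus.} Here I would use the first control ingredient, stabilization of the linearized system. Write $J_{s,t}\xi$ for the linearization $\partial_t w-\nu w_{xx}+3u^2w-\lambda w=0$ along the random trajectory. For a given direction $\xi$, I would construct a control $h$ acting only in the low modes $\psi_1,\dots,\psi_{N_*}$ of $L^2(a,b)$ such that $\rho_t=J_{0,t}\xi-\mathcal{A}_{0,t}h$ decays exponentially, where $\mathcal{A}$ is the Malliavin derivative in the direction of $h$. Then, by Malliavin integration by parts,
\[
\nabla P_tf(u_0)\xi=\E\bigl[f(u_t)\textstyle\int_0^t h\,dW\bigr]+\E\bigl[\nabla f(u_t)\rho_t\bigr].
\]
This gives $\|\nabla P_tf\|\le C e^{\gamma\|u_0\|^2}\bigl(\|f\|_\infty+e^{-\alpha t}\|\nabla f\|_\infty\bigr)$.

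The control is built piecewise on unit time intervals. On each interval the high modes are damped by $-\nu\partial_x^2$, since $j^2\nu$ eventually dominates $\lambda+3\|u\|_\infty^2$. The remaining finitely many unstable directions must be killed by a control localized in $(a,b)$. This requires a quantitative weak observability inequality for the adjoint linear parabolic equation with potential $3u^2-\lambda$: the low-frequency part of the terminal state is bounded by the observation on $(a,b)\times(0,1)$, up to a compact error. Fenchel--Rockafellar duality then converts this into a control of controlled cost $\exp(C\|u\|_{L^\infty}^{2/3}+\dots)$, which is integrable against the moment bounds. Truncating to finitely many $\psi_j$ fixes $N_*$ depending on $B_0$. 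The controls must be adapted for Skorokhod integrals; I would handle this as Hairer--Mattingly do, by regularizing $\mathcal{A}^*\mathcal{A}$ with a Tikhonov parameter and controlling the resulting Malliavin matrix errors.

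\textbf{Irreducibility and conclusion.} This is the main obstacle. The deterministic Allen--Cahn flow has many stable and unstable steady states when $\lambda\gg\nu$, so noise localized in $(a,b)$ must be able to drive the solution to a single point, say $0$. I would prove global steady-state controllability:
\begin{itemize}
\item First, starting from any point of the absorbing ball, let the free dynamics (a gradient flow) converge near some steady state.
\item Second, connect steady states through a continuous path of steady states of the equation forced by a localized control. This is a quasi-static deformation, which combines local exact controllability near each steady state (Carleman estimates) with a compactness argument along the path.
\item Third, reach a neighbourhood of $0$.
\end{itemize}
By the support theorem, $\Pb(\|u_T-0\|<\varepsilon)\ge p>0$ uniformly on the ball. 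Combining the three ingredients in the abstract criterion yields a unique $\mu_*$ and exponential mixing.
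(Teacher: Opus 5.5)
Your overall architecture matches the paper's. It uses the Hairer--Mattingly criterion (Theorem~\ref{lemma HM}), a Lyapunov bound, and a gradient estimate obtained by stabilizing $\rho_t=\mathcal{J}_{0,t}\xi-\mathcal{A}_{0,t}v$ with a Tikhonov-regularized control built from weak observability and Fenchel--Rockafellar duality. Irreducibility then comes from global steady-state controllability (gradient dynamics plus quasi-static deformation) together with the Gaussian support theorem.

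The genuine gap is in the last step. You simply assert that $\Pb(\|u_T\|<\varepsilon)\ge p>0$ uniformly on the ball, but nothing in your three bullets produces that uniformity, and the paper identifies it as the main difficulty. Two ingredients are missing.

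First, the support theorem (Lemma~\ref{lemma support}) only covers controls in the Cameron--Martin space of $W$. Since $b_j$ may vanish for $j>N_*$, these are paths in $H^1(0,T;\mathsf P_NL^2(a,b))$ with $N\le N_*$ and zero initial value. The controls produced by quasi-static deformation and local exact controllability are infinite-dimensional in space and depend on $u_0$. You therefore need a \emph{finite} family of them, so that a single truncation level $N$ works. That $N$ must then enter $N_*$; the paper takes $N_*=\max\{N_1,N_2\}$, whereas your $N_*$ is fixed by the gradient estimate alone.

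Second, the time at which the free flow comes near a steady state depends on $u_0$, so different trajectories reach the target at different times. With target $0$ this breaks the naive argument. For $\lambda>\nu$ the state $0$ is unstable, and the free flow repels nearby states off its stable manifold. A trajectory only approximately steered near $0$ at an early, $u_0$-dependent time therefore need not be near $0$ at the common time $T$.

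The paper resolves both points as follows:
\begin{enumerate}
\item[(i)] Lemma~\ref{lemma uniform} gives a uniform hitting time. After smoothing, $\phi(1,u_0)$ lies in a compact $H^2$-ball, and a finite subcover yields $T(\varepsilon)$ such that every trajectory is $\varepsilon$-close in $H^1$ to some steady state at some $t_{u_0}\in[1,T+1]$.
\item[(ii)] It uses one quasi-static control $h_j$ per steady state, truncated to $\tilde h_j=\mathsf P_Nh_j$.
\item[(iii)] A waiting-time family $\zeta_{lj}$ switches $\tilde h_j$ on at the grid times $1+l\tau$.
\item[(iv)] It chooses $u_*=\phi_{\pm1}$, which is locally exponentially stable by \eqref{eq stable}. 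Once the state is $\delta$-close to $u_*$, it stays $\varepsilon$-close for all later times with zero control (Theorem~\ref{thm irre-control}).
\end{enumerate}
If you want to keep $u_*=0$, you would need exact null controllability (Theorem~\ref{thm null-control}) or continued stabilizing control to park the trajectory at $0$. You would also need a uniform bound on parking times over the compact set and a covering argument redone for each fixed $t\ge T$. This is feasible, but it is not in your plan.

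Two smaller points. The criterion needs the super-Lyapunov form $\E\exp(r\gamma\|u_t\|^2)(1+\|\mathcal{J}_{0,t}\|)\le C\exp(r\gamma e^{-\nu t}\|u_0\|^2)$, not $e^{-ct}e^{\gamma\|u_0\|^2}+C$. It also needs the gradient bound with $\sqrt{P_t|f|^2}$ and $\sqrt{P_t\|\nabla f\|^2}$ rather than sup norms. Both follow from the computations you sketch. Finally, the controls $v$ are not adapted, and that is exactly why the Skorokhod integral and the Malliavin derivative of $v$ must be estimated.
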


    For simplicity, this paper addresses mixing properties for the 1D semilinear parabolic equations with Dirichlet boundary condition. Nevertheless, the approach could be adapted for other equations with localized white noise. Moreover, it could also be further applied to accommodate other noise models,  such as boundary white noise or Lévy-type noise.  We  also note that the  mixing results and the underlying methods can further be applied to investigate other statistical properties of the system, e.g. large deviations.

    \subsection{Obstructions and ingredients} \label{Sec 1.3}  To establish the Main Theorem, our strategy combines a general probabilistic framework for mixing with two  ingredients from the PDE control theory. Specifically, a stabilization result for the linearized system is obtained via weak observability and the Fenchel--Rockafellar duality, while the global steady-state controllability is achieved by employing quasi-static deformations together with the global dynamics of the unforced system.
    
    \vspace{0.6em}
    \noindent{1.3.0. \it Probabilistic framework.}

    \vspace{0.3em}
    The general criterion for exponential mixing that we apply is due to Hairer and Mattingly \cite{HM-06,HM-08,HM-11b}, i.e. Theorem~\ref{lemma HM}. This framework reduces mixing to verifying three key conditions: 
    \begin{itemize}
        \item[(\runum{1})] the existence of a suitable Lyapunov function;
        \item[(\runum{2})] the asymptotic strong Feller property;
        \item[(\runum{3})] a weak form of irreducibility.
    \end{itemize}
    Among these, the verification of the latter two properties contributes to the main challenges in the present setting, due to both the localized nature of the noise and the non-trivial global dynamics of the system.

    \vspace{0.6em}
   
    \noindent{1.3.1. \it Two  obstructions for mixing.}

    \vspace{0.3em}

    \noindent {\bf Asymptotic strong Feller.}  The first issue to address is the asymptotic strong Feller property. Such semigroup regularities are standard tools in the study of ergodicity and mixing. In fact, recent works \cite{GLLL-24,GLLL-25} show that such conditions are in many cases necessary for mixing. Roughly speaking, the asymptotic strong Feller property can usually established via gradient estimates of the form
    \begin{equation*}
        |\nabla P_t\varphi(u_0)|\leq C(\|u_0\|)\left(\|\varphi\|_{\infty}+\delta_t\|\nabla\varphi\|_{\infty}\right)
    \end{equation*}
    for any $C^1$-smooth observable $\varphi$ on $H$. Here $C(\|u_0\|),\delta_t>0$ are constants, and $\lim_{t\rightarrow\infty} \delta_t=0$.

    In classical settings, several mechanisms are available. When the system is perturbed by non-degenerate white noise, the strong Feller property can be established; see e.g. \cite{FM-95,HZ-24}. When the determining modes of the system are forced, ergodicity can often be obtained via a change of measure through Girsanov’s theorem combined with the pathwise contractive nature of the dynamics; see \cite{EMS-01,BKL-02,Mattingly-02,KS-02,NZ-24}. In the hypoelliptic framework \cite{HM-06}, the asymptotic strong Feller property is achieved by exploiting detailed structural information on the Navier--Stokes nonlinearity from \cite{MP-06}, together with a high-low Fourier mode splitting.

    In contrast, in the present setting with physically degenerate noise, the localized random perturbation interacts with the nonlinearities in a highly nontransparent way. This rules out not only the application of Girsanov’s theorem, but also a direct use of structural information on the nonlinearity. Moreover, the high-low mode interaction requires a refined frequency analysis, owing to the internal nature of the forcing.

    \vspace{0.6em}

    \noindent {\bf Irreducibility.} The second difficulty concerns the irreducibility, which also plays an equally essential role in establishing ergodicity and mixing; see, e.g. \cite{DPZ-96,HM-11b}.  Namely, the weak irreducibility under consideration can be formulated as follows: there exists a state $u_*\in H$ such that, for any $R,\varepsilon>0$, there exists $T>0$, for any $t\geq T$,
       \begin{equation*}
        \inf_{u_0\in B_H(0,R)}P_t(u_0,B(u_*,\varepsilon))>0. \end{equation*}
    
    By support theorems, the irreducibility problem is often reduced to the analysis of the global dynamics of the unforced equation. In many situations, this reduction leads to relatively straightforward arguments; see, e.g.  \cite{EM-01,LWXZZ-24}. For the Allen--Cahn system considered here, when diffusion dominates the destabilizing linear term, i.e. $\lambda \leq \nu$, the system is globally stable, and irreducibility follows readily.

    Conversely, when $\lambda > \nu$,  the unforced Allen--Cahn equation admits multiple steady states, and the long-time behavior depends sensitively on the initial condition. In this regime,  mixing is governed by the interaction between the dynamics and the noise. For instance, the Lorenz system in \cite{CZH-21} exhibits a transition from a unique invariant measure to multiple ergodic measures.  For the Allen--Cahn equation with highly degenerate noise \cite[Section 8.4]{HM-11b}, irreducibility follows from noise acting on all spatial locations. Nevertheless, the present noise is {\it localized}, and this mechanism is no longer available. As a result, establishing irreducibility becomes more delicate.

    \vspace{0.6em}

    \noindent{1.3.2. \it Ingredients from control theory.}

    \vspace{0.3em}

    \noindent {\bf Stabilization for the linearized system.}
    To address the asymptotic strong Feller property, we invoke the technical route proposed in \cite{HM-06} using Malliavin calculus, and adapt it to our setting. The main objective is to construct suitable controls to stabilize the linearized stochastic Allen--Cahn equation, which is achieved by analyzing the invertibility of Malliavin matrix.

    Indeed, it turns out that the Malliavin matrix is naturally related to the Gramian matrix in control theory. This allows us to reformulate the question as a deterministic control problem. In this framework, the study of the Gramian matrix is equivalent to an observability analysis, by virtue of the duality between observability and controllability.

    In particular, building on the ``{\it frequency analysis}'' strategies from our earlier work \cite{LWXZZ-24,Xiang-24}, we establish an explicit low-frequency control construction together with quantitative stabilization estimates. Here the key tools  are a quantitative version of {\it weak observability inequality} and {\it Fenchel--Rockafellar duality} arguments.  Consequently, with this explicit deterministic control construction at hand, we are able to design the required control inputs to establish the asymptotic strong Feller property. See Section~\ref{Sec 3} for more details.

   \begin{figure}[th]
    \centering

\tikzset{every picture/.style={line width=0.75pt}} 

\begin{tikzpicture}[x=0.75pt,y=0.75pt,yscale=-1,xscale=1]

\draw [color={rgb, 255:red, 0; green, 0; blue, 0 }  ,draw opacity=0.2 ]   (191.4,17.3) -- (213.29,36.27) ;
\draw [color={rgb, 255:red, 0; green, 0; blue, 0 }  ,draw opacity=0.2 ]   (213.29,36.27) -- (191.4,54.3) ;

\draw [color={rgb, 255:red, 0; green, 0; blue, 0 }  ,draw opacity=0.2 ]   (417.4,36.85) -- (439.19,50.36) ;
\draw [color={rgb, 255:red, 0; green, 0; blue, 0 }  ,draw opacity=0.2 ]   (439.19,50.36) -- (417.4,63.2) ;

\draw (15,26) node [anchor=north west][inner sep=0.75pt]  [font=\normalsize] [align=left] {{\fontfamily{ptm}\selectfont Weak observability inequality}};
\draw (19.27,46.6) node [anchor=north west][inner sep=0.75pt]  [font=\normalsize] [align=left] {{\fontfamily{ptm}\selectfont Fenchel--Rockafellar duality }};
\draw (213.7,28.27) node [anchor=north west][inner sep=0.75pt]  [font=\normalsize] [align=left] {{\fontfamily{ptm}\selectfont Stabilization for linearized system}};
\draw (257.21,54.29) node [anchor=north west][inner sep=0.75pt]  [font=\normalsize] [align=left] {{\fontfamily{ptm}\selectfont Malliavin calculus }};
\draw (439.58,41.67) node [anchor=north west][inner sep=0.75pt]  [font=\normalsize] [align=left] {{\fontfamily{ptm}\selectfont Asymptotic strong Feller}};
\draw (50,6) node [anchor=north west][inner sep=0.75pt]  [font=\normalsize] [align=left] {{\fontfamily{ptm}\selectfont Carleman estimate}};

\end{tikzpicture}
    
   \vspace{-0.6em}
      \caption{Proof of the asymptotic strong Feller.}\label{figure 1}
   \end{figure}

\vspace{-0.3em}

    \noindent {\bf Global steady-state controllability.}  It remains to establish irreducibility in the parameter regime $\lambda>\nu$. Roughly speaking, this means showing the existence of a reference state $u_*$ such that, with positive probability, the system can reach any small neighborhood of $u_*$.  Due to support theorems, it suffices to construct deterministic forces to drive deterministic solutions into the neighborhood of $u_*$.   To ensure positivity of the probability, a key point is that only a finite number of control trajectories is constructed, uniformly with respect to the initial state.

    Therefore, the irreducibility problem reduces to a global approximate controllability problem. Moreover, the controls must satisfy certain regularity requirements in order to belong to the Cameron--Martin space. 
    Global controllability with localized control is a central problem in PDE control theory, see \cite{CT-04,CX-25} for related results and the discussion in \cite[Section 1.2.2]{CKX-25}. In particular,  for the case of the geometric heat flow model \cite{CX-25} solved the challenging open problem for the small-time global controllability between steady states of nonlinear heat equations.

    To achieve this, our proof strategy is based on the {\it quasi-static deformation method} developed in \cite{Coron-02,CT-04}. More precisely, we first establish steady-state controllability for the deterministic equation. This is achieved by combining controlled Lyapunov theory with quasi-static deformation, and by constructing explicit feedback-type controls that connect different steady states. In the next step, we exploit the  global dynamics of the unforced equation.  The details are presented in Section~\ref{Sec 4}.

    \begin{figure}[th]
        \centering

\tikzset{every picture/.style={line width=0.75pt}} 

\begin{tikzpicture}[x=0.75pt,y=0.75pt,yscale=-1,xscale=1]

\draw [color={rgb, 255:red, 0; green, 0; blue, 0 }  ,draw opacity=0.2 ]   (152.4,12.87) -- (174.29,25.5) ;
\draw [color={rgb, 255:red, 0; green, 0; blue, 0 }  ,draw opacity=0.2 ]   (174.29,25.5) -- (152.4,37.51) ;

\draw [color={rgb, 255:red, 0; green, 0; blue, 0 }  ,draw opacity=0.2 ]   (378.14,26) -- (404.14,26) ;

\draw (2.67,3) node [anchor=north west][inner sep=0.75pt]  [font=\normalsize] [align=left] {{\fontfamily{ptm}\selectfont Quasi-static deformation }};
\draw (28.27,26.6) node [anchor=north west][inner sep=0.75pt]  [font=\normalsize] [align=left] {{\fontfamily{ptm}\selectfont Global dynamics}};
\draw (176.7,17.27) node [anchor=north west][inner sep=0.75pt]  [font=\normalsize] [align=left] {{\fontfamily{ptm}\selectfont Global steady-state controllability}};
\draw (405.58,17.3) node [anchor=north west][inner sep=0.75pt]  [font=\normalsize] [align=left] {{\fontfamily{ptm}\selectfont Irreducibility}};

\end{tikzpicture}
 \vspace{-0.6em}
        \caption{Proof of the irreducibility.}
        \label{figure 2}
    \end{figure}
    
\vspace{-1em}    

 \subsection{Organization of the paper and guide to notation} This paper is organized as follows.  Section~\ref{Sec 2}  collects some probabilistic preliminaries for the mixing problem, together with the relevant notions from Malliavin calculus. In Section~\ref{Sec 3}, we establish the asymptotic strong Feller property for the associated Markov process, relying on quantitative stabilization estimates for the linearized Allen--Cahn system. Section~\ref{Sec 4} is devoted to proving irreducibility, which is based on the global steady-state controllability of the deterministic Allen--Cahn equation.  In Section~\ref{Sec 5}, we combine the above results to complete the proof of the Main Theorem. Finally, the Appendix gathers several technical results and auxiliary proofs used in the main text.

    \subsubsection*{A guide to notation}  We include here for the convenience of the reader a guide to commonly used notations in each part of this paper.

    \begin{itemize}[leftmargin=1em]
        \item[\tiny$\bullet$] Throughout this paper, the  settings for the Allen--Cahn equation are as follows:

         Let $H=L^2(0,\pi)$ be a Hilbert space, equipped with the usual $L^2$-inner product $\langle \cdot,\cdot\rangle$ and norm $\|\cdot\|$. We denote by $\{e_{j}\}_{j\in\N^+}$ the orthonormal basis of $H$ consisting of the eigenfunctions of the Dirichlet Laplacian, with corresponding eigenvalues $j^2$.  The noise $W=\sum_{j\in\N^+}b_j\beta_j\psi_j$ given by \eqref{eq noise W} is spacial localized in a sub-interval $(a,b)\subset[0,\pi]$. Here $b_j$ are real numbers and $\beta_j$ are independent stand Brownian motions.  The sequence $\{\psi_j\}_{j\in\N^+}$,  defined by \eqref{eq psi-def}, forms an orthogonal basis of  $L^2(a,b)$. The notation $\mathsf{P}_N$ stands for the projection in $L^2(a,b)$ onto the finite-dimensional subspace spanned by $\{\psi_j\}_{1\leq j\leq N}$. 

        \vspace{0.3em}

        Throughout, we use $u(t,u_0)=u(t,u_0,W)=u_t$\footnote{Throughout this paper, subscripts $t$ denote the value of the solution at time $t$. Partial derivatives are written as $\partial_t$, $\partial_x$ and  $\partial_x^2$.} to denote the solution of the stochastic equation \eqref{AC equation}.  The corresponding linearized equation is denoted by $\mathcal{J}_{s,t}$, satisfying \eqref{J equation} and its adjoint $\mathcal{J}_{s,t}^*$ satisfies \eqref{J* equation}. In Section \ref{Sec 3}, $z(t)$ denotes the solution of linearized equation \eqref{z equation}, while $y(t)$ stands for the solution of adjoint equation \eqref{y equation}. In Section \ref{Sec 4}, we use $w(t,u_0,\zeta)$ and $\phi(t,u_0)$ to represent the solutions of the controlled deterministic equation \eqref{AC determine equation} and unforced equation \eqref{AC unforced equation}, respectively.

        \vspace{0.6em}

        \item[\tiny$\bullet$] Section \ref{Sec 2}, i.e. the probabilistic setting, frequently uses the following notation:

        For the Markovian framework, we lay out the following notations for a Hilbert space $X$. The symbol $\mathcal{B}(X)$ denotes the Borel $\sigma$-algebra on $X$ and $\mathcal{P}(X)$ is the set of Borel probability measures on $X$. We write $B_b(X)$ for the space of bounded Borel measurable functions on $X$. The Markov semigroup associated with equation \eqref{AC equation} on $H$ are denoted by $P_t,P_t^*$.
        
        \vspace{0.3em}
         The Malliavin calculus setting invokes the following. The operator $\mathcal{D}$ denotes the Malliavin derivative, taking directions in $\mathcal{H}=L^2(\R^+;L^2(a,b))$. The random operators $\mathcal{A}_{s,t}$ on $\mathcal{H}_{s,t}=L^2(s,t;L^2(a,b))$ and its dual $\mathcal{A}_{s,t}^*$ are defined in \eqref{eq A def},\eqref{eq A* def}. We also use the truncated operators $\mathcal{A}_{s,t,N}=\mathcal{A}_{s,t}\mathsf{P}_N$, $\mathcal{A}_{s,t,N}^*=\mathsf{P}_N\mathcal{A}_{s,t}^*$, along with the truncated  Malliavin matrix $\mathcal{M}_{s,t,N}=\mathcal{A}_{s,t,N}\mathcal{A}_{s,t,N}^*$. The operators $B$,$B^*$ introduced in \eqref{eq B def},\eqref{eq B* def} are also used in this setting.

        \vspace{0.6em}

        \item[\tiny$\bullet$] Section \ref{Sec 3} applies the following symbols to study the stabilization for the linearized equation:
        
        The letter $g\in L^\infty(Q)$ stands for a potential term on the domain $Q=(s,t)\times(0,\pi)$. To align with the stochastic setting, we use the nation $\mathcal{J}_{s,t,g}$ to represent  the solution operator of equation \eqref{z equation}, and $\mathcal{J}_{s,t,g}^*$ for the solution operator of equation \eqref{y equation}. The associated operators $\mathcal{A}_{s,t,N,g}$ and $\mathcal{A}_{s,t,N,g}^*$ are given by \eqref{eq A-A*def2}. The corresponding Gramian operator is denoted by $\mathcal{G}_{s,t,N,g}=\mathcal{A}_{s,t,N,g}\mathcal{A}_{s,t,N,g}^*$.  We also set $\hat{b}_N=\max\{|b_j|^{-1}:1\leq j\leq N\}$.

         \vspace{0.6em}

        \item[\tiny$\bullet$]  Section \ref{Sec 4} uses the notations in the context of global steady-state controllability:

        The set $\mathcal{S}$ in \eqref{eq S def} denotes the collection of equilibria for the unforced system \eqref{AC unforced equation}; while $\mathcal{S}_{\rm ex}$ represents the extended equilibrium set in \eqref{eq Sh def}. We use $(\bar{y}(\tau,x),\bar{h}(\tau,x))$ to denote a $C^1$-path in $\mathcal{S}_{\rm ex}$ connecting two steady-states $\phi,\hat{\phi}\in\mathcal{S}$. Moreover, we define a family of operators $A(\tau)$ by \eqref{eq A_tau def}, with corresponding eigenfunctions $e_{j}(\tau,\cdot)$ and eigenvalues $\lambda_j(\tau)$. The projection operator onto the first $m$ modes is denoted by $\mathscr{P}(\tau)$, where $m$ is chosen by \eqref{eq m def}.

    \end{itemize}

    \vspace{0.3em}

     Let $X,Y$ be separable Banach spaces. We write $B_{X}(R)$ for the closed ball in $X$ of radius $r$ centered at zero. $\mathcal{L}(X;Y)$ stands for the space of continuous linear operators from $X$ to $Y$ with the usual operator norm.

    \vspace{0.3em} 
    Additionally, $\mathbf{1}_A(\cdot)$ denotes the indicator function on set $A$. For $a,b\in\R$, we use $a\lor b$, $a\wedge b$ to denote their maximum and minimum. For $a\in\R^+$, the notation $\lfloor a \rfloor$ and $\lceil a \rceil$ denote the greatest integer less than or equal to $a$  and the smallest integer greater than or equal to  $a$, respectively.

    \section{Probabilistic setup and Malliavin calculus}\label{Sec 2}

    In this section, we first summarize the mathematical setup of mixing problem for the stochastic Allen--Cahn equation \eqref{AC equation}. Additionally, we also introduce the Malliavin calculus setting, which will play a crucial role in the subsequent analysis.

    \subsection{Well-posedness and Markovian framework}\label{Sec 2.1}
    To begin with, let us formulate equation \eqref{AC equation} as an abstract evolution equation on a Hilbert space and define its associated Markovian framework.
    
    \vspace{0.3em}
    
    Recall that the phase space $H=L^2(0,\pi)$, endowed with the $L^2$-norm $\|\cdot\|$. We denote the associated inner product on $H$ by $\langle\cdot,\cdot\rangle$.  We write $H_0^1(0,\pi)=\{v\in H^1(0,\pi):v(0)=v(\pi)=0\}$. On the spaces $H^k(0,\pi)\cap H_0^1(0,\pi)$ for $k=1,2$, we use the equivalent spectral norms defined by
    \begin{equation*}
    \|v\|_{H^k}^2 =\sum_{j\in\N^+} j^{2k} \langle v,e_j\rangle^2,\qquad v\in H_0^1(0,\pi)\cap H^k(0,\pi) .
    \end{equation*}
    We also use the usual $H^1$-norm on the space $H^1(a,b)$, denoted by $\|\cdot\|_{H^1(a,b)}$.

    In order to rewrite \eqref{AC equation} in a functional form we introduce the following two abstract operators:
    \begin{equation*}
        A\colon D(A)=H_0^1(0,\pi)\cap H^2(0,\pi)\rightarrow H,\quad Av=-\nu\partial_x^2v,
    \end{equation*}
    and 
    \begin{equation*}
       F\colon H^1(0,\pi)\rightarrow H,\quad F(v)=v^3-\lambda v.
    \end{equation*}
    
    With these notations, equation \eqref{AC equation} is then written as an abstract stochastic evolution equation on $H$, which reads
    \begin{equation}\label{AC equation2}
        du+(Au+F(u))dt=dW,\quad u(0)=u_0\in H.
    \end{equation}
    We say that $u=u(t,u_0)$ is a solution of \eqref{AC equation2} if it is $\mathcal{F}_t$-adapted,
    \begin{equation}\label{eq sol_reg}
        u\in C([0,\infty);H)\cap L^2_{loc}([0,\infty);H^1(0,\pi))\quad a.s.,
    \end{equation}
    and $u$ satisfies \eqref{AC equation2} in the mild sense, that is,
    \begin{equation*}
        u(t)=e^{-At}u_0-\int_0^te^{-A(t-s)}F(u(s))ds+\int_0^te^{-A(t-s)}dW(s).
    \end{equation*}

    Recall that the localized noise $W$ is given by \eqref{eq noise W}. To formulate the differentiability with respect to the noise, let $\mathsf{P}_N$ be the projection in $L^2(a,b)$ onto the finite-dimensional subspace spanned by $\{\psi_j\}_{1\leq j\leq N}$ with $N\in\N^+$. We set 
    \begin{equation*}
        W_N(t,x)=\mathsf{P}_NW(t,x)=\sum_{1\leq j\leq N}b_j\beta_j(t)\psi_j(x).
    \end{equation*}
    Note that $W_N\in C([0,\infty);\mathsf P_NL^2(a,b))$ a.s., where $\mathsf{P}_NL^2(a,b):=\text{span}\{\psi_j:1\leq j\leq N\}$. Identifying $\mathsf{P}_NL^2(a,b)$ with $\R^N$, we may view $W_N$ as an element of $C([0,\infty);\R^N)$.

    \vspace{0.3em}
    
    The following proposition summarizes the basic well-posedness, regularity, and smoothness with respect to data for \eqref{AC equation}.

    \begin{proposition}\label{prop well-posed}
      For the equation given by \eqref{AC equation}-\eqref{eq psi-def}, assume that there exists $N_*\in\N^+$ such that the sequence $\{b_j\}_{j\in\N^+}$ in \eqref{eq noise W} satisfy
    \begin{equation*}
       \sum_{j\in\N^+} j^2b_j^2<\infty\quad \text{and}\quad b_j\neq 0\quad \text{for }1\leq j\leq N_*.       
    \end{equation*}
        Then for any $u_0\in H$, there exists a unique solution $u\colon[0,\infty)\times \Omega\rightarrow H$ of \eqref{AC equation} which is $\mathcal{F}_t$-adapted and satisfies \eqref{eq sol_reg}.  For any $t\geq 0$ and realization of the noise $W(\cdot,\omega)$, the map $u_0\mapsto u(t,u_0)$ is Fréchet differentiable on $H$. Meanwhile, for any $u_0\in H$,  $t\geq 0$ and $1\leq N\leq N_*$, $W_N\mapsto u(t,u_0,W)$ is Fréchet differentiable from $C([0,t];\R^{N})$ to $H$. Moreover, the solution $u$ has spatial regularity of
      \begin{equation*}
          u\in C([s,\infty);H_0^1(0,\pi))\quad a.s.\quad \forall\,s>0.
      \end{equation*}      
      Finally, the solution $u$ satisfies certain a priori moment bounds collected in Lemma \ref{lemma L bdd}.
    \end{proposition}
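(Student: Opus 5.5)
The plan is the standard splitting $u=v+z$. Here $z(t)=\int_0^te^{-A(t-s)}dW(s)$ is the stochastic convolution, and $v$ solves the random PDE
\begin{equation*}
\partial_tv+Av+F(v+z)=0,\qquad v(0)=u_0 .
\end{equation*}
Since $\sum_j j^2b_j^2<\infty$ and $\psi_j\in H_0^1(a,b)\subset H_0^1(0,\pi)$, the covariance of $W$ is trace class as an operator into $H_0^1(0,\pi)$. Factorization (Da Prato--Kwapień--Zabczyk) then shows that $z\in C([0,\infty);H_0^1(0,\pi))$ almost surely, with all moments of $\sup_{[0,T]}\|z\|_{H^1}$ finite. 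In one dimension $H_0^1\subset L^\infty$, so $z$ is pathwise bounded in $L^\infty$ on every $[0,T]$.

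For fixed $\omega$, I solve the $v$-equation pathwise. Local existence in $H$ comes from a Galerkin scheme or a fixed point in $C([0,T];H)\cap L^2(0,T;H^1_0)$, using $H^1\subset L^\infty$ to control $v^3$. Global existence follows from the energy identity:
\begin{equation*}
\tfrac12\tfrac{d}{dt}\|v\|^2+\nu\|\partial_xv\|^2+\int (v+z)^3v\,dx=\lambda\int(v+z)v\,dx .
\end{equation*}
Young's inequality gives $(v+z)^3v\ge \tfrac12 v^4-C|z|^4$, which yields a bound on $\|v\|_{C([0,T];H)}+\|v\|_{L^2H^1}+\|v\|_{L^4L^4}$ in terms of $\|u_0\|$ and $\|z\|_{L^\infty}$. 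Uniqueness and Lipschitz dependence follow from the monotonicity $(a^3-b^3)(a-b)\ge0$, so the difference of two solutions satisfies $\frac{d}{dt}\|w\|^2\le 2\lambda\|w\|^2$. Adaptedness is inherited from $z$, and the mild formulation is equivalent to the weak formulation.

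For $H_0^1$-regularity at positive times, I test the equation with $t\,Av$. The cubic term $\int 3(v+z)^2\partial_x(v+z)\,\partial_xv$ splits into $\int 3(v+z)^2|\partial_xv|^2\ge0$ plus a cross term. The cross term is controlled by $\|v+z\|_{L^\infty}^2\|\partial_xz\|\,\|\partial_xv\|$, and Agmon's inequality handles $\|v\|_{L^\infty}^2\le\|v\|\|v\|_{H^1}$. This gives $t\|v(t)\|_{H^1}^2\le C$ by Gronwall, hence $v\in C([s,\infty);H_0^1)$ for $s>0$, and the same holds for $u$. The moment bounds of Lemma~\ref{lemma L bdd} come from applying Itô's formula to $\|u\|^2$ and $\exp(\gamma\|u\|^2)$, using $\int u^4\ge c\|u\|^4$ to absorb $\lambda\|u\|^2$; I take these as stated in that lemma.

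For differentiability, the map $u_0\mapsto v(t)$ is given pathwise by a PDE whose nonlinearity $F$ is $C^\infty$ as a map $L^\infty\to L^\infty$, and on bounded sets the solution lies in $L^2H^1\subset L^2L^\infty$. I would prove Fréchet differentiability by identifying the derivative with the solution $\mathcal J_{0,t}\xi$ of the linearized equation $\partial_t\rho+A\rho+(3u^2-\lambda)\rho=0$. The remainder is estimated through the equation for $r=u(u_0+\xi)-u(u_0)-\mathcal J\xi$, whose forcing is $O(\|\xi\|^2)$ in $L^1H$, using the Lipschitz bound above and the integrability of $\|u\|_{L^\infty}^2$ in time. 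For $W_N\mapsto u$, I write $W=W_N+(W-W_N)$. The map $W_N\mapsto z_N=\int e^{-A(t-s)}dW_N$ is defined pathwise by integrating by parts:
\begin{equation*}
z_N(t)=W_N(t)-\int_0^tAe^{-A(t-s)}W_N(s)\,ds .
\end{equation*}
This is problematic because $\psi_j\notin D(A)$ when $a=0$ or $b=\pi$. It is still fine as a bounded linear map from $C([0,t];\R^N)$ into $C([0,t];H)$ intersected with an $L^\infty$-type space, since $\psi_j\in H_0^1$ and $\|A^{1/2}e^{-A\tau}\|\lesssim\tau^{-1/2}$ is integrable. Composing a bounded linear map with the $C^1$ solution map $z\mapsto v$ (the same implicit-function/linearization argument with the forcing perturbation $z$) gives Fréchet differentiability.

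The main obstacle is the last step: making the pathwise map $W_N\mapsto z_N$ land in a space, such as $C([0,t];L^\infty)$, on which $z\mapsto v$ is differentiable, without assuming $W_N$ is smoother than continuous. If the $L^\infty$ bound fails near $\tau=0$, I will instead work in $L^4(0,t;L^4)$, where the cubic nonlinearity is still differentiable thanks to the $L^4L^4$ energy bound.
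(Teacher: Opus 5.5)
Your proposal is correct and follows the standard route. The paper states this proposition without proof. The only pieces it actually establishes are the moment bounds of Lemma \ref{lemma L bdd} and the deterministic bounds on $\mathcal{J}_{s,t}$ in Lemma \ref{lemma J bounds}. It derives the moment bounds directly on $u$, by Itô's formula applied to $\|u\|^2$ and to $(t-n+1)\|u\|_{H^1}^2$ together with exponential martingale estimates, rather than through a splitting $u=v+z$. Your pathwise splitting argument therefore supplies details the paper leaves implicit.

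There are two small corrections.

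\emph{The $D(A)$ remark is reversed.} In fact $\psi_j=e_j\in D(A)$ exactly when $(a,b)=(0,\pi)$. For a genuinely localized interval, the zero extension of $\psi_j$ has a jump in $\partial_x\psi_j$ at an interior endpoint, so $\psi_j\in H^s$ only for $s<3/2$.

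\emph{The step you flag as the main obstacle is not one.} Since $\psi_j\in H_0^1=D(A^{1/2})$,
\begin{equation*}
\|A^{1/4+\epsilon}Ae^{-A\tau}\psi_j\|\le\|A^{3/4+\epsilon}e^{-A\tau}\|\,\|A^{1/2}\psi_j\|\lesssim\tau^{-3/4-\epsilon}\|A^{1/2}\psi_j\|,
\end{equation*}
and this is integrable near $\tau=0$ for $\epsilon<1/4$. Hence your integration-by-parts formula defines a bounded linear map from $C([0,t];\R^N)$ into $C([0,t];D(A^{1/4+\epsilon}))\hookrightarrow C([0,t];C([0,\pi]))$, uniformly up to $\tau=0$. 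Using $\psi_j\in D(A^{\theta})$ for $\theta<3/4$, it even lands in $C([0,t];H_0^1)$.

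Your linearization argument then applies verbatim with $L^\infty$-valued perturbations $h$ of $z$. First, the energy estimate with $\int_0^t\|u\|_{L^\infty}^2\,ds<\infty$ gives a Lipschitz bound on $v$. Second, the remainder $(u_h-u)^2(u_h+2u)$ is $O(\|h\|^2)$ in $L^1(0,t;H)$. So the $L^4L^4$ fallback is unnecessary.
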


    Throughout this paper,  we use the notations $u(t,x)=u(t,u_0)=u(t,u_0,W)=u_t$ to denote the unique solution of equation \eqref{AC equation}, depending on the context. The meaning will be clear from the circumstances and should not cause any confusion.

    \vspace{0.3em}

     Using the well-posedness by Proposition \ref{prop well-posed}, equation \eqref{AC equation} defines a Feller family of Markov processes in $H$. The transition function is given by    
    \begin{equation*}
        P_t(u_0,A)=\Pb(u(t,u_0)\in A)\quad \text{for }A\in\mathcal{B}(H),\;u_0\in H,\;t\geq 0.
    \end{equation*}
    The corresponding Markov semigroup $P_t\colon B_b(H)\rightarrow B_b(H)$ and its dual $P^*_t\colon\mathcal{P}(H)\rightarrow \mathcal{P}(H)$ defined by
	\begin{equation*}
		P_t\varphi(u_0)= \int_{H}\varphi(\hat{u})P_t(u_0,d\hat{u}),\quad\quad P_t^*\mu(A)=\int_{H}P_t(u_0,A)\mu(du_0)
	\end{equation*}
    for $\varphi\in B_b(H)$, $\mu\in\mathcal{P}(H)$, $u_0\in H$ and $A\in\mathcal{B}(H)$. Recall that a probability measure $\mu\in \mathcal{P}(H)$ is called \textit{invariant} for $\{P_t^*\}_{t\geq 0}$  if $P_t^*\mu=\mu$ for any $t\geq 0$. Our goal is to investigate exponential mixing for the Markov process $\{u(t,u_0)\}_{t\geq 0,\;u_0\in H}$ on $H$, i.e. the Main Theorem.

    \subsection{Malliavin calculus and the Allen--Cahn}\label{Sec 2.2} 
    
    To establish the asymptotic strong Feller property, we follow the approach developed in \cite{HM-06,HM-11b,FGRT-15}, which relies on the Malliavin calculus techniques. To this end, we introduce in this subsection the associated definitions and summarize some preliminaries for the stochastic Allen--Cahn equation \eqref{AC equation}.

    \vspace{0.3em}
    
    Let us introduce the separable Hilbert space  
    \begin{equation*}
        \mathcal{H}=L^2(\R^+\times\N^+,\mathcal{B}(\R^+\times\N^+),\text{Leb}\times\#)\cong L^2(\R^+;l^2(\N^+))\cong L^2(\R^+;L^2(a,b)),
    \end{equation*}
    where Leb denotes the Lebesgue measure on $\R^+$ and $\#$ denotes the counting measure on $\N^+$. The sequence of standard independent Brownian motions $\{\beta_j(t)\}_{j\in\N^+}$ is isometry to an isonormal Gaussian process on $\mathcal{H}$. Specifically,  let $\mathsf{W}=\{\mathsf{W}(h):h\in\mathcal{H}\}$ be a centered Gaussian family of random variables such that 
    \begin{equation*}
        \E \mathsf{W}(h)\mathsf{W}(g)=\langle h,g\rangle_{\mathcal{H}}\quad \forall\, h,g\in\mathcal{H}.
    \end{equation*}
    Then $\mathsf{W}$ has a version with continuous paths such that
    \begin{equation*}
       \beta_j(t)=\mathsf{W}\left(\mathbf{1}_{[0,t]\times\{j\}}\right)\quad\text{for } j\in\N^+,\; t\geq 0.
    \end{equation*}

     The Malliavin derivative $\mathcal{D}$  associated with equation \eqref{AC equation} is therefore defined as, for each $v\in \mathcal{H}$ with $V(\cdot)=\int_0^\cdot v(t)dt$,
    \begin{equation*}
        \mathcal{D}\colon L^2(\Omega;H)\rightarrow L^2(\Omega;\mathcal{H}\otimes H),\quad \langle \mathcal{D}u_t,v\rangle_{ \mathcal{H}}=\lim\limits_{\varepsilon\rightarrow 0}\frac{u(t,u_0,W+\varepsilon V)-u(t,u_0,W)}{\varepsilon},
    \end{equation*}
    where the limit holds almost surely with respect to the Wiener measure.

    \vspace{0.6em}

    In the following notations, let $u_t= u(t,u_0,W)$ be the solution of stochastic Allen--Cahn equation \eqref{AC equation} with initial value $u_0$ and noise $W$.
    
    For any $\xi\in H$ and $s\geq 0$, let $\mathcal{J}_{s,t}\xi$ be the unique solution of the linearized equation, which reads   
    \begin{equation}\label{J equation}
    \begin{cases}
         \partial_t\mathcal{J}_{s,t}\xi-\nu \partial_x^2\mathcal{J}_{s,t}\xi+(3u_t^2-\lambda)\mathcal{J}_{s,t}\xi=0,\quad t>s,\\         
        \mathcal{J}_{s,s}\xi=\xi,
    \end{cases}
    \end{equation}
    equipped with the Dirichlet condition as in \eqref{AC equation}.\footnote{All of the Allen--Cahn equations arising in this paper, which may be positioned in various settings of stochastic problems, linearized systems and controlled systems, are supplemented by the Dirichlet condition, without any explicit mention.} 
    
    Then for any $v\in  \mathcal{H}$, it follows that
    \begin{equation}\label{eq D def}
        \langle \mathcal{D}u_t,v\rangle_{ \mathcal{H}}=\int_0^t\mathcal{J}_{s,t}Bv(s)ds,
    \end{equation}
    where the operator $B$ is defined as
    \begin{equation}\label{eq B def}
        B\colon L^2(a,b)\rightarrow L^2(a,b)\subset H,\quad Bv=\sum_{j\in \N^+}b_j\langle v,\psi_j\rangle\psi_j.
    \end{equation}

    \vspace{0.3em}

    Inspired by \eqref{eq D def},  we define the random operator $\mathcal{A}_{s,t}$ for $0\leq s<t<\infty$ by
    \begin{equation}\label{eq A def}
       \mathcal{A}_{s,t}\colon \mathcal{H}_{s,t}\rightarrow H,\quad \mathcal{A}_{s,t}v=\int_s^t\mathcal{J}_{r,t}Bv(r)dr,
    \end{equation}
    where $\mathcal{H}_{s,t}=L^2(s,t;L^2(a,b))$,  endowed with the inner product inherited from $\mathcal{H}$. 
    
    Noticing that, for any $v\in\mathcal{H}_{s,t}$,  $\mathcal{A}_{s,\cdot}v$ satisfies the following equation
    \begin{equation*}
    \begin{cases}
         \partial_r\mathcal{A}_{s,r}v-\nu \partial_x^2\mathcal{A}_{s,r}v+(3u_r^2-\lambda)\mathcal{A}_{s,r}v=Bv(r),\quad r\in(s,t),\\         
        \mathcal{A}_{s,s}v=0.
    \end{cases}
    \end{equation*}
    The adjoint operator $\mathcal{A}^*_{s,t}$ of $\mathcal{A}_{s,t}$ is given by
    \begin{equation}\label{eq A* def}
        \mathcal{A}^*_{s,t}\colon H\rightarrow \mathcal{H}_{s,t},\quad (\mathcal{A}^*_{s,t}\xi)(r)=B^*\mathcal{J}^*_{r,t}\xi \quad \text{for }\xi\in H,\;r\in[s,t].
    \end{equation}
    Here $B^*$ denotes the adjoint operator of $B$, satisfying
    \begin{equation}\label{eq B* def}
        B^*\colon H\rightarrow L^2(a,b),\quad B^*\xi=\sum_{j\in\N^+}b_j\langle\xi,\psi_j\rangle\psi_j \quad \text{for }\xi\in H.
    \end{equation}
    Meanwhile, the notation $\mathcal{J}_{s,t}^*\xi$ stands for the adjoint system of $\mathcal{J}_{s,t}\xi$, which satisfies: 
    \begin{equation}\label{J* equation}
    \begin{cases}
         \partial_s\mathcal{J}^*_{s,t}\xi+\nu \partial_x^2\mathcal{J}^*_{s,t}\xi-(3u_s^2-\lambda)\mathcal{J}^*_{s,t}\xi=0,\quad s\in(0,t),\\         
        \mathcal{J}^*_{t,t}\xi=\xi.
    \end{cases}
    \end{equation}

    Similarly, for any $N\in\N^+$ and $0\leq s<t$, define the random operator $\mathcal{A}_{s,t,N}=\mathcal{A}_{s,t}\mathsf{P}_N$ as
    \begin{equation*}
       \mathcal{A}_{s,t,N}\colon \mathcal{H}_{s,t}\rightarrow H,\quad \mathcal{A}_{s,t,N}v=\int_s^t\mathcal{J}_{r,t}B\mathsf{P}_Nv(r)dr,
    \end{equation*}
    where recall that $\mathsf{P}_N$ is the projection on $L^2(a,b)$. The associated adjoint operator $\mathcal{A}^*_{s,t,N}$ is given by
    \begin{equation*}
        \mathcal{A}^*_{s,t,N}\colon H\rightarrow \mathcal{H}_{s,t},\quad (\mathcal{A}^*_{s,t,N}\xi)(r)=\mathsf{P}_NB^*\mathcal{J}^*_{r,t}\xi \quad \text{for }\xi\in H,\;r\in[s,t].
    \end{equation*}

    \vspace{0.3em}

    Finally, we define the truncated Malliavin matrix $\mathcal{M}_{s,t,N}$ by
    \begin{equation*}
        \mathcal{M}_{s,t,N}\colon H\rightarrow H,\quad \mathcal{M}_{s,t,N}=\mathcal{A}_{s,t,N}\mathcal{A}^*_{s,t,N}.
    \end{equation*}
    In particular, one has 
    \begin{equation*}
        \mathcal{M}_{s,t,N}=\int_s^t\mathcal{J}_{r,t}B\mathsf{P}_NB^*J^*_{r,t}dr,
    \end{equation*}
    and 
    \begin{equation*}
        \langle \mathcal{M}_{s,t,N}\xi,\xi\rangle=\int_s^t\|(\mathcal{A}^*_{s,t,N}\xi)(r)\|^2dr=\sum_{1\leq j\leq N}b_j^2\int_s^t\langle\mathcal{J}^*_{r,t}\xi,\psi_j\rangle^2dr\quad \forall\,\xi\in H.
    \end{equation*}

    \section{Stabilization analysis and asymptotic strong Feller}\label{Sec 3}

    In this section, we establish the quantitative stabilization properties for the linearized system, summarized in Theorem~\ref{Thm linear-stable}. This result, combined with the Malliavin calculus, will be used to derive the asymptotic strong Feller property, i.e. Proposition \ref{prop ASF}. The latter plays a crucial role in Section~\ref{Sec 5}, where it is employed in the proof of the Main Theorem.
    
    The proof of the stabilization result, along with an outline of the argument, is presented in Section~\ref{Sec 3.1}. The verification of the asymptotic strong Feller property is provided in Section~\ref{Sec 3.2}.

    \vspace{0.6em}

    Let us consider a stabilization property associated with the following linearized system with a potential term, 
    \begin{equation}\label{z equation}
    \begin{cases}
         \partial_rz-\nu \partial_x^2 z+ gz=B\mathsf{P}_N\zeta(r,x),\quad x\in(0,\pi),\;r\in(s,t),\\
        z(s,\cdot)=z_s(\cdot)\in H.
    \end{cases}
    \end{equation}
    Here $0\leq s<t\leq s+1$,  $g\in L^\infty(Q)$ denotes a potential posed on the space-time domain $Q=(s,t)\times (0,\pi)$, and $\zeta=\zeta(r,x)\in \mathcal{H}_{s,t}=L^2(s,t;L^2(a,b))$ is a control. Recall that the operator $B$ and its dual $B^*$ are defined by \eqref{eq B def},\eqref{eq B* def}, and  the notation $\mathsf{P}_N$ stands for the projection in $L^2(a,b)$ onto the finite-dimensional subspace spanned by $\{\psi_j\}_{1\leq j\leq N}$ with $N\in\N^+$.

    \vspace{0.3em}
    The corresponding  adjoint system of \eqref{z equation} is given by
     \begin{equation}\label{y equation}
    \begin{cases}
         \partial_ry+\nu \partial_x^2y-gy=0,\quad x\in(0,\pi),\; r\in(s,t),\\
        y(t,\cdot)=y_t(\cdot)\in H.
    \end{cases}
    \end{equation}

    To continue, we further introduce the following notations. For each $g\in L^\infty(Q)$ and $0\leq s<t$, let $\mathcal{J}_{s,t,g}\colon H\rightarrow H$ be the solution of \eqref{J equation} with the random potential term $3u_t^2-\lambda$ replaced by $g$, and its dual operator is denoted by $\mathcal{J}_{s,t,g}^*$. Using these  operators, the solutions of \eqref{z equation}, \eqref{y equation} have the form of 
    \begin{equation*}
        z(t)=\mathcal{J}_{s,t,g}z_s+\int_s^t\mathcal{J}_{r,t,g}B\mathsf{P}_N\zeta(r)dr,\quad y(s)=\mathcal{J}^*_{s,t,g}y_t.
    \end{equation*}  
   Also define the control map $\mathcal{A}_{s,t,N,g}\in\mathcal{L}(\mathcal{H}_{s,t};H)$ and its dual $\mathcal{A}^*_{s,t,N,g}\in\mathcal{L}(H;\mathcal{H}_{s,t})$ by  
    \begin{equation}\label{eq A-A*def2}
        \mathcal{A}_{s,t,N,g}(\zeta)=\int_s^t\mathcal{J}_{r,t,g}B\mathsf{P}_N\zeta(r)dr,\quad (\mathcal{A}^*_{s,t,N,g}y_t)(r)=\mathsf{P}_NB^*\mathcal{J}^*_{r,t,g}y_t,
    \end{equation}
    and the Gramian operator $\mathcal{G}_{s,t,N,g}$ is thus given by 
    \begin{equation*}
        \mathcal{G}_{s,t,N,g}\colon H\rightarrow H,\quad \mathcal{G}_{s,t,N,g}=\int_s^t\mathcal{J}_{r,t,g}B\mathsf{P}_NB^*\mathcal{J}^*_{r,t,g}dr.
    \end{equation*}

    The main stabilization property is collected in the following.

    \begin{theorem}\label{Thm linear-stable} {\rm(}Deterministic stabilization along trajectory{\rm)}
    For the equation \eqref{z equation}, there exists a constant $C>0$  such that the following holds. For any $0\leq s<t\leq s+1$, $\beta>0$, $N\in\N^+$, $z_s\in H$ and $g\in L^\infty(Q)$, the solution $z$ of \eqref{z equation} with the control $\zeta\in \mathcal{H}_{s,t}$ given by 
    \begin{equation}\label{eq zeta def}
        \zeta(r)=-\mathsf{P}_N\left(B^*\mathcal{J}_{r,t,g}^*(\mathcal{G}_{s,t,N,g}+\beta)^{-1}\mathcal{J}_{s,t,g}z_s\right),\quad r\in(s,t)
    \end{equation}
    satisfies that
    \begin{equation}\label{eq z sqz1}
        \|z(t)\|\leq  C\exp\left(C((t-s)\|g\|_{L^\infty(Q)}+(t-s)^{-2})\right)\left(\hat{b}_N\beta^{1/2}+N^{-1/2}\right)\|z_s\|,
    \end{equation}
    and 
    \begin{equation}\label{eq z sqz2}
        \|\zeta\|_{\mathcal{H}}\leq  C\exp\left(C((t-s)\|g\|_{L^\infty(Q)}+(t-s)^{-2})\right)\left(\hat{b}_N+\beta^{-1/2}N^{-1/2}\right)\|z_s\|,
    \end{equation}
    where $\hat{b}_N:=\max\{|b_j|^{-1}:1\leq j\leq N\}$.
    \end{theorem}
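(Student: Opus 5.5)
The plan is to reduce both estimates to a single quantitative \emph{weak observability inequality} for the adjoint system \eqref{y equation}, and then to close by a short duality computation. That computation is the finite-dimensional shadow of the Fenchel--Rockafellar argument.

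\textbf{Algebraic structure.} Write $T=t-s$, $\xi=\mathcal{J}_{s,t,g}z_s$ and $\eta=(\mathcal{G}_{s,t,N,g}+\beta)^{-1}\xi$. By \eqref{eq A-A*def2}, the control \eqref{eq zeta def} is exactly $\zeta=-\mathcal{A}^*_{s,t,N,g}\eta$. Hence
\begin{equation*}
z(t)=\xi-\mathcal{G}_{s,t,N,g}\eta=\beta\eta,
\qquad
\|\zeta\|_{\mathcal{H}}^2=\langle \mathcal{G}_{s,t,N,g}\eta,\eta\rangle .
\end{equation*}
Set $X=\langle\mathcal{G}_{s,t,N,g}\eta,\eta\rangle^{1/2}$ and $Y=\beta^{1/2}\|\eta\|$. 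Then $\|\zeta\|_{\mathcal{H}}=X$ and $\|z(t)\|=\beta^{1/2}Y$, so it suffices to bound $X$ and $Y$. Testing the defining relation of $\eta$ against $\eta$ gives
\begin{equation*}
X^2+Y^2=\langle \xi,\eta\rangle=\langle z_s,\mathcal{J}^*_{s,t,g}\eta\rangle\le \|z_s\|\,\|\mathcal{J}^*_{s,t,g}\eta\|.
\end{equation*}

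\textbf{Weak observability.} I will prove that for every $y_t\in H$,
\begin{equation*}
\|\mathcal{J}^*_{s,t,g}y_t\|\le K\Big(\hat b_N\langle \mathcal{G}_{s,t,N,g}y_t,y_t\rangle^{1/2}+N^{-1/2}\|y_t\|\Big),
\qquad K=C e^{C(T\|g\|_{L^\infty}+T^{-2})}.
\end{equation*}
Applying this with $y_t=\eta$ gives
\begin{equation*}
X^2+Y^2\le K\|z_s\|\big(\hat b_N X+\beta^{-1/2}N^{-1/2}Y\big).
\end{equation*}
By Cauchy--Schwarz, both $X$ and $Y$ are then at most $K\|z_s\|(\hat b_N+\beta^{-1/2}N^{-1/2})$. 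This yields \eqref{eq z sqz2} directly, and \eqref{eq z sqz1} after multiplying the bound on $Y$ by $\beta^{1/2}$.

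\textbf{Proof of weak observability.} There are three steps.

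First, I use the classical Fursikov--Imanuvilov Carleman estimate for the backward heat equation with bounded potential. Take a subinterval $\omega=(a',b')\Subset(a,b)$. The estimate gives
\begin{equation*}
\|y(s)\|^2\le C\exp\big(C(1+T^{-1}+T\|g\|_\infty+\|g\|_\infty^{2/3})\big)\int_s^t\!\!\int_\omega |y|^2 .
\end{equation*}
By Young's inequality, $\|g\|^{2/3}=(T\|g\|)^{2/3}T^{-2/3}\le T\|g\|+T^{-2}$, and $T^{-1}\le 1+T^{-2}$. This produces the exponent in the statement.

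Second, let $\chi\in C_c^\infty(a,b)$ with $\chi=1$ on $\omega$, so $\chi y(r)\in H^1_0(a,b)$. Split $\chi y=\mathsf P_N(\chi y)+(1-\mathsf P_N)(\chi y)$.
\begin{itemize}
\item The $\psi_j$ are the Dirichlet eigenfunctions on $(a,b)$ with eigenvalues $\sim j^2$. So the tail satisfies $\|(1-\mathsf P_N)\chi y\|\le CN^{-1}\|\chi y\|_{H^1(a,b)}$.
\item The energy estimate for \eqref{y equation} gives $\int_s^t\|y\|_{H^1}^2\le Ce^{CT\|g\|}\|y_t\|^2$. Hence the tail contributes at most $CN^{-2}e^{CT\|g\|}\|y_t\|^2$, which is more than enough since only $N^{-1}$ is needed.
\end{itemize}

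Third, for the low modes, $\langle\chi y,\psi_j\rangle$ is not $\langle y,\psi_j\rangle$. To avoid this, I will instead replace the $\psi_j$-basis expansion by one for $y$ itself on $(a,b)$, estimating the tail of $y|_{(a,b)}$ through the odd/boundary-term-free bound $\sum_{j>N}\langle y,\psi_j\rangle^2\le CN^{-1}\|y\|_{H^1(a,b)}^2$. This holds because $\langle y,\psi_j\rangle$ decays like $j^{-1}\|y\|_{H^1}$ up to boundary terms, and it is what accounts for the rate $N^{-1/2}$ in the statement. The low modes are then handled by $\sum_{j\le N}\langle y,\psi_j\rangle^2\le \hat b_N^2\sum_{j\le N}b_j^2\langle y,\psi_j\rangle^2$, whose time integral is $\hat b_N^2\langle\mathcal G_{s,t,N,g}y_t,y_t\rangle$.

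\textbf{Main obstacle.} I expect the hardest step to be this quantitative weak observability. The key requirements are tracking the exact dependence on $T$ and $\|g\|_\infty$ in the Carleman constant, and getting a uniform-in-$g$ tail bound for the localized basis $\psi_j$ when $y$ does not vanish at $a$ and $b$. The remaining algebra is routine.
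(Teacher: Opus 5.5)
Your proposal is correct and follows essentially the same route as the paper. You use the same truncated weak observability inequality: Carleman observability, then Parseval in the $\psi_j$ basis with $|\langle y,\psi_j\rangle|\le Cj^{-1}\|y\|_{H^1(a,b)}$ for $j>N$ and the energy estimate. You close with the same identity $\|\zeta\|_{\mathcal{H}}^2+\beta^{-1}\|z(t)\|^2=\langle z_s,\mathcal{J}^*_{s,t,g}\eta\rangle$, obtained by testing $(\mathcal{G}_{s,t,N,g}+\beta)\eta=\mathcal{J}_{s,t,g}z_s$ against $\eta$ rather than through the paper's Fenchel--Rockafellar framing. The cutoff $\chi$ in your second step is an unnecessary detour that, as you noticed, cannot control the low modes; just use $\int_s^t\!\int_\omega|y|^2\le\int_s^t\!\int_a^b|y|^2$ and proceed directly as in your third step.
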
   

    \vspace{0.3em}

   Next we establish the asymptotic strong Feller property by using the Malliavin calculus.
    
    \begin{proposition}\label{prop ASF}{\rm(}Asymptotic strong Feller property{\rm)}
     For the equation given by \eqref{AC equation}-\eqref{eq psi-def} and any $B_0,\gamma,\alpha>0$, there exists a constant $N\in\N^+$ such that if the sequence $\{b_j\}_{j\in\N^+}$ satisfies 
    \begin{equation*}
       \sum_{j\in\N^+} j^2b_j^2\leq B_0\quad \text{and}\quad b_j\neq 0\quad \text{for }1\leq j\leq N,
    \end{equation*}
    then there is a constant $C>0$  such that for any Fréchet differentiable function $\varphi\colon H\rightarrow \R$, $u_0\in H$ and $t\geq 0$,
        \begin{equation}\label{eq ASF}
            \|\nabla P_t\varphi(u_0)\|\leq C\exp\left(\gamma\|u_0\|^2\right)\left(\sqrt{P_t(|\varphi|^2)(u_0)}+e^{-\alpha t}\sqrt{P_t(\|\nabla\varphi\|^2)(u_0)})\right).
        \end{equation}

    \end{proposition}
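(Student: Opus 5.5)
We follow the Hairer--Mattingly control scheme \cite{HM-06,HM-11b}. Fix $u_0,\xi\in H$ with $\|\xi\|=1$. We will build an adapted control $v\in L^2(\Omega;\mathcal{H})$ so that the residual $\rho_t:=\mathcal{J}_{0,t}\xi-\mathcal{A}_{0,t}v$ is small. Then
\begin{equation*}
\langle\nabla P_t\varphi(u_0),\xi\rangle=\E\left[\langle\nabla\varphi(u_t),\mathcal{J}_{0,t}\xi\rangle\right]=\E\left[\varphi(u_t)\,\delta(v)\right]+\E\left[\langle\nabla\varphi(u_t),\rho_t\rangle\right],
\end{equation*}
where $\delta$ is the Skorokhod integral and we used $\mathcal{D}^v\varphi(u_t)=\langle\nabla\varphi(u_t),\mathcal{A}_{0,t}v\rangle$. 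By Cauchy--Schwarz, \eqref{eq ASF} then reduces to two estimates:
\begin{equation*}
\E|\delta(v_{[0,t]})|^2\le Ce^{2\gamma\|u_0\|^2}\quad\text{and}\quad \E\|\rho_t\|^2\le Ce^{2\gamma\|u_0\|^2}e^{-2\alpha t}.
\end{equation*}
Time is split into unit intervals $[n,n+1]$. On each interval the control is applied only on $[n,n+1/2]$, using the noise on $[n+1/2,n+1]$ is not needed. This keeps $v$ adapted at the level of each block, so $\delta(v)$ is an Itô integral up to the usual anticipating corrections, which the block structure makes manageable.

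\textbf{Recursive construction.} On $[n,n+1]$ we apply Theorem~\ref{Thm linear-stable} pathwise, with $s=n$, $t=n+1/2$, $g=3u_r^2-\lambda$, and initial state $z_s=\rho_n$. The control is the feedback \eqref{eq zeta def}, with the regularization $\beta$ chosen below. After $r=n+1/2$ the residual evolves freely by $\mathcal{J}$. This gives
\begin{equation*}
\|\rho_{n+1}\|\le \|\mathcal{J}_{n+1/2,n+1}\|\,C e^{C\|g\|_{L^\infty}}\big(\hat b_N\beta^{1/2}+N^{-1/2}\big)\|\rho_n\|,
\end{equation*}
together with a matching bound on $\|v\|_{\mathcal{H}_{n,n+1}}$ from \eqref{eq z sqz2}. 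Since $\mathcal{J}$ grows at most like $e^{\lambda/2}$, the only dangerous factor is $K_n:=\exp(C\sup_{[n,n+1]}\|u_r\|_{L^\infty}^2)$. Iterating gives $\|\rho_n\|\le \prod_{k<n}K_k\,\epsilon_N^n$ with $\epsilon_N=C(\hat b_N\beta^{1/2}+N^{-1/2})$. The parameters are fixed in order: first $N$ large so that the $N^{-1/2}$ part is small, then $\beta$ small depending on $\hat b_N$. Once $N$ is fixed, $\hat b_N$ is a finite constant because $b_j\ne0$ for $j\le N$.

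\textbf{Main obstacle.} The hard step is controlling $\E\prod_k K_k^2\epsilon_N^{2n}$. This needs an exponential moment bound of the form
\begin{equation*}
\E\exp\Big(\kappa\sum_{k<n}\sup_{[k,k+1]}\|u\|_{L^\infty}^2\Big)\le e^{\gamma\|u_0\|^2+Cn},
\end{equation*}
valid for some $\kappa$ and with $C$ independent of $N$. I would derive it from the a priori bounds of Lemma~\ref{lemma L bdd}. The cubic dissipation gives strong control: there is an energy estimate for $\|u\|_{H^1}$ on $[k+1/2,k+1]$, and in 1D we have $\|u\|_{L^\infty}^2\le\|u\|\|u\|_{H^1}$. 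The bound must hold uniformly once $\sum j^2b_j^2\le B_0$, which is exactly why the hypothesis involves $B_0$. The difficulty is that $\kappa$ is dictated by the constant $C$ in Theorem~\ref{Thm linear-stable}, so it may not be small. My first attempt is to use the strong cubic damping, which gives bounds like $\sup_{r\ge1}\E e^{\kappa\|u_r\|_{L^\infty}^2}<\infty$ for every $\kappa$ via $L^p$ energy estimates. I would combine these with Hölder and the Markov property, applied block by block. If that fails, I would shorten the control window so that the factor $(t-s)\|g\|_{L^\infty}$ carries a small coefficient, at the price of a larger $(t-s)^{-2}$ constant, which can be absorbed by enlarging $N$.

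\textbf{Closing the estimates.} Given the moment bound, choose $\epsilon_N$ small enough that $\E\|\rho_n\|^2\le Ce^{2\gamma\|u_0\|^2}e^{-2\alpha n}$. For non-integer $t$, the last partial block is bounded by $\|\mathcal{J}\|$. For the Skorokhod integral, write $\E|\delta(v)|^2\le \E\|v\|_{\mathcal{H}}^2+\E\|\mathcal{D}v\|^2_{\mathcal{H}\otimes\mathcal{H}}$. The first term is summable because the blockwise control norms decay geometrically like $\|\rho_n\|$. The derivative term $\mathcal{D}v$ involves derivatives of $\mathcal{J}$, of $\mathcal{J}^*$, and of $(\mathcal{G}+\beta)^{-1}$. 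These are bounded by $\beta^{-1}$ times polynomial and exponential functionals of $u$ on the same block, and decay with the same geometric factor $\|\rho_n\|$, so the moment bound handles them as well. Summing over $n$ gives the claim with constant $Ce^{\gamma\|u_0\|^2}$.
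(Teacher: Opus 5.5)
The overall architecture is the paper's: Malliavin integration by parts, blockwise use of the regularized feedback \eqref{eq zeta def} from Theorem~\ref{Thm linear-stable}, iteration of $\|\rho_n\|$, and a Skorokhod bound for $\int v\,dW$. However, your main line does not close at exactly the step you flag as the main obstacle.

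\emph{Placement of the control window.} You apply Theorem~\ref{Thm linear-stable} on $[n,n+1/2]$, at the \emph{start} of each block. For $n=0$ and $u_0\in H\setminus L^\infty$ one has $\|u\|_{L^\infty((0,1/2)\times(0,\pi))}=\infty$, so $g=3u^2-\lambda\notin L^\infty(Q)$ and the theorem gives nothing. For $n\ge1$, the $H^1$-smoothing estimate you invoke controls $\|u\|_{L^\infty}$ on $[k+1/2,k+1]$, which is the half of the block where you do not act.

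\emph{Size of the exponential moment.} This is the more important problem. With a window of length $1/2$, the factor $\exp(C(t-s)\|g\|_{L^\infty(Q)})$ requires exponential moments of $\|u^2\|_{L^\infty}$ with a coefficient of the order of the uncontrolled Carleman constant $C$. Lemma~\ref{lemma L bdd}(3) only provides such moments for coefficients $\le\gamma_0$, and at the cost of a factor $e^{c\gamma\|u_{n-1}\|^2}$. Your claim that cubic damping gives $\sup_{r\ge1}\E e^{\kappa\|u_r\|^2_{L^\infty}}<\infty$ for every $\kappa$ is asserted, not proved. It may be provable, e.g.\ by a comparison argument on windows of length $\sim\kappa^{-1}$, but it is a separate estimate that $L^p$ energy bounds do not give directly. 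Moreover, a pointwise-in-time bound would not suffice, since the supremum over the window sits inside the exponential and you need products over blocks.

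\emph{What is actually needed.} It is your fallback, which is the paper's argument:
\begin{itemize}
\item[(i)] Control only on a short terminal window $(n-\delta,n)$ and let the system evolve freely on $[n-1,n-\delta]$. Then $\|\rho_{n-\delta}\|\le e^{\lambda}\|\rho_{n-1}\|$ and $u$ is smoothed.
\item[(ii)] Choose $\delta\le\frac{\gamma}{4c_0\hat C}\wedge\frac{\gamma_0}{4\hat C}$, so that \eqref{eq L-inf bdd} gives $\E(\exp(4\hat C\delta\|u^2\|_{L^\infty(Q_{n,\delta})})\,|\,\mathcal F_{n-1})\le C_0e^{\gamma\|u_{n-1}\|^2}$.
\item[(iii)] Only then take $N$ large and $\beta$ small to beat the factor $e^{4\hat C\delta^{-2}}$.
\end{itemize}
This yields the conditional contraction of Lemma~\ref{prop rho_n 1}. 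The $e^{\bar\gamma\|u_{k-1}\|^2}$ weights are then absorbed by Cauchy--Schwarz and \eqref{eq L2 bdd3}. So the parameters must be fixed in the order $\delta$, then $N$, then $\beta$; with the window frozen at length $1/2$, the available moment bounds do not suffice.

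\emph{The Skorokhod estimate.} Your control is not adapted: the feedback involves $\mathcal J^*_{r,t}$ and $(\mathcal G_{s,t,N,g}+\beta)^{-1}$ over the whole window. The crude bound $\E|\delta(v)|^2\le\E\|v\|^2_{\mathcal H}+\E\|\mathcal Dv\|^2_{\mathcal H\otimes\mathcal H}$ contains cross-block derivatives $\mathcal D_sv(t)$ with $s$ in an earlier block. These involve $\mathcal D_s\rho$ and $\mathcal D_su$ propagated through all intermediate feedback steps, and your argument, which only treats derivatives ``on the same block'', does not bound them.

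The paper avoids these terms by using the exact generalized It\^o isometry. The term $\mathrm{Tr}(\mathcal D_sv(t)\mathcal D_tv(s))$ vanishes unless $s,t$ lie in the same window: if $s$ lies in an earlier window than $t$, then $v(s)$ is measurable with respect to the noise before $t$, so $\mathcal D_tv(s)=0$. Inside a window, $\rho_{n-\delta}$ is $\mathcal F_{n-\delta}$-measurable. Hence only derivatives of $\mathcal J$, $\mathcal A_{n-\delta,n,N}$ and $(\mathcal M_{n-\delta,n,N}+\beta)^{-1}$ appear. These are bounded deterministically by \eqref{eq J_2 bound} and Lemma~\ref{lemma AM}, times $\|\rho_{n-\delta}\|$.
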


    \subsection{Stabilization for the linearized equation}\label{Sec 3.1}

    In this subsection, we present the proof of Theorem \ref{Thm linear-stable}. The strategy builds upon the frequency analysis applied in our previous work \cite[Proposition 5.1]{LWXZZ-24}, which is inspired by \cite{Xiang-23, Xiang-24}. Specifically, our approach relies on two ingredients:
    
    \begin{itemize}
        \item[(\runum{1})] A weak observability inequality based on Carleman estimates, whose quantitative estimates are crucial for verifying the asymptotic strong Feller property;

        \item[(\runum{2})] The Fenchel--Rockafellar duality, which establishes the duality between the stabilization property and the weak observability inequality.

    \end{itemize}

    \vspace{0.3em}
    
    The proof is structured into three steps, as sketched below. See Sections \ref{Sec 3.1.1}-\ref{Sec 3.1.3} for details.

    \noindent {\it Step 1: Quantitative weak observability inequality.} We first establish a weak observability inequality by invoking the result of \cite{FCZ-00}. Specifically, there exists a constant $C>0$ such that for any  $0\leq s<t\leq s+1$, $N\in\N^+$, $y_t\in H$ and $g\in L^\infty(Q)$, the following estimate holds:
    \begin{equation}\label{eq outa}
             \|y(s)\|\leq 
            C\exp\left(C((t-s)\|g\|_{L^\infty(Q)}+(t-s)^{-2})\right)\left(\|\mathsf{P}_N(\mathbf{1}_{(a,b)}y)\|_{\mathcal{H}}+N^{-1/2}\|y_t\|\right).
       \end{equation}
       
    Compared to the standard observability inequality used in the context of null controllability, inequality \eqref{eq outa} represents a weaker form due to the presence of the remainder term $C\exp\left(C((t-s)\|g\|_{L^\infty(Q)}+(t-s)^{-2})\right)N^{-1/2}\|y_t\|$. Indeed, the notion of weak observability, can be used to establish  weak forms of null controllability; see e.g. \cite{AM-22,TWX-20}.

    \vspace{0.6em}
    
    \noindent {\it Step 2: Duality for  stabilization and  weak observability.} 
    Next, we establish the equivalence of the following two properties: let  $C,\varepsilon\geq0$ be arbitrarily given,
    \begin{itemize}[leftmargin=2em]
    \item[\tiny$\bullet$] Stabilization of \eqref{z equation}: For any $z_s\in H$, there exists a control $\zeta\in \mathcal{H}_{s,t}$ such that
    \begin{equation}\label{eq outb}
            \|z(t)\|\leq \varepsilon\|z_s\|,\quad \|\zeta\|_{\mathcal{H}}\leq C\|z_s\|
    \end{equation}

    \item[\tiny$\bullet$] Observability of \eqref{y equation}: For any $y_t\in H$,
    \begin{equation}\label{eq outc}
            \|y(s)\|\leq C\|\mathsf{P}_NB^*y\|+\varepsilon\|y_t\|.
    \end{equation}
    \end{itemize}
    The duality between controllability and observability is well-known in control theory; see e.g. Coron \cite{Coron-07}. In our circumstance, the relation \eqref{eq outb} corresponds precisely to the notion of $\alpha$-null controllability considered in \cite{TWX-20}.

    Following \cite{Lions-92,TWX-20}, the proof of the equivalence is based on the Fenchel--Rockafellar duality. More precisely, the issue of stabilization  \eqref{eq outb} is formulated as a convex minimization problem:
    \begin{equation}\label{eq outd}
        \inf_{\zeta\in \mathcal{H}_{s,t}}J(\zeta)\leq\tfrac{1}{2}C^2\|z_s\|^2.
    \end{equation}
  Thus \eqref{eq outd} is equivalent to the condition that its dual problem satisfies:  
    \begin{equation}\label{eq oute}
        \sup_{w\in H}J^*(w)\geq\tfrac{1}{2}C^2\|z_s\|^2.
    \end{equation}
    Finally, we show that \eqref{eq oute} holds if and only if the weak observability inequality \eqref{eq outc} is satisfied, thereby establishing the desired duality.

    \vspace{0.6em}

    \noindent {\it Step 3: Regularized stabilization via Fenchel--Rockafellar duality.}   Although the last step establishes the duality for stabilization, the corresponding control  constructed in this way does not admit an explicit expression and may exhibit singular behavior; see Remark \ref{Rem 3.7}. Therefore, to derive Theorem~\ref{Thm linear-stable}, we provide an alternative construction of a {\it regularized control} in the form of \eqref{eq zeta def}.

     More precisely, the desired relations  \eqref{eq z sqz1},\eqref{eq z sqz2} are obtained by solving the following convex minimization problem:
    \begin{equation}\label{eq outf}
        \inf_{\zeta\in \mathcal{H}_{s,t}}J(\zeta)=\inf_{\zeta\in \mathcal{H}_{s,t}}\left(\frac{1}{2}\|\zeta\|_{\mathcal{H}}^2+\frac{1}{2\beta}\|\mathcal{A}_{s,t,N,g}\zeta+\mathcal{J}_{s,t,g}z_s\|^2\right).
    \end{equation}
   By the Fenchel--Rockafellar theorem, the problem \eqref{eq outf} is equivalent to the following dual maximization
    \begin{equation}\label{eq outg}
        \sup_{w\in H}J^*(w)= \sup_{w\in H}\left(-\frac{1}{2}\langle(\mathcal{G}_{s,t,N,g}+\beta)w,w\rangle+\langle w,\mathcal{J}_{s,t,g}z_s\rangle\right).
    \end{equation}
    Using the weak observability inequality \eqref{eq outa}, we show that the dual problem \eqref{eq outg} leads to the stabilization estimates \eqref{eq z sqz1},\eqref{eq z sqz2}. The structure of the proof is illustrated in Figure \ref{figure 3}.

    \begin{figure}[th]
    \centering
    \tikzset{every picture/.style={line width=0.75pt}} 

\begin{tikzpicture}[x=0.75pt,y=0.75pt,yscale=-1,xscale=1]

\draw  [color={rgb, 255:red, 0; green, 0; blue, 0 }  ,draw opacity=0.2 ] (7,14.01) .. controls (7,9.31) and (10.81,5.5) .. (15.51,5.5) -- (214.92,5.5) .. controls (219.62,5.5) and (223.43,9.31) .. (223.43,14.01) -- (223.43,39.54) .. controls (223.43,44.24) and (219.62,48.05) .. (214.92,48.05) -- (15.51,48.05) .. controls (10.81,48.05) and (7,44.24) .. (7,39.54) -- cycle ;
\draw [color={rgb, 255:red, 0; green, 0; blue, 0 }  ,draw opacity=0.6 ] [dash pattern={on 4.5pt off 4.5pt}]  (229.33,29) -- (357.33,29) ;
\draw [shift={(227.33,29)}, rotate = 0] [color={rgb, 255:red, 0; green, 0; blue, 0 }  ,draw opacity=0.6 ][line width=0.75]    (10.93,-3.29) .. controls (6.95,-1.4) and (3.31,-0.3) .. (0,0) .. controls (3.31,0.3) and (6.95,1.4) .. (10.93,3.29)   ;
\draw [color={rgb, 255:red, 0; green, 0; blue, 0 }  ,draw opacity=0.6 ][line width=0.75]    (115.19,54) -- (116.14,92) ;
\draw [shift={(115.14,52)}, rotate = 88.57] [color={rgb, 255:red, 0; green, 0; blue, 0 }  ,draw opacity=0.6 ][line width=0.75]    (10.93,-3.29) .. controls (6.95,-1.4) and (3.31,-0.3) .. (0,0) .. controls (3.31,0.3) and (6.95,1.4) .. (10.93,3.29)   ;
\draw [color={rgb, 255:red, 0; green, 0; blue, 0 }  ,draw opacity=0.6 ]   (228.67,119.7) -- (354.67,119.7) ;
\draw [shift={(356.67,119.7)}, rotate = 180] [color={rgb, 255:red, 0; green, 0; blue, 0 }  ,draw opacity=0.6 ][line width=0.75]    (10.93,-3.29) .. controls (6.95,-1.4) and (3.31,-0.3) .. (0,0) .. controls (3.31,0.3) and (6.95,1.4) .. (10.93,3.29)   ;
\draw [shift={(226.67,119.7)}, rotate = 0] [color={rgb, 255:red, 0; green, 0; blue, 0 }  ,draw opacity=0.6 ][line width=0.75]    (10.93,-3.29) .. controls (6.95,-1.4) and (3.31,-0.3) .. (0,0) .. controls (3.31,0.3) and (6.95,1.4) .. (10.93,3.29)   ;

\draw  [color={rgb, 255:red, 0; green, 0; blue, 0 }  ,draw opacity=0.2 ] (7,103.54) .. controls (7,98.84) and (10.81,95.03) .. (15.51,95.03) -- (214.92,95.03) .. controls (219.62,95.03) and (223.43,98.84) .. (223.43,103.54) -- (223.43,129.07) .. controls (223.43,133.77) and (219.62,137.58) .. (214.92,137.58) -- (15.51,137.58) .. controls (10.81,137.58) and (7,133.77) .. (7,129.07) -- cycle ;

\draw [color={rgb, 255:red, 0; green, 0; blue, 0 }  ,draw opacity=0.6 ]   (468.64,52.46) -- (468.64,89.54) ;
\draw [shift={(468.64,91.54)}, rotate = 270] [color={rgb, 255:red, 0; green, 0; blue, 0 }  ,draw opacity=0.6 ][line width=0.75]    (10.93,-3.29) .. controls (6.95,-1.4) and (3.31,-0.3) .. (0,0) .. controls (3.31,0.3) and (6.95,1.4) .. (10.93,3.29)   ;
\draw  [color={rgb, 255:red, 0; green, 0; blue, 0 }  ,draw opacity=0.2 ] (361,103.54) .. controls (361,98.84) and (364.81,95.03) .. (369.51,95.03) -- (568.92,95.03) .. controls (573.62,95.03) and (577.43,98.84) .. (577.43,103.54) -- (577.43,129.07) .. controls (577.43,133.77) and (573.62,137.58) .. (568.92,137.58) -- (369.51,137.58) .. controls (364.81,137.58) and (361,133.77) .. (361,129.07) -- cycle ;
\draw  [color={rgb, 255:red, 0; green, 0; blue, 0 }  ,draw opacity=0.2 ] (361,14.01) .. controls (361,9.31) and (364.81,5.5) .. (369.51,5.5) -- (568.92,5.5) .. controls (573.62,5.5) and (577.43,9.31) .. (577.43,14.01) -- (577.43,39.54) .. controls (577.43,44.24) and (573.62,48.05) .. (568.92,48.05) -- (369.51,48.05) .. controls (364.81,48.05) and (361,44.24) .. (361,39.54) -- cycle ;

\draw (3,111.11) node [anchor=north west][inner sep=0.75pt]   [align=left] { \ \ };
\draw (236.86,99) node [anchor=north west][inner sep=0.75pt]  [font=\small] [align=left] {{\fontfamily{ptm}\selectfont Fenchel--Rockafellar}};
\draw (3,109) node [anchor=north west][inner sep=0.75pt]  [font=\small] [align=left] { \ \ {\fontfamily{ptm}\selectfont Convex minimization problem \ \eqref{eq outf}}};
\draw (358,20) node [anchor=north west][inner sep=0.75pt]  [font=\small] [align=left] { \ \ {\fontfamily{ptm}\selectfont Weak observability inequality \ \eqref{eq outc}}};
\draw (15,20) node [anchor=north west][inner sep=0.75pt]  [font=\small] [align=left] { \ \ {\fontfamily{ptm}\selectfont Stabilization property \ \eqref{eq z sqz1},\eqref{eq z sqz2}}};
\draw (269.07,8.63) node [anchor=north west][inner sep=0.75pt]  [font=\small] [align=left] {{\fontfamily{ptm}\selectfont Duality}};
\draw (364,109) node [anchor=north west][inner sep=0.75pt]  [font=\small] [align=left] { \ \ {\fontfamily{ptm}\selectfont Dual minimization problem \ \eqref{eq outg}}};

\end{tikzpicture}

    \vspace{-0.6em}
    \caption{Proof of Theorem \ref{Thm linear-stable}.}\label{figure 3}
    \end{figure}

    \subsubsection{Step 1: Quantitative  weak observability inequality}\label{Sec 3.1.1} In this section, we establish Proposition \ref{prop OI}. We first recall the following observability inequality.     

    \begin{lemma}\label{lemma OI}{\rm(\hspace{-0.1mm}}{\rm\cite[Theorem 1.2]{FCZ-00})} For any $0\leq a<b\leq \pi$, there exists a constant $C>0$ such that for any $y_t\in H$  and $g\in L^\infty(Q)$, the solution $y$ of \eqref{y equation} satisfies that
        \begin{align}\label{eq OI}
            \|y(s)\|^2\leq 
            C\exp\left(C((t-s)\|g\|_{L^\infty(Q)}+(t-s)^{-2})\right)\int_s^t\int_a^b|y(r,x)|^2dxdr.
        \end{align}
    \end{lemma}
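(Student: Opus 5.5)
The plan is the classical global Carleman route of Fursikov--Imanuvilov, with explicit tracking of how the constants depend on $T:=t-s$ and on $\|g\|_{L^\infty(Q)}$, in the spirit of \cite{FCZ-00}. First I fix a subinterval $\omega=(a',b')\Subset(a,b)$ and a weight $\psi\in C^2([0,\pi])$ with $\psi>0$ in $(0,\pi)$, $\psi(0)=\psi(\pi)=0$ and $|\psi'|>0$ on $[0,\pi]\setminus\omega$. Such a $\psi$ exists trivially in 1D. With a fixed large $\mu>0$ I set
\[
\varphi(r,x)=\frac{e^{\mu\psi(x)}}{(r-s)(t-r)},\qquad \alpha(r,x)=\frac{e^{2\mu\|\psi\|_\infty}-e^{\mu\psi(x)}}{(r-s)(t-r)}.
\]
For the backward equation $\partial_r y+\nu\partial_x^2 y=gy$, the standard Carleman inequality (proved by conjugating with $e^{-\sigma\alpha}$, splitting into symmetric and antisymmetric parts and integrating by parts) gives
\[
\iint_Q e^{-2\sigma\alpha}\sigma^3\varphi^3|y|^2\le C\Bigl(\iint_Q e^{-2\sigma\alpha}|gy|^2+\int_s^t\!\!\int_\omega e^{-2\sigma\alpha}\sigma^3\varphi^3|y|^2\Bigr)
\]
for all $\sigma\ge C(T+T^2)$. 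Because $\varphi\ge cT^{-2}$, the potential term is absorbed into the left side as soon as $\sigma^3\varphi^3\gg\|g\|^2$, that is, $\sigma\ge CT^2\|g\|_{L^\infty(Q)}^{2/3}$. So I take
\[
\sigma=C_0\bigl(T+T^2+T^2\|g\|_{L^\infty(Q)}^{2/3}\bigr).
\]

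Next I pass from the weighted estimate to the unweighted one. On the middle interval $I=[s+T/4,\,s+3T/4]$ the weights satisfy $\varphi\sim T^{-2}$ and $e^{-2\sigma\alpha}\ge e^{-C\sigma/T^2}$. On the whole of $Q$ we have $e^{-2\sigma\alpha}\sigma^3\varphi^3\le C\,T^{-3}\cdot(\text{bounded})$, because $\sup_{z>0} z^3e^{-cz}<\infty$. Hence
\[
\int_I\|y(r)\|^2dr\le C e^{C\sigma/T^2}\int_s^t\!\!\int_a^b|y|^2\,dx\,dr .
\]

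Then I use a backward energy estimate. Multiplying the equation by $y$ and integrating gives $-\tfrac{d}{dr}\|y\|^2\le 2\|g\|_\infty\|y\|^2$, so $\|y(s)\|^2\le e^{2T\|g\|_\infty}\|y(r)\|^2$ for every $r\in I$. Averaging over $I$ (where $|I|=T/2$) yields
\[
\|y(s)\|^2\le CT^{-1}e^{2T\|g\|_\infty+C\sigma/T^2}\iint_{(s,t)\times(a,b)}|y|^2 .
\]

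Finally I match the constant with the form stated in \eqref{eq OI}. We have $\sigma/T^2\lesssim T^{-1}+1+\|g\|^{2/3}$. By Young's inequality,
\[
\|g\|^{2/3}=(T\|g\|)^{2/3}T^{-2/3}\le T\|g\|+T^{-2}.
\]
Moreover $T^{-1}$, $1$ and the prefactor $T^{-1}$ are all bounded by $e^{CT^{-2}}$ when $T\le1$; for larger $T$ one can first shrink the observation window to unit length and then use the energy estimate. This gives exactly $C\exp\bigl(C(T\|g\|_{L^\infty(Q)}+T^{-2})\bigr)$.

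The main obstacle is the Carleman estimate itself. The delicate points are the explicit $T$-dependence of the admissible range of $\sigma$, and the handling of the boundary terms at $x=0,\pi$, which have a good sign because $\psi'(0)>0$ and $\psi'(\pi)<0$. My plan is to take this estimate directly from \cite{FCZ-00} in its quantified form, rather than reproving it, and to carry out only the absorption and optimization steps above in detail.
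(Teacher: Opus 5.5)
Your proposal is correct and is essentially the argument behind \cite[Theorem 1.2]{FCZ-00}, which the paper cites rather than reproves (it only recalls the underlying Carleman estimate as Lemma~\ref{lemma Carleman}): a Carleman estimate with $\sigma\sim T+T^2+T^2\|g\|_{L^\infty(Q)}^{2/3}$, restriction to the middle time interval, a backward energy estimate, and Young's inequality turning $\|g\|_{L^\infty(Q)}^{2/3}$ into $T\|g\|_{L^\infty(Q)}+T^{-2}$. There are two cosmetic slips: the weight satisfies $e^{-2\sigma\alpha}\sigma^3\varphi^3\le C$, not $CT^{-3}$ (the latter fails for large $T$, but your display still follows because $\sigma\ge C_0T^2$ gives $T^6\sigma^{-3}\le C$), and for $T>1$ you do not need to shrink the window, since $T^{-1}$, $1$ and the prefactor are then simply absorbed into $C$.
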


    Note that the proof of Lemma \ref{lemma OI} relies on global Carleman inequalities; see e.g. \cite{FCZ-00,FLZ-19,DZZ-08}, in particular \cite{DZZ-08} provided the sharp estimates. For clarity, we  give a brief explanation of the role of observability inequalities and Carleman estimates. Observability inequalities quantify how the total energy of a solution can be controlled by measurements on a subdomain, and their duality with controllability makes them central in control theory. Carleman estimates provide one crucial analytic tool for establishing such inequalities: weighted a priori estimates with carefully designed exponential weights, which align with the structure of the underlying operator.  

    Specifically, the Carleman estimate in \cite[Proposition 2.1]{FCZ-00} for the proof of \eqref{eq OI} is formulated as follows. We introduce a function $\xi\in C^2([0,\pi])\cap H_0^1(0,\pi)$ such that 
    \begin{equation*}
        \xi(x)>0\quad \text{for }x\in(0,\pi),\quad \xi'(x)\neq0   \quad \text{for }x\in[0,a]\cup[b,\pi].
    \end{equation*}
    Let $K>0$ be such that $K\geq 5\max_{x\in [0,\pi]} \xi-6\min_{x\in [0,\pi]} \xi$ and set 
    \begin{equation*}
        \beta(x)=\xi(x)+K,\quad \bar{\beta}=\frac{5}{4}\max_{x\in [0,\pi]} \beta,\quad \rho(x)=\exp\left(L\bar\beta(x)\right)-\exp\left(L\beta(x)\right),
    \end{equation*}
    where $L>0$ is a sufficiently large positive constant that only depends on $a,b$. For any $T>0$, we also introduce 
    \begin{equation*}
        \varphi(x,t)=\exp\left(t^{-1}(T-t)^{-1}\rho(x)\right).
    \end{equation*}

    \begin{lemma}\label{lemma Carleman}{\rm(\hspace{-0.1mm}}{\rm\cite[Proposition 2.1]{FCZ-00})} For any $0\leq a<b\leq \pi$, there exist constants $C,c>0$ such that for any $T>0$, $q\in C^2([0,T]\times[0,\pi])\cap L^2(0,T;H^1_0(0,\pi))$ and $s\geq c(T+T^2)$,
        \begin{align*}
            &\int_0^T\int_0^\pi s^{-1}\varphi^{-2s}t(T-t)(|\partial_tq|^2+|\partial_{x}^2q|^2)dxdt\\
            &\quad+\int_0^T\int_0^\pi s\varphi^{-2s}t^{-1}(T-t)^{-1}|\partial_xq|^2dxdt+\int_0^T\int_0^\pi s^3\varphi^{-2s}t^{-3}(T-t)^{-3}|q|^2dxdt\\
            &\leq C\left(\int_0^T\int_0^\pi \varphi^{-2s}|\partial_tq+\partial_{x}^2q|^2dxdt+\int_0^T\int_a^b s^3\varphi^{-2s}t^{-3}(T-t)^{-3}|q|^2dxdt\right).
        \end{align*}
    \end{lemma}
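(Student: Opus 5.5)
This lemma is quoted from \cite[Proposition 2.1]{FCZ-00}. Were one to prove it directly, I would follow the standard global Carleman scheme for the backward heat operator $\partial_t+\partial_x^2$ (here $\nu$ has been absorbed by scaling).

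\emph{Conjugation.} Set $\theta(t)=t^{-1}(T-t)^{-1}$ and $\alpha=\theta\rho$, so that $\varphi^{-2s}=e^{-2s\alpha}$. Introduce $w=e^{-s\alpha}q$, which vanishes as $t\to0,T$ together with all derivatives because $\rho>0$, and satisfies $w=0$ at $x=0,\pi$. Write
\begin{equation*}
e^{-s\alpha}(\partial_t+\partial_x^2)(e^{s\alpha}w)=P_1w+P_2w+R w.
\end{equation*}
Here $P_1w=\partial_x^2w+s^2\alpha_x^2w+s\alpha_tw$ is the formally symmetric part, $P_2w=\partial_tw+2s\alpha_x\partial_xw+2s\alpha_{xx}w$ is the antisymmetric part, and $R$ collects lower order terms. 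Expanding $\|P_1w+P_2w\|^2\ge 2\langle P_1w,P_2w\rangle$ and integrating by parts in $x$ and $t$ gives the main cross terms:
\begin{equation*}
2s^3\int(-\alpha_{xx})\alpha_x^2|w|^2\quad\text{and}\quad 2s\int(-\alpha_{xx})|w_x|^2,
\end{equation*}
plus a boundary term $-s\int\alpha_x|w_x|^2\big|_{x=0}^{x=\pi}$.

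\emph{Sign of the weights.} From the choice of $\rho$ we have $\alpha_x=-L\theta\xi'e^{L\beta}$ and
\begin{equation*}
-\alpha_{xx}=L^2\theta|\xi'|^2e^{L\beta}+L\theta\xi''e^{L\beta}.
\end{equation*}
Outside $(a,b)$ we have $\xi'\ne0$, so for $L$ large $-\alpha_{xx}\gtrsim L^2\theta$ there, and the two cross terms become positive. Inside $(a,b)$ the contributions of indefinite sign are moved to the right-hand side as the local observation term $s^3\theta^3\int_a^b|w|^2$ (the $|w_x|^2$ local term is absorbed by a cutoff/Caccioppoli argument). The boundary terms have a good sign because $\xi>0$ inside and $\xi=0$ at the endpoints; this forces $\xi'(0)>0>\xi'(\pi)$, so $-\alpha_x\cdot\nu_{\rm out}\ge0$. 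Time derivative terms of the form $s\alpha_{tt}$ and $s\alpha_t\alpha_{xx}$ are bounded by $sT\theta^2\cdots$ and $sT^2\theta^3$, using $|\theta'|\le CT\theta^2$. This is exactly why $s\ge c(T+T^2)$ lets the principal terms absorb them. The condition $K\ge 5\max\xi-6\min\xi$ with $\bar\beta=\frac54\max\beta$ ensures $\rho>0$ and gives the comparability $\max\rho\le 2\min\rho$ needed in these absorptions.

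\emph{Second-order terms and return to $q$.} The estimate with weights $s^{-1}\theta^{-1}(|\partial_tq|^2+|\partial_x^2q|^2)$ follows from the identity $P_1w=\ldots$. Write $\partial_x^2w$ and $\partial_tw$ in terms of $P_1w$, $P_2w$ and the already controlled terms, multiply by $s^{-1}\theta^{-1}$, and return to $q=e^{s\alpha}w$.

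The main obstacle is the bookkeeping of the cross term $\langle P_1w,P_2w\rangle$: one has to verify that all $t$-derivative contributions are dominated under exactly $s\ge c(T+T^2)$ with $c$ independent of $T$. I would handle this by tracking powers of $\theta$ and $T$ term by term.
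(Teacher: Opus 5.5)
This is essentially the same approach. The paper gives no proof of its own and simply cites \cite[Proposition 2.1]{FCZ-00}, which rests on exactly this Fursikov--Imanuvilov conjugation argument, with $L$ fixed large and the dependence on $T$ tracked explicitly. Because constants may depend on the fixed $L$, the comparability $\max\rho\le2\min\rho$ is not actually needed for the absorptions.

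A few points to fix when you write it out. You must keep $\|P_1w\|^2+\|P_2w\|^2$ on the left, since you need them for the second-order terms. The boundary contribution to $\langle P_1w,P_2w\rangle$ is $+s\int_0^T[\alpha_x|w_x|^2]_{x=0}^{x=\pi}\,dt\ge0$, because $\alpha_x\nu_{\rm out}>0$ at $x=0,\pi$; your two sign slips there cancel. Finally, the time-derivative terms only require $s\gtrsim T$. The $T^2$ in $s\ge c(T+T^2)$ comes instead from lower-order terms such as $\iint s^2\alpha_{xx}^2|w|^2\lesssim\iint s^2\theta^2|w|^2$, which $\iint s^3\theta^3|w|^2$ absorbs only when $s\theta\gtrsim1$, i.e.\ $s\gtrsim T^2$.
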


    \vspace{0.3em}

    Using Lemma \ref{lemma OI}, we derive the following {\it truncated }observability inequality for \eqref{y equation}.

     \begin{proposition}\label{prop OI} For any $0\leq a<b\leq \pi$, there exists a constant $C>0$ such that for any $0\leq s<t\leq s+1$, $N\in\N^+$, $y_t\in H$  and $g\in L^\infty(Q)$, the solution $y$ of \eqref{y equation} satisfies that
        \begin{align}\label{eq TOI}
            \|y(s)\|\leq 
            C\exp\left(C((t-s)\|g\|_{L^\infty(Q)}+(t-s)^{-2})\right)\left(\|\mathsf{P}_N(\mathbf{1}_{(a,b)}y)\|_{\mathcal{H}}+N^{-1/2}\|y_t\|\right).
        \end{align}
    \end{proposition}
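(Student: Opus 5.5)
We start from the observability inequality \eqref{eq OI} of Lemma~\ref{lemma OI} and split the observed quantity into low and high frequencies with respect to the basis $\{\psi_j\}$ of $L^2(a,b)$. For each $r\in(s,t)$ we write
\begin{equation*}
\mathbf{1}_{(a,b)}y(r)=\mathsf{P}_N(\mathbf{1}_{(a,b)}y(r))+(I-\mathsf{P}_N)(\mathbf{1}_{(a,b)}y(r)).
\end{equation*}
By orthogonality, the right-hand side of \eqref{eq OI} becomes $\|\mathsf{P}_N(\mathbf{1}_{(a,b)}y)\|_{\mathcal{H}}^2+\int_s^t\|(I-\mathsf{P}_N)(\mathbf{1}_{(a,b)}y(r))\|_{L^2(a,b)}^2dr$. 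We then need to bound the high-frequency tail by $N^{-1}\|y_t\|^2$, up to a factor of the same exponential type.

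For the tail we use the $H^1$ regularity of the restriction. Since $y(r)\in H_0^1(0,\pi)$ for $r<t$, the restriction $y(r)|_{(a,b)}\in H^1(a,b)$. The $\psi_j$ are Dirichlet sine modes on $(a,b)$, so they do not directly give an $H^1(a,b)$ Poincaré-type tail bound, because the restriction need not vanish at $a,b$. However, the sine coefficients of an $H^1$ function satisfy $\langle f,\psi_j\rangle=\frac{b-a}{j\pi}\langle f',\tilde\psi_j\rangle+$ boundary terms $\sim j^{-1}(f(a)\pm f(b))$, where $\tilde\psi_j$ are the cosines. This gives
\begin{equation*}
\sum_{j>N}\langle f,\psi_j\rangle^2\lesssim N^{-2}\|f'\|^2+N^{-1}\|f\|_{L^\infty}^2\lesssim N^{-1}\|f\|_{H^1(a,b)}^2,
\end{equation*}
using $\|f\|_{L^\infty}^2\lesssim\|f\|_{H^1}^2$. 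This $N^{-1}$ loss from the boundary terms is exactly why the statement has $N^{-1/2}$ rather than $N^{-1}$. So we need
\begin{equation*}
\int_s^t\|y(r)\|_{H^1}^2dr\le C e^{C(t-s)\|g\|_\infty}\|y_t\|^2.
\end{equation*}
This is the standard energy estimate for the backward equation \eqref{y equation}: multiply by $y$ and integrate over $(r,t)$ to get
\begin{equation*}
\|y(r)\|^2+2\nu\int_r^t\|\partial_xy\|^2\le\|y_t\|^2+2\|g\|_\infty\int_r^t\|y\|^2,
\end{equation*}
and Gronwall yields the bound with constant $e^{2(t-s)\|g\|_\infty}/\nu$.

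Combining the two steps,
\begin{equation*}
\|y(s)\|^2\le Ce^{C((t-s)\|g\|_\infty+(t-s)^{-2})}\Big(\|\mathsf{P}_N(\mathbf{1}_{(a,b)}y)\|_{\mathcal{H}}^2+N^{-1}e^{2(t-s)\|g\|_\infty}\|y_t\|^2\Big).
\end{equation*}
We absorb the extra exponential into the prefactor by enlarging $C$, and take square roots, using $\sqrt{A+B}\le\sqrt A+\sqrt B$, to obtain \eqref{eq TOI}.

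The main obstacle is the tail estimate. It requires checking that the boundary contributions from the nonvanishing traces at $a,b$ cost only $N^{-1}$ in the squared norm. This follows since $\sum_{j>N}j^{-2}\sim N^{-1}$. We also need the pointwise trace bound integrated in time, which is controlled by $\int\|y\|_{H^1}^2$. If $a=0$ or $b=\pi$, the corresponding trace vanishes, which only helps.
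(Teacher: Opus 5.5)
Your proposal is correct and follows essentially the same route as the paper. Both apply Lemma~\ref{lemma OI}, split $\mathbf{1}_{(a,b)}y$ orthogonally via $\mathsf{P}_N$, bound the tail by $N^{-1}\|y\|_{L^2(s,t;H^1)}^2$ using the $j^{-1}$ decay of the sine coefficients of an $H^1(a,b)$ function, and close with the backward energy estimate. Your integration by parts with the boundary traces is exactly what justifies the paper's bound $|\langle f,\psi_j\rangle|\le Cj^{-1}\|f\|_{H^1(a,b)}$.

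One cosmetic point: Gronwall actually gives a constant like $(1+2(t-s)\|g\|_{L^\infty(Q)})e^{2(t-s)\|g\|_{L^\infty(Q)}}/(2\nu)$ rather than $e^{2(t-s)\|g\|_{L^\infty(Q)}}/\nu$. This is still absorbed into the exponential prefactor, so the argument is unaffected.
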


    For ease of notation, we in this subsection use $C\geq 1$  to denote generic constants that may vary from line to line, which are independent of $s$, $t$, $N$, $y$.

    \begin{proof} By Lemma \ref{lemma OI}, we have
    \begin{align*}
        \|y(s)\|^2&\leq  C\exp\left(C((t-s)\|g\|_{L^\infty(Q)}+(t-s)^{-2})\right)\|\mathbf{1}_{(a,b)}y\|^2_{\mathcal{H}}\\
        &= C\exp\left(C((t-s)\|g\|_{L^\infty(Q)}+(t-s)^{-2})\right)(\left\|\mathsf{P}_N(\mathbf{1}_{(a,b)}y)\|^2_{\mathcal{H}}+\|(I-\mathsf{P}_N)(\mathbf{1}_{(a,b)}y)\|^2_{\mathcal{H}}\right)\\
        &\leq C\exp\left(C((t-s)\|g\|_{L^\infty(Q)}+(t-s)^{-2})\right)\left(\|\mathsf{P}_N(\mathbf{1}_{(a,b)}y)\|^2_{\mathcal{H}}+\sum_{j>N}\int_s^t\langle y(r),\psi_j\rangle^2dr\right)\\
        &\leq C\exp\left(C((t-s)\|g\|_{L^\infty(Q)}+(t-s)^{-2})\right)\left(\|\mathsf{P}_N(\mathbf{1}_{(a,b)}y)\|^2_{\mathcal{H}}+N^{-1}\| y\|^2_{L^2(s,t;H^1(a,b))}\right)\\
        &\leq  C\exp\left(C((t-s)\|g\|_{L^\infty(Q)}+(t-s)^{-2})\right)\left(\|\mathsf{P}_N(\mathbf{1}_{(a,b)}y)\|^2_{\mathcal{H}}+N^{-1}\|y\|^2_{L^2(s,t;H^1(0,\pi))}\right).
    \end{align*}
    Here in the third to fourth line, we used
    \begin{equation*}
        |\langle f,\psi_j\rangle|\leq Cj^{-1}\|f\|_{H^1(a,b)}\quad \forall\, f\in H_0^1(0,\pi),\; j\in\N^+.
    \end{equation*}
   We also note that the $N^{-1}$ rate is sharp in general,  due to the possible nonzero traces at $a$, $b$ after restricting an $H_0^1(0,1)$ function to $(a,b)$.

   Consequently,  the desired inequality \eqref{eq TOI} follows from 
    \begin{equation*}
       \|y\|^2_{L^2(s,t;H^1)}\leq  C\exp\left(C(t-s)\|g\|_{L^\infty(Q)}\right)\|y_t\|^2,
    \end{equation*}
    which can be calculated directly.
        
    \end{proof}

     \subsubsection{Step 2: Duality for stabilization and weak observability}\label{Sec 3.1.2}

    \begin{proposition}\label{prop dual}
    For any $0\leq a<b\leq \pi$, $0\leq s<t$, $\varepsilon,C\geq0$, $N\in\N^+$ and $g\in L^\infty(Q)$, the following two statements are equivalent.

    \begin{itemize}       
        \item [(1)] For any $z_s\in H$, there exists a control $\zeta\in \mathcal{H}_{s,t}$ such that the solution $z$ of \eqref{z equation} satisfies 
        \begin{equation*}
            \|z(t)\|\leq  \varepsilon\|z_s\|,\quad \|\zeta\|_{\mathcal{H}}\leq C\|z_s\|.
        \end{equation*}
        \item [(2)] For any $y_t\in H$, the solution $y$ of \eqref{y equation} satisfies that
       \begin{equation*}
           \|y(s)\|\leq C\|\mathsf{P}_NB^* y\|_{\mathcal{H}}+\varepsilon\|y_t\|.
       \end{equation*}
    \end{itemize}  
    Moreover, when $(2)$ holds, we provide an explicit construction for the control $\zeta$.
    \end{proposition}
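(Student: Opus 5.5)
We prove the equivalence by writing the reachable state as $z(t)=\mathcal{J}_{s,t,g}z_s+\mathcal{A}_{s,t,N,g}\zeta$ and testing it against the adjoint state. The basic identity, obtained by multiplying \eqref{z equation} by the solution $y$ of \eqref{y equation} and integrating over $Q$, is
\begin{equation*}
\langle z(t),y_t\rangle=\langle z_s,y(s)\rangle+\langle \zeta,\mathsf{P}_NB^*y\rangle_{\mathcal{H}},
\end{equation*}
which encodes $\mathcal{A}^*_{s,t,N,g}y_t=\mathsf{P}_NB^*y$ and $\mathcal{J}^*_{s,t,g}y_t=y(s)$.

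\emph{(1)$\Rightarrow$(2).} Fix $y_t$ and pick $z_s=y(s)$. Take the control $\zeta$ given by (1). The identity gives
\[
\|y(s)\|^2=\langle z(t),y_t\rangle-\langle\zeta,\mathsf{P}_NB^*y\rangle_{\mathcal{H}}\le \varepsilon\|y(s)\|\|y_t\|+C\|y(s)\|\|\mathsf{P}_NB^*y\|_{\mathcal{H}}.
\]
Dividing by $\|y(s)\|$ yields (2). The case $y(s)=0$ is trivial.

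\emph{(2)$\Rightarrow$(1).} We use Fenchel--Rockafellar duality, following \cite{Lions-92,TWX-20}. Fix $z_s$ and let $B_\varepsilon$ be the closed ball of radius $\varepsilon\|z_s\|$ in $H$. Consider
\[
\inf_{\zeta\in\mathcal{H}_{s,t}}\Bigl(\tfrac12\|\zeta\|_{\mathcal{H}}^2+I_{B_\varepsilon}\bigl(\mathcal{J}_{s,t,g}z_s+\mathcal{A}_{s,t,N,g}\zeta\bigr)\Bigr),
\]
where $I_{B_\varepsilon}$ is the convex indicator of $B_\varepsilon$. Write $F(\zeta)=\tfrac12\|\zeta\|^2$ and $G(w)=I_{B_\varepsilon}(\mathcal{J}_{s,t,g}z_s+w)$. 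Their conjugates are $F^*(\eta)=\tfrac12\|\eta\|^2$ and $G^*(y)=\varepsilon\|z_s\|\|y\|-\langle y,\mathcal{J}_{s,t,g}z_s\rangle$. The dual value is therefore
\[
\sup_{y_t\in H}\Bigl(-\tfrac12\|\mathsf{P}_NB^*y\|_{\mathcal{H}}^2+\langle y(s),z_s\rangle-\varepsilon\|z_s\|\|y_t\|\Bigr).
\]
By (2) and Cauchy--Schwarz, $\langle y(s),z_s\rangle-\varepsilon\|z_s\|\|y_t\|\le C\|z_s\|\|\mathsf{P}_NB^*y\|_{\mathcal{H}}$. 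Hence the dual value is at most $\sup_{a\ge0}(-\tfrac12a^2+C\|z_s\|a)=\tfrac12C^2\|z_s\|^2$.

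The main obstacle is the qualification condition ensuring there is no duality gap and that the primal infimum is attained. The standard Fenchel--Rockafellar condition needs $G$ continuous at some point $\mathcal{A}\zeta_0$, which fails when $\varepsilon=0$ because the ball then has empty interior. We treat $\varepsilon>0$ first. There $G$ is continuous at $w=-\mathcal{J}_{s,t,g}z_s$, which lies in the interior of the shifted ball but need not lie in the range of $\mathcal{A}$. To fix this we perturb: replace $\varepsilon$ by $\varepsilon+\delta$, and use that the range of $\mathcal{A}_{s,t,N,g}$ is dense. This density follows from (2) applied along a limiting argument, or more simply from the finite-dimensional structure combined with the stated inequality. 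The primal problem then has value at most $\tfrac12C^2\|z_s\|^2$, and it is attained by coercivity and weak lower semicontinuity. Letting $\delta\to0$ and extracting a weakly convergent subsequence of controls gives a $\zeta$ with $\|\zeta\|\le C\|z_s\|$ and $\|z(t)\|\le\varepsilon\|z_s\|$, because the ball constraint is weakly closed. The case $\varepsilon=0$ follows from the same limiting procedure.

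For the explicit construction we use the dual maximizer. We minimize the strictly convex, coercive functional $y_t\mapsto\tfrac12\|\mathsf{P}_NB^*y\|^2+\varepsilon\|z_s\|\|y_t\|-\langle y(s),z_s\rangle$; coercivity is exactly what (2) provides. If a minimizer $\hat y_t$ exists, we set $\zeta=\mathsf{P}_NB^*\hat y$, in analogy with \eqref{eq zeta def}. The Euler--Lagrange condition then gives $\|z(t)\|\le\varepsilon\|z_s\|$, and testing the optimality condition with $\hat y_t$ gives the cost bound. A minimizer may fail to exist because (2) only controls a weak norm. In that case we use the regularized family \eqref{eq outf} with $\beta\to0$ to produce the control, as foreshadowed in Remark~\ref{Rem 3.7}.
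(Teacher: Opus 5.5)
Your direct proof of (1)$\Rightarrow$(2), testing with $z_s=y(s)$, is correct. It is more elementary than the paper's, which gets both directions from $\inf J=-\inf\hat J^*$ plus the ray-scaling $l\mapsto\hat J^*(lw)$. Your $\varepsilon\to\varepsilon+\delta$ perturbation with a weak limit is a sound way to handle the qualification condition. The paper only asserts that condition for $\varepsilon>0$ and appeals to HUM for $\varepsilon=0$.

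\textbf{The gap: density of the range.} Your justification for the perturbation step is false. The range of $\mathcal{A}_{s,t,N,g}$ need not be dense, and (2) does not imply density when $\varepsilon>0$. Take $(a,b)=(0,\pi)$ and $g\equiv0$. Then $\psi_j=e_j$, so $\mathrm{Ran}\,\mathcal{A}_{s,t,N,0}\subset\mathrm{span}\{e_1,\dots,e_N\}$. Yet (2) holds with $\varepsilon=e^{-\nu(N+1)^2(t-s)}$ and a suitable $C$ when $b_1,\dots,b_N\neq0$. The proposition does not even assume $b_j\neq0$: if $B=0$, both (1) and (2) hold for $\varepsilon\ge\|\mathcal{J}_{s,t,g}\|$. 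In general (2) only gives $\|\mathcal{J}^*_{s,t,g}w\|\le\varepsilon\|w\|$ on $\ker\mathcal{A}^*_{s,t,N,g}$, not injectivity of $\mathcal{A}^*_{s,t,N,g}$. The ``finite-dimensional structure'' works against density, since only $N$ modes are actuated.

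\textbf{The repair.} You only need approximate feasibility, and that does follow from (2). Let $\Pi$ be the orthogonal projection onto $\ker\mathcal{A}^*_{s,t,N,g}=(\mathrm{Ran}\,\mathcal{A}_{s,t,N,g})^\perp$, and apply (2) to $y_t=\Pi\mathcal{J}_{s,t,g}z_s$:
\[
\|\Pi\mathcal{J}_{s,t,g}z_s\|^2=\langle z_s,\mathcal{J}^*_{s,t,g}\Pi\mathcal{J}_{s,t,g}z_s\rangle\le\varepsilon\|z_s\|\,\|\Pi\mathcal{J}_{s,t,g}z_s\|.
\]
Hence $\mathrm{dist}(-\mathcal{J}_{s,t,g}z_s,\mathrm{Ran}\,\mathcal{A}_{s,t,N,g})\le\varepsilon\|z_s\|<(\varepsilon+\delta)\|z_s\|$ for $z_s\neq0$. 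This is exactly the strict feasibility Proposition~\ref{prop F-R duality} needs at radius $(\varepsilon+\delta)\|z_s\|$. After that, your dual bound, attainment and $\delta\to0$ limit go through, including for $\varepsilon=0$.

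\textbf{The explicit-construction paragraph also needs correction.} (2) does not make $\Phi(y_t)=\tfrac12\|\mathsf{P}_NB^*y\|^2_{\mathcal{H}}+\varepsilon\|z_s\|\|y_t\|-\langle y(s),z_s\rangle$ coercive. It only gives the lower bound $\Phi\ge\tfrac12\|\mathsf{P}_NB^*y\|^2_{\mathcal{H}}-C\|z_s\|\|\mathsf{P}_NB^*y\|_{\mathcal{H}}$. Your fallback via \eqref{eq outf} with $\beta\to0$ does not preserve the cost $C$. From (2), the regularized minimizer only satisfies $\|\zeta_\beta\|^2_{\mathcal{H}}+\beta^{-1}\|z(t)\|^2\le\|z_s\|\bigl(C\|\zeta_\beta\|_{\mathcal{H}}+\varepsilon\beta^{-1}\|z(t)\|\bigr)$, which allows $\|\zeta_\beta\|_{\mathcal{H}}$ of order $\varepsilon\beta^{-1/2}\|z_s\|$. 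For example, with $C=0$ and $\varepsilon=\|\mathcal{J}_{s,t,g}\|$, (1) holds with $\zeta=0$, while $\zeta_\beta=-\mathcal{A}^*_{s,t,N,g}(\mathcal{G}_{s,t,N,g}+\beta)^{-1}\mathcal{J}_{s,t,g}z_s\neq0$ in general.

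Instead, minimize the $\delta$-perturbed dual functional, with $\varepsilon$ replaced by $\varepsilon+\delta$. It is coercive because of the extra term $\delta\|z_s\|\|y_t\|$. Its minimizer $\hat y_t$ gives $\zeta=-\mathsf{P}_NB^*\hat y$. Note the minus sign: it is needed so that $z(t)=\mathcal{J}_{s,t,g}z_s-\mathcal{G}_{s,t,N,g}\hat y_t$. Your Euler--Lagrange computation then yields $\|z(t)\|\le(\varepsilon+\delta)\|z_s\|$ and $\|\zeta\|_{\mathcal{H}}\le C\|z_s\|$.
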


    The proof relies on the classical Fenchel--Rockafellar duality, which follows from the analysis in \cite{Lions-92,TWX-20}. We mention that when $\varepsilon=0$, the controllability considered here coincides with the usual null controllability.

    \vspace{0.3em}
    
    \begin{proof}
    Following \cite{Lions-92}, we define two  convex and lower semicontinuous functions by 
    \begin{equation*}
        F\colon \mathcal{H}_{s,t}\rightarrow\R^+,\quad F(\zeta)=\frac{1}{2}\|\zeta\|^2_{\mathcal{H}},
    \end{equation*}
    and 
    \begin{equation*}
       G\colon H\rightarrow\R^+\cup\{+\infty\},\quad G(w)=\begin{cases}
           0, &\quad \text{for }\|w+\mathcal{J}_{s,t,g}z_s\|\leq \varepsilon\|z_s\|,\\
           +\infty, &\quad \text{otherwise}.
       \end{cases}
    \end{equation*}
    
    We then consider the following minimization problem
    \begin{equation}\label{eq problem1}
        \inf_{\zeta\in \mathcal{H}_{s,t}}J(\zeta)=\inf_{\zeta\in \mathcal{H}_{s,t}}(F(\zeta)+G(\mathcal{A}_{s,t,N,g}\zeta)).
    \end{equation}
    Noting that $J$ is strictly convex, thus problem \eqref{eq problem1} admits at most one solution. In addition, for any $\zeta\in \mathcal{H}_{s,t}$,
    \begin{equation*}
        J(\zeta)<+\infty\quad \text{if and only if}\quad \|z(t)\|\leq \varepsilon\|z_s\|,
    \end{equation*}
    in this case,
    \begin{equation*}
        J(\zeta)=F(\zeta)=\frac{1}{2}\|\zeta\|^2_{\mathcal{H}}.
    \end{equation*}

    Therefore, by our construction, statement (1) is equivalent to
    \begin{equation}\label{eq J equivalence}
        \inf_{\zeta\in \mathcal{H}_{s,t}}J(\zeta)\leq \frac{1}{2}C^2\|z_s\|^2,\quad \forall\, z_s\in H.
    \end{equation}

    \vspace{0.6em}

    Meanwhile, the Fenchel conjugates $F^*\colon \mathcal{H}_{s,t}\rightarrow\R^+$ and $G^*\colon H\rightarrow\R^+\cup\{+\infty\}$ are given by
    \begin{equation*}
        F^*(\zeta)=\sup_{\zeta'\in \mathcal{H}_{s,t}}\left(\langle \zeta,\zeta'\rangle_{\mathcal{H}}-F(\zeta')\right)=\frac{1}{2}\|\zeta\|^2_{\mathcal{H}}=F(\zeta),
    \end{equation*}
    and 
    \begin{align*}
        G^*(w)&=\sup_{w'\in H}\left(\langle w,w'\rangle-G(w')\right)=\sup\left\{\langle w,w'\rangle:\|w'+\mathcal{J}_{s,t,g}z_s\|\leq \varepsilon\|z_s\|\right\}\\
        &=-\langle w,\mathcal{J}_{s,t,g}z_s\rangle+\varepsilon\|z_s\|\|w\|.
    \end{align*}

    The corresponding dual problem is then given by
    \begin{equation}\label{eq problem1*}
    \begin{aligned}
        \sup_{w\in H}J^*(w)&=\sup_{w\in H}\left(-G^*(-w)-F^*(\mathcal{A}_{s,t,N,g}^*w)\right)=\sup_{w'\in H}\left(-G^*(w')-F^*(-\mathcal{A}_{s,t,N,g}^*w')\right)\\
        &=-\inf_{w\in H}\hat{J}^*(w),
    \end{aligned}
    \end{equation}
    where 
    \begin{equation*}
        \hat{J}^*\colon H\rightarrow\R^+,\quad \hat{J}^*(w):=\tfrac{1}{2}\langle\mathcal{G}_{s,t,N,g}w,w\rangle-\langle w,\mathcal{J}_{s,t,g}z_s\rangle+\varepsilon\|z_s\|\|w\|.
    \end{equation*}

    \vspace{0.6em}
    
    In the case of $\varepsilon>0$, when problem \eqref{eq problem1} admits a solution, the range of $\mathcal{A}_{s,t,N,g}$ intersects the set of points at which $G$ is finite and continuous. Then, invoking Proposition \ref{prop F-R duality}, both problems \eqref{eq problem1} and \eqref{eq problem1*} have a solution. In particular, 
    \begin{equation}\label{eq dual relation}
        \inf_{\zeta\in \mathcal{H}_{s,t}}J(\zeta)=\sup_{w\in H}J^*(w)=-\inf_{w\in H}\hat{J}^*(w).
    \end{equation}

    Moreover, this above relation result is still valid for $\varepsilon=0$, which is, however,  not a consequence of the Fenchel--Rockafellar duality theorem. For $\varepsilon=0$, this is due to  the usual Hilbert uniqueness method; see e.g. \cite{Lions-88}.

    Consequently, for any $\varepsilon\geq 0$, statement (1) holds if and only if 
    \begin{equation}\label{eq J equivalence2}
        \inf_{w\in H}\hat{J}^*(w)\geq -\frac{1}{2}C^2\|z_s\|^2,\quad \forall\, z_s\in H.
    \end{equation}

    \vspace{0.3em}
    
    In what follows, we show that condition \eqref{eq J equivalence2} is equivalent to statement (2). 
    For any fixed $w, z_s\in H$, let us consider
    \begin{equation*}
        \hat{J}^*(lw)=\tfrac{1}{2}\langle\mathcal{G}_{s,t,N,g}w,w\rangle l^2-\left(\langle w,\mathcal{J}_{s,t,g}z_s\rangle-\varepsilon\|z_s\|\|w\|\right)l,\quad l\in\R^+.
    \end{equation*}
    By standard calculations, one has
    \begin{equation*}
        \inf_{l\in \R^+}\hat{J}^*(lw)=\begin{cases}              0,&\quad \text{ if }\langle w,\mathcal{J}_{s,t,g}z_s\rangle-\varepsilon\|z_s\|\|w\|\leq 0,\\
            -\frac{1}{2}\frac{(\langle w,\mathcal{J}_{s,t,g}z_s\rangle-\varepsilon\|z_s\|\|w\|)^2}{\langle\mathcal{G}_{s,t,N,g}w,w\rangle},&\quad \text{ if } \langle\mathcal{G}_{s,t,N,g}w,w\rangle\neq 0,\langle w,\mathcal{J}_{s,t,g}z_s\rangle-\varepsilon\|z_s\|\|w\|>0,\\
            -\infty,&\quad \text{ if } \langle\mathcal{G}_{s,t,N,g}w,w\rangle=0, \langle w,\mathcal{J}_{s,t,g}z_s\rangle-\varepsilon\|z_s\|\|w\|>0.
        \end{cases}
    \end{equation*}

    Note that condition \eqref{eq J equivalence2} is equivalent to
    \begin{equation*}
        \inf_{w\in H}\inf_{l\in \R^+}\hat{J}^*(lw)\geq -\frac{1}{2}C^2\|z_s\|^2,\quad \forall\, z_s\in H.
    \end{equation*}
    Therefore, we derive that \eqref{eq J equivalence2} holds if and only if, for any $z_s\in H$ and $w\in H$,  when $C>0$,
    \begin{equation*}
    \begin{cases}
        -\frac{1}{2}\frac{(\langle w,\mathcal{J}_{s,t,g}v_0\rangle-\varepsilon\|z_s\|\|w\|)^2}{\langle\mathcal{G}_{s,t,N,g}w,w\rangle}\geq -\frac{1}{2}C^2\|z_s\|^2,&\quad \text{ if } \langle\mathcal{G}_{s,t,N,g}w,w\rangle\neq 0,\langle w,\mathcal{J}_{s,t,g}z_s\rangle-\varepsilon\|z_s\|\|w\|>0,\\
        \langle w,\mathcal{J}_{s,t,g}z_s\rangle-\varepsilon\|z_s\|\|w\|\leq 0,&\quad \text{ if } \langle\mathcal{G}_{s,t,N,g}w,w\rangle=0,
    \end{cases}        
    \end{equation*}
    and when $C=0$,
    \begin{equation*}
        \langle w,\mathcal{J}_{s,t,g}z_s\rangle-\varepsilon\|z_s\|\|w\|\leq0.
    \end{equation*}
    This is further equivalent to 
    \begin{equation*}
        \langle w,\mathcal{J}_{s,t,g}z_s\rangle\leq C\|z_s\|\langle \mathcal{G}_{s,t,N,g}w,w\rangle^{1/2}+\varepsilon \|z_s\|\|w\|,\quad \forall\, z_s,w\in H.
    \end{equation*}

    Finally, since the above relation holds for any $z_s\in H$, it implies the desired equivalence between statement (2) and \eqref{eq J equivalence2}. The proof is thus completed.
        
    \end{proof}

    \begin{remark}\label{Rem 3.7}
        Let us mention that when statement (2) is satisfied, the desired control $\zeta$ in statement (1) can be constructed explicitly.  Indeed, let $\zeta$ be the unique minimizer $\zeta^*$ of problem \eqref{eq problem1}. Note that 
            \begin{equation*}
            \inf_{\zeta\in \mathcal{H}_{s,t}}J(\zeta)=0\quad\text{if and only if }\quad\|\mathcal{}_{s,t,g}z_s\|\leq \varepsilon\|z_s\|,
        \end{equation*}
        and $\zeta=\zeta^*=0$ in this case. Taking the dual relation \eqref{eq dual relation} into account, for any minimizer $w^*\in H$ of $\hat{J}^*$, $w^*=0$ if and only if $\|\mathcal{J}_{s,t,g}z_s\|\leq \varepsilon\|z_s\|$. Meanwhile, as $\hat{J^*}$ is differentiable except at $w=0$, when $\|\mathcal{J}_{s,t,g}z_s\|>\varepsilon\|z_s\|$, i.e. $w^*\neq 0$, one has
        \begin{equation}\label{eq w*}
            \mathcal{G}_{s,t,N,g}w^*-\mathcal{J}_{s,t,g}z_s+\varepsilon\|z_s\|\frac{w^*}{\|w^*\|}=0.
        \end{equation}
        Thus in both cases, problem \eqref{eq problem1*} admits a unique solution $w^*$. Consequently, in view of the Fenchel duality argument, the optimal control $\zeta^*$ is given as a function of $w^*$ by
        \begin{equation*}
            \zeta^*(r)=\mathsf{P}_N\left(B^*\mathcal{J}_{s,t,g}^*w^*\right),\quad r\in(s,t).
        \end{equation*}
    In particular, $w^*$ given by \eqref{eq w*} does not admit an explicit expression. Moreover, the mapping $g\mapsto\zeta^*$ may lack sufficient regularity and exhibit singular behavior, which makes it difficult to apply in the stochastic setting.
    \end{remark}

    \subsubsection{Step 3: Regularized stabilization via Fenchel--Rockafellar duality}\label{Sec 3.1.3} 
     We combine the Fenchel--Rockafellar duality with the weak observability inequality to establish the regularized  stabilization result, and construct the desired control given in the form of \eqref{eq zeta def}.
 
    \begin{proof}[Proof of Theorem \ref{Thm linear-stable}]
    Analogous to the proof of Proposition \ref{prop dual}, let us define two  convex and continuous functions by 
    \begin{equation*}
        F\colon \mathcal{H}_{s,t}\rightarrow\R^+,\quad F(\zeta)=\frac{1}{2}\|\zeta\|^2_{\mathcal{H}},
    \end{equation*}
    and 
    \begin{equation*}
       G\colon H\rightarrow\R^+,\quad G(w)=\frac{1}{2\beta}\|w+\mathcal{J}_{s,t,g}z_s\|^2.
    \end{equation*}

    We then consider the following minimization problem
    \begin{equation}\label{eq problem2}
        \inf_{\zeta\in \mathcal{H}_{s,t}}J(\zeta)=\inf_{\zeta\in \mathcal{H}_{s,t}}\left(F(\zeta)+G(\mathcal{A}_{s,t,N,g}\zeta)\right).
    \end{equation}

    Note that $J$ is strictly convex, thus problem \eqref{eq problem2} admits at most one solution. The Fenchel conjugates $F^*\colon \mathcal{H}_{s,t}\rightarrow\R^+$ and $G^*\colon H\rightarrow\R^+$ are given by
    \begin{equation*}
        F^*(\zeta)=\sup_{\zeta'\in \mathcal{H}_{s,t}}\left(\langle \zeta,\zeta'\rangle_{\mathcal{H}}-F(\zeta')\right)=\frac{1}{2}\|\zeta\|^2_{\mathcal{H}}=F(\zeta),
    \end{equation*}
    and 
    \begin{align*}
        G^*(w)=\sup_{w'\in H}\left(\langle w,w'\rangle-G(w')\right)=
            \frac{\beta}{2}\|w\|^2-\langle w,\mathcal{J}_{s,t,g}z_s\rangle.
    \end{align*}

    The corresponding dual problem is then given by
    \begin{equation}\label{eq problem2*}
        \sup_{w\in H}J^*(w)=\sup_{w\in H}\left(-G^*(-w)-F^*(\mathcal{A}_{s,t,N,g}^*w)\right)=-\inf_{w\in H}\hat{J}^*(w),
    \end{equation}
    where 
    \begin{equation*}
        \hat{J}^*\colon H\rightarrow\R^+,\quad \hat{J}^*(w):=\tfrac{1}{2}\langle(\mathcal{G}_{s,t,N,g}+\beta)w,w\rangle+\langle w,\mathcal{J}_{s,t,g}z_s\rangle.
    \end{equation*}

    \vspace{0.6em}
    
   Clearly, $F$ and $G$ are finite and continuous. Thus, by the Fenchel--Rockafellar duality, both problems \eqref{eq problem2} and \eqref{eq problem2*} admit a solution. In particular, 
    \begin{equation*}
        \inf_{\zeta\in \mathcal{H}_{s,t}}J(\zeta)=\sup_{w\in H}J^*(w)=-\inf_{w\in H}\hat{J}^*(w).
    \end{equation*}

    Since $\hat{J}^*$ is differentiable, and its unique minimizer $w^*\in H$ is given by
    \begin{equation*}
        (\mathcal{G}_{s,t,N,g}+\beta)w^*+\mathcal{J}_{s,t,g}z_s=0,
    \end{equation*}
    that is,
    \begin{equation}\label{eq w* def}
        w^*=-(\mathcal{G}_{s,t,N,g}+\beta)^{-1}\mathcal{J}_{s,t,g}z_s.
    \end{equation}

    \vspace{0.6em}
    Meanwhile, using the Fenchel dual relation, the unique minimizer $\zeta^*$ of $J$ is given by
    \begin{equation}\label{eq zeta* def}
            \zeta^*(r)=\mathsf{P}_N\left(B^*\mathcal{J}_{r,t,g}^*w^*\right),\quad r\in(s,t).
    \end{equation}
    This algins with the control $\zeta$ defined in Theorem \ref{Thm linear-stable}. In what follows, we verify the desired stabilization property \eqref{eq z sqz1}, \eqref{eq z sqz2}.
     \vspace{0.6em}
    
    By definition, one has
    \begin{equation*}
        J(\zeta^*)=-\hat{J}^*(w^*)=\frac{1}{2}\langle w^*,\mathcal{J}_{s,t,g}z_s\rangle=\frac{1}{2}\langle \mathcal{J}_{s,t,g}^*w^*,z_s\rangle.
    \end{equation*}
   Also, from our construction, it follows that
    \begin{equation}\label{eq zeta-beta1}
        \frac{1}{2}\|\zeta^*\|^2_{\mathcal{H}}+\frac{1}{2\beta}\|z(t)\|^2=J(\zeta^*)=\frac{1}{2}\langle \mathcal{J}_{s,t,g}^*w^*,z_s\rangle.
    \end{equation}

    Applying the weak observability inequality, i.e. \eqref{eq TOI}, the Cauchy--Schwarz inequality and \eqref{eq zeta* def}, it yields that
    \begin{equation}\label{eq zeta-beta2}
    \begin{aligned}
        &|\langle \mathcal{J}_{s,t,g}^*w^*,z_s\rangle|\\
        &\leq \|\mathcal{J}_{s,t,g}^*w^*\| \|z_s\|\\
        &\leq C\exp\left(C((t-s)\|g\|_{L^\infty(Q)}+(t-s)^{-2})\right)\left(\|\mathsf{P}_N(\mathbf{1}_{(a,b)}\mathcal{J}^*_{\cdot,t,g}w_*)\|_{\mathcal{H}}+N^{-1/2}\|w_*\|\right)\|z_s\|\\
        &\leq C\exp\left(C((t-s)\|g\|_{L^\infty(Q)}+(t-s)^{-2})\right)\left(\hat{b}_N\|\zeta^*\|_{\mathcal{H}}+N^{-1/2}\|w^*\|\right)\|z_s\|.
    \end{aligned}          
    \end{equation}
    
    In addition, in view of the relation between $\zeta^*$ and $w^*$, one has
    \begin{equation}\label{eq zeta-beta3}
    \begin{aligned}        z(t)&=\mathcal{J}_{s,t,g}z_s+\int_s^t\mathcal{J}_{r,t,g}\mathsf{P}_N\zeta^*(r)dr=\mathcal{J}_{s,t,g}z_s+\int_s^t\mathcal{J}_{r,t,g}\mathsf{P}_NB^*\mathcal{J}^*_{r,t,g}w^*dr\\
        &=\mathcal{J}_{s,t,g}z_s+\mathcal{G}_{s,t,N,g}w^*=-\beta w^*,
    \end{aligned}
    \end{equation}
    where the last equality is due to \eqref{eq w* def}.

    Summarizing \eqref{eq zeta-beta1}-\eqref{eq zeta-beta3}, we obtain that
    \begin{equation*}
        \|\zeta^*\|^2_{\mathcal{H}}+\frac{1}{\beta}\|z(t)\|^2\leq C\exp\left(C((t-s)\|g\|_{L^\infty(Q)}+(t-s)^{-2})\right)\left(\hat{b}_N\|\zeta^*\|_{\mathcal{H}}+\beta^{-1}N^{-1/2}\|z(t)\|\right)\|z_s\|.
    \end{equation*}
    Using Young's inequality, one has
     \begin{equation*}
        \|\zeta^*\|^2_{\mathcal{H}}+\frac{1}{\beta}\|z(t)\|^2\leq C\exp\left(C((t-s)\|g\|_{L^\infty(Q)}+(t-s)^{-2})\right)\left(\hat{b}_N^2+{\beta}^{-1}N^{-1}\right)\|z_s\|^2,
    \end{equation*}
    which implies that
    \begin{equation*}
        \|z(t)\|\leq C\exp\left(C((t-s)\|g\|_{L^\infty(Q)}+(t-s)^{-2})\right)\left(\hat{b}_N\beta^{1/2}+N^{-1/2}\right)\|z_s\|,
    \end{equation*}
    and 
    \begin{equation*}
        \|\zeta^*\|_{\mathcal{H}}\leq C\exp\left(C((t-s)\|g\|_{L^\infty(Q)}+(t-s)^{-2})\right)\left(\hat{b}_N+\beta^{-1/2}N^{-1/2}\right)\|z_s\|,
    \end{equation*}
    completing the proof of \eqref{eq z sqz1} and \eqref{eq z sqz2}.

    \end{proof}

    \subsection{Asymptotic strong Feller property}\label{Sec 3.2}
    With the stabilization result for the linearized equation, i.e. Theorem \ref{Thm linear-stable} at hand, this subsection derives the asymptotic strong Feller property, i.e. Proposition \ref{prop ASF}. The proof also relies on the Malliavin calculus techniques, as well as a priori estimates collected in Appendix \ref{Sec B}. 

    \vspace{0.6em}

    The overall strategy is inspired by the approach in \cite{HM-06,HM-11b}. Specifically, it suffices to construct suitable controls $v\in L^2(\Omega;\mathcal{H})$, which are not necessarily adapted, to stabilize the random process 
    \begin{equation}\label{eq rho}
        \rho_t=\mathcal{J}_{0,t}\xi-\mathcal{A}_{0,t}v.
    \end{equation}    
    The construction is explicitly given by formulas \eqref{eq vn def},\eqref{eq v def}, which combines two observations:
    \begin{itemize}
        \item[(\runum{1})] The formulation of random process $\rho$ aligns with the deterministic stabilization problem addressed in Theorem~\ref{Thm linear-stable};
        \item[(\runum{2})] The process $\rho$ can be stabilized within a short time interval by using controls acting only on {\it finitely many} spatial modes.
    \end{itemize}

    \begin{proof}[Proof of Proposition \ref{prop ASF}]

    Let us fix any $B_0,\gamma,\alpha>0$ and  $\xi\in H$ with $\|\xi\|=1$. For simplicity, we suppress the dependence on $B_0$ in what follows.  Using equation \eqref{eq rho}, it follows that
    \begin{align*}
        \langle\nabla P_t\varphi(u_0),\xi\rangle&=\E((\nabla\varphi)(u_t)(\mathcal{A}_{0,t}v+\mathcal{J}_{0,t}\xi-\mathcal{A}_{0,t}v))\\
        &=\E((\nabla\varphi)(u_t)\langle \mathcal{D}u_t,v\rangle_{\mathcal{H}})+\E((\nabla\varphi)(u_t)\rho_t)\\
        &=\E(\langle \mathcal{D}\varphi(u_t),v\rangle_{\mathcal{H}})+\E((\nabla\varphi)(u_t)\rho_t)\\
        &=\E\left(\varphi(u_t)\int_0^tv(s)dW(s)\right)+\E((\nabla\varphi)(u_t)\rho_t),
    \end{align*}
    where the last inequality is due to the integration by parts formula, see e.g. \cite{Nualart-06}. This leads to
    \begin{align}\label{eq ASF0}       
        |\langle\nabla P_t\varphi(u_0),\xi\rangle|&\leq \left(\E\left|\int_0^tv(s)dW(s)\right|^2\right)^{1/2}\sqrt{P_t|\varphi|^2(u_0)}+ \left(\E\|\rho_t\|^2\right)^{1/2}\sqrt{P_t\|\nabla\varphi\|^2(u_0)}.
    \end{align}

    \vspace{0.6em}

    Consequently, it suffices to construct a suitable control $v$ to stabilize $\rho$ with finite cost,    \begin{equation}\label{rho equation}
    \begin{cases}
         \partial_t\rho_t-\nu \partial_x^2\rho_t+(3u_t^2-\lambda)\rho_t=-Bv(t),\quad t>0,\\         
        \rho_0=\xi,
    \end{cases}
    \end{equation}
     which has the same structure as equation \eqref{z equation}, except that the deterministic potential $g$ is now replaced by the random potential $3u_t^2-\lambda$, and the original control acts only on finitely many spatial modes. Therefore, we  apply the stabilization result, i.e. Theorem~\ref{Thm linear-stable}, which is well-aligned with the present stochastic setting.

    \vspace{0.6em}
    
    Let $\delta=\delta(\gamma,\alpha)\in(0,1/2)$, $\beta=\beta(\gamma,\alpha)>0$ and $N=N(\gamma,\alpha)\in\N^+$ be three parameters to be determined later. For each $n\in\N^+$, define $v_n\in L^2(\Omega;\mathcal{H}_{n-\delta,n})$ as
   \begin{equation}\label{eq vn def}
        v_n(t)=\mathsf{P}_N\left(B^*\mathcal{J}_{t,n}^*(\mathcal{M}_{n-\delta,n,N}+\beta)^{-1}\mathcal{J}_{n-\delta,n}\,\rho_{n-\delta}\right),\quad t\in(n-\delta,n).
    \end{equation}
    We further set
    \begin{equation}\label{eq v def}
        v(t)=\begin{cases}
            v_n(t),\quad &\text{for } n\in\N^+,\;t\in(n-\delta,n),\\
            0,\quad &\text{otherwise}.
        \end{cases}
    \end{equation}   
    \vspace{0.3em}
    
    Invoking the well-posedness of equation \eqref{AC equation} given by Proposition \ref{prop well-posed}, we know that
    \begin{equation*}
        u\in C([s,+\infty);H_0^1(0,\pi))\quad a.s.\quad \forall\,s>0,
    \end{equation*}
    thus
    \begin{equation*}
        u^2\in L^\infty((s,t)\times (0,\pi))\quad a.s.\quad \forall\, t>s>0.
    \end{equation*}

    Therefore, for each $n\in\N^+$, one can see that $\{\rho_t\}_{t\in(n-\delta,n)}$ solves \eqref{z equation} with
    \begin{equation*}
        g(t)=3u_t^2-\lambda,\quad \zeta(t)=v(t),\quad t\in(n-\delta,n),\quad z_{n-\delta}=\rho_{n-\delta}.
    \end{equation*}
    In particular, 
    \begin{equation*}
        \mathcal{A}_{n-\delta,n,N}=\mathcal{A}_{n-\delta,n,N,g},\quad \mathcal{M}_{n-\delta,n,N}=\mathcal{G}_{n-\delta,n,N,g}\quad a.s.
    \end{equation*}

    With these preparations,  according to   stabilization result Theorem \ref{Thm linear-stable}, there exists a universal constant $C>0$, which is independent of $n,\delta,\beta,N,u$, such that     \begin{equation}\label{eq rho bdd1}
        \|\rho_n\|\leq  Ce^{C\delta^{-2}}\exp\left(C\delta\|u^2\|_{L^\infty(Q_{n,\delta})}\right)\left(\hat{b}_N\beta^{1/2}+N^{-1/2}\right)\|\rho_{n-\delta}\|\quad \forall\, n\in\N^+,
    \end{equation}
    where $Q_{n,\delta}:=(n-\delta,n)\times(0,\pi)$ denotes the space-time domain.

    Since $v(t)=0$ for $t\in(n-1,n-\delta)$, using Lemma \ref{lemma J bounds}, one has 
    \begin{equation}\label{eq rho bdd3}
        \|\rho_{n-\delta}\|\leq e^{\lambda(1-\delta)}\|\rho_{n-1}\|\leq e^{\lambda}\|\rho_{n-1}\|\quad \forall\, n\in\N^+,\;\delta\in(0,1/2).
    \end{equation}

    It follows that there exists a universal positive constant, still denoted by $C$, such that
    \begin{equation}\label{eq rho bdd4}
         \|\rho_n\|\leq  Ce^{C\delta^{-2}}\exp\left(C\delta\|u^2\|_{L^\infty(Q_{n,\delta})}\right)\left(\hat{b}_N\beta^{1/2}+N^{-1/2}\right)\|\rho_{n-1}\|\quad \forall\, n\in\N^+.
    \end{equation}

    Invoking this relation \eqref{eq rho bdd4}, we then first choose $\delta$ sufficiently small such that the term $\E\exp(C\delta\|u^2\|_{L^\infty(Q_{n,\delta})})$ is bounded, then determine the parameters $N$ and $\beta$ to stabilize $\rho$. In summary, we derive the following lemma. 
    
    \begin{lemma}\label{prop rho_n 2} 

    For the equation given by \eqref{AC equation}-\eqref{eq psi-def} and any $B_0,\gamma,\alpha>0$, there exists a constant $N\in \N^+$ such that if the sequence $\{b_j\}_{j\in\N^+}$ satisfies 
    \begin{equation*}
       \sum_{j\in\N^+} j^2b_j^2\leq B_0\quad \text{and}\quad b_j\neq 0\quad \text{for }1\leq j\leq N,
    \end{equation*}
     then there are constants $\delta\in(0,1/2)$ and $C,\beta>0$ such that the solution $\rho$ of equation \eqref{rho equation} with $v$ given by \eqref{eq vn def},\eqref{eq v def} satisfies that
        \begin{equation}\label{eq rho_n 2}
            \E\|\rho_t\|^{2}\leq Ce^{-\alpha t}\exp\left(\gamma\|u_0\|^2\right)\quad \forall\,u_0\in H,\;t\geq 0,
        \end{equation}
        and 
         \begin{equation}\label{eq v}
            \E\left(\int_{\R^+}v(t)dW(t)\right)^2\leq C\exp\left(\gamma\|u_0\|^2\right)\quad\forall\,u_0\in H.
        \end{equation}
    \end{lemma}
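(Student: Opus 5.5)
Starting from \eqref{eq rho bdd4}, I write $\|\rho_n\|\le \Theta_n\|\rho_{n-1}\|$ with
\[
\Theta_n:=Ce^{C\delta^{-2}}\exp\bigl(C\delta\|u^2\|_{L^\infty(Q_{n,\delta})}\bigr)\bigl(\hat b_N\beta^{1/2}+N^{-1/2}\bigr),
\]
so that $\|\rho_n\|\le\prod_{k\le n}\Theta_k$, and the task is to control $\E\prod_{k\le n}\Theta_k^2$. For this I need an exponential moment bound along the whole trajectory, of the type expected from Lemma~\ref{lemma L bdd}:
\[
\E\exp\Bigl(\kappa\sum_{k=1}^n\|u^2\|_{L^\infty(Q_{k,\delta})}\Bigr)\le C_\kappa e^{c_\kappa n}e^{\gamma\|u_0\|^2},
\]
valid for $\kappa$ small, with $c_\kappa$ independent of $B_0$-dependent quantities except through a fixed constant. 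In 1D, $\|u\|_{L^\infty}^2\le \|u\|\,\|u\|_{H^1}$, so $\|u^2\|_{L^\infty(Q_{k,\delta})}\le\sup_{t\in Q_{k,\delta}}\|u_t\|_{H^1}^2$ on time windows away from $0$ (for $k=1$ the window is $(1-\delta,1)$, away from $t=0$). An $H^1$ energy estimate with smoothing gives such exponential moments. Indeed, $\sum j^2b_j^2\le B_0$ makes the noise $H^1$-valued, and the dissipative cubic term gives a Lyapunov structure uniform in $u_0$ after time $1/2$. The $\exp(\gamma\|u_0\|^2)$ factor then absorbs the first step.

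Next I fix the parameters in order. First choose $\delta$ so small that $2C\delta\le\kappa$ (with $\kappa=\kappa(\gamma)$ from the moment bound). This yields
\[
\E\prod_{k\le n}\exp\bigl(2C\delta\|u^2\|_{L^\infty(Q_{k,\delta})}\bigr)\le C e^{c n}e^{\gamma\|u_0\|^2}.
\]
With $\delta$ fixed, $K:=Ce^{C\delta^{-2}}$ is a constant. Then choose $N$ so large that $K^2e^{c}\,4N^{-1}\le e^{-2\alpha-2\lambda}$, and afterwards $\beta$ so small (depending on $\hat b_N$, which is finite since $b_j\ne0$ for $j\le N$) that $\hat b_N\beta^{1/2}\le N^{-1/2}$. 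This gives
\[
\E\|\rho_n\|^2\le Ce^{-2\alpha n}e^{\gamma\|u_0\|^2},
\]
with $\alpha$ adjusted as needed. For non-integer $t\in(n,n+1)$, I would control $\|\rho_t\|$ by $\|\rho_n\|$ times $e^{\lambda}$ on the uncontrolled part, using Lemma~\ref{lemma J bounds}. On the controlled part, I would use the variation of constants formula together with the control bound below, which gives \eqref{eq rho_n 2}.

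For \eqref{eq v}, the control is not adapted: $v_n$ involves $\mathcal{J}^*_{t,n}$ and $\mathcal M_{n-\delta,n,N}$, which depend on the future. So $\int v\,dW$ is a Skorokhod integral, and I would bound it by the standard estimate
\[
\E\Bigl|\int v\,dW\Bigr|^2\le \E\|v\|_{\mathcal H}^2+\E\|\mathcal D v\|_{\mathcal H\otimes\mathcal H}^2,
\]
as in \cite{HM-06,HM-11b}. The $\E\|v\|^2$ part follows from \eqref{eq z sqz2}: on each block $\|v_n\|\le K'\bigl(\hat b_N+\beta^{-1/2}N^{-1/2}\bigr)\exp\bigl(C\delta\|u^2\|_{L^\infty}\bigr)\|\rho_{n-\delta}\|$. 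Its second moment is summable thanks to the geometric decay of $\E\|\rho_{n-1}\|^2$ just obtained, after Cauchy--Schwarz with the exponential moments; shrinking $\delta$ and $\gamma$ bookkeeping handle the extra factor.

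The \textbf{main obstacle} is the Malliavin derivative term $\E\|\mathcal D v\|^2$. Differentiating $(\mathcal M+\beta)^{-1}$ gives
\[
-(\mathcal M+\beta)^{-1}(\mathcal D\mathcal M)(\mathcal M+\beta)^{-1},
\]
where $\|(\mathcal M+\beta)^{-1}\|\le\beta^{-1}$ is deterministic because $\beta$ is fixed. The regularization was introduced precisely so this holds, and it avoids the singular behaviour noted in Remark~\ref{Rem 3.7}. The derivatives $\mathcal D\mathcal J_{s,t}$ and $\mathcal D\rho$ satisfy linear equations with source $6u\,(\mathcal D u)\,\mathcal J$. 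Their bounds are of the form $\exp\bigl(C\int\|u\|_{L^\infty}^2\bigr)$ times polynomial moments of $u$, and these are again controlled by the exponential moment bound. I would carry out these estimates blockwise. The cost is a factor $\beta^{-2}$ times a constant depending on $N,\delta$, which is harmless because those parameters are already fixed. Combining with $\E\|\rho_{n-1}\|^4$ decay (obtained by the same argument with squared factors) then makes the series over $n$ converge, which yields \eqref{eq v}.
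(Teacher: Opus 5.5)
Your argument for \eqref{eq rho_n 2} is a valid variant of the paper's. You bound $\|\rho_n\|\le\prod_{k\le n}\Theta_k$ pathwise and use a trajectory-wide exponential moment of $\sum_k\|u^2\|_{L^\infty(Q_{k,\delta})}$. The paper instead proves the conditional one-step bound $\E(\|\rho_n\|^4|\mathcal{F}_{n-1})\le\varepsilon e^{\gamma\|u_{n-1}\|^2}\|\rho_{n-1}\|^4$ and then splits $\|\rho_n\|^2$ into $\prod_kX_kY_k$, using Cauchy--Schwarz and \eqref{eq L2 bdd3}. Your moment bound is not stated in the paper. It does follow by iterating \eqref{eq L-inf bdd} while carrying a weight $\gamma'\|u_k\|^2$ that contracts by \eqref{eq L2 bdd}, with $\kappa$ small relative to $\gamma'$. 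For $\E\|v\|^2_{\mathcal{H}}$ and for non-integer times, you can drop the random factor coming from \eqref{eq z sqz2}: Lemma~\ref{lemma AM} gives the deterministic bound $\|v_n\|_{\mathcal{H}}\le e^{\lambda}\beta^{-1/2}\|\rho_{n-1}\|$.

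The genuine gap is in \eqref{eq v}. The bound $\E|\int v\,dW|^2\le\E\|v\|^2_{\mathcal{H}}+\E\|\mathcal{D}v\|^2_{\mathcal{H}\otimes\mathcal{H}}$ forces you to control $\mathcal{D}_sv_n(t)$ for \emph{every} $s<n$, not only for $s\in(n-\delta,n)$.

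\begin{itemize}
\item For $s<n-\delta$ this brings in $\mathcal{D}_s\rho_{n-\delta}$, and also $\mathcal{D}_su_r=b_j\mathcal{J}_{s,r}\psi_j$ over the long interval $[s,n]$. The latter is only bounded by $|b_j|e^{\lambda(r-s)}$.
\item Your equation for $\mathcal{D}\rho$ omits the source $-B\mathcal{D}_sv$. That term feeds $\mathcal{D}_s\rho$ back into itself through the feedback of every earlier block, together with the derivatives of $(\mathcal{M}_{m-\delta,m,N}+\beta)^{-1}$ for all $m<n$.
\item To sum over $n$ you would have to prove that this coupled recursion is itself stabilized, and that its decay beats the growth in $n-s$. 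Nothing in the proposal does this.
\item Bounds of the form $\exp\bigl(C\int_s^n\|u\|^2_{L^\infty}\bigr)$, with a non-small $C$ over intervals of length $n-s$, are not covered by your exponential-moment bound. That bound requires small coefficients; it works on a single block only because $C\delta$ is small.
\end{itemize}

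So the claim that the cost is "$\beta^{-2}$ times a constant depending on $N,\delta$", and that the series over $n$ converges, is unsupported. The issue is uniformity in $n-s$, not the choice of parameters.

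The missing idea is the block structure, used through the trace form of the Skorokhod isometry:
\[
\E\Bigl(\int v\,dW\Bigr)^2=\E\|v\|^2_{\mathcal{H}}+\E\int\!\!\int\mathrm{Tr}\bigl(\mathcal{D}_sv(t)\,\mathcal{D}_tv(s)\bigr)\,ds\,dt .
\]
\begin{itemize}
\item Since $v_m$ is $\mathcal{F}_m$-measurable, $\mathcal{D}_tv_m(s)=0$ for $t>m$. Hence every cross-block term vanishes, and only $s,t\in(n-\delta,n)$ remain.
\item On that block $\mathcal{D}_s\rho_{n-\delta}=0$, so no derivative of $\rho$ is ever needed.
\item The remaining derivatives $\mathcal{D}_s\mathcal{J}_{n-\delta,n}$ and $\mathcal{D}_s\mathcal{A}_{n-\delta,n,N}$, and hence $\mathcal{D}_s(\mathcal{M}_{n-\delta,n,N}+\beta)^{-1}$, are expressed through $\mathcal{J}^{(2)}$ and $\mathcal{J}$. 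Their bounds \eqref{eq J_2 bound} and $e^{\lambda(t-s)}$ are deterministic thanks to the sign of $u^3$.
\end{itemize}
This gives a bound $C\delta\beta^{-2}\sum_n\E\|\rho_{n-1}\|^2$, which is summable by \eqref{eq rho_n 2} and needs neither fourth moments nor exponential moments over long intervals.
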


    \vspace{0.6em}
   
    To establish Lemma \ref{prop rho_n 2}, we first derive  the following lemma by using energy estimates and choosing parameters properly.    

     \begin{lemma}\label{prop rho_n 1}
       For the equation given by \eqref{AC equation}-\eqref{eq psi-def} and any $B_0,\gamma,\varepsilon>0$, there exists a constant $N\in \N^+$ such that if the sequence $\{b_j\}_{j\in\N^+}$ satisfies 
     \begin{equation*}
       \sum_{j\in\N^+} j^2b_j^2\leq B_0\quad \text{and}\quad b_j\neq 0\quad \text{for }1\leq j\leq N,
     \end{equation*} then there are constants $\delta\in(0,1/2)$ and $\beta>0$ such that the solution $\rho$ of equation \eqref{rho equation} with $v$ given by \eqref{eq vn def},\eqref{eq v def} satisfies that
     \begin{equation}\label{eq rho_n 1}
            \E\left(\|\rho_n\|^4|\mathcal{F}_{n-1}\right)\leq \varepsilon e^{\gamma\|u_{n-1}\|^2}\|\rho_{n-1}\|^4\quad \forall\,u_0\in H,\;n\in\N^+.
        \end{equation}
    \end{lemma}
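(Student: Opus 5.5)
The plan is to raise the one-step bound \eqref{eq rho bdd4} to the fourth power and take conditional expectation given $\mathcal{F}_{n-1}$. Since $\rho_{n-1}$ is $\mathcal{F}_{n-1}$-measurable, we have
\begin{equation*}
\E\left(\|\rho_n\|^4\,\middle|\,\mathcal{F}_{n-1}\right)\leq C^4e^{4C\delta^{-2}}\left(\hat{b}_N\beta^{1/2}+N^{-1/2}\right)^4\E\left(\exp\left(4C\delta\|u^2\|_{L^\infty(Q_{n,\delta})}\right)\middle|\,\mathcal{F}_{n-1}\right)\|\rho_{n-1}\|^4 .
\end{equation*}
Here $C$ is universal, independent of $n,\delta,\beta,N,u$. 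Note that $v$ on $(n-\delta,n)$ depends on $u$ only through the potential, so \eqref{eq rho bdd4} holds pathwise.

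\emph{Step 1: the exponential moment.} We first bound the conditional exponential moment. By the Markov property it equals $\Phi(u_{n-1})$, where
\[
\Phi(w)=\E\exp\left(4C\delta\sup_{r\in[1-\delta,1]}\|u(r,w)\|_{L^\infty}^2\right).
\]
Using the 1D embedding $\|u\|_{L^\infty}^2\le C\|u\|_{H^1}^2$, we need an estimate of the form
\[
\E\exp\left(\kappa\sup_{r\in[1/2,1]}\|u(r,w)\|_{H^1}^2\right)\leq C_\kappa e^{\gamma\|w\|^2}
\]
for all $\kappa$ below a threshold depending on $\gamma$ and $B_0$. This is where the a priori bounds in Lemma \ref{lemma L bdd} of Appendix \ref{Sec B} enter. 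The cubic dissipation gives instantaneous smoothing: $u^3$ yields $\|u(r)\|\le C(r^{-1/2}+\text{noise})$ independently of $w$. An $H^1$ energy estimate with Itô's formula on $e^{\kappa\|\partial_xu\|^2}$ then works, because $\sum j^2b_j^2\le B_0$ makes the Itô correction finite. If the appendix gives only $e^{\gamma\|w\|^2}$ control, that is exactly the form needed. We then choose $\delta=\delta(\gamma,B_0)\in(0,1/2)$ so small that $4C\delta\le\kappa_0(\gamma)$. This gives $\Phi(w)\le C_1e^{\gamma\|w\|^2}$ with $C_1$ independent of $n$, $N$ and $\beta$.

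\emph{Step 2: choice of $N$ and $\beta$.} With $\delta$ fixed, the prefactor is $K:=C^4e^{4C\delta^{-2}}C_1$. We choose $N$ large so that $K(2N^{-1/2})^4\le\varepsilon$; this $N$ depends only on $B_0,\gamma,\varepsilon$. Given the $b_j$ with $b_j\neq0$ for $j\le N$, the quantity $\hat b_N$ is finite. We then choose $\beta$ so that $\hat b_N\beta^{1/2}\le N^{-1/2}$. The product is then at most $\varepsilon$, which yields \eqref{eq rho_n 1}.

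The main obstacle is Step 1: obtaining a uniform-in-$N$ exponential moment of $\sup_t\|u_t\|_{L^\infty}^2$ on the short window, with the loss only $e^{\gamma\|u_{n-1}\|^2}$ for arbitrarily small $\gamma$. The requirement of smallness of $\gamma$ is what forces $\delta$ to be small. If needed, one can replace the $L^\infty$ norm via $\|u\|_{L^\infty}^2\le\|u\|\|u\|_{H^1}$ and interpolate to get the right exponential integrability.
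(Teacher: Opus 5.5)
Your proposal is correct and follows the paper's proof essentially step for step. You raise \eqref{eq rho bdd4} to the fourth power and condition on $\mathcal{F}_{n-1}$. You then bound the conditional exponential moment by \eqref{eq L-inf bdd} (with $\eta=4C\delta$, using $(n-\delta,n)\subset(n-1/2,n)$), after fixing $\delta$ so small that $4C\delta\le\gamma_0$ and $c\cdot 4C\delta\le\gamma$. Finally you take $N$ large and $\beta$ small relative to $\hat b_N^{-2}$.

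The alternative derivations you sketch in Step 1 (cubic smoothing, It\^o's formula on $e^{\kappa\|\partial_x u\|^2}$, interpolation) are not needed. The appendix estimate \eqref{eq L-inf bdd} already gives exactly the bound $C\exp(c\eta\|u_{n-1}\|^2)$, uniform in $n$, $N$ and $\beta$.
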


    \begin{proof}
        Let $B_0>0$ and $\gamma,\varepsilon\in(0,1)$ be arbitrarily given. We denote the generic constant $C$ presented in \eqref{eq rho bdd4} by $\hat{C}$.   Taking a priori estimate \eqref{eq L-inf bdd} into consideration, there exist constants $C_0,c_0,\gamma_0>0$ such that
        for any $\delta\in(0,1/2)$, $\eta\in(0,\gamma_0]$, $u_0\in H$ and $n\in\N^+$,       
        \begin{equation*}       \E\left(\exp\left(\eta\|u^2\|_{L^\infty((n-\delta,n)\times(0,\pi))}\right)|\mathcal{F}_{n-1}\right)\leq C_0\exp(c_0\eta\|u_{n-1}\|^2).
        \end{equation*}
        
       Accordingly, let us first fix any $\delta\in (0,1/2)$ such that
        \begin{equation*}
            \delta\leq\frac{\gamma}{4c_0\hat{C}}\wedge\frac{\gamma_0}{4\hat{C}},            
        \end{equation*}
        which ensures that
        \begin{align*}
            \E\left(\exp\left(4\hat{C}\delta \|u^2\|_{L^\infty(Q_{n,\delta})}\right)|\mathcal{F}_{n-1}\right)\leq C_0\exp(\gamma\|u_{n-1}\|^2).
        \end{align*}

        Then by \eqref{eq rho bdd4}, we derive that
        \begin{align*}
             \E\left(\|\rho_n\|^4|\mathcal{F}_{n-1}\right)&\leq \hat{C}^4e^{4\hat{C}\delta^{-2}}\left(\hat{b}_N\beta^{1/2}+N^{-1/2}\right)^4\|\rho_{n-1}\|^4\E\left(\exp\left(4\hat{C}\delta\|u^2\|_{L^\infty(Q_{n,\delta})}\right)|\mathcal{F}_{n-1}\right)\\
             &\leq C_0\hat{C}^4e^{4\hat{C}\delta^{-2}}\left(\hat{b}_N\beta^{1/2}+N^{-1/2}\right)^4e^{\gamma\|u_{n-1}\|^2}\|\rho_{n-1}\|^4\\
             &:=\tilde{C}\left(\hat{b}_N\beta^{1/2}+N^{-1/2}\right)^4e^{\gamma\|u_{n-1}\|^2}\|\rho_{n-1}\|^4.
        \end{align*}

        Therefore, by choosing $N=N(\gamma,\varepsilon)\in\N^+$ such that
        \begin{equation*}
            N^{-1/2}\leq 2^{-1}\tilde{C}^{-1/4}\varepsilon^{1/4},
        \end{equation*}
        and taking $\beta=\beta(\gamma,\varepsilon)>0$ as
        \begin{equation*}
            \hat{b}_N\beta^{1/2}:=2^{-1}\tilde{C}^{-1/4}\varepsilon^{1/4},
        \end{equation*}
        we establish the desired relation \eqref{eq rho_n 1}.  
        
    \end{proof}

     Finally, uisng Lemma \ref{prop rho_n 1}, the proof of Lemma \ref{prop rho_n 2} is fairly standard and is provided in Appendix \ref{Sec C}. Combining relation \eqref{eq ASF0} with Lemma \ref{prop rho_n 2}, we thus complete the proof of Proposition~\ref{prop ASF}.

    \end{proof}

    \section{Global steady-state controllability and irreducibility}\label{Sec 4}

    This section is devoted to establishing the global steady-state controllability for the deterministic Allen--Cahn equation, i.e. Theorem \ref{thm null-control} and Theorem \ref{thm irre-control}. The latter result will then be applied to deduce the irreducibility property, i.e. Proposition \ref{prop irreducibility}, which is further used in Section~\ref{Sec 5} for the proof of the Main Theorem. 
    
    The controllability analysis forms the main part of this section, which is presented in Section~\ref{Sec 4.1}, along with its outline. Meanwhile, the proof of irreducibility is provided in Section~\ref{Sec 4.2}.
    
    \vspace{0.6em}

    Let us consider the deterministic Allen--Cahn equation given by 
    \begin{equation}\label{AC determine equation}
    \begin{cases}
         \partial_tw-\nu \partial_x^2w+w^3-\lambda w=h(t,x),\quad x\in (0,\pi),\;t\in(0,T),\\
        w(0,\cdot)=u_0(\cdot),
    \end{cases}
    \end{equation}
    where $h\in L^2(0,T;L^2(a,b))$ denotes a control. The solution $w$ is denoted by $w(t,u_0,h)$.

    \vspace{0.3em}
    We also introduce the unforced Allen--Cahn equation $\phi$, which reads
    \begin{equation}\label{AC unforced equation}
    \begin{cases}
         \partial_t\phi-\nu \partial_x^2\phi+\phi^3-\lambda \phi=0,\quad x\in (0,\pi),\;t>0,\\
        \phi(0,\cdot)=u_0(\cdot).
    \end{cases}
    \end{equation}
    The associated equilibrium set for the equation \eqref{AC unforced equation}  is denoted by
    \begin{equation}\label{eq S def}
        \mathcal{S}=\{\phi\in H_0^1(0,\pi):-\nu\phi''+\phi^3-\lambda \phi=0\}.
    \end{equation}

    When $0<\lambda\leq \nu$, it is straightforward to show that equation \eqref{AC unforced equation} is globally stable and $\mathcal{S}=\{0\}$.  In the case of $\lambda>\nu$, by applying \cite[Theorem 4.3.12]{Hale-88} and performing a transformation of  $\tilde{\phi}(x)=\lambda^{-1/2}\phi(\pi x)$, one sees that equation \eqref{AC unforced equation} admits exactly $2\lfloor\lambda/\nu\rfloor+1$ steady-states, which are written as  
    \begin{equation*}
        \mathcal{S}=\left\{\phi_k:0\leq |k|\leq \lfloor\lambda/\nu\rfloor\right\}\subset C^\infty([0,\pi]).
    \end{equation*}
    In particular,  $\phi_0=0$ and  $\phi_1\geq 0\geq \phi_{-1}$. Also denote the solution of equation \eqref{AC unforced equation} by $\phi(t,u_0)$.

    \vspace{0.3em}
    The goal is to establish the global steady-state controllability  in the following sense:

    \begin{definition}\label{def global steady-state}
        The system \eqref{AC determine equation} is called globally steady-state controllable if for any $\phi\in\mathcal{S}$ and $u_0\in H$, there exists $T>0$ and $h\in L^2(0,T;L^2(a,b))$ such that the solution $w$ of equation \eqref{AC determine equation} satisfies that
       \begin{equation*}
           w(T,u_0,h)=\phi.
       \end{equation*}
    \end{definition}

    \begin{theorem}\label{thm null-control}{\rm(}Global steady-state exact controllability{\rm)} 
       For any $\nu,\lambda>0$ and $0\leq a<b\leq \pi$, the system \eqref{AC determine equation} is globally steady-state controllable.  In particular, the case of  $\phi=\phi_0=0$ implies the global null controllability.
    \end{theorem}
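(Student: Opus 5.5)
The plan has three stages: free evolution to a compact set, local exact controllability near each equilibrium, and quasi-static transfer between equilibria. Each is handled with the tools announced in Section~\ref{Sec 1.3}.

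\emph{Stage 1: reduction to a neighbourhood of an equilibrium.} The unforced equation \eqref{AC unforced equation} is a gradient flow for the energy $E(u)=\int_0^\pi(\frac{\nu}{2}|\partial_xu|^2+\frac14u^4-\frac{\lambda}{2}u^2)dx$. By parabolic smoothing and the maximum principle, every trajectory $\phi(t,u_0)$ enters a bounded set of $H_0^1\cap L^\infty$ in time $1$, uniformly on bounded sets of $H$. LaSalle's principle, together with the finiteness of $\mathcal{S}$, then gives $\phi(t,u_0)\to\phi_k$ in $H_0^1$ for some $k$. So, using the zero control, any $u_0$ is brought into an arbitrarily small $H_0^1$ (hence $L^\infty$) neighbourhood of some $\phi_k\in\mathcal{S}$ in finite time.

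\emph{Stage 2: local exact controllability to each steady state.} Fix $\bar\phi\in\mathcal{S}$ and write $w=\bar\phi+z$. Then $z$ solves $\partial_tz-\nu\partial_x^2z+(3\bar\phi^2-\lambda)z+3\bar\phi z^2+z^3=h$. The linear part with bounded potential is null controllable with $L^2$ controls supported in $(a,b)$; this follows from the observability inequality of Lemma~\ref{lemma OI} via the Hilbert uniqueness method, i.e. Proposition~\ref{prop dual} with $\varepsilon=0$ and $\mathsf{P}_N$ replaced by the full restriction to $(a,b)$. The nonlinear term is handled in the standard way of \cite{FCZ-00}: a fixed point (Kakutani or Schauder) on the linearization $g=3\bar\phi^2-\lambda+3\bar\phi z+z^2$, using the explicit $\exp(C\|g\|_{L^\infty})$ dependence in \eqref{eq OI} and $L^\infty$ bounds on $z$. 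This yields a radius $r(\bar\phi)>0$ such that every $u_0$ with $\|u_0-\bar\phi\|_{L^\infty}<r$ can be driven exactly to $\bar\phi$ in time $1$.

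\emph{Stage 3: connecting steady states by quasi-static deformation.} Combining Stages 1--2, any $u_0$ can be driven exactly to some $\phi_k$, so it remains to steer any $\phi_k$ to any prescribed $\phi\in\mathcal{S}$.
\begin{enumerate}
\item Build a $C^1$ path $\tau\mapsto(\bar y(\tau),\bar h(\tau))$, $\tau\in[0,1]$, of stationary controlled solutions: $-\nu\bar y''+\bar y^3-\lambda\bar y=\bar h$ with $\supp\bar h(\tau)\subset[a,b]$, joining $(\phi_k,0)$ to $(\phi,0)$.
\item For the construction, interpolate linearly, $\bar y(\tau)=(1-\tau)\phi_k+\tau\phi$ on $[0,\pi]\setminus(a,b)$. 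Outside $(a,b)$ the equation must hold with zero forcing; by ODE uniqueness this forces $\bar y$ there to be determined by Cauchy data at the endpoints of $(0,a)$ and $(b,\pi)$. So instead solve the ODE $-\nu y''+y^3-\lambda y=0$ on $[0,a]$ and on $[b,\pi]$ with $y(0)=0$, $y'(0)=\sigma_1(\tau)$ and $y(\pi)=0$, $y'(\pi)=\sigma_2(\tau)$, interpolating the slopes of $\phi_k$ and $\phi$. Then glue inside $(a,b)$ by any smooth interpolation and define $\bar h$ by the equation. Global existence of these ODE solutions follows from the conserved energy.
\item Along the path, the linearized operators $A(\tau)=-\nu\partial_x^2+3\bar y(\tau)^2-\lambda$ have finitely many nonpositive eigenvalues. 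Stabilize the unstable modes by a finite-dimensional feedback supported in $(a,b)$, built from the spectral projection $\mathscr{P}(\tau)$ onto the first $m$ modes, by controlled Lyapunov arguments.
\item Traverse the path slowly over time $1/\epsilon$, in the manner of \cite{CT-04}, to keep the trajectory within $O(\epsilon)$ of $\bar y(\epsilon t)$, and finish with Stage 2 at $\phi$.
\end{enumerate}

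The main obstacle is Stage 3, specifically keeping the stabilizing feedback uniform in $\tau$ while eigenvalues of $A(\tau)$ may cross $0$. I would handle this by choosing $m$ above the maximal number of nonpositive eigenvalues along the compact path, and by using the unique continuation of eigenfunctions on $(a,b)$ to get uniform controllability of the projected finite-dimensional system.
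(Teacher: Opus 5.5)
Your proposal is correct and is essentially the paper's argument. It uses the same three ingredients: gradient-flow convergence to an equilibrium (Lemma \ref{lemma pointwise}); local exact controllability near each equilibrium, where the paper invokes Emanuilov's theorem after a Lipschitz truncation of the cubic instead of your fixed point, an immaterial difference; and quasi-static transfer along a path in $\mathcal{S}_{\rm ex}$ built by shooting from $x=0$ and $x=\pi$ with interpolated slopes, stabilized by a finite-dimensional spectral feedback whose Kalman condition comes from unique continuation (Proposition \ref{prop steady control}).

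One step needs to be made explicit: global existence of the shooting solutions does not follow from energy conservation alone, since for $|\theta|>\lambda/\sqrt{2\nu}$ the solution blows up at finite $x$. What you need is that every equilibrium has an interior critical point, which gives $|\phi'(0)|,|\phi'(\pi)|\le\lambda/\sqrt{2\nu}$; this bound is preserved by convex interpolation, and under it the solution stays in the well $y^2\le\lambda-\sqrt{\lambda^2-2\nu\theta^2}$.
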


    Moreover, to derive the irreducibility, we further have the following global approximate controllability by using a {\it well-prepared finite family of finite dimensional controls}. Recall that $\mathsf{P}_NL^2(a,b)=\text{span}\{\psi_j:1\leq j\leq N\}$.

    \begin{theorem}\label{thm irre-control} For any $\lambda>\nu>0$ and $0\leq a<b\leq \pi$, let $u_*\in \{\phi_1,\phi_{-1}\}$. Then there exist constants $N,m\in\N^+$, $T_*>0$ and a finite family of controls 
    \begin{equation}\label{eq zeta_m}
        \{\zeta_n\in L^2(0,T_*;\mathsf{P}_NL^2(a,b)): 1\leq n\leq m\}
    \end{equation}
    with the following property.  For any $\varepsilon>0$, there exists $T=T(\varepsilon)\geq T_*$ such that for any $u_0\in H$, there exists $n\in\{1,\cdots,m\}$ such that the solution $w$ of equation \eqref{AC determine equation} satisfies 
        \begin{equation}\label{eq control}
            \|w(t,u_0,\zeta_n)-u_*\|\leq \varepsilon\quad \forall\,t\geq T,
        \end{equation}
    where  each $\zeta_n$ is extended by zero outside $[0,T_*]$.
    \end{theorem}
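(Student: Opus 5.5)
We will prove Theorem \ref{thm irre-control} in three stages: first make the state uniformly small and regular, then steer it with a fixed finite-dimensional control, and finally let the free dynamics bring it to the stable equilibrium $u_*$. Throughout, take $u_*=\phi_1$; the case $\phi_{-1}$ follows from the symmetry $w\mapsto -w$.

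\textbf{Step 1 (uniform absorbing ball and compactness).} Using the cubic damping, a standard energy/maximum-principle estimate (Lemma \ref{lemma L bdd}-type) gives a time $T_0$ and a radius $R_0$, both independent of $u_0$, such that every solution of \eqref{AC unforced equation} satisfies
\[
\|\phi(T_0,u_0)\|_{H^1_0}\le R_0 \quad\text{for all } u_0\in H .
\]
This is the uniform-in-data "coming down from infinity'' property of the cubic term. So during $[0,T_0]$ we use the zero control, and all trajectories enter the compact set $K:=\{\|v\|_{H^1_0}\le R_0\}\subset H$.

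\textbf{Step 2 (finitely many controls steering $K$ to a neighborhood of $u_*$).} The key dynamical fact for $\lambda>\nu$ is this. The unforced equation \eqref{AC unforced equation} is a gradient flow of the energy $E(\phi)=\int \tfrac{\nu}{2}|\phi'|^2+\tfrac14\phi^4-\tfrac{\lambda}{2}\phi^2$. Every trajectory converges to some equilibrium in $\mathcal S$. Among these, $\phi_{\pm1}$ are the only stable ones: they are the global minimizers and are exponentially stable. Its basin of attraction $\mathcal B(\phi_1)$ is therefore open, and it contains a neighborhood $U$ of $\phi_1$ on which the flow contracts uniformly, so $\|\phi(t,v)-\phi_1\|\le Ce^{-ct}\|v-\phi_1\|$ for $v\in U$.

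For each $v\in K$, Theorem \ref{thm null-control} gives a control $h_v\in L^2(0,T_v;L^2(a,b))$ with $w(T_v,v,h_v)=\phi_1$. We then modify $h_v$ as follows.
\begin{itemize}
\item[(i)] Project onto $\mathsf P_N L^2(a,b)$ for large $N$, using the continuity of $h\mapsto w(T,v,h)$ in $L^2$ to retain $w(T_v,v,\mathsf P_N h_v)\in U/2$.
\item[(ii)] Extend all controls to a common time $T_*$ by zero. Since $U$ is forward invariant under the free flow, the endpoint stays in $U$.
\end{itemize}
By continuity of $v\mapsto w(T_*,v,\zeta)$ in $H$, each $\zeta_v$ works on an open neighborhood of $v$. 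Compactness of $K$ then yields finitely many $\zeta_1,\dots,\zeta_m$, and $N$ can be taken uniform as the maximum over this finite family. We prepend the zero control on $[0,T_0]$, i.e. replace $\zeta_n$ by its time shift, which is still a control in $L^2(0,T_*+T_0;\mathsf P_N L^2(a,b))$.

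\textbf{Step 3 (conclusion).} For any $u_0$ there is an $n$ such that $w(T_*+T_0,u_0,\zeta_n)\in U$. Afterwards the control vanishes, so the exponential contraction in $U$ gives \eqref{eq control} for all $t\ge T(\varepsilon)$. Here $T(\varepsilon)=T_*+T_0+c^{-1}\log(C\,\mathrm{diam}(U)/\varepsilon)$, which is uniform in $u_0$.

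The main obstacle is Theorem \ref{thm null-control} itself, which is assumed here. The genuinely delicate points of this argument are different. One is the uniform absorbing estimate in $H^1$, which needs the cubic nonlinearity to make all $u_0\in H$ uniform. The other is the stability and openness of the basin near $\phi_1$, which should follow from the spectral gap of $-\nu\partial_x^2+3\phi_1^2-\lambda$ at the positive minimizer.
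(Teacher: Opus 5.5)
Your argument is correct, but it takes a different route from the paper.

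\textbf{The paper's route.} The paper never steers from a generic point of an absorbing set. It applies Proposition~\ref{prop steady control} only from the finitely many equilibria $\phi_j\in\mathcal S$, truncates those controls to $\mathsf P_N L^2(a,b)$, and uses continuity in the initial datum only on small balls $B_H(\phi_j,\delta_1)$. It then uses Lemma~\ref{lemma uniform} to obtain a time $T_{\delta_2}$, uniform in $u_0$, before which every free trajectory passes $\delta_2$-close to some equilibrium. Because that passage time still depends on $u_0$, it adds a ``waiting-time'' scheme: the controls $\zeta_{lj}$ switch on $\tilde h_j$ at the discrete times $1+l\tau$. This makes $m=L(2\lfloor\lambda/\nu\rfloor+1)$ explicit. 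It also means the argument needs only the approximate controllability between steady states, not the local exact controllability of Lemma~\ref{lemma local control}.

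\textbf{Your route.} You synchronize all trajectories at one time $T_0$ using the uniform $H^1_0$ bound; Part~1 of the proof of Lemma~\ref{lemma uniform} even gives an $H^2$ bound. You then use that this ball $K$ is compact in $H$ and steer each $v\in K$ with Theorem~\ref{thm null-control}. The Lipschitz estimate $\|w(t,v,\zeta)-w(t,v',\zeta)\|\le e^{\lambda t}\|v-v'\|$, which follows from monotonicity of the cubic term, gives open neighbourhoods and hence a finite subcover.

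\textbf{What each buys.} Your approach removes the hitting-time discretization and the bookkeeping of activation times. The price is that $m$ is non-constructive, and that you invoke the full global exact controllability, whose proof rests on the Carleman-based Lemma~\ref{lemma local control}. You only need to land approximately in the basin of $u_*$, and that already follows from Lemma~\ref{lemma pointwise}, Proposition~\ref{prop steady control} and continuity in the initial datum. So your argument can be made equally independent of exact controllability.

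\textbf{Ordering fix.} As written, Step 2 has the order slightly wrong. $T_v$ varies with $v$ and need not be bounded on $K$, so you cannot fix a common $T_*$ before covering. Apply continuity at the time $T_v$, extract the finite subcover $v_1,\dots,v_m$, and only then set $T_*=T_0+\max_i T_{v_i}$. Forward invariance of $U$ is not needed. Once $w(T_0+T_{v_i},u_0,\zeta_i)$ lies in the ball where the local exponential estimate for $\phi_{\pm1}$ holds, apply that estimate from this time on. Since $T_0+T_{v_i}\le T_*$, the resulting $T(\varepsilon)$ is uniform in $u_0$.
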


    \begin{remark}\label{rmk lambda}
        For the case $\lambda\leq\nu$, direct estimates yield that the solution $\phi$ of unforced equation \eqref{AC unforced equation} is globally stable: for any $\varepsilon>0$, there exists a constant $T=T(\varepsilon)>0$  such that
        \begin{equation*}
            \|\phi(t,u_0)-0\|\leq \varepsilon\quad \forall\,u_0\in H,\;t\geq T.
        \end{equation*}
    \end{remark}

    \vspace{0.3em}

    With the help of Theorem \ref{thm irre-control} and Remark \ref{rmk lambda}, we show the  irreducibility.
    
    \begin{proposition}\label{prop irreducibility}{\rm(}Irreducibility{\rm)} 
    For the equation given by \eqref{AC equation}-\eqref{eq psi-def}, there exists $u_*\in H$ and $N\in\N^+$ such that if the sequence $\{b_j\}_{j\in\N^+}$  satisfies 
    \begin{equation*}
        \sum_{j\in \N^+}b_j^2<\infty\quad \text{and}\quad b_j\neq 0\quad \text{for }1\leq j\leq N,
    \end{equation*}
    then for any $\varepsilon,R>0$, there exists  $T=T(\varepsilon)>0$ such that for any $t\geq T$, the solution $u$ of equation \eqref{AC equation} satisfies that
        \begin{equation}\label{eq irreducibility}
            \inf_{u_0\in B_H(0,R)}\Pb\left(\|u(t,u_0)-u_*\|\leq \varepsilon\right)>0.
        \end{equation}
    \end{proposition}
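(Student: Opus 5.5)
The proof is a support-theorem argument: we transfer the deterministic controllability of Theorem~\ref{thm irre-control} (or the global stability of Remark~\ref{rmk lambda}) to the stochastic equation. There are two cases. If $\lambda\le\nu$ we take $u_*=0$ and the zero control; if $\lambda>\nu$ we take $u_*=\phi_1$ together with the constants $N,m,T_*$ and the finite family $\{\zeta_n\}_{1\le n\le m}\subset L^2(0,T_*;\mathsf P_NL^2(a,b))$ from Theorem~\ref{thm irre-control}. We fix $\varepsilon,R>0$ and $T=T(\varepsilon/2)$ from that theorem (or from the remark), and we also fix $t\ge T$. Since $b_j\ne0$ for $j\le N$, each $\zeta_n$ can be written as $\zeta_n=\mathsf P_N\dot V_n$, where
\begin{equation*}
V_n(s)=\sum_{j\le N} b_j\Big(b_j^{-1}\int_0^s\langle\zeta_n(r),\psi_j\rangle dr\Big)\psi_j .
\end{equation*}
Thus $V_n$ is a Cameron--Martin path of the noise $W$: its coefficients $b_j^{-1}\int_0^\cdot\langle\zeta_n,\psi_j\rangle$ lie in $H^1_0(0,t)$. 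We extend $\zeta_n$ by zero on $[T_*,t]$.

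The core step is a continuity estimate. Write $\Phi(u_0,\cdot)$ for the solution map of the equation with additive forcing, $u=\Phi(u_0,W)$. We show that for every $\eta>0$ there is $\kappa>0$ such that, uniformly in $u_0\in B_H(0,R)$ and $n$,
\begin{equation*}
\sup_{s\le t}\|W(s)-V_n(s)\|_{H^1}\le\kappa\ \Longrightarrow\ \|u(t,u_0)-w(t,u_0,\zeta_n)\|\le\eta .
\end{equation*}
To prove it, set $z=W-V_n$ and the stochastic convolution $Z(s)=\int_0^se^{-A(s-r)}dz(r)$, so that $\sup_{s\le t}\|Z\|_{H^1}\lesssim\sup_{s\le t}\|z\|_{H^1}$ after integrating by parts in time. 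The difference $d=u-w$ satisfies a parabolic equation that we handle through $d-Z$. An $L^2$ energy estimate uses monotonicity, $(a^3-b^3)(a-b)\ge0$, and needs only the bound $\|w\|_{L^\infty}$, which is available uniformly for $u_0\in B_H(0,R)$ on $[\tau,t]$; near $s=0$ we use the $H$-valued energy estimate directly. With $\|Z\|_{L^\infty}\lesssim\|Z\|_{H^1}$ in 1D and Gronwall, this gives $\|d(t)\|\le C(R,t)\kappa$. Choosing $\eta=\varepsilon/2$ then yields
\begin{equation*}
\Pb(\|u(t,u_0)-u_*\|\le\varepsilon)\ \ge\ \min_{n}\Pb\Big(\sup_{s\le t}\|W-V_n\|_{H^1}\le\kappa\Big),
\end{equation*}
where $n=n(u_0)$ is the index provided by Theorem~\ref{thm irre-control}.

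It remains to show the right-hand side is positive. For each of the finitely many $n$, $V_n$ belongs to the Cameron--Martin space of $W$. By Girsanov (a deterministic shift) the law of $W-V_n$ is equivalent to the law of $W$, and $\Pb(\sup_{s\le t}\|W\|_{H^1}\le\kappa)>0$ because $W$ is a centered Gaussian process in $C([0,t];H^1)$ thanks to $\sum j^2b_j^2<\infty$ (so its small balls charge positive mass). The minimum over the finitely many $n$ is therefore positive and independent of $u_0$, which gives \eqref{eq irreducibility}.

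The main obstacle is the uniformity of the continuity estimate over $u_0\in B_H(0,R)$ when $u_0$ is only in $L^2$: the cubic nonlinearity needs the dissipative sign and a priori bounds, since plain Lipschitz bounds are unavailable. I would first obtain the uniform $L^\infty$ smoothing of $w$ on $[\tau,t]$ via the standard comparison/maximum-principle bound $\|w(s)\|_{L^\infty}\le C(1+s^{-1/2})$ for cubic damping, with the forcing handled by the shift by $Z$. The finiteness of the control family is what makes the final positivity uniform.
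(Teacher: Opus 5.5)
Your architecture is the paper's. You take the finite family $\{\zeta_n\}$ of Theorem~\ref{thm irre-control}, show that $u(t,u_0)$ is close to $w(t,u_0,\zeta_n)$ when the noise is near $V_n$ (the paper's $\hat\zeta_n$), uniformly over $u_0\in B_H(0,R)$, and finish with a Cameron--Martin/support argument whose uniformity comes from finiteness of the family. Shifting by the stochastic convolution of $z=W-V_n$ is a sensible variant of the paper's direct subtraction of $r=h-\hat\zeta_n$. That subtraction produces the forcing $\nu\partial_x^2 r$ and is only controlled for $\mathsf P_NL^2(a,b)$-valued $h$, where $\|\partial_x r\|\le C(N)\|r\|$.

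However, your bound $\sup_{s\le t}\|Z(s)\|_{H^1}\lesssim\sup_{s\le t}\|z(s)\|_{H^1}$ is false. Integrating by parts gives $Z(s)=z(s)-\int_0^sAe^{-A(s-r)}z(r)\,dr$, and controlling this integral in $H^1$ by the sup norm of $z$ is maximal regularity in the sup norm in time, which the heat semigroup lacks. Mode by mode you only get $|Z_k(s)|\le 2\sup_r|z_k(r)|$, and $\sup_r$ does not commute with $\sum_k k^2$. This is repairable, because your energy estimate only needs $\sup_s\|Z(s)\|_{L^\infty}$. Since $\|Ae^{-A\sigma}\|_{\mathcal L(H^1;H^{1-\epsilon})}\lesssim\sigma^{-1+\epsilon/2}$ is integrable, you do get $\sup_s\|Z(s)\|_{H^{1-\epsilon}}\le C(t)\sup_s\|z(s)\|_{H^1}$, and $H^{1-\epsilon}\hookrightarrow L^\infty$ for $\epsilon<1/2$.

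The genuine gap is the hypothesis. Your positivity step needs $W\in C([0,t];H^1)$, i.e. $\sum_j j^2b_j^2<\infty$, but the proposition assumes only $\sum_j b_j^2<\infty$. This suffices for the Main Theorem, but not for the statement as posed. Since $\|W(s)\|_{H^1}^2=\sum_j (j\pi/(b-a))^2b_j^2\beta_j(s)^2$, this is $+\infty$ almost surely once $\sum_j j^2b_j^2=\infty$. So the event $\{\sup_{s\le t}\|W-V_n\|_{H^1}\le\kappa\}$ is null and your lower bound is vacuous. Measuring $W-V_n$ only in $C([0,t];L^2(a,b))$ does not help either: $\sup_s\|z(s)\|$ gives no deterministic bound on $\sup_s\|Z(s)\|_{L^\infty}$.

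Under the stated hypothesis, apply the support argument to the stochastic convolution itself. $Z_W=\int_0^\cdot e^{-A(\cdot-r)}\,dW(r)$ is a centered Gaussian element of $C([0,t];H^\theta)\subset C([0,t]\times[0,\pi])$ for $\theta\in(1/2,1)$, since the covariance is trace class in one space dimension. Since $V_n$ is a Cameron--Martin path of $W$ (this is where $b_j\neq0$ for $j\le N$ enters), the law of $Z_W-Z_{V_n}=Z_{W-V_n}$ is equivalent to that of $Z_W$. A centered Gaussian measure on a separable Banach space charges every ball around the origin, so $\Pb\bigl(\sup_{s\le t}\|Z_W(s)-Z_{V_n}(s)\|_{L^\infty}\le\kappa\bigr)>0$. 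Alternatively, a Hölder norm $C^\alpha([0,t];L^2)$ with $\alpha\in(1/4,1/2)$ on $W-V_n$ also works, using $Z(s)=e^{-As}z(s)-\int_0^sAe^{-A(s-r)}(z(r)-z(s))\,dr$.

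Your monotonicity estimate for $q=u-w-(Z_W-Z_{V_n})$ then closes with just this $L^\infty$ closeness and $\int_0^t\|w\|_{L^4}^4\,ds\le C(R,t)$. That bound follows from the energy identity for $w$, uniformly over $u_0\in B_H(0,R)$. No $L^\infty$ smoothing of $w$ near $s=0$ is needed, so the ``main obstacle'' you name is not one.
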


    Let us mention that the notion of irreducibility plays a fundamental role in the study of ergodicity and mixing. When the unforced system is globally stable, irreducibility  can often be established directly; see e.g. \cite{EM-01,FGRT-15,LWXZZ-24}. In contrast, when the global stability fails, irreducibility is a more dedicate issue and can be obtained through global controllability arguments. In our setting, the case $\lambda\leq \nu$  corresponds to the globally stable regime and hence falls into the former scenario,  while the case $\lambda>\nu$ requires a controllability-based approach and thus falls into the latter. Specifically, for $\lambda>\nu$, the dynamics of the unforced system can be described as shown in Figure~\ref{figure 4} below.

    \vspace{-1.5em}
    
    \begin{figure}[th]
    \centering

\tikzset{every picture/.style={line width=0.75pt}} 

\begin{tikzpicture}[x=0.75pt,y=0.75pt,yscale=-1,xscale=1]

\draw  [line width=2.25] [line join = round][line cap = round] (230.45,104.96) .. controls (230.45,104.96) and (230.45,104.96) .. (230.45,104.96) ;
\draw [color={rgb, 255:red, 208; green, 2; blue, 27 }  ,draw opacity=0.7 ]   (229.14,29.9) .. controls (201.84,31.67) and (170.81,54.02) .. (160.81,60.02) .. controls (150.81,66.02) and (118.5,85) .. (105.74,103.74) ;
\draw [shift={(189.61,42.78)}, rotate = 333.46] [fill={rgb, 255:red, 208; green, 2; blue, 27 }  ,fill opacity=0.7 ][line width=0.08]  [draw opacity=0] (7.14,-3.43) -- (0,0) -- (7.14,3.43) -- cycle    ;
\draw [shift={(128.08,81.73)}, rotate = 323.12] [fill={rgb, 255:red, 208; green, 2; blue, 27 }  ,fill opacity=0.7 ][line width=0.08]  [draw opacity=0] (7.14,-3.43) -- (0,0) -- (7.14,3.43) -- cycle    ;
\draw [color={rgb, 255:red, 245; green, 166; blue, 35 }  ,draw opacity=1 ]   (107.11,104.95) -- (166.61,104.95) -- (228.84,105.14) ;
\draw [shift={(131.56,104.95)}, rotate = 0] [fill={rgb, 255:red, 245; green, 166; blue, 35 }  ,fill opacity=1 ][line width=0.08]  [draw opacity=0] (7.14,-3.43) -- (0,0) -- (7.14,3.43) -- cycle    ;
\draw [shift={(192.42,105.03)}, rotate = 0.18] [fill={rgb, 255:red, 245; green, 166; blue, 35 }  ,fill opacity=1 ][line width=0.08]  [draw opacity=0] (7.14,-3.43) -- (0,0) -- (7.14,3.43) -- cycle    ;
\draw [color={rgb, 255:red, 245; green, 166; blue, 35 }  ,draw opacity=1 ]   (353.9,105.04) -- (292.73,105.95) -- (231.73,104.95) ;
\draw [shift={(328.61,105.42)}, rotate = 179.15] [fill={rgb, 255:red, 245; green, 166; blue, 35 }  ,fill opacity=1 ][line width=0.08]  [draw opacity=0] (7.14,-3.43) -- (0,0) -- (7.14,3.43) -- cycle    ;
\draw [shift={(267.53,105.53)}, rotate = 180.94] [fill={rgb, 255:red, 245; green, 166; blue, 35 }  ,fill opacity=1 ][line width=0.08]  [draw opacity=0] (7.14,-3.43) -- (0,0) -- (7.14,3.43) -- cycle    ;
\draw  [line width=2.25] [line join = round][line cap = round] (355.45,104.96) .. controls (355.45,104.96) and (355.45,104.96) .. (355.45,104.96) ;
\draw  [line width=2.25] [line join = round][line cap = round] (105.45,104.96) .. controls (105.45,104.96) and (105.45,104.96) .. (105.45,104.96) ;
\draw  [line width=2.25] [line join = round][line cap = round] (230.45,179.96) .. controls (230.45,179.96) and (230.45,179.96) .. (230.45,179.96) ;
\draw  [line width=2.25] [line join = round][line cap = round] (230.45,29.96) .. controls (230.45,29.96) and (230.45,29.96) .. (230.45,29.96) ;
\draw [color={rgb, 255:red, 245; green, 166; blue, 35 }  ,draw opacity=1 ]   (230.5,178.43) -- (230.48,140.95) -- (230.5,106.47) ;
\draw [shift={(230.5,164.99)}, rotate = 269.97] [fill={rgb, 255:red, 245; green, 166; blue, 35 }  ,fill opacity=1 ][line width=0.08]  [draw opacity=0] (7.14,-3.43) -- (0,0) -- (7.14,3.43) -- cycle    ;
\draw [shift={(230.49,129.01)}, rotate = 270.04] [fill={rgb, 255:red, 245; green, 166; blue, 35 }  ,fill opacity=1 ][line width=0.08]  [draw opacity=0] (7.14,-3.43) -- (0,0) -- (7.14,3.43) -- cycle    ;
\draw [color={rgb, 255:red, 245; green, 166; blue, 35 }  ,draw opacity=1 ]   (230.5,31.54) -- (230.5,63.54) -- (230.5,103.51) ;
\draw [shift={(230.5,42.24)}, rotate = 90] [fill={rgb, 255:red, 245; green, 166; blue, 35 }  ,fill opacity=1 ][line width=0.08]  [draw opacity=0] (7.14,-3.43) -- (0,0) -- (7.14,3.43) -- cycle    ;
\draw [shift={(230.5,78.22)}, rotate = 90] [fill={rgb, 255:red, 245; green, 166; blue, 35 }  ,fill opacity=1 ][line width=0.08]  [draw opacity=0] (7.14,-3.43) -- (0,0) -- (7.14,3.43) -- cycle    ;
\draw [color={rgb, 255:red, 208; green, 2; blue, 27 }  ,draw opacity=0.7 ]   (229.05,180.09) .. controls (202.25,179.29) and (169.29,155.08) .. (162.79,151.08) .. controls (156.29,147.08) and (118.5,120.33) .. (106.13,106.24) ;
\draw [shift={(190.57,167.98)}, rotate = 26.41] [fill={rgb, 255:red, 208; green, 2; blue, 27 }  ,fill opacity=0.7 ][line width=0.08]  [draw opacity=0] (7.14,-3.43) -- (0,0) -- (7.14,3.43) -- cycle    ;
\draw [shift={(130.26,127.51)}, rotate = 37.69] [fill={rgb, 255:red, 208; green, 2; blue, 27 }  ,fill opacity=0.7 ][line width=0.08]  [draw opacity=0] (7.14,-3.43) -- (0,0) -- (7.14,3.43) -- cycle    ;
\draw [color={rgb, 255:red, 208; green, 2; blue, 27 }  ,draw opacity=0.7 ]   (232.12,30.06) .. controls (258.92,30.86) and (290.7,54.91) .. (297.2,58.91) .. controls (303.7,62.91) and (339.17,87) .. (354.55,103.37) ;
\draw [shift={(270.07,42.1)}, rotate = 206.77] [fill={rgb, 255:red, 208; green, 2; blue, 27 }  ,fill opacity=0.7 ][line width=0.08]  [draw opacity=0] (7.14,-3.43) -- (0,0) -- (7.14,3.43) -- cycle    ;
\draw [shift={(330.3,82.34)}, rotate = 217.21] [fill={rgb, 255:red, 208; green, 2; blue, 27 }  ,fill opacity=0.7 ][line width=0.08]  [draw opacity=0] (7.14,-3.43) -- (0,0) -- (7.14,3.43) -- cycle    ;
\draw [color={rgb, 255:red, 208; green, 2; blue, 27 }  ,draw opacity=0.7 ]   (231.83,179.78) .. controls (258.5,179) and (290.16,155.66) .. (300.16,149.66) .. controls (310.16,143.66) and (344.5,119.67) .. (355.33,106.47) ;
\draw [shift={(271.49,167.08)}, rotate = 153.09] [fill={rgb, 255:red, 208; green, 2; blue, 27 }  ,fill opacity=0.7 ][line width=0.08]  [draw opacity=0] (7.14,-3.43) -- (0,0) -- (7.14,3.43) -- cycle    ;
\draw [shift={(332.12,127.2)}, rotate = 142.72] [fill={rgb, 255:red, 208; green, 2; blue, 27 }  ,fill opacity=0.7 ][line width=0.08]  [draw opacity=0] (7.14,-3.43) -- (0,0) -- (7.14,3.43) -- cycle    ;

\draw (86.94,98.36) node [anchor=north west][inner sep=0.75pt]  [font=\scriptsize]  {$\phi _{1}$};
\draw (241.2,111.36) node [anchor=north west][inner sep=0.75pt]  [font=\scriptsize]  {$\phi _{0} =0$};
\draw (363.44,98.36) node [anchor=north west][inner sep=0.75pt]  [font=\scriptsize]  {$\phi _{-1}$};
\draw (223.94,14.36) node [anchor=north west][inner sep=0.75pt]  [font=\scriptsize]  {$\phi _{2}$};
\draw (222.44,183.86) node [anchor=north west][inner sep=0.75pt]  [font=\scriptsize]  {$\phi _{-2}$};

\end{tikzpicture}

    \vspace{-1em}
    \caption{Dynamics of the unforced system for $\lambda\in(2\nu,3\nu]$.}
        \label{figure 4}
    \end{figure}

\vspace{-1em}

    \subsection{Global steady-state controllability}\label{Sec 4.1}

    We in this subsection, establish the proof of Theorem \ref{thm null-control} and Theorem \ref{thm irre-control}. The strategy builds upon the quasi-static deformation method introduced in \cite{Coron-02,CT-04}. The proofs rely on two  ingredients:

    \begin{itemize}
        \item[(\runum{1})] An explicit construction of the control in a feedback-type form, obtained by the controlled Lyapunov theory combined with the quasi-static deformation;
        \item[(\runum{2})] The global dynamics of unforced equation, which generates a gradient system.
    \end{itemize}

    \vspace{0.3em}
    
    The proof is divided into four steps, as outlined below. Details are in Sections \ref{Sec 4.1.1}-\ref{Sec 4.1.4}.

    \noindent {\it Step 1: Approximate controllability between steady-states.} We begin by establishing the steady-state controllability, which constitutes the core of the argument. Specifically, we show that for any $\phi,\hat{\phi}\in\mathcal{S}$ and $\varepsilon>0$, there exists a control $h\in H^1(0,T;C^2([a,b]))$ such that 
    \begin{equation}\label{eq out4}
        \|w(T,\phi,h)-\hat{\phi}\|_{H^1}\leq \varepsilon.
    \end{equation}
    Our strategy is inspired by the approach in \cite{CT-04}, which studies exact boundary controllability of 1D semilinear heat equations. The argument combines controlled Lyapunov  theory with quasi-static deformation theory, leading to an explicit feedback-type construction of the control.  The construction essentially reduces to solving a finite-dimensional stabilization problem.

    \vspace{0.6em}
    
    \noindent {\it Step 2: Convergence to the steady-states.} Next, we summarize dynamical properties for the unforced equation \eqref{AC unforced equation}. More precisely, for any $u_0\in H$, there exists $\phi_{u_0}\in\mathcal{S}$ such that 
        \begin{equation}\label{eq out5}
            \lim\limits_{t\rightarrow\infty}\|\phi(t,u_0)-\phi_{u_0}\|_{H^1}=0.
        \end{equation}
    Invoking this convergence with compactness arguments, we further derive that for any $\varepsilon>0$, there exists $T>0$ such that for any $u_0\in H$, there exists $\phi_{u_0}\in\mathcal{S}$ and $t_{u_0}\in [1,T+1]$ satisfying 
        \begin{equation}\label{eq out6}
            \|\phi(t_{u_0},u_0)-\phi_{u_0}\|_{H^1}\leq \varepsilon.
        \end{equation}

    \noindent {\it Step 3: Local exact controllability and the proof of Theorem \ref{thm null-control}.}   Next we prove the following local controllability: for any $\phi\in\mathcal{S}$ and $T>0$, there exists $\delta>0$ such that for any $u_0\in B_{H^1}(\phi,\delta)$, there exists a control $\zeta\in L^2(0,T;L^2(a,b))$ satisfying
    \begin{equation}\label{eq out7}
        w(T,u_0,\zeta)=\phi.
    \end{equation}
    This local control result is standard; see e.g.  \cite{Emanuilov-95}. 
    Combining Steps 1-3, we conclude Theorem~\ref{thm null-control}.

    \vspace{0.6em}

    \noindent {\it Step 4: Construction of finitely many controls and proof of Theorem~\ref{thm irre-control}.} 
To establish irreducibility, the main difficulty is to construct a finite family of control inputs that works uniformly for all initial data. More precisely, the controls $\zeta_n$ are required to satisfy:
\begin{itemize}
    \item[(\runum{1})] they are finite in number, say $m$,
    \item[(\runum{2})] they belong to a finite-dimensional space $\mathsf{P}_N L^2(a,b)$,
    \item[(\runum{3})] they act over a finite time interval $T_*$,
\end{itemize}
as in \eqref{eq zeta_m}. In particular, sll parameters $m, N, T_*$ are independent of $u_0\in H$  $\varepsilon>0$.

The construction combines the existence of {\it stable steady states} $\phi_{\pm 1}$ for the unforced system~\eqref{AC unforced equation}: there exist constants $C,c,\delta>0$ such that
\begin{equation}\label{eq out8}
\|\phi(t,u_0)-\phi_{\pm 1}\|_{H^1}\leq Ce^{-ct}\|u_0-\phi_{\pm 1}\|
\quad \forall\, u_0\in B_{H}(\phi_{\pm 1},\delta),\; t\geq1.
\end{equation}
Roughly speaking, the global dynamics~\eqref{eq out5} steers the solution into a neighborhood of a steady state, which is then transferred near $\phi_{\pm1}$  via approximate controllability~\eqref{eq out4}, and finally attracted by local stability. The key point is that this procedure can be implemented using only finitely many control inputs, uniformly with respect to $u_0$ and $\varepsilon$. See Figure~\ref{figure 5} for an illustration.

    \vspace{-0.6em}
   \begin{figure}[th]
    \centering

\tikzset{every picture/.style={line width=0.75pt}} 

\begin{tikzpicture}[x=0.75pt,y=0.75pt,yscale=-1,xscale=1]

\draw  [color={rgb, 255:red, 0; green, 0; blue, 0 }  ,draw opacity=0.8 ] (317.33,109.37) .. controls (317.33,63.53) and (355.56,26.36) .. (402.72,26.36) .. controls (449.89,26.36) and (488.12,63.53) .. (488.12,109.37) .. controls (488.12,155.21) and (449.89,192.37) .. (402.72,192.37) .. controls (355.56,192.37) and (317.33,155.21) .. (317.33,109.37) -- cycle ;
\draw [color={rgb, 255:red, 74; green, 144; blue, 226 }  ,draw opacity=1 ]   (334.83,98.32) .. controls (334.16,148.48) and (395.6,139.99) .. (424.16,138.44) ;
\draw [shift={(427.14,138.3)}, rotate = 177.88] [fill={rgb, 255:red, 74; green, 144; blue, 226 }  ,fill opacity=1 ][line width=0.08]  [draw opacity=0] (5.36,-2.57) -- (0,0) -- (5.36,2.57) -- cycle    ;
\draw  [line width=2.25] [line join = round][line cap = round] (401.98,108.83) .. controls (401.83,108.83) and (401.69,108.83) .. (401.54,108.83) ;
\draw  [line width=2.25] [line join = round][line cap = round] (178.91,210.12) .. controls (179.06,210.12) and (179.2,210.12) .. (179.35,210.12) ;
\draw  [line width=2.25] [line join = round][line cap = round] (334.84,97.83) .. controls (334.84,97.83) and (334.84,97.83) .. (334.84,97.83) ;
\draw [color={rgb, 255:red, 245; green, 166; blue, 35 }  ,draw opacity=1 ]   (107.16,13.65) .. controls (98.47,62.99) and (90.11,123.46) .. (175.84,206.79) ;
\draw [shift={(177.14,208.05)}, rotate = 223.95] [fill={rgb, 255:red, 245; green, 166; blue, 35 }  ,fill opacity=1 ][line width=0.08]  [draw opacity=0] (5.36,-2.57) -- (0,0) -- (5.36,2.57) -- cycle    ;
\draw  [line width=2.25] [line join = round][line cap = round] (106.89,14.96) .. controls (106.89,15.16) and (106.89,15.36) .. (106.89,15.56) ;
\draw  [color={rgb, 255:red, 0; green, 0; blue, 0 }  ,draw opacity=0.8 ] (352.53,106.77) .. controls (352.53,79.89) and (374.49,58.11) .. (401.58,58.11) .. controls (428.66,58.11) and (450.62,79.89) .. (450.62,106.77) .. controls (450.62,133.64) and (428.66,155.42) .. (401.58,155.42) .. controls (374.49,155.42) and (352.53,133.64) .. (352.53,106.77) -- cycle ;
\draw  [line width=2.25] [line join = round][line cap = round] (215.02,209.36) .. controls (214.87,209.36) and (214.73,209.36) .. (214.58,209.36) ;
\draw [color={rgb, 255:red, 208; green, 2; blue, 27 }  ,draw opacity=0.65 ]   (331.54,97.04) .. controls (259.97,69.75) and (222.14,84.91) .. (222.35,125.19) .. controls (222.57,166.08) and (212.43,169.98) .. (177.14,208.05) ;
\draw [shift={(334.83,98.32)}, rotate = 200.02] [fill={rgb, 255:red, 208; green, 2; blue, 27 }  ,fill opacity=0.65 ][line width=0.08]  [draw opacity=0] (5.36,-2.57) -- (0,0) -- (5.36,2.57) -- cycle    ;
\draw  [color={rgb, 255:red, 0; green, 0; blue, 0 }  ,draw opacity=0.8 ] (149.64,205.4) .. controls (149.64,172.85) and (177.75,146.47) .. (212.43,146.47) .. controls (247.1,146.47) and (275.21,172.85) .. (275.21,205.4) .. controls (275.21,237.94) and (247.1,264.33) .. (212.43,264.33) .. controls (177.75,264.33) and (149.64,237.94) .. (149.64,205.4) -- cycle ;
\draw  [line width=2.25] [line join = round][line cap = round] (427.13,137.98) .. controls (426.99,137.98) and (426.84,137.98) .. (426.7,137.98) ;

\draw (111.22,7.87) node [anchor=north west][inner sep=0.75pt]  [font=\scriptsize]  {$u_{0}$};
\draw (164.93,104.16) node  [font=\footnotesize,color={rgb, 255:red, 245; green, 166; blue, 35 }  ,opacity=1 ] [align=left] {\begin{minipage}[lt]{74.66pt}\setlength\topsep{0pt}
global \\dynamics
\end{minipage}};
\draw (254.68,67.12) node  [font=\footnotesize] [align=left] {\begin{minipage}[lt]{123.76pt}\setlength\topsep{0pt}
\begin{center}
\textcolor[rgb]{0.82,0.01,0.11}{approximate controllability }\\\textcolor[rgb]{0.82,0.01,0.11}{between steady-states }
\end{center}

\end{minipage}};
\draw (397.91,124.71) node  [font=\footnotesize] [align=left] {\begin{minipage}[lt]{61.51pt}\setlength\topsep{0pt}
\textcolor[rgb]{0.29,0.56,0.89}{ \ \ local stability \ \ \ }
\end{minipage}};
\draw (392.21,92.95) node [anchor=north west][inner sep=0.75pt]  [font=\scriptsize]  {$\phi _{\pm 1}$};
\draw (204.94,193.36) node [anchor=north west][inner sep=0.75pt]  [font=\scriptsize]  {$\phi _{u_{0}}$};
\draw (184.2,266.86) node [anchor=north west][inner sep=0.75pt]  [font=\scriptsize]  {$B_{H^{1}}( \phi _{u_{0}} ,\delta _{1})$};
\draw (381.77,197.32) node [anchor=north west][inner sep=0.75pt]  [font=\scriptsize]  {$B_{H^{1}}( \phi _{\pm 1} ,\delta )$};
\draw (380.77,159.32) node [anchor=north west][inner sep=0.75pt]  [font=\scriptsize]  {$B_{H}( \phi _{\pm 1} ,\varepsilon )$};

\end{tikzpicture}

    \vspace{-0.6em}
    \caption{Proof of Theorem \ref{thm irre-control}.}
        \label{figure 5}
    \end{figure}

    \subsubsection{Step 1: Approximate controllability between steady-states}\label{Sec 4.1.1}  

    \begin{proposition}\label{prop steady control} 
        For any $\nu,\lambda>0$ and $0\leq a<b\leq \pi$, the approximate controllability between steady-states holds: for any $\phi,\hat{\phi}\in\mathcal{S}$ and $\varepsilon>0$, there exists $T>0$ and $h\in H^1(0,T;C^2([a,b]))$ such that the solution $w$ of equation \eqref{AC determine equation} satisfies that
        \begin{equation}\label{eq steady control}
            \|w(T,\phi,h)-\hat{\phi}\|_{H^1}\leq \varepsilon.
        \end{equation}
    \end{proposition}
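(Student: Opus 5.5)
We follow the quasi-static deformation strategy of \cite{CT-04}. The first task is to build a $C^1$ path of extended equilibria connecting $\phi$ to $\hat\phi$, where the forcing is supported in $[a,b]$. Concretely, we seek $(\bar y(\tau,\cdot),\bar h(\tau,\cdot))_{\tau\in[0,1]}$ in the extended set
\[
\mathcal{S}_{\rm ex}=\{(y,h): -\nu y''+y^3-\lambda y=h,\ y\in H_0^1(0,\pi),\ \supp h\subset[a,b]\},
\]
with $\bar y(0)=\phi$, $\bar y(1)=\hat\phi$, $\bar h(0)=\bar h(1)=0$, and with $\bar h$ of class $C^1$ in $\tau$ with values in $C^2([a,b])$.

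To construct it, we solve the ODE $-\nu y''+y^3-\lambda y=0$ on $[0,a]$ and on $[b,\pi]$ as an initial value problem from the boundary points. On the left we take $y(0)=0$, $y'(0)=\theta_1(\tau)$, and on the right $y(\pi)=0$, $y'(\pi)=\theta_2(\tau)$. The slopes $\theta_1,\theta_2$ are interpolated smoothly, for instance linearly, between the values $\phi'(0),\hat\phi'(0)$ and $\phi'(\pi),\hat\phi'(\pi)$. On $[a,b]$ we glue the two pieces by a smooth interpolation $\chi$ that matches the $C^2$-jets at $a$ and $b$, and then define $h:=-\nu y''+y^3-\lambda y$ there. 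At $\tau=0$ and $\tau=1$ we choose the interpolant to be exactly $\phi$ and $\hat\phi$ respectively, for example $y=(1-\tau)\phi+\tau\hat\phi$ plus a correction that vanishes at the endpoints. This automatically gives $\bar h(0)=\bar h(1)=0$. Solutions of the ODE with bounded data exist globally because the cubic term is defocusing and conserves an energy, so smooth dependence on $\tau$ follows from ODE theory.

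Next we run the system slowly along the path, with $h(t)=\bar h(\varepsilon t)$ on $[0,1/\varepsilon]$, and study the deviation $e=w-\bar y(\varepsilon t)$. It satisfies
\[
\partial_t e+A(\tau)e+R(e)=-\varepsilon\,\partial_\tau\bar y+\text{(feedback)},\qquad A(\tau)=-\nu\partial_x^2+3\bar y(\tau)^2-\lambda,
\]
where $R(e)$ collects the quadratic and cubic terms. The operator $A(\tau)$ has finitely many nonpositive eigenvalues, so we fix $m$ so that $\lambda_{m+1}(\tau)>c>0$ uniformly in $\tau$. The unstable modes $\mathscr P(\tau)e$ are stabilized by a finite-dimensional feedback control localized in $(a,b)$ and added to $\bar h$. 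This is possible by unique continuation, since the eigenfunctions $e_j(\tau,\cdot)$, $j\le m$, are linearly independent on $(a,b)$ (as ODE solutions). Hence, for each $\tau$, the pair $(A(\tau),\mathbf 1_{(a,b)})$ restricted to the first $m$ modes satisfies a Kalman rank condition. We then choose a feedback $K(\tau)$ that is $C^1$ in $\tau$ and makes $\mathscr P(\tau)(A(\tau)+\mathbf 1_{(a,b)}K(\tau))$ uniformly stable; smooth $\tau$-dependence can be obtained via a Riccati/Gramian construction. Taking the controls in $\mathrm{span}\{\mathbf 1_{(a,b)}e_j(\tau)\}$, smoothed to be $C^2$ in $[a,b]$, keeps the regularity $h\in H^1(0,T;C^2([a,b]))$.

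The main obstacle is the uniform stability estimate for the deviation. We plan to use a Lyapunov function $V(\tau,e)=\langle Q(\tau)\mathscr P e,\mathscr P e\rangle+M\|(I-\mathscr P)e\|_{H^1}^2$ and show
\[
\frac{d}{dt}V\le -cV+C\varepsilon V+C\varepsilon\sqrt V+C V^{3/2}
\]
for $V$ small, where the $\varepsilon$-terms come from $\partial_\tau$ of the projection, of $Q$ and of $\bar y$. A bootstrap with $e(0)=0$ then gives $V\le C\varepsilon^2$ on the whole interval, hence $\|w(1/\varepsilon)-\hat\phi\|_{H^1}\le C\varepsilon$. Since the feedback is applied along the trajectory, the resulting open-loop control is regular in time. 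Finally we take $T=1/\varepsilon$; the feedback part of the control vanishes at the end only approximately, which is harmless for the approximate statement \eqref{eq steady control}.
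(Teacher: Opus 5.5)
You have the same overall strategy as the paper, but there is a genuine gap in Part 1: the existence of the outer pieces of your path.

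\textbf{The gap.} You justify that the shooting solutions $y^\tau$ (from $x=0$) and $\hat y^\tau$ (from $x=\pi$) exist on $[0,a]$ and $[b,\pi]$ by saying that the cubic term is ``defocusing''. For the \emph{spatial} ODE $\nu y''=y^3-\lambda y$ the opposite holds. This is a Newton equation in $x$ with potential $\frac{\lambda}{2}y^2-\frac14 y^4$, which is unbounded below. Its first integral is $(y')^2=\theta^2+\frac{1}{2\nu}y^4-\frac{\lambda}{\nu}y^2$. For $|\theta|>\lambda/\sqrt{2\nu}$ the right-hand side never vanishes, so $y$ is monotone with $|y'|\approx y^2/\sqrt{2\nu}$ for large $|y|$. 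Hence $y$ blows up at a finite $x^*(\theta)$, and $x^*(\theta)\to 0$ as $|\theta|\to\infty$. So bounded initial data do \emph{not} give global solutions, and a general smooth interpolation of the slopes can produce a $y^\tau$ that blows up before $x=a$.

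\textbf{What is missing} is the a priori bound on equilibria used in the paper. Every $\tilde\phi\in\mathcal S$ has a critical point $x_0\in(0,\pi)$. Evaluating the first integral there, with $s=\tilde\phi(x_0)^2$, gives $\frac{\nu}{2}\tilde\phi'(0)^2=\frac{\lambda}{2}s-\frac14 s^2\le\frac{\lambda^2}{4}$, i.e.\ $|\tilde\phi'(0)|\le\lambda/\sqrt{2\nu}$; the same holds at $x=\pi$. For $|\theta|<\lambda/\sqrt{2\nu}$ the energy lies below the barrier, so $y^2$ is trapped in $[0,\lambda-\sqrt{\lambda^2-2\nu\theta^2}]$. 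For $|\theta|=\lambda/\sqrt{2\nu}$ the solution is $\pm\sqrt{\lambda}\tanh(\sqrt{\lambda/(2\nu)}\,x)$. In both cases the solution is global and bounded. Since this admissible set of slopes is an interval, the \emph{linear} interpolation $\theta_\tau$ (and $\vartheta_\tau$ at $x=\pi$) stays inside it. Only then does $C^1$ dependence on $\tau$ follow from ODE theory.

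\textbf{The rest of your plan.} Once this is repaired, it follows the paper's route: a path in $\mathcal S_{\rm ex}$, slow time $\tau=\varepsilon t$, splitting at a fixed $m$, finite-dimensional feedback on the unstable modes, and a Lyapunov bootstrap from $e(0)=0$. Your two deviations are legitimate simplifications.
\begin{enumerate}
\item You feed back directly on $\mathscr P(\tau)e$, whereas the paper adds integrators ($\zeta_j$ as states, $\alpha_j=\zeta_j'$ as controls) and so needs the Kalman condition for $(D(\tau),E)$ and a $C^1$ choice of $K(\tau)$. Your version works because $b_{jk}(\tau)=\langle\mathbf 1_{[a,b]}e_k(\tau,\cdot),e_j(\tau,\cdot)\rangle$ is an invertible Gram matrix. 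An explicit $C^1$ feedback then makes the low-mode closed loop equal to $-I_m$. The $C^1$-in-time regularity of $h$ comes from $e\in C([0,T];H^1)$. No smoothing is needed, since $e_j(\tau,\cdot)$ is already $C^2$ on $[a,b]$.
\item Your gluing, $(1-\tau)\phi+\tau\hat\phi$ plus a jet correction vanishing at $\tau=0,1$, gives exactly $\bar y(0)=\phi$, $\bar y(1)=\hat\phi$ and $\bar h(0)=\bar h(1)=0$. The paper's cubic ansatz on $[a,b]$ does not literally achieve this.
\end{enumerate}

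\textbf{Two details in the Lyapunov estimate.} Take the high-mode weight $M$ small relative to the low-mode quadratic form, because the localized control leaks into $(I-\mathscr P)e$. On the high modes, use $\langle A(\tau)e_h,e_h\rangle$, which is equivalent to $\|e_h\|_{H^1}^2$ there, so that the dissipation $-\|A(\tau)e_h\|^2$ appears directly.
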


    \begin{proof}
    Let $\varepsilon>0$ and $\phi,\hat{\phi}\in\mathcal{S}$ be fixed.
        
    \vspace{0.3em}
    
     \noindent {\it Part 1: Extended steady-states $\mathcal{S}_{\rm ex}$ and connectedness. }  Let us first introduce the set of extended steady-states $\mathcal{S}_{\rm ex}$, which is given by
    \begin{equation}\label{eq Sh def}
        \mathcal{S}_{\rm ex}:=\left\{y\in C^2([0,\pi]):-\nu y''+y^3-\lambda y=h,\quad y(0)=y(\pi)=0, \quad h\in C^2([a,b])\right\},
    \end{equation}
    endowed with $C^1$-topology. Clearly, $\mathcal{S}\subset \mathcal{S}_{\rm ex}$. We shall show that $\mathcal{S}$ is $C^1$-connected in $\mathcal{S}_{\rm ex}$, i.e., there exists a $C^1$-path $\bar{y}\in C^1([0,1];H_0^1(0,\pi)\cap H^2(0,\pi))$ and $\bar{h}\in C^1([0,1];C^2([a,b]))$ such that    
    \begin{equation*}
        -\nu\partial_x^2\bar{y}(\tau,x)+\bar{y}(\tau,x)^3-\lambda \bar{y}(\tau,x)=\bar{h}(\tau,x),\quad x\in (0,\pi),\;\tau\in[0,1],
    \end{equation*}
    and
    \begin{gather*}
        \bar{y}(0,\cdot)=\phi(\cdot),\quad \bar{y}(1,\cdot)=\hat{\phi}(\cdot), \quad  \bar{y}(\tau, \cdot)\in \mathcal{S}_{\rm ex}  \quad\forall\,\tau\in [0, 1], \\
        \bar{h}(0,\cdot)=\bar{h}(1,\cdot)=0.
    \end{gather*}
    
    \vspace{0.3em}
    To prove the connectedness, let us note some facts about the maximal solutions of equations
    \begin{align}
        -\nu y''+y^3-\lambda y&=0,\quad y(0)=0,\quad y'(0)=\theta\in\R,\label{y equation2}\\
        -\nu \hat{y}''+\hat{y}^3-\lambda \hat{y}&=0,\quad \hat{y}(\pi)=0,\quad \hat{y}'(\pi)=\vartheta\in\R.\label{y equation3}
    \end{align}     
    By \eqref{y equation2}, it follows that
    \begin{equation}\label{eq y2}
        -\frac{\nu}{2} (y')^2+\frac{1}{4}y^4-\frac{\lambda}{2} y^2=-\frac{\nu}{2}\theta^2.
    \end{equation}    
    Noting that for any $\tilde\phi\in\mathcal{S}$, there exists $x_0\in(0,\pi)$ such that $\tilde\phi'(x_0)=0$. This gives that
    \begin{equation*}
       \frac{1}{4}\tilde\phi(x_0)^4-\frac{\lambda}{2} \tilde\phi(x_0)^2=-\frac{\nu}{2}(\tilde\phi'(0))^2,
    \end{equation*}
    which implies 
    \begin{equation*}
        |\tilde\phi'(0)|\leq \frac{\lambda}{\sqrt{2\nu}}.
    \end{equation*}

    Meanwhile, we claim that for any $\theta\in\left[-\frac{\lambda}{\sqrt{2\nu}},\frac{\lambda}{\sqrt{2\nu}}\right]$, the solution of \eqref{y equation2} is well-defined on $[0,\pi]$. Specifically, for $\theta=\pm \frac{\lambda}{\sqrt{2\nu}}$, it admits explicit solutions
    \begin{equation*}        y(x)=\pm\sqrt{\lambda}\tanh\left(\sqrt{\frac{\lambda}{2\nu}}x\right).
    \end{equation*}
    For $|\theta|<\frac{\lambda}{\sqrt{2\nu}}$, by \eqref{eq y2},
    \begin{equation*}
       (y')^2=\theta^2+\frac{1}{2\nu}y^4-\frac{\lambda}{\nu}y^2:=Q(y^2).
    \end{equation*}    
    Noting that $Q(\cdot)$ admits two positive roots $\lambda\pm\sqrt{\lambda^2-2\nu\theta^2}$ and $Q(y^2)\geq 0$,  one has
    \begin{equation*}
        y^2\in\left[0,\lambda-\sqrt{\lambda^2-2\nu\theta^2}\right]\quad \text{or}\quad y^2\in\left[\lambda+\sqrt{\lambda^2-2\nu\theta^2},+\infty\right).
    \end{equation*}
    Combining $y(0)=0$, we conclude that both $y(x)$ and $y'(x)$ are bounded uniformly on $[0,\infty)$.

    \vspace{0.3em}
    
    Let us set, for $\tau\in[0,1]$,
    \begin{align*}
        \theta_\tau&=(1-\tau)\phi'(0)+\tau\hat{\phi}'(0)\in\left[-\frac{\lambda}{\sqrt{2\nu}},\frac{\lambda}{\sqrt{2\nu}}\right],\\
        \vartheta_\tau&=(1-\tau)\phi'(\pi)+\tau\hat{\phi}'(\pi)\in\left[-\frac{\lambda}{\sqrt{2\nu}},\frac{\lambda}{\sqrt{2\nu}}\right].
    \end{align*}
    Then for any $\tau\in[0,1]$, the solution of \eqref{y equation2} with $\theta=\theta_\tau$ and  is well-defined on $[0,\pi]$. By similar arguments, the solution of \eqref{y equation3} with $\vartheta=\vartheta_\tau$ also exists on $[0,\pi]$. We denote these solutions by $y^\tau(\cdot)$ and $\hat{y}^\tau(\cdot)$, respectively.

    \vspace{0.3em}
    
    By our construction, one derives that for any $\tau\in[0,1]$,
    \begin{equation*}
        \bar{y}(\tau,x)=\begin{cases}
            y^\tau(x)\quad\text{for }x\in[0,a],\\
            \hat{y}^\tau(x)\quad\text{for }x\in[b,1].
        \end{cases}
    \end{equation*}
    Therefore, it remains to construct suitable controls $\bar{h}(\tau,\cdot)\in C^2([a,b])$ such that $\bar{y}(\tau,\cdot)$ connects $(y^\tau(a),(y^\tau)'(a))$ and $(\hat{y}^\tau(b),(\hat{y}^\tau)'(b))$ on $[a,b]$. Indeed, we can take $\bar{y}(\tau,x)$ to be in the form of 
    \begin{equation*}
        \bar{y}(\tau,x)=\sum_{0\leq k\leq 3}c_k(\tau)(x-a)^k,\quad x\in[a,b],
    \end{equation*}
     where $c_k(\tau)$ are constants satisfying the the initial and terminal conditions. By construction, 
     \begin{equation*}
         \bar{h}(\tau,x)=-\nu\partial_x^2\bar{y}(\tau,x)+\bar{y}(\tau,x)^3-\lambda \bar{y}(\tau,x)\in C^\infty([a,b]).
     \end{equation*}     
     Moreover, as the mappings $\tau\mapsto y^\tau$ and $\tau\mapsto \hat{y}^\tau$ are $C^1$-smooth, we can accordingly make the mapping $\tau\mapsto\bar{h}(\tau,\cdot)$ to be $C^1$-smooth. Thus, we establish a $C^1$-path $\bar{y}$ in $\mathcal{S}_{\rm ex}$ connecting $\phi$ to $\hat{\phi}$.

    \vspace{0.6em}
    
    \noindent {\it Part 2: Reduction of the problem.}  Recall that $w(t,\phi,h)$ denotes the solution of \eqref{AC determine equation} with initial condition $w(0,\cdot)=\phi(\cdot)$ and a control $h\in H^1(0,T;C^2([a,b]))$. We now define
    \begin{equation*}
        z(t,\cdot)=w(t,\phi,h)(\cdot)-\bar{y}(\varepsilon t,\cdot),\quad \zeta(t,\cdot)=h(t,\cdot)-\bar{h}(\varepsilon t,\cdot),\quad T=\varepsilon^{-1},\quad t\in [0,\varepsilon^{-1}].
    \end{equation*}
    Then $z$ satisfies the equation 
    \begin{equation}\label{z equation2}
    \begin{cases}
         \partial_tz-\nu \partial_x^2z-(\lambda-3\bar{y}^2) z+z^2(z+3\bar{y})+\varepsilon\partial_\tau\bar{y}=\zeta,\quad x\in (0,\pi),\;t\in[0,\varepsilon^{-1}],\\
         z|_{x=0,\pi}=0,\\
        z(0,\cdot)=0.
    \end{cases}
    \end{equation}

    In particular, the desired inequality \eqref{eq steady control} is thus equivalent to
    \begin{equation}\label{eq z stabilize}
        \|z(\varepsilon^{-1})\|_{H^1}\leq \varepsilon.
    \end{equation}

    \vspace{0.6em}

    Following \cite{CT-04}, we then introduce the one-parameter family of linear operators
    \begin{equation}\label{eq A_tau def}
        A(\tau)=\nu\partial_x^2+\lambda-3\bar{y}(\tau,\cdot)^2,\quad \tau\in[0,1],
    \end{equation}
    defined on $H_0^1(0,\pi)\cap H^2(0,\pi)$. Let  $\{e_j(\tau,\cdot)\}_{j\in\N^+}$ be an orthonormal basis of $L^2(0,\pi)$ of eigenfunctions of $A(\tau)$, such that for each $j\in\N^+$ and $\tau\in[0,1]$,
    \begin{equation*}
        e_j(\tau,\cdot)\in H_0^1(0,\pi)\cap C^2([0,\pi]),
    \end{equation*}
    given that $\bar{y}(\tau,\cdot)\in H_0^1(0,\pi)\cap H^2(0,\pi)$. Let $\lambda_j(\tau)$ be the corresponding eigenvalues. It follows that these eigenfunctions and eigenvalues are $C^1$ functions of $\tau$. Additionally, for each $\tau\in[0,1]$, 
    \begin{equation*}
        -\infty<\cdots<\lambda_m(\tau)<\cdots<\lambda_1(\tau)\quad \text{and }\lim\limits_{j\rightarrow\infty}\lambda_j(\tau)=-\infty.
    \end{equation*}
    In view of the continuity of the eigenvalues on $[0,1]$, there exists $\eta>0$ and $m\in\N^+$ such that
    \begin{equation}\label{eq m def}
        \lambda_k(\tau)<-\eta<0\quad \forall\,k>m,\;\tau\in[0,1].
    \end{equation}

    In this notation, the solution $z(t,\cdot)\in H_0^1(0,\pi)\cap H^2(0,\pi)$ of \eqref{z equation2} leads to
    \begin{equation}\label{z equation3}
        \partial_t z(t,\cdot)=A(\varepsilon t)z(t,\cdot)+\zeta(t,\cdot)+r(\varepsilon,t,\cdot),
    \end{equation}
    where 
    \begin{equation}\label{eq r def}
        r(\varepsilon,t,\cdot)=-z(t,\cdot)^2(z(t,\cdot)+3\bar{y}(\varepsilon t,\cdot))-\varepsilon\partial_\tau\bar{y}(\varepsilon t,\cdot).
    \end{equation}
    Moreover, $z(t,\cdot)\in H_0^1(0,\pi)\cap H^2(0,\pi)$ can be expanded as a series of the eigenfunctions $e_j(\varepsilon t,\cdot)$,  
    \begin{equation*}
        z(t,\cdot)=\sum_{j\in\N^+}z_j(t)e_j(\varepsilon t,\cdot),
    \end{equation*}
    where the convergence is in $H_0^1(0,\pi)$.

    \vspace{0.6em}
    
    For any $\tau\in[0,1]$, let $\mathscr{P}(\tau)$ be the orthogonal projection in $L^2(0,\pi)$ onto the finite-dimensional subspace spanned by $\{e_j(\tau,\cdot)\}_{1\leq j\leq m}$, and set 
    \begin{equation}\label{eq hat_z def}
        \hat{z}(t,\cdot)=\mathscr{P}(\varepsilon t)z(t,\cdot)=\sum_{1\leq j\leq m}z_j(t)e_j(\varepsilon t,\cdot).
    \end{equation}    
    Note that the operators $\mathscr{P}(\tau)$ and $A(\tau)$ commute, and for any $v\in L^2(0,\pi)$,
    \begin{equation*}
        \mathscr{P}'(\tau)v=\sum_{1\leq j\leq m}\langle v,e_j(\tau,\cdot)\rangle\partial_\tau e_j(\tau,\cdot)+\sum_{1\leq j\leq m}\langle v,\partial_\tau e_j(\tau,\cdot)\rangle e_j(\tau,\cdot).
    \end{equation*}
    Hence differentiating \eqref{eq hat_z def} with respect to $t$ and using 
    \begin{equation*}
        A(\varepsilon t)\hat{z}(t,\cdot)=\sum_{1\leq j\leq m}\lambda_j(\varepsilon t)z_j(t)e_j(\varepsilon t,\cdot),
    \end{equation*}
    we obtain 
    \begin{equation}\label{z equation4}
        \sum_{1\leq j\leq m}z_j'(t)e_j(\varepsilon t,\cdot)=\sum_{1\leq j\leq m}\lambda_j(\varepsilon t)z_j(t)e_j(\varepsilon t,\cdot)+\mathscr{P}(\varepsilon t)\zeta(t,\cdot)+\hat{r}(\varepsilon,t,\cdot),
    \end{equation}
    where 
    \begin{equation*}
        \hat{r}(\varepsilon,t,\cdot)=\mathscr{P}(\varepsilon t)r(\varepsilon,t,\cdot)+\varepsilon\sum_{1\leq j\leq m}\langle z(t,\cdot),\partial_\tau e_j(\varepsilon t,\cdot)\rangle e_j(\varepsilon t,\cdot).
    \end{equation*}

    We now take the control $\zeta\in H^1(0,T;C^2([a,b]))$ to have the form of
    \begin{equation}\label{eq zeta def2}
        \zeta(t,\cdot)=\sum_{1\leq j\leq m}\zeta_j(t)\mathbf1_{[a,b]}(\cdot)e_j(\varepsilon t,\cdot),
    \end{equation}
    where 
    \begin{equation*}
        \zeta_j\in C^1([0,\varepsilon^{-1}]),\quad\zeta_j(0)=0\quad\text{for }1\leq j\leq m.
    \end{equation*}

    \vspace{0.6em}
    
    Projecting \eqref{z equation4} on each $e_j$ for $1\leq j\leq m$, it leads to
    \begin{equation*}
        z_j'(t)=\lambda_j(\varepsilon t)z_j(t)+\sum_{1\leq k\leq m} b_{jk}(\varepsilon t) \zeta_k(t)+\hat{r}_j(\varepsilon,t),
    \end{equation*}
    where 
    \begin{equation*}
        b_{jk}(\tau)=\langle \mathbf1_{[a,b]}(\cdot)e_k(\tau,\cdot),e_j(\tau,\cdot)\rangle,\quad \hat{r}_j(\varepsilon,t)=\langle \hat{r}(\varepsilon,t,\cdot),e_j(\varepsilon t,\cdot) \rangle.
    \end{equation*}
    Set 
    \begin{equation*}
        \alpha_j(t)=\zeta_j'(t),
    \end{equation*}
    and consider now $\zeta_j(t)$ as a state and $\alpha_j(t)$ as a control. Then the former finite dimensional system can be rewritten as    
    \begin{equation}\label{z equation5}
    \begin{cases}
         z_1'=\lambda_1z_1+\sum_{1\leq k\leq m} b_{k1}\zeta_k+\hat{r}_1,\\
         \quad\;\;\vdots\\z_m'=\lambda_mz_m+\sum_{1\leq k\leq m} b_{km}\zeta_k+\hat{r}_m,\\
         \zeta_1'=\alpha_1,\\
         \quad\;\;\vdots\\         
        \zeta_m'=\alpha_m,\\
    \end{cases}
    \end{equation}
    Let us introduce the matrix notation
    \begin{equation*}
        X(t)=(z_1(t),\cdots, z_m(t), \zeta_1(t),\cdots,\zeta_m(t))^{\rm T},\quad Y(t)= (\alpha_1(t),\cdots, \alpha_m(t))^{\rm T}, 
    \end{equation*}
    \begin{equation}\label{eq R def}
        R(\varepsilon,t)=(\hat{r}_1(\varepsilon,t),\cdots, \hat{r}_m(\varepsilon,t), 0,\cdots,0)^{\rm T},
    \end{equation}
    \begin{equation*}
        \hat{A}(\tau)=\text{diag}\left\{\lambda_1(\tau),\cdots,\lambda_m(\tau)\right\},\quad B(\tau)=\left(b_{jk}(\tau)\right)_{1\leq j,k\leq m},\quad 
    \end{equation*}
    and 
    \begin{equation*}
        D(\tau)=\left(\begin{matrix}
            \hat{A}(\tau) &  B(\tau)\\
            \mathbf{0}_m & \mathbf{0}_m
        \end{matrix}\right)\in \R^{2m\times 2m},\quad E=\left(\begin{matrix}
           \mathbf{0}_m \\
             I_m
        \end{matrix}\right)\in \R^{2m\times m},
    \end{equation*}
    where $\mathbf{0}_m$ denotes the $m\times m$ zero matrix, and $I_m$ denotes the $m\times m$ identity matrix.

    Invoking this notation, equations \eqref{z equation5}  yield the finite dimensional linear control system
    \begin{equation}\label{z equation6}
        X'(t)=D(\varepsilon t)X(t)+EY(t)+R(\varepsilon,t).
    \end{equation}

    \vspace{0.6em}

    \noindent {\it Part 3: Kalman condition and pole shifting property.} In what follows, we show the pair $(D(\tau), E)$ satisfies the Kalman condition, and hence derives a pole shifting result.  
    \begin{lemma}\label{lemma Kalman}
        For each $\tau\in[0,1]$, the pair $(D(\tau), E)$ satisfies the Kalman condition, i.e.,        \begin{equation}\label{eq Kalman}
            {\rm rank }\left (E, D(\tau)E, \cdots, D(\tau)^{2m-1}E \right)=2m.
        \end{equation}
        Therefore, there exists $K(\tau)\in \R^{m\times 2m}$ such that the matrix $D(\tau)+EK(\tau)$ admits $-1$ as an eigenvalue with order $2m$. Moreover, there exists a $C^1$-mapping 
        \begin{equation*}
            Q\colon [0,1]\rightarrow \R^{2m\times 2m},\quad Q(\tau)\text{ is symmetric and positive definite}, 
        \end{equation*}
        such that
        \begin{equation}\label{eq Q def}
            Q(\tau)\left(D(\tau)+EK(\tau))+(D(\tau)+EK(\tau)\right)^{\rm T}Q(\tau)=-I_{2m}.
        \end{equation}        
    \end{lemma}
    The proof is rather standard, which is collected in Appendix~\ref{Sec E}.

    \vspace{0.6em}
    
    \noindent {\it Part 4: Stabilization via Lyapunov functional.}    We now construct a Lyapunov functional to stabilize system \eqref{z equation6}. Let $c>0$  be a constant specified later.  Define
    \begin{equation*}
        V\colon [0,\varepsilon^{-1}]\times \R^m\times H_0^1(0,\pi)\cap H^2(0,\pi),\quad V(t,\xi,v):=cX_1(t)^{\rm T}Q(\varepsilon t)X_1(t)-\tfrac{1}{2}\langle v,A(\varepsilon t)v\rangle,
    \end{equation*}
    where $X_1(t)$ is a vector, defined as
    \begin{equation*}
        X_1(t)=(v_1(t),\cdots,v_m(t),\xi_1,\cdots,\xi_m)^{\rm T},
    \end{equation*}
    and 
    \begin{equation*}
        v_j(t)=\langle v(\cdot), e_j(\varepsilon t,\cdot)\rangle,\;j\in\N^+, \quad \xi=(\xi_1,\cdots,\xi_m)^{\rm T}.
    \end{equation*}
    In particular, one has
    \begin{equation*}
        V(t,\xi,v)=cX_1(t)^{\rm T}Q(\varepsilon t)X_1(t)-\frac{1}{2}\sum_{j\in\N^+}\lambda_j(\varepsilon t)v_j(t)^2.
    \end{equation*}

    Let $\|\cdot\|_{\R^{2m}}$ stand for the Euclidean norm in $\R^{2m}$. Recall that $Q(\tau)$ is symmetric  positive definite, and the eigenvalues of $A(\tau)$, except the $m$ first ones, are less than $-\eta<0$. Therefore, there exists $c_0>0$ sufficiently large, such that for any $c\geq c_0$, there exists $C>1$ satisfying
    \begin{equation}\label{eq V estimate}
        C^{-1}\left(\|X_1(t)\|_{\R^{2m}}^2-\sum_{j>m}\lambda_j(\varepsilon t)v_j(t)^2\right)\leq V(t,\xi,v)\leq C\left(\|X_1(t)\|_{\R^{2m}}^2-\sum_{j>m}\lambda_j(\varepsilon t)v_j(t)^2\right)
    \end{equation}
    for any $\varepsilon\in(0,1]$, $t\in(0,\varepsilon^{-1}]$, $\xi\in\R^m$ and $v\in H_0^1(0,\pi)\cap H^2(0,\pi)$, where 
    \begin{equation*}
        \|X_1(t)\|_{\R^{2m}}^2=\sum_{1\leq j\leq m}\xi_j^2+\sum_{1\leq j\leq m}v_j(t)^2.
    \end{equation*}
    We further have the following lemma, whose proof is placed in Appendix~\ref{Sec E}.
    \begin{lemma}\label{lemma L-functional}
        There exists $c_0>0$ such that for any $c\geq c_0$, there exists $C>1$ satisfying
        \begin{equation}\label{eq V estimate1}
             C^{-1}\left(\|\xi\|_{\R^{2m}}^2+\|v\|_{H^1}^2\right)\leq V(t,\xi,v)\leq C\left(\|\xi\|_{\R^{2m}}^2+\|v\|_{H^1}^2\right)
        \end{equation}
        and 
        \begin{equation}\label{eq V estimate2} V(t,\xi,v)\leq C\left(\|X_1(t)\|_{\R^{2m}}^2+\|A(\varepsilon t)v\|^2\right)
        \end{equation}
        for any $\varepsilon\in(0,1]$, $t\in(0,\varepsilon^{-1}]$, $\xi\in\R^{2m}$ and $v\in H_0^1(0,\pi)\cap H^2(0,\pi)$.
    \end{lemma}

    \vspace{0.6em}

    It remains to construct suitable control $\zeta$ to derive \eqref{eq z stabilize}. Recall that $z$ denotes the solution of \eqref{z equation2}, and the control $\zeta$ is of form of \eqref{eq zeta def2}. Taking Lemma \ref{lemma Kalman} into account, let us choose the control in the feedback form, i.e.,
    \begin{equation*}
        (\alpha_1(t),\cdots,\alpha_m(t))^{\rm T}=K(\varepsilon t)X(t),\quad 
        \hat{\zeta}'=(\zeta_1',\cdots,\zeta_m')^{\rm T}=(\alpha_1,\cdots,\alpha_m)^{\rm T},\quad 
        \hat{\zeta}(0)=0.
    \end{equation*}
    We set
    \begin{equation}\label{eq hat V def}
        \hat{V}(t)=V(t,\hat{\zeta}(t),z(t,\cdot))=cX(t)^{\rm T}Q(\varepsilon t)X(t)-\tfrac{1}{2}\langle z(t,\cdot), A(\varepsilon t)z(t,\cdot)\rangle.
    \end{equation}

    Differentiating $\hat{V}$ with respect to $t$ and using the definitions,  we compute that
    \begin{align*}
       \hat{V}'&=-c\|X\|^2-\|Az\|^2-\langle Az,\zeta\rangle-\langle Az,r\rangle+c(R^{\rm T}QX+X^{\rm T}QR)\\
       &\quad+\varepsilon cX^{\rm T}Q'X+3\varepsilon\langle z,\bar{y}\partial_\tau\bar{y}z\rangle.
    \end{align*}

    Next we specify the choice of the parameter $c$. From the definition of $\zeta$ and $\hat{V}$, i.e. \eqref{eq zeta def2}, \eqref{eq hat V def}, there exists a generic constant $c_1>0$ such that
    \begin{equation*}
        |\langle Az,\zeta\rangle|\leq\tfrac{1}{2}\|Az\|^2+\|\zeta\|^2\leq \tfrac{1}{2}\|Az\|^2+c_1\|\hat{\zeta}\|^2_{\R^{2m}}\leq\tfrac{1}{2}\|Az\|^2+c_1\|X\|^2_{\R^{2m}}.
    \end{equation*}
    We now fix any $c>c_0+c_1$, and denote $\hat{c}=c-c_1>0$.

    In what follows, we use the letter $C$ to denote generic constants, which are independent of $\varepsilon\in(0,1]$, $t\in[0,\varepsilon^{-1}]$, and may vary from line to line. We have the following estimates:
    \begin{itemize}[leftmargin=2em]
        \item[\tiny$\bullet$] By Lemma \ref{lemma Kalman}, the mapping $\tau\mapsto Q(\tau)$ is $C^1$-smooth, thus
        \begin{equation*}
            |\varepsilon cX^{\rm T}Q'X|\leq C\varepsilon\|X\|^2_{\R^{2m}}\leq C\varepsilon \hat{V}.
        \end{equation*}
        \item[\tiny$\bullet$]  Recall that $\bar{y}\in H^1(0,1;H_0^1(0,\pi))\cap H^2(0,\pi)$, and hence
        \begin{equation*}
           |3\varepsilon\langle z,\bar{y}\partial_\tau\bar{y}z\rangle|\leq C\varepsilon\|z\|^2_{L^\infty(0,\pi)}\leq C\varepsilon\|z\|_{H^1}^2\leq C\varepsilon \hat{V}.
        \end{equation*}
        
        \item[\tiny$\bullet$] The remainder $r(\varepsilon,t,\cdot)$ is given by \eqref{eq r def}. We have
        \begin{align*}
            |\langle Az,r\rangle|=|\langle Az,z^2(z+3\bar{y})+\varepsilon\partial_\tau\bar{y}\rangle|\leq C\|Az\|\left(\|z\|_{L^\infty(0,\pi)}^3+\|z\|_{L^\infty(0,\pi)}^2\right)+C\varepsilon\|Az\|.
        \end{align*}
         Since $H^1(0,\pi)$ is continuously embedded in $C([0,\pi])$, for $\|z(t,\cdot)\|_{L^\infty(0,\pi)}\leq 1$, we derive 
         \begin{align*}
            |\langle Az,r\rangle|\leq C\left(\|Az\|\|z\|_{L^\infty(0,\pi)}^2+\varepsilon\|Az\|\right)\leq C\|Az\|(\hat{V}+\varepsilon).
        \end{align*}

        \item[\tiny$\bullet$] Notice that $R(\varepsilon,t)$ is given by \eqref{eq R def} and
        \begin{equation*}
            \hat{r}_j(\varepsilon,t)=\langle r(t,\cdot),e_j(\varepsilon t,\cdot)\rangle+\varepsilon\langle z(t,\cdot),\partial_\tau e_j(\varepsilon t,\cdot)\rangle,\quad 1\leq j \leq m.
        \end{equation*}
        We compute that 
        \begin{equation*}
            |\hat{r}_j(\varepsilon,t)|\leq C\left(\varepsilon+\|z(t,\cdot)\|_{L^\infty(0,\pi)}^2+\|z(t,\cdot)\|_{L^\infty(0,\pi)}^3\right),
        \end{equation*}
        which implies that, for $\|z(t,\cdot)\|_{L^\infty(0,\pi)}\leq 1$, 
        \begin{equation*}
            \|R(\varepsilon,t)\|_{L^\infty(\R^{2m})}\leq C\left(\varepsilon+\|z(t,\cdot)\|_{H^1}^2\right).
        \end{equation*}
        Therefore, for $\|z(t,\cdot)\|_{L^\infty(0,\pi)}\leq 1$, it follows that
        \begin{equation*}
            |c(R^{\rm T}QX+X^{\rm T}QR)|\leq C\|X\|_{\R^{2m}}\left(\varepsilon+\|z\|_{H^1}^2\right)\leq C\left(\varepsilon \hat{V}^{1/2}+\hat{V}^{3/2}\right).
        \end{equation*}
    \end{itemize}

    In summary, collecting these estimates, we arrive at, for $\|z(t,\cdot)\|_{L^\infty(0,\pi)}\leq 1$,  
     \begin{align*}
       \hat{V}'+\hat{c}\|X\|^2+\tfrac{1}{2}\|Az\|^2&\leq C\left(\varepsilon \hat{V}^{1/2}+\hat{V}^{3/2}+\varepsilon \|Az\|+\hat{V}\|Az\|\right).
    \end{align*}
    Note that, for any $\theta>0$,
    \begin{align*}
        \varepsilon \hat{V}^{1/2}\leq \frac{\theta}{2}\hat{V}+\frac{1}{2\theta}\varepsilon^2,\quad 
        \varepsilon \|Az\|\leq \frac{\theta}{2}\|Az\|^2+\frac{1}{2\theta}\varepsilon^2,\quad
        \hat{V} \|Az\|\leq \frac{\theta}{2}\|Az\|^2+\frac{1}{2\theta}\hat{V}^2.
    \end{align*}

    Hence, taking Lemma \ref{lemma L-functional} into account and letting $\theta>0$ be sufficiently small, there exists a constant $\hat{C}>1$ such that, for $\|z(t,\cdot)\|_{L^\infty(0,\pi)}\leq 1$,  
    \begin{equation*}
        \hat{V}'+\hat{C}^{-1}\hat{V}\leq \hat{C}\left(\hat{V}^2+\hat{V}^{3/2}+\varepsilon^2\right).
    \end{equation*}
    In particular, there exists $\sigma>0$ such that for any $\varepsilon\in(0,1]$, $t\in[0,\varepsilon^{-1}]$,  $\hat{V}(t)\leq \sigma$ and $\|z(t,\cdot)\|_{L^\infty(0,\pi)}\leq 1$, 
    \begin{equation*}
        \hat{V}'\leq \hat{C}\varepsilon^2.
    \end{equation*}

    Meanwhile, recall that $\hat{V}(0)=0$ and \eqref{eq V estimate1}. Thus for any
    \begin{equation*}
        0<\varepsilon\leq \min\left\{\sigma/\hat{C},\sigma/\tilde{C}\right\}:=\hat{\varepsilon},
    \end{equation*}
    where $\tilde{C}>0$ is a constant such that $\|z(t,\cdot)\|_{L^\infty(0,\pi)}\leq \tilde{C}\hat{V}(t)$, we obtain
    \begin{equation*}
        \hat{V}(t)\leq \hat{C}\varepsilon\quad\forall\,t\in[0,\varepsilon^{-1}].
    \end{equation*}
    
    Consequently, we conclude that
    \begin{equation*}
        \|z(\varepsilon^{-1})\|_{H^1}^2\leq C\hat{V}(\varepsilon^{-1})\leq C\varepsilon,
    \end{equation*}
    where the constant $C>0$ does not depend on $\varepsilon\in(0,\hat{\varepsilon}]$.

    \vspace{0.3em}

    This completes the proof of \eqref{eq z stabilize}, and thereby, the proof of Proposition \ref{prop steady control}.

    \end{proof}

    \subsubsection{Step 2: Convergence to the steady-states}\label{Sec 4.1.2}

   This subsection collects some dynamical properties for the unforced equation \eqref{AC unforced equation}.  Lemma \ref{lemma pointwise} is used to derive global steady-state controllability, i.e. Theorem \ref{thm null-control}; while Lemma \ref{lemma uniform} is applied to establish Theorem \ref{thm irre-control}.

    \begin{lemma}\label{lemma pointwise}
       Let $\nu,\lambda>0$ be arbitrary. Then for any $u_0\in H$, there exists $\phi_{u_0}\in\mathcal{S}$ such that the solution $\phi$ of equation \eqref{AC unforced equation} satisfies that
        \begin{equation}\label{eq uniform3}
            \lim\limits_{t\rightarrow\infty}\|\phi(t,u_0)-\phi_{u_0}\|_{H^1}=0.
        \end{equation}
    \end{lemma}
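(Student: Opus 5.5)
The plan is to use the gradient structure of \eqref{AC unforced equation} together with the finiteness of $\mathcal{S}$ and LaSalle's invariance principle. The Ginzburg--Landau energy
\begin{equation*}
E(v)=\int_0^\pi\left(\frac{\nu}{2}|\partial_x v|^2+\frac14 v^4-\frac{\lambda}{2}v^2\right)dx
\end{equation*}
is a strict Lyapunov functional on $H_0^1(0,\pi)$. For $t>0$ one has $\frac{d}{dt}E(\phi(t))=-\|\partial_t\phi(t)\|^2$, so $E$ is nonincreasing along trajectories. Moreover, $E(\phi(t))$ is constant on an interval only if $\phi$ is an equilibrium there.

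First I will establish the needed compactness. Parabolic smoothing gives $\phi(t,u_0)\in H_0^1\cap H^2$ for $t>0$. For the $L^2$ bound, I will multiply by $\phi$ and use $\phi^4-\lambda\phi^2\geq \phi^2-C$, which yields $\frac{d}{dt}\|\phi\|^2+2\|\phi\|^2\le C$. This gives a uniform $L^2$ bound. The energy is bounded below, since $E(v)\ge -\lambda^2\pi/4$, and it is nonincreasing from $t=1$ on. Combining this with $E(v)\ge \frac{\nu}{2}\|\partial_x v\|^2-C$, the trajectory $\{\phi(t)\}_{t\ge1}$ is bounded in $H_0^1$. A further $H^2$ estimate, obtained by testing with $-\partial_x^2\phi$ and applying a uniform Gronwall argument, shows that the orbit for $t\ge 2$ is bounded in $H^2$ and hence precompact in $H^1$.

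Next I will run the LaSalle argument. The $\omega$-limit set $\omega(u_0)$ in the $H^1$ topology is nonempty, compact, connected and invariant, and by continuity of $E$ on $H^1$, $E$ is constant on it. Take any $\psi\in\omega(u_0)$. The trajectory starting from $\psi$ stays in $\omega(u_0)$, so $E$ is constant along it. Since $\frac{d}{dt}E=-\|\partial_t\phi\|^2$, this forces $\partial_t\phi\equiv0$, so $\psi\in\mathcal{S}$. Thus $\omega(u_0)\subset\mathcal{S}$. The integrability $\int_1^\infty\|\partial_t\phi\|^2dt<\infty$ can serve as a sanity check on this step.

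The final step uses that $\mathcal{S}$ is finite: $\mathcal{S}=\{0\}$ when $\lambda\le\nu$, and it has $2\lfloor\lambda/\nu\rfloor+1$ elements otherwise, as recalled above. A connected subset of a finite set is a single point, so $\omega(u_0)=\{\phi_{u_0}\}$. Together with precompactness in $H^1$, this gives $\|\phi(t,u_0)-\phi_{u_0}\|_{H^1}\to0$.

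The main obstacle is the rigorous precompactness in $H^1$ for initial data that are only in $L^2$, that is, the smoothing and uniform $H^2$ bound, and also making the energy identity valid for the strong solution. These are handled by standard regularization: use the energy identity for $t\ge t_0>0$ and approximate by smooth data where needed. Without finiteness of $\mathcal{S}$ one would need a Łojasiewicz argument, but here finiteness suffices.
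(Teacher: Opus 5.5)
Your proposal is correct and follows essentially the same route as the paper: the Ginzburg--Landau energy makes the unforced flow a gradient system, so $\omega(u_0)\subset\mathcal{S}$ (you derive this directly via LaSalle, where the paper cites Henry), and connectedness of $\omega(u_0)$, finiteness of $\mathcal{S}$ and precompactness of the orbit then give the $H^1$ convergence. The only difference is that you write out the compactness estimates the paper attributes to parabolic smoothing; there, strictly, testing with $-\partial_x^2\phi$ yields only the time-integrated bound $\int_t^{t+1}\|\phi\|_{H^2}^2\,ds\le C$, and the pointwise $H^2$ bound comes from a further test with $\partial_x^4\phi$ (or a bound on $\partial_t\phi$) combined with the uniform Gronwall lemma.
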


    \begin{proof}[Proof]
        Let us first check that $\phi(t,\cdot)\colon H_0^1(0,\pi)\rightarrow H_0^1(0,\pi)$, $t\geq 0$, is a gradient system (see \cite[Definition 3.8.1]{Hale-88}). To this end, in view of the parabolic regularization, it suffices to construct a suitable Lyapunov function. We take
    \begin{equation*}
        V\colon H_0^1(0,\pi)\rightarrow\R,\quad V(\phi)=\int_0^\pi\left(\frac{\nu}{2}|\phi'|^2-\frac{\lambda}{2}\phi^2+\frac{1}{4}\phi^4\right)dx.
    \end{equation*}
    Note that $V$ is continuous in $H_0^1(0,\pi)$ and bounded below. Moreover,
    \begin{equation*}
        \partial_t V(\phi(t,u_0))=-\|\partial_t\phi(t,u_0)\|^2\leq 0,
    \end{equation*}
    which ensures that $V(\phi(t,u_0))$ is strictly decreasing unless $u_0\in\mathcal{S}$.  Thus $\phi(t,\cdot)$ is a gradient system. 

     In particular, by \cite[Section 5.3]{Henry-81}, for any $u_0\in H_0^1(0,\pi)$, the $\omega$-limit set $\omega(u_0)\subset \mathcal{S}$. Since $\omega(u_0)$ is connected and $\mathcal{S}$ is finite, there exists $\phi_{u_0}\in\mathcal{S}$ such that $\omega(u_0)=\{\phi_{u_0}\}$. We claim that this implies \eqref{eq uniform3}. Otherwise there exists $t_n\rightarrow\infty$ and $\eta>0$ such that
     \begin{equation}\label{eq uniform4}
         \|\phi(t_n,u_0)-\phi_{u_0}\|_{H^1}\geq \eta\quad \forall\,n\in\N^+.
     \end{equation}
    On the other hand, the precompactness of the orbit ensures the existence of a convergent subsequence $\phi(t_{n_k},u_0)\rightarrow\psi\in\omega(u_0)=\{\phi_{u_0}\}$. This gives $\psi=\phi_{u_0}$, contradicting to \eqref{eq uniform4}. Consequently, in view of the parabolic smoothing effect, we complete the proof of Lemma \ref{lemma pointwise}.
    \end{proof}

    \begin{lemma}\label{lemma uniform}
       Let $\nu,\lambda>0$ be arbitrary. Then for any $\varepsilon>0$, there exists a constant $T=T(\varepsilon)>0$ with the following property: for any $u_0\in H$, there exists $\phi_{u_0}\in\mathcal{S}$ and $t_{u_0}\in[1,T+1]$ such that the solution $\phi$ of equation \eqref{AC unforced equation} satisfies that
       \begin{equation}\label{eq uniform}
           \|\phi(t_{u_0},u_0)-\phi_{u_0}\|_{H^1}\leq \varepsilon.
       \end{equation}
    \end{lemma}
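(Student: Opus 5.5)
The plan is to reduce the uniform statement to a compactness argument on a bounded set in $H_0^1$, fed by a uniform absorbing-ball estimate and by Lemma~\ref{lemma pointwise}.

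\emph{Step 1: absorbing set.} Testing \eqref{AC unforced equation} with $\phi$ gives
\[
\tfrac12\tfrac{d}{dt}\|\phi\|^2+\nu\|\partial_x\phi\|^2+\|\phi\|_{L^4}^4\le\lambda\|\phi\|^2 .
\]
Since $\|\phi\|^4\le\pi\|\phi\|_{L^4}^4$, this yields an ODE inequality of Bernoulli type. Hence $\|\phi(t,u_0)\|\le C_0(1+t^{-1/2})$ for all $u_0\in H$, with $C_0$ independent of $u_0$. In particular $\|\phi(1/2,u_0)\|\le R_0$ uniformly.

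Next, testing with $-\partial_x^2\phi$, or using the energy $V$ together with its decay, and applying parabolic smoothing on $[1/2,1]$, gives a uniform bound $\|\phi(1,u_0)\|_{H^1}\le R_1$ for all $u_0\in H$. The same smoothing gives $\|\phi(1,u_0)\|_{H^2}\le R_2$. So $K:=\{\phi(1,u_0):u_0\in H\}$ lies in a bounded subset of $H^2\cap H_0^1$, and its $H^1$-closure $\bar K$ is compact in $H_0^1$.

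\emph{Step 2: a finite cover.} For each $v\in\bar K$, Lemma~\ref{lemma pointwise} gives $\phi_v\in\mathcal S$ and a time $\tau_v$ with $\|\phi(\tau_v,v)-\phi_v\|_{H^1}\le\varepsilon/2$. The map $v\mapsto\phi(\tau_v,v)$ is continuous from $H_0^1$ to $H_0^1$, by local well-posedness and continuous dependence on a fixed finite time interval. So there is an $H^1$-neighbourhood $U_v$ of $v$ on which $\|\phi(\tau_v,w)-\phi_v\|_{H^1}\le\varepsilon$.

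By compactness, finitely many $U_{v_1},\dots,U_{v_k}$ cover $\bar K$. Set $T:=\max_i\tau_{v_i}$.

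\emph{Step 3: conclusion.} Given $u_0\in H$, we have $\phi(1,u_0)\in U_{v_i}$ for some $i$. The semigroup property gives $\phi(1+\tau_{v_i},u_0)=\phi(\tau_{v_i},\phi(1,u_0))$, so taking $t_{u_0}=1+\tau_{v_i}\in[1,T+1]$ and $\phi_{u_0}=\phi_{v_i}$ proves \eqref{eq uniform}.

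\emph{Main obstacle.} The main obstacle is Step 1: the bound on $\phi(1,u_0)$ must be uniform over all of $H$, not merely over bounded sets. This relies on the cubic dissipation, via the Bernoulli inequality $y'\le 2\lambda y-\tfrac{2}{\pi}y^2$, which gives a bound independent of $y(0)$. The $H^1$ and $H^2$ smoothing then needs the Gagliardo--Nirenberg inequality to control $\phi^3$, or simply the 1D embedding $H^1\subset L^\infty$. I would handle the latter by the standard estimate $t\|\partial_x\phi\|^2$ combined with the decreasing energy $V$. Continuity of the flow in $H_0^1$ on the compact time interval $[0,\tau_v]$ is routine.
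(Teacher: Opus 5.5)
Your proof is correct and follows essentially the same route as the paper. Both arguments use the same three ingredients:
\begin{itemize}
\item a uniform absorbing bound in $H$ from the Bernoulli-type energy inequality;
\item parabolic smoothing to a uniform $H^2$ bound at time $1$;
\item Lemma~\ref{lemma pointwise} combined with $H^1$-continuity of the flow and a finite subcover of an $H^1$-compact set, followed by the semigroup shift by time $1$.
\end{itemize}
The only cosmetic difference is that you cover the $H^1$-closure of $\{\phi(1,u_0):u_0\in H\}$, whereas the paper covers the whole closed ball $B_{H^2}(R)$.
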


    \begin{proof}[Proof]

     \noindent {\it Part 1: Uniform bounds.}
         We first show that $\phi(t,u_0)$ for $t\geq 1$ is uniformly bounded in $H^2(0,\pi)$, i.e., there exists a constant $R=R(\nu,\lambda)$ such that
         \begin{equation}\label{eq uniform1}
             \|\phi(t,u_0)\|_{H^2}\leq R\quad\forall\,u_0\in H,\;t\geq 1.
         \end{equation}
         In view of the parabolic smoothing effect, it suffices to show that $\phi(\tfrac{1}{2},u_0)$ is uniformly bounded in $H$. To this end, using Young's inequality, one has
     \begin{align*}
            \tfrac{1}{2}\partial_t\|\phi\|^2=-\nu\|\phi\|_{H^1}^2+\lambda\|\phi\|^2-\|\phi^2\|^2\leq -\nu\|\phi\|_{H^1}^2-\tfrac{1}{2}\|\phi^2\|^2+2\pi\lambda^2\leq -\tfrac{1}{2\pi^2}\|\phi\|^4+2\pi\lambda^2.
        \end{align*}
    Thus we see that the function $g(t)=\|\phi(t)\|^2$ satisfies the differential inequality
    \begin{equation}\label{eq uniform2}
        \partial_t g+2c_1g^2\leq c_2
    \end{equation}
    with $c_1=\pi^{-2}/2$ and $c_2=4\pi\lambda^2$. It follows that, as long as $g(t)\geq K_1:=(c_2/c_1)^{1/2}$, then 
    \begin{equation*}
        \partial_t g+c_1g^2\leq 0.
    \end{equation*}
    Solving the  differential inequality \eqref{eq uniform2}, we obtain 
    \begin{equation*}
    g(t)\leq \left(g(s)^{-1}+c_3(t-s)\right)^{-1}\quad \text{for }0\leq s\leq t,
    \end{equation*}
    where $c_3>0$ is a constant. Thus if $g(0)\leq K_1$, then $g(t)\leq K_1$ for any $t\geq 0$. Conversely, if $g(0)>K_1$, then
    \begin{equation*}
        g(t)\leq \left(g(0)^{-1}+c_3t\right)^{-1}\quad \text{for }0\leq t\leq t_*,
    \end{equation*}
    where $t_*>0$ is the first instant $t>0$ such that $g(t)=K_1$. Now taking $K_2=2c_3^{-1}$, we thus derive that
    \begin{equation*}
        \|\phi(t,u_0)\|\leq K:=\max\left\{K_1^{1/2},K_2^{1/2}\right\}\quad\forall\, u_0\in H,\; t\geq 1/2.
    \end{equation*}

    \vspace{0.6em}

     \noindent {\it Part 2: Proof of \eqref{eq uniform}.} In view of \eqref{eq uniform1}, it suffices to
     show that for any $\varepsilon>0$, there exists $T=T(\varepsilon)>0$ such that for any $u_0\in B_{H^2}(R)$, there exists $\phi_{u_0}\in\mathcal{S}$ and $t_{u_0}\in[0,T]$ satisfying
       \begin{equation}\label{eq uniform5}
           \|\phi(t_{u_0},u_0)-\phi_{u_0}\|_{H^1}\leq \varepsilon.
       \end{equation}
     Using the pointwise convergence \eqref{eq uniform3}, for any $u_0\in B_{H^2}(R)$, there exists $s_{u_0}>0$ such that
     \begin{equation*}
         \|\phi(s_{u_0},u_0)-\phi_{u_0}\|_{H^1}< \varepsilon/2.
     \end{equation*}
    Then by the continuity of $v\mapsto \phi(s_{u_0},v)$ in the $H^1$-topology, there exists $\delta_{u_0}>0$ such that
    \begin{equation*}
         \|\phi(s_{u_0},v)-\phi_{u_0}\|_{H^1}<\varepsilon\quad\forall\, v\in B_{H^1}^\circ(u_0,\delta_{u_0}),
     \end{equation*}
    where $B_{X}^\circ(x,r)$ denotes the open ball in $X$ centered at $x$ with radius $r$. Therefore, the family $\{B_{H^1}^\circ(u_0,\delta_{u_0})\}_{u_0\in B_{H^2}(R)}$ forms an open cover of the compact set $B_{H^2}(R)$. This implies that there exists $N\in\N^+$, $\{u_k\}_{1\leq k\leq N}\subset B_{H^2}(R)$ such that
    \begin{equation*}
        B_{H^2}(R)\subset \bigcup_{1\leq k\leq N} B_{H^1}(u_j,\delta_{u_j}),
    \end{equation*}
    which ensures \eqref{eq uniform5} with $T:=\max\{s_{u_k}:1\leq k\leq N\}$. This completes the proof of Lemma~\ref{lemma uniform}.

    \end{proof}

    \subsubsection{Step 3: Local exact controllability and the proof of Theorem \ref{thm null-control}}\label{Sec 4.1.3} In view of the dynamics of the unforced system, i.e. Lemma~\ref{lemma pointwise}, and the approximate controllability between steady-states, i.e. Proposition~\ref{prop steady control}, the global steady-state controllability would then follow from the following local result.

    \begin{lemma}\label{lemma local control}
    For any $\nu,\lambda>0$ and $0\leq a<b\leq \pi$, the local steady-state controllability holds: for any $T>0$, there exists $\delta>0$ such that for any $\phi\in\mathcal{S}$ and $u_0\in B_{H^1}(\phi,\delta)$, there exists $\zeta\in L^2(0,T;L^2(a,b))$ such that the solution $w$ of equation \eqref{AC determine equation} satisfies that
    \begin{equation*}
        w(T,u_0,\zeta)=\phi.
    \end{equation*}
    
    \end{lemma}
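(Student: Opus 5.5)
The plan is the standard linearize-plus-fixed-point argument, as in \cite{Emanuilov-95}. Since $\mathcal{S}$ is finite, it suffices to fix one $\phi\in\mathcal{S}$ and then take the minimum of the finitely many resulting radii $\delta$.

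\emph{Reduction.} Write $w=\phi+z$. Then $z$ solves
\begin{equation*}
\partial_t z-\nu\partial_x^2 z+(3\phi^2-\lambda)z+3\phi z^2+z^3=\mathbf{1}_{(a,b)}\zeta,\qquad z(0)=u_0-\phi,
\end{equation*}
and the goal becomes $z(T)=0$. Here $\phi\in C^\infty([0,\pi])$, so the potential $g=3\phi^2-\lambda$ is bounded.

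\emph{Step 1: linear null controllability.} Consider the linearized system with a general bounded potential $g_1=3\phi^2-\lambda+p$, where $\|p\|_{L^\infty(Q)}\le 1$. Lemma~\ref{lemma OI} (the Fernández-Cara--Zuazua observability inequality) gives an observability constant that depends only on $\|g_1\|_{L^\infty}$ and $T$. By HUM duality (the $\varepsilon=0$ case of Proposition~\ref{prop dual}, with the full observation $\mathbf{1}_{(a,b)}$ in place of $\mathsf{P}_NB^*$), for every $z_0\in H$ there is a control $\zeta$ with
\begin{equation*}
\|\zeta\|_{L^2(0,T;L^2(a,b))}\le C_T\|z_0\|
\end{equation*}
that drives the solution to $0$ at time $T$, and $C_T$ is uniform over such $p$. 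The Carleman estimate of Lemma~\ref{lemma Carleman} also allows one to construct the control with a decaying weight; that is useful if one prefers a source-term argument, but the potential approach below avoids needing it.

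\emph{Step 2: fixed point.} Write the nonlinearity as a potential: $3\phi z^2+z^3=p_z z$ with $p_z=3\phi z+z^2$. For $\bar z$ in the ball
\begin{equation*}
\mathcal{K}=\{\bar z\in L^\infty(Q):\|\bar z\|_{L^\infty}\le r\}
\end{equation*}
with $r$ small, we have $\|p_{\bar z}\|_\infty\le 1$. Let $\Lambda(\bar z)$ be the set of solutions of the linear problem with potential $g+p_{\bar z}$ and the controls from Step 1 (for instance, the minimal-norm HUM control) that vanish at $T$.

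We need an $L^\infty$ bound on $\Lambda(\bar z)$. Parabolic regularity in one dimension gives
\begin{equation*}
\|z\|_{L^\infty(Q)}\le C\bigl(\|z_0\|_{H^1}+\|\zeta\|_{L^2}\bigr)\le C'\|z_0\|_{H^1},
\end{equation*}
using $L^2(0,T;L^2)$ forcing with $H^1_0$ data and the embedding of $C([0,T];H^1_0)$ into $L^\infty$. Hence if $\delta$ is small, $\Lambda$ maps $\mathcal{K}$ into itself. This is exactly where the $H^1$-closeness in the statement is used.

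Kakutani's theorem then finishes the argument: $\Lambda$ has convex values, and it is compact and upper semicontinuous because of the extra regularity (for example, Aubin--Lions compactness into $C(\bar Q)$). Alternatively, Schauder's theorem applies if one selects the minimal-norm control, which depends continuously on the potential. A fixed point $z$ solves the nonlinear problem with $z(T)=0$, so $w(T,u_0,\zeta)=\phi$.

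\emph{Main obstacle.} The key difficulty is uniformity in Step 2: the control constant must stay uniform as the potential varies in a bounded set, and the map must be continuous and compact. Lemma~\ref{lemma OI} gives the uniformity directly, since its constant depends only on $\|g\|_{L^\infty}$. For continuity, the main tool is the minimal-norm characterization, which is stable under $L^\infty$-weak-$*$ convergence of the potentials together with strong convergence of the adjoint states.
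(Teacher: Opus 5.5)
Your proof is correct, but it is organized differently from the paper's. The paper does not run a fixed point itself. It replaces $s^3-\lambda s$ by a globally Lipschitz $f$ that agrees with it on $[-\|\phi\|_{L^\infty}-1,\|\phi\|_{L^\infty}+1]$, and quotes \cite[Theorem 3.1]{Emanuilov-95} (null controllability for globally Lipschitz semilinear heat equations). It also reads off from that proof the bound $\|q-\bar\phi\|_{Y_T}\le C\|u_0-\phi\|_{H^1}$, and uses the resulting $L^\infty$ smallness to see that the truncation is never active.

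You instead carry out the linearization-plus-fixed-point argument directly. Your ball $\mathcal K$ plays exactly the role of the paper's truncation, since it keeps $p_{\bar z}=3\phi\bar z+\bar z^2$ bounded by $1$. The uniform-in-potential observability of Lemma~\ref{lemma OI} then supplies the uniform control cost that the paper takes from Imanuvilov. The $H^1$-closeness enters at the same place in both arguments, through $L^2(0,T;H^2)\cap H^1(0,T;L^2)\hookrightarrow C([0,T];H_0^1)\hookrightarrow L^\infty(Q)$. Your route buys self-containedness within the paper and an explicit derivation of the quantitative estimate that the paper extracts from inside a cited proof. The paper's route buys brevity.

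Of your two fixed-point options, rely on the Kakutani one, with $\Lambda(\bar z)$ the convex set of null-controlled states whose controls obey $\|\zeta\|\le C_T\|z_0\|$. Nonempty convex values and a closed graph follow from weak compactness of the controls and uniform convergence of the potentials. Compactness follows from the Aubin--Lions--Simon lemma, since $H^1_0(0,\pi)$ embeds compactly in $C([0,\pi])$. The Schauder variant would need continuity of the minimal-norm null control with respect to the potential. You only assert this, it is the genuinely delicate point, and it is not needed.
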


    \begin{proof}[Proof]
        The proof follows the idea of \cite[Section 2.4]{CT-04}, which invokes \cite[Theorem 3.1]{Emanuilov-95} to construct the required controls. Let $T>0$, $\phi\in\mathcal{S}$ and $u_0\in B_{H^1}(\phi,\delta)$ be given, where $\delta>0$ will be specified later. Recall that $\mathcal{S}$ is a finite set. Define $f\in C^1(\R)$ by a globally Lipschitz function such that
        \begin{equation}\label{eq local1}
            f(s)=s^3-\lambda s\quad\forall\,s\in\left[-\|\phi\|_{L^\infty(0,\pi)}-1,\|\phi\|_{L^\infty(0,\pi)}+1\right].
        \end{equation}
        Then by \cite[Theorem 3.1]{Emanuilov-95}, there exists $\zeta\in L^2(0,T;L^2(a,b))$ such that the solution of    
         \begin{equation*}
        \begin{cases}
         \partial_tv-\nu \partial_x^2v+ f(v+\phi)-f(\phi)=\zeta(t,x),\quad x\in (0,\pi),\;t\in(0,T),\\
        v(0,\cdot)=u_0(\cdot)-\phi(\cdot),
        \end{cases}
        \end{equation*}
        satisfies that
        \begin{equation*}
            v(T)=0.
        \end{equation*}
        Additionally, from the proof of \cite[Theorem 3.1]{Emanuilov-95}, there exists a constant $C>0$ such that 
        \begin{equation*}\label{eq local}
            \|v\|_{Y_T}\leq C\|u_0-\phi\|_{H^1},
        \end{equation*}
        where $ \|v\|_{Y_T}^2:=\|v\|_{L^2(0,T;H^2(0,\pi))}^2+\|\partial_tv\|_{L^2(0,T;H)}^2$. This implies that for $q=v+\bar{\phi}$ with $\bar{\phi}(t)\equiv\phi$, one has
        \begin{equation}\label{eq local3}
             \begin{cases}
         \partial_tq-\nu \partial_x^2q+ f(q)=\zeta(t,x),\quad x\in (0,\pi),\;t\in(0,T),\\
        q(0,\cdot)=u_0(\cdot),\\
        q(T,\cdot)=\phi(\cdot),
        \end{cases}
        \end{equation}
        and 
        \begin{equation}\label{eq local2}
            \|q-\bar{\phi}\|_{Y_T}\leq C\|u_0-\phi\|_{H^1}.
        \end{equation}

        In particular, \eqref{eq local2} implies that there exists a constant $\hat{C}>0$ such that
        \begin{equation*}
            \|q-\bar{\phi}\|_{L^\infty((0,T)\times (0,\pi))}\leq \hat{C}\|u_0-\phi\|_{H^1}\leq \hat{C}\delta.
        \end{equation*}
        Therefore, by taking $\delta>0$ be such that $\hat{C}\delta\leq 1$ and using the definitions \eqref{eq local1},\eqref{eq local3}, we conclude that
        \begin{equation*}
            f(q)=q^3-\lambda q,\quad w(T,u_0,\zeta)=q(T)=\phi,
        \end{equation*}
        completing the proof of Lemma \ref{lemma local control}.

    \end{proof}

    Consequently, this completes the proof of Theorem \ref{thm null-control}.

    \subsubsection{Step 4: Construction of finitely many controls and proof of Theorem~\ref{thm irre-control}}\label{Sec 4.1.4}

    The construction proceeds as follows. We first use Proposition~\ref{prop steady control} to select a finite family of controls $h_j$, which are then truncated to a common finite-dimensional space. The main issue is to make the construction uniform with respect to the initial condition $u_0$. Although each trajectory approaches a steady state, the hitting time is not explicit. By Lemma~\ref{lemma uniform}, there exists a deterministic time $T_{\delta_2}$, independent of $u_0$, such that every trajectory enters a small neighborhood of some steady state before time $T_{\delta_2}+1$.

    We then adopt a “waiting-time” construction: partition $[1,T_{\delta_2}+1]$into finitely many subintervals and activate one of the controls at each discrete time, keeping the control zero elsewhere. This ensures that, for any $u_0$, there exists a suitable activation time at which the trajectory is sufficiently close to a steady state. The local stability then drives the solution into the desired $\varepsilon$-neighborhood after time $\hat T$. See Figure~\ref{figure 6} for an illustration.

       \begin{figure}[th]
    \centering

\tikzset{every picture/.style={line width=0.75pt}} 

\begin{tikzpicture}[x=0.75pt,y=0.75pt,yscale=-1,xscale=1]

\draw [color={rgb, 255:red, 208; green, 2; blue, 27 }  ,draw opacity=0.7 ]   (42.75,69) -- (148,69.54) ;
\draw  [line width=2.25] [line join = round][line cap = round] (16.17,69.14) .. controls (16.17,69.25) and (16.17,69.35) .. (16.17,69.45) ;
\draw  [color={rgb, 255:red, 0; green, 0; blue, 0 }  ,draw opacity=1 ][line width=2.25] [line join = round][line cap = round] (42.67,69.12) .. controls (42.67,69.22) and (42.67,69.32) .. (42.67,69.42) ;
\draw [color={rgb, 255:red, 208; green, 2; blue, 27 }  ,draw opacity=1 ]   (42.23,36.92) .. controls (50.23,63.45) and (67.23,57.92) .. (79.23,48.92) .. controls (91.23,39.92) and (131.23,22.92) .. (149.23,37.92) ;
\draw  [line width=2.25] [line join = round][line cap = round] (347.17,69.14) .. controls (347.17,69.25) and (347.17,69.35) .. (347.17,69.45) ;
\draw  [color={rgb, 255:red, 0; green, 0; blue, 0 }  ,draw opacity=1 ][line width=2.25] [line join = round][line cap = round] (149.17,69.12) .. controls (149.17,69.22) and (149.17,69.32) .. (149.17,69.42) ;
\draw [color={rgb, 255:red, 0; green, 0; blue, 0 }  ,draw opacity=0.4 ]   (15.75,69) -- (15.75,6.92) ;
\draw [line width=3] [line join = round][line cap = round]    ;
\draw [color={rgb, 255:red, 0; green, 0; blue, 0 }  ,draw opacity=0.4 ] [dash pattern={on 0.84pt off 2.51pt}]  (149.23,69) -- (149.23,37.92) ;
\draw [color={rgb, 255:red, 0; green, 0; blue, 0 }  ,draw opacity=0.4 ] [dash pattern={on 0.84pt off 2.51pt}]  (42.23,68) -- (42.23,36.92) ;
\draw [color={rgb, 255:red, 74; green, 144; blue, 226 }  ,draw opacity=0.8 ]   (150.25,69) -- (345.75,69.54) ;
\draw  [line width=2.25] [line join = round][line cap = round] (16.17,160.12) .. controls (16.17,160.23) and (16.17,160.33) .. (16.17,160.43) ;
\draw  [color={rgb, 255:red, 0; green, 0; blue, 0 }  ,draw opacity=1 ][line width=2.25] [line join = round][line cap = round] (70.17,160.1) .. controls (70.17,160.2) and (70.17,160.3) .. (70.17,160.4) ;
\draw  [color={rgb, 255:red, 0; green, 0; blue, 0 }  ,draw opacity=1 ][line width=2.25] [line join = round][line cap = round] (42.67,160.1) .. controls (42.67,160.2) and (42.67,160.3) .. (42.67,160.4) ;
\draw  [line width=2.25] [line join = round][line cap = round] (349.17,160.12) .. controls (349.17,160.23) and (349.17,160.33) .. (349.17,160.43) ;
\draw  [color={rgb, 255:red, 0; green, 0; blue, 0 }  ,draw opacity=1 ][line width=2.25] [line join = round][line cap = round] (177.17,160.1) .. controls (177.17,160.2) and (177.17,160.3) .. (177.17,160.4) ;
\draw [color={rgb, 255:red, 0; green, 0; blue, 0 }  ,draw opacity=0.4 ]   (15.75,159.98) -- (15.75,97.9) ;
\draw [color={rgb, 255:red, 245; green, 166; blue, 35 }  ,draw opacity=0.8 ]   (43,402.98) -- (168,403.52) ;
\draw  [line width=2.25] [line join = round][line cap = round] (16.17,403.12) .. controls (16.17,403.23) and (16.17,403.33) .. (16.17,403.43) ;
\draw  [color={rgb, 255:red, 0; green, 0; blue, 0 }  ,draw opacity=1 ][line width=2.25] [line join = round][line cap = round] (42.67,403.1) .. controls (42.67,403.2) and (42.67,403.3) .. (42.67,403.4) ;
\draw  [line width=2.25] [line join = round][line cap = round] (167.67,403.1) .. controls (167.67,403.2) and (167.67,403.3) .. (167.67,403.4) ;
\draw  [line width=2.25] [line join = round][line cap = round] (275.17,403.12) .. controls (275.17,403.23) and (275.17,403.33) .. (275.17,403.43) ;
\draw  [line width=2.25] [line join = round][line cap = round] (347.17,403.12) .. controls (347.17,403.23) and (347.17,403.33) .. (347.17,403.43) ;
\draw [color={rgb, 255:red, 0; green, 0; blue, 0 }  ,draw opacity=0.4 ]   (15.75,402.98) -- (15.75,340.9) ;
\draw  [color={rgb, 255:red, 0; green, 0; blue, 0 }  ,draw opacity=0.4 ] (16,288.67) .. controls (15.94,292.19) and (17.67,293.98) .. (21.19,294.04) -- (21.19,294.04) .. controls (26.23,294.13) and (28.72,295.93) .. (28.66,299.46) .. controls (28.72,295.93) and (31.27,294.22) .. (36.3,294.31)(34.03,294.27) -- (36.3,294.31) .. controls (39.82,294.37) and (41.61,292.64) .. (41.68,289.11) ;
\draw  [color={rgb, 255:red, 0; green, 0; blue, 0 }  ,draw opacity=0.4 ] (277.72,287.43) .. controls (277.71,292.1) and (280.04,294.43) .. (284.71,294.44) -- (302.5,294.46) .. controls (309.17,294.47) and (312.5,296.8) .. (312.49,301.47) .. controls (312.5,296.8) and (315.83,294.48) .. (322.5,294.49)(319.5,294.48) -- (339.48,294.51) .. controls (344.15,294.52) and (346.48,292.19) .. (346.49,287.52) ;
\draw  [color={rgb, 255:red, 0; green, 0; blue, 0 }  ,draw opacity=0.4 ] (117.16,288.19) .. controls (117.19,292.86) and (119.53,295.17) .. (124.2,295.14) -- (157.78,294.92) .. controls (164.45,294.88) and (167.8,297.19) .. (167.83,301.86) .. controls (167.8,297.19) and (171.11,294.84) .. (177.78,294.8)(174.78,294.82) -- (213.53,294.56) .. controls (218.2,294.53) and (220.52,292.19) .. (220.49,287.52) ;
\draw [color={rgb, 255:red, 0; green, 0; blue, 0 }  ,draw opacity=1 ]   (15.75,159.98) -- (43,160.52) ;
\draw [color={rgb, 255:red, 0; green, 0; blue, 0 }  ,draw opacity=1 ]   (15.75,69) -- (43,69) ;
\draw [color={rgb, 255:red, 208; green, 2; blue, 27 }  ,draw opacity=0.7 ]   (71.5,160.52) -- (176.1,160.46) ;
\draw [color={rgb, 255:red, 208; green, 2; blue, 27 }  ,draw opacity=0.7 ]   (169,402.98) -- (273.77,403.52) ;
\draw [color={rgb, 255:red, 74; green, 144; blue, 226 }  ,draw opacity=0.8 ]   (178.5,160.54) -- (348,161.08) ;
\draw [color={rgb, 255:red, 74; green, 144; blue, 226 }  ,draw opacity=0.8 ]   (276.5,403.54) -- (346,404.08) ;
\draw [color={rgb, 255:red, 0; green, 0; blue, 0 }  ,draw opacity=0.4 ]   (347.75,70) -- (397,70) ;
\draw [color={rgb, 255:red, 0; green, 0; blue, 0 }  ,draw opacity=0.4 ]   (348,160.5) -- (397,160.5) ;
\draw [color={rgb, 255:red, 0; green, 0; blue, 0 }  ,draw opacity=0.4 ]   (348.25,403.5) -- (397,403.5) ;
\draw [color={rgb, 255:red, 245; green, 166; blue, 35 }  ,draw opacity=0.8 ]   (44,160.5) -- (68.8,160.5) ;
\draw [color={rgb, 255:red, 0; green, 0; blue, 0 }  ,draw opacity=1 ]   (15.75,403) -- (43,403.54) ;
\draw  [line width=2.25] [line join = round][line cap = round] (16.17,269.12) .. controls (16.17,269.23) and (16.17,269.33) .. (16.17,269.43) ;
\draw  [color={rgb, 255:red, 0; green, 0; blue, 0 }  ,draw opacity=1 ][line width=2.25] [line join = round][line cap = round] (114.17,269.1) .. controls (114.17,269.2) and (114.17,269.3) .. (114.17,269.4) ;
\draw  [color={rgb, 255:red, 0; green, 0; blue, 0 }  ,draw opacity=1 ][line width=2.25] [line join = round][line cap = round] (42.67,269.1) .. controls (42.67,269.2) and (42.67,269.3) .. (42.67,269.4) ;
\draw  [line width=2.25] [line join = round][line cap = round] (275.14,269.12) .. controls (275.14,269.23) and (275.14,269.33) .. (275.14,269.43) ;
\draw  [line width=2.25] [line join = round][line cap = round] (347.17,269.12) .. controls (347.17,269.23) and (347.17,269.33) .. (347.17,269.43) ;
\draw  [color={rgb, 255:red, 0; green, 0; blue, 0 }  ,draw opacity=1 ][line width=2.25] [line join = round][line cap = round] (221.17,269.1) .. controls (221.17,269.2) and (221.17,269.3) .. (221.17,269.4) ;
\draw [color={rgb, 255:red, 0; green, 0; blue, 0 }  ,draw opacity=0.4 ]   (15.75,268.98) -- (15.75,206.9) ;
\draw [color={rgb, 255:red, 0; green, 0; blue, 0 }  ,draw opacity=1 ]   (15.75,268.98) -- (43,269.52) ;
\draw [color={rgb, 255:red, 208; green, 2; blue, 27 }  ,draw opacity=0.7 ]   (115.5,269.52) -- (219.62,269.46) ;
\draw [color={rgb, 255:red, 74; green, 144; blue, 226 }  ,draw opacity=0.8 ]   (222.5,269.54) -- (346,270.08) ;
\draw [color={rgb, 255:red, 0; green, 0; blue, 0 }  ,draw opacity=0.4 ]   (347.75,269.5) -- (397,269.5) ;
\draw [color={rgb, 255:red, 245; green, 166; blue, 35 }  ,draw opacity=0.8 ]   (44,269.52) -- (112.8,269.52) ;
\draw  [color={rgb, 255:red, 0; green, 0; blue, 0 }  ,draw opacity=0.4 ] (44.43,286.76) .. controls (44.43,291.43) and (46.76,293.76) .. (51.43,293.76) -- (67.78,293.76) .. controls (74.45,293.76) and (77.78,296.09) .. (77.78,300.76) .. controls (77.78,296.09) and (81.11,293.76) .. (87.78,293.76)(84.78,293.76) -- (105.43,293.76) .. controls (110.1,293.76) and (112.43,291.43) .. (112.43,286.76) ;
\draw [color={rgb, 255:red, 208; green, 2; blue, 27 }  ,draw opacity=1 ]   (70.23,126.92) .. controls (78.23,153.45) and (95.23,147.92) .. (107.23,138.92) .. controls (119.23,129.92) and (159.23,112.92) .. (177.23,127.92) ;
\draw [color={rgb, 255:red, 0; green, 0; blue, 0 }  ,draw opacity=0.4 ] [dash pattern={on 0.84pt off 2.51pt}]  (177.23,159) -- (177.23,127.92) ;
\draw [color={rgb, 255:red, 0; green, 0; blue, 0 }  ,draw opacity=0.4 ] [dash pattern={on 0.84pt off 2.51pt}]  (70.23,158) -- (70.23,126.92) ;
\draw [color={rgb, 255:red, 208; green, 2; blue, 27 }  ,draw opacity=1 ]   (114.23,235.58) .. controls (122.23,262.11) and (139.23,256.58) .. (151.23,247.58) .. controls (163.23,238.58) and (203.23,221.58) .. (221.23,236.58) ;
\draw [color={rgb, 255:red, 0; green, 0; blue, 0 }  ,draw opacity=0.4 ] [dash pattern={on 0.84pt off 2.51pt}]  (221.23,267.67) -- (221.23,236.58) ;
\draw [color={rgb, 255:red, 0; green, 0; blue, 0 }  ,draw opacity=0.4 ] [dash pattern={on 0.84pt off 2.51pt}]  (114.23,266.67) -- (114.23,235.58) ;
\draw [color={rgb, 255:red, 208; green, 2; blue, 27 }  ,draw opacity=1 ]   (168.23,370.25) .. controls (176.23,396.78) and (193.23,391.25) .. (205.23,382.25) .. controls (217.23,373.25) and (257.23,356.25) .. (275.23,371.25) ;
\draw [color={rgb, 255:red, 0; green, 0; blue, 0 }  ,draw opacity=0.4 ] [dash pattern={on 0.84pt off 2.51pt}]  (275.23,402.33) -- (275.23,371.25) ;
\draw [color={rgb, 255:red, 0; green, 0; blue, 0 }  ,draw opacity=0.4 ] [dash pattern={on 0.84pt off 2.51pt}]  (168.23,401.33) -- (168.23,370.25) ;

\draw (12,74.42) node [anchor=north west][inner sep=0.75pt]  [font=\scriptsize]  {$0$};
\draw (399,62.42) node [anchor=north west][inner sep=0.75pt]  [font=\footnotesize]  {$t$};
\draw (37.5,75.42) node [anchor=north west][inner sep=0.75pt]  [font=\scriptsize,color={rgb, 255:red, 0; green, 0; blue, 0 }  ,opacity=1 ]  {$1$};
\draw (312.5,73.04) node [anchor=north west][inner sep=0.75pt]  [font=\scriptsize,color={rgb, 255:red, 0; green, 0; blue, 0 }  ,opacity=1 ]  {$1+L\tau +T_{j} +\hat{T}$};
\draw (23,4.42) node [anchor=north west][inner sep=0.75pt]  [font=\footnotesize]  {$\mathsf P_{N} L^{2}( a,b)$};
\draw (372,17.42) node [anchor=north west][inner sep=0.75pt]  [font=\footnotesize]  {$\zeta _{j0}$};
\draw (12,165.4) node [anchor=north west][inner sep=0.75pt]  [font=\scriptsize]  {$0$};
\draw (399,153.4) node [anchor=north west][inner sep=0.75pt]  [font=\footnotesize]  {$t$};
\draw (37.5,166.4) node [anchor=north west][inner sep=0.75pt]  [font=\scriptsize,color={rgb, 255:red, 0; green, 0; blue, 0 }  ,opacity=1 ]  {$1$};
\draw (54.5,167.02) node [anchor=north west][inner sep=0.75pt]  [font=\scriptsize,color={rgb, 255:red, 0; green, 0; blue, 0 }  ,opacity=1 ]  {$1+\tau $};
\draw (315.5,164.02) node [anchor=north west][inner sep=0.75pt]  [font=\scriptsize,color={rgb, 255:red, 0; green, 0; blue, 0 }  ,opacity=1 ]  {$1+L\tau +T_{j} +\hat{T}$};
\draw (23,95.4) node [anchor=north west][inner sep=0.75pt]  [font=\footnotesize]  {$\mathsf P_{N} L^{2}( a,b)$};
\draw (372,107.9) node [anchor=north west][inner sep=0.75pt]  [font=\footnotesize]  {$\zeta _{j1}$};
\draw (167.83,326.35) node [anchor=north west][inner sep=0.75pt]  [font=\normalsize,color={rgb, 255:red, 0; green, 0; blue, 0 }  ,opacity=1 ]  {$\cdots \cdots $};
\draw (12,408.4) node [anchor=north west][inner sep=0.75pt]  [font=\scriptsize]  {$0$};
\draw (399,396.4) node [anchor=north west][inner sep=0.75pt]  [font=\footnotesize]  {$t$};
\draw (37.5,409.4) node [anchor=north west][inner sep=0.75pt]  [font=\scriptsize,color={rgb, 255:red, 0; green, 0; blue, 0 }  ,opacity=1 ]  {$1$};
\draw (151.5,410.02) node [anchor=north west][inner sep=0.75pt]  [font=\scriptsize,color={rgb, 255:red, 0; green, 0; blue, 0 }  ,opacity=1 ]  {$1+L\tau $};
\draw (243.83,410.02) node [anchor=north west][inner sep=0.75pt]  [font=\scriptsize,color={rgb, 255:red, 0; green, 0; blue, 0 }  ,opacity=1 ]  {$1+L\tau +T_{j}$};
\draw (313.5,407.02) node [anchor=north west][inner sep=0.75pt]  [font=\scriptsize,color={rgb, 255:red, 0; green, 0; blue, 0 }  ,opacity=1 ]  {$1+L\tau +T_{j} +\hat{T}$};
\draw (23,338.4) node [anchor=north west][inner sep=0.75pt]  [font=\footnotesize]  {$\mathsf P_{N} L^{2}( a,b)$};
\draw (372,350.9) node [anchor=north west][inner sep=0.75pt]  [font=\footnotesize]  {$\zeta _{jL}$};
\draw (26.24,309.23) node   [align=left] {\begin{minipage}[lt]{35.69pt}\setlength\topsep{0pt}
{\scriptsize smoothing}
\end{minipage}};
\draw (317.6,309.49) node  [color={rgb, 255:red, 74; green, 144; blue, 226 }  ,opacity=1 ] [align=left] {\begin{minipage}[lt]{52.04pt}\setlength\topsep{0pt}
{\scriptsize local stability}
\end{minipage}};
\draw (205.97,310.08) node  [color={rgb, 255:red, 208; green, 2; blue, 27 }  ,opacity=1 ] [align=left] {\begin{minipage}[lt]{132.85pt}\setlength\topsep{0pt}
{\scriptsize control between steady-states}
\end{minipage}};
\draw (230,217.39) node [anchor=north west][inner sep=0.75pt]  [font=\footnotesize,color={rgb, 255:red, 208; green, 2; blue, 27 }  ,opacity=1 ]  {$\tilde{h}_{j}$};
\draw (134.17,76.04) node [anchor=north west][inner sep=0.75pt]  [font=\scriptsize,color={rgb, 255:red, 0; green, 0; blue, 0 }  ,opacity=1 ]  {$1+T_{j}$};
\draw (152.5,166.04) node [anchor=north west][inner sep=0.75pt]  [font=\scriptsize,color={rgb, 255:red, 0; green, 0; blue, 0 }  ,opacity=1 ]  {$1+\tau +T_{j}$};
\draw (12,274.4) node [anchor=north west][inner sep=0.75pt]  [font=\scriptsize]  {$0$};
\draw (399,262.4) node [anchor=north west][inner sep=0.75pt]  [font=\footnotesize]  {$t$};
\draw (37.5,275.4) node [anchor=north west][inner sep=0.75pt]  [font=\scriptsize,color={rgb, 255:red, 0; green, 0; blue, 0 }  ,opacity=1 ]  {$1$};
\draw (98.5,276.02) node [anchor=north west][inner sep=0.75pt]  [font=\scriptsize,color={rgb, 255:red, 0; green, 0; blue, 0 }  ,opacity=1 ]  {$1+k\tau $};
\draw (246.5,276.02) node [anchor=north west][inner sep=0.75pt]  [font=\scriptsize,color={rgb, 255:red, 0; green, 0; blue, 0 }  ,opacity=1 ]  {$1+L\tau +T_{j}$};
\draw (313.5,273.02) node [anchor=north west][inner sep=0.75pt]  [font=\scriptsize,color={rgb, 255:red, 0; green, 0; blue, 0 }  ,opacity=1 ]  {$1+L\tau +T_{j} +\hat{T}$};
\draw (23,204.4) node [anchor=north west][inner sep=0.75pt]  [font=\footnotesize]  {$\mathsf P_{N} L^{2}( a,b)$};
\draw (372,216.9) node [anchor=north west][inner sep=0.75pt]  [font=\footnotesize]  {$\zeta _{jk}$};
\draw (183.5,275.04) node [anchor=north west][inner sep=0.75pt]  [font=\scriptsize,color={rgb, 255:red, 0; green, 0; blue, 0 }  ,opacity=1 ]  {$1+k\tau +T_{j}$};
\draw (164.5,183.52) node [anchor=north west][inner sep=0.75pt]  [font=\normalsize,color={rgb, 255:red, 0; green, 0; blue, 0 }  ,opacity=1 ]  {$\cdots \cdots $};
\draw (80.24,309.5) node  [color={rgb, 255:red, 245; green, 166; blue, 35 }  ,opacity=1 ] [align=left] {\begin{minipage}[lt]{35.69pt}\setlength\topsep{0pt}
{\scriptsize dissipation}
\end{minipage}};

\end{tikzpicture}
      
    \vspace{-0.6em}
    \caption{Construction of finitely many controls.}
        \label{figure 6}
    \end{figure}

    \vspace{0.6em}
    Let us recall some properties for the steady-states of equation \eqref{AC unforced equation}.  By \cite[Section 5]{Henry-81}, the steady-states $\phi_{\pm 1}$ are locally exponentially stable in the following sense: there exist constants $C_1,C_2>0$ and $\delta_0\in(0,1)$ such that
    \begin{equation*}
        \|\phi(t,u_0)-\phi_{\pm 1}\|_{H^1}\leq C_1e^{-C_2t}\|u_0-\phi_{\pm 1}\|_{H^1}\quad\forall\,u_0\in B_{H^1}(\phi_{\pm 1},\delta_0),\;t\geq 0.
    \end{equation*}
   Additionally, using the parabolic smoothing effect, there exist constants $C_3,C_4>0$ and $\delta\in(0,1)$
    \begin{equation}\label{eq stable}
        \|\phi(t,u_0)-\phi_{\pm 1}\|_{H^1}\leq C_3e^{-C_4t}\|u_0-\phi_{\pm 1}\|\quad\forall\,u_0\in B_{H}(\phi_{\pm 1},\delta),\;t\geq 1.
    \end{equation}
    
    \vspace{0.3em}

        Let $u_*\in\{\phi_1,\phi_{-1}\}$ be fixed. Next we shall apply Proposition \ref{prop steady control} to send each initial state $u_0$ to the $\delta$-neighborhood of $u_*$ within a finite family of controls. For any $\phi_j\in\mathcal{S}$, there exists $T_j>0$ and $h_j\in H^1(0,T_j;C^2([a,b]))$ such that  
    \begin{equation}\label{eq uniform13}
            \|w(T_j,\phi_j,h_j)-u_*\|\leq \delta/4.
    \end{equation}
    In view of the continuity $v\mapsto w(T_j,v,h_j)$, there exists $\delta_1=\delta_1(\delta)>0$ such that     \begin{equation*}
        \|w(T_{j},v,h_j)-u_*\|\leq\delta/2\quad\forall\, v\in B_{H}(\phi_j,\delta_1),\;0\leq |j|\leq \lfloor\lambda/\nu\rfloor.
    \end{equation*}
    
    Thus by performing a standard truncation procedure, there exists $N\in\N^+$ such that for each $\tilde{h}_j=\mathsf{P}_Nh_j\in H^1(0,T_{j};\mathsf{P}_NL^2(a,b))$, one has
    \begin{equation}\label{eq uniform10}
        \|w(T_{j},v,\tilde{h}_j)-u_*\|\leq\delta\quad\forall\, v\in B_{H}(\phi_j,\delta_1),\;0\leq |j|\leq \lfloor\lambda/\nu\rfloor,
    \end{equation}
    
    \vspace{0.3em}

    Additionally, using the continuity of the mapping $(s,v)\mapsto \phi(s,v)$, there exists $\tau=\tau(\delta)>0$ and $\delta_2=\delta_2(\delta)>0$ such that
    \begin{equation}\label{eq uniform11}
        \|\phi(t,v)-\phi_j\|\leq \delta_1/4\quad\forall\, v\in B_{H}(\phi_j,\delta_2),\; 0\leq t\leq \tau.
    \end{equation}

    For such $\delta_2>0$, Lemma \ref{lemma uniform} ensures a constant $T_{\delta_2}>0$ such that for any $u_0\in H$, there exists $\phi_{u_0}\in\mathcal{S}$ and $t_{u_0}\in[1,T_{\delta_2}+1]$ satisfying
       \begin{equation}\label{eq uniform14}
           \|\phi(t_{u_0},u_0)-\phi_{u_0}\|_{H}\leq \delta_2.
       \end{equation}

    \vspace{0.6em}

    With these preparations, we now set 
    \begin{equation*}
        T_*:=1+T_{\delta_2}+\max\left\{T_{j}:0\leq |j|\leq \lfloor\lambda/\nu\rfloor\right\},
    \end{equation*}
    which depends only on the parameters $\nu,\lambda,a,b$. We also set
    \begin{equation*}
        L:=\lceil T_{\delta_2}/\tau\rceil\in\N^+,
    \end{equation*}
    and construct controls $\zeta_{lj}\in L^2(0,T;\mathsf{P}_NL^2(a,b)) $ in the following way. For each $1\leq l\leq L$ and $|j|\leq \lfloor\lambda/\nu\rfloor$, define
    \begin{equation*}
        \zeta_{lj}(t)=\begin{cases}
            0\quad&\text{for }0\leq t\leq 1+l\tau,\\
            \tilde{h}_{j}(t-l\tau)\quad&\text{for }1+l\tau< t\leq 1+l\tau+T_{j},\\
            0\quad&\text{for }1+l\tau+T_{j}<t\leq T_*.
        \end{cases}
    \end{equation*}

    \vspace{0.3em}

    With such finite family of controls $\zeta_{lj}$ at hand, we are ready to verify the desired property \eqref{eq control}. Let $\varepsilon>0$ be arbitrarily given. Recall that the state state $u_*$ is locally exponentially stable in the sense of \eqref{eq stable}. Let $\hat T=\hat T(\varepsilon)\geq 1$ be such that
    \begin{equation}\label{eq T1}
        C_3 e^{-C_4t}\leq \varepsilon\quad\forall\, t\geq \hat T,
    \end{equation}
    and set
    \begin{equation}\label{eq T2}
        T=T(\varepsilon)=T_*+T_0.
    \end{equation}

    Then for any $u_0\in H$, in view of \eqref{eq uniform14}, for
    \begin{equation*}
        s_{u_0}:=\inf\{t\geq 0:\|\phi(t+1,u_0)-\phi_{u_0}\|\leq \delta_2\},
    \end{equation*}
    one has $s_{u_0}\leq T_{\delta_2}$ by definition. Thus there exists $j_{u_0}$ such that 
    \begin{equation*}
        \phi(s_{u_0},u_0)\in B_{H}(\phi_{j_{u_0}},\delta_2).
    \end{equation*}

   We hence claim that inequality \eqref{eq control} is satisfied by taking
    \begin{equation*}
        \zeta_n:=\zeta_{\lceil s_{u_0}/\tau\rceil,\,j_{u_0}}.
    \end{equation*}

    \vspace{0.3em}
    To verify this, by the definition of $\tilde{h}_{lj}$, it follows that
    \begin{equation*}
        w\left(s_{u_0},u_0,\zeta_n|_{\left[0,s_{u_0}+1\right]}\right)=\phi(s_{u_0},u_0)\in B_{H}(\phi_{j_{u_0}},\delta_2).
    \end{equation*}
    In addition, 
    \begin{equation*}
        |s_{u_0}-\lceil s_{u_0}/\tau\rceil\tau|\leq \tau.
    \end{equation*}
    Thus by \eqref{eq uniform11}, one has
    \begin{equation*}
        w\left(1+\lceil  s_{u_0}/\tau\rceil\tau,u_0,\zeta_n|_{\left[0,1+\lceil s_{u_0}/\tau\rceil\tau\right]}\right)\in B_{H}(\phi_{j_{u_0}},\delta_1/4).
    \end{equation*}
    Consequently, using \eqref{eq uniform10}, we derive that
    \begin{equation*}
        w\left(1+\lceil  s_{u_0}/\tau\rceil\tau+T_{j_{u_0}},u_0,\zeta_n|_{\left[0,1+\lceil s_{u_0}/\tau\rceil\tau+T_{j_{u_0}}\right]}\right)\in B_{H}(u_*,\delta).
    \end{equation*}

    Finally, by applying the stability \eqref{eq stable} and the definition of $T$ by \eqref{eq T1},\eqref{eq T2}, we conclude that the desired relation \eqref{eq control} holds, thereby completing the proof of Theorem \ref{thm irre-control}.

    \subsection{Irreducibility}\label{Sec 4.2}

    This section is devoted to the proof of irreducibility, i.e. Proposition \ref{prop irreducibility}. The proof combines a support lemma, i.e. Lemma \ref{lemma support}, with the global controllability within a finite family of controls, i.e. Theorem \ref{thm irre-control}.  The proof of Lemma \ref{lemma support} is placed at the end of this section.
   
    \begin{lemma}\label{lemma support} For the noise given by \eqref{eq noise W},\eqref{eq psi-def} and any $N\in\N^+$, if the sequence $\{b_j\}_{j\in\N^+}$  satisfies 
    \begin{equation*}
        \sum_{j\in \N^+}b_j^2<\infty\quad \text{and}\quad b_j\neq 0\quad \text{for }1\leq j\leq N,
    \end{equation*}
       then for any $T,\delta>0$, the Wiener process $W$ satisfies that
        \begin{equation*}
            \Pb(\|W-h\|_{C([0,T];L^2(a,b))}\leq \delta)>0\quad \forall\,h\in H^1(0,T; \mathsf{P}_NL^2(a,b))\quad \text{with}\quad h(0)=0.
        \end{equation*}
    \end{lemma}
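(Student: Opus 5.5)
The idea is to split $W$ into its first $N$ modes and the tail, use independence of the two pieces, and apply the standard support theorem for finite-dimensional Brownian motion to the low modes.

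\textbf{Splitting.} Write $W=W_N+W^N$, where $W_N=\mathsf{P}_NW=\sum_{j\le N}b_j\beta_j\psi_j$ and $W^N=\sum_{j>N}b_j\beta_j\psi_j$. These two processes are independent, since they involve disjoint families of Brownian motions. Since $h$ takes values in $\mathsf{P}_NL^2(a,b)$, the triangle inequality gives
\begin{equation*}
\Pb\left(\|W-h\|_{C([0,T];L^2(a,b))}\le\delta\right)\ge \Pb\left(\|W_N-h\|_{C([0,T];L^2(a,b))}\le\delta/2\right)\Pb\left(\|W^N\|_{C([0,T];L^2(a,b))}\le\delta/2\right).
\end{equation*}
It therefore suffices to show that each factor is positive.

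\textbf{Tail factor.} Since $\sum_j b_j^2<\infty$, $W^N$ is a centered Gaussian process with continuous paths in $C([0,T];L^2(a,b))$. By Doob's maximal inequality,
\begin{equation*}
\E\sup_{t\le T}\|W^N(t)\|^2\le 4T\sum_{j>N}b_j^2<\infty.
\end{equation*}
A Gaussian measure on a separable Banach space gives positive mass to every ball centered at its mean. To avoid invoking that fact, I would split once more at a level $M>N$:
\begin{equation*}
W^N=\sum_{N<j\le M}b_j\beta_j\psi_j+\sum_{j>M}b_j\beta_j\psi_j.
\end{equation*}
Choose $M$ so large that, by Chebyshev, the far tail $\sum_{j>M}$ has sup norm at most $\delta/4$ with probability at least $1/2$. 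The middle block is a finite-dimensional Brownian motion, so its sup norm is at most $\delta/4$ with positive probability. Independence of the two pieces then gives positivity of the tail factor.

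\textbf{Low-mode factor.} Write $h(t)=\sum_{j\le N}h_j(t)\psi_j$ with $h_j\in H^1(0,T)$ and $h_j(0)=0$. Since $b_j\neq0$ for $j\le N$, set $g_j=h_j/b_j$, which also lie in $H^1(0,T)$ with $g_j(0)=0$. Because the $\psi_j$ are orthonormal,
\begin{equation*}
\|W_N-h\|_{C([0,T];L^2(a,b))}\le \Big(\max_{j\le N}|b_j|\Big)\sqrt N\,\max_{j\le N}\|\beta_j-g_j\|_{C([0,T])}.
\end{equation*}
So it is enough that the $\R^N$-valued Brownian motion $(\beta_1,\dots,\beta_N)$ lies within a small sup-distance of $g=(g_1,\dots,g_N)$ with positive probability. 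The function $g$ belongs to the Cameron--Martin space, so Girsanov's theorem (equivalently, the Cameron--Martin theorem) shows that the law of $\beta-g$ is equivalent to the law of $\beta$. The question then reduces to showing $\Pb(\sup_{t\le T}|\beta(t)|\le\eta)>0$ for every $\eta>0$. This is the classical small-ball estimate for Brownian motion: it follows from the explicit exit-time distribution for a box, or from independence of the coordinates together with the reflection principle.

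The only point needing care is the tail factor, because it requires uniform-in-time control of an infinite-dimensional process. The second splitting at level $M$ reduces it to finite-dimensional small-ball estimates plus Chebyshev, so I do not expect a genuine obstacle there.
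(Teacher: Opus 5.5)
Your proof is correct and follows the same route as the paper: split $W$ into $W_N$ and the independent tail, bound the probability from below by the product of the two small-ball probabilities, and get positivity of the low-mode factor from the fact that $h$ lies in the Cameron--Martin space of $W_N$. The only difference is in how positivity is justified. The paper cites the Gaussian support theorem for $W_N$ and uses the analogous fact for the continuous Gaussian tail. You check both by hand, using the Cameron--Martin shift plus the Brownian small-ball estimate for the low modes, and a second splitting with Doob's inequality and Chebyshev for the tail, which is a valid and more self-contained justification.
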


     \begin{proof}[Proof of Proposition \ref{prop irreducibility}] Without loss of generality, we consider the case  $\lambda>\nu$; the case $\lambda\leq \nu$ can be treated analogously. Let $u_*\in\{\phi_1,\phi_{-1}\}$ and $\varepsilon,R>0$ be fixed.   By Theorem \ref{thm irre-control}, there exist constants $N,m\in\N^+$, $T_*>0$ and a finite family of controls $\zeta_n\in L^2(0,T_*;\mathsf{P}_NL^2(a,b))$, $1\leq n\leq m$, such that the follow holds. There exists a constant $T=T(\varepsilon)\geq T_*$ such for each $u_0\in H$, there is $n=n(u_0)\in\{1,\cdots,m\}$ such that
        \begin{equation*}
            \|w(t,{u}_0,\zeta_n)-u_*\|\leq \varepsilon/2\quad \forall\,t\geq T.
        \end{equation*}
        Fix any $t\ge T$ and let $\delta\in(0,1)$ be determined later, and set
        \begin{equation*}
            \hat{\zeta}_n(s)=\int_{0}^{s\wedge T_*}\zeta_n(r)dr,\quad 0\leq s\leq t.
        \end{equation*}
        We claim that there exists $\delta=\delta(\varepsilon,R,t)>0$ such that
        \begin{equation}\label{eq irre claim}
            \|\hat{w}(t,u_0,h)-u_*\|\leq \varepsilon\quad\forall\,u_0\in B_H(R),\;h\in B_{C([0,t];\mathsf{P}_NL^2(a,b))}(\hat\zeta_n,\delta)\text{ with }h(0)=0,
        \end{equation}
        where $\hat{w}(t,u_0,h)$ denotes the solution of 
        \begin{equation*}
            \hat{w}(t)-u_0-\nu\int_0^t\partial_x^2\hat{w}(s)ds+\int_0^t\hat{w}(s)^3ds-\lambda\int_0^t\hat{w}(s)ds=h(t),\quad t>0.
        \end{equation*}
        Then for $\hat{v}(t):=\hat{w}(t,u_0,h)-w(t,u_0,\zeta_n)-r(t)$ with $r(t):=h(t)-\hat{\zeta}_n(t)$, it satisfies 
        \begin{equation*}
            \partial_t\hat{v}-\nu\partial_x^2\hat{v}+(\hat{v}+w+r)^3-w^3-\lambda(\hat{v}+r)=\nu\partial_x^2r,\quad \hat{v}(0)=0.
        \end{equation*}
        We compute that there exists a constant $C>0$ such that
        \begin{equation*}
            \partial_t\|\hat{v}\|^2\leq C\left(\|w\|^2_{L^\infty(0,\pi)}+1\right)\left(\|\hat{v}\|^2+\|r^2\|^2+\|r\|^2+\|\partial_xr\|^2\right).
        \end{equation*} 
        Since $r\in \mathsf{P}_NL^2(a,b)$, this implies that there exists a constant $C_1=C_1(N)>0$ such that
        \begin{equation*}
            \|r(s)^2\|^2+\|r(s)\|^2+\|\partial_xr(s)\|^2\leq C_1\delta^2\quad\forall s\in(0,t).
        \end{equation*}
        Moreover, there exists a constant $C_2=C_2(R)>0$ such that
        \begin{equation*}
            \sup_{u_0\in B(0,R)}\int_0^t\|w(s,u_0,\zeta_n)\|^2_{L^\infty(0,\pi)}ds\leq C_2.
        \end{equation*}
        Thus for $\|\hat{v}(r)\|\leq 1$ with $r\in(0,s)$, we arrive at
        \begin{equation*}
            \|\hat{v}(s)\|^2\leq Ct\exp\left(\int_0^tC\|w(r,u_0,\zeta_n)\|_{L^\infty(0,\pi)}^2dr\right)\delta^2\leq C\delta^2,  
        \end{equation*}
        which implies that there exists a constant $C=C(R,t)$, independent of $u_0$ and $\delta$ such that
        \begin{equation*}
            \|\hat{v}(s)\|=\|\hat{w}(s,u_0,h)-w(s,u_0,\zeta_n)-r(s)\|\leq C\delta.
        \end{equation*}
         In particular, taking $\delta$ sufficiently small, we can ensure that $\|\hat{v}(s)\|\leq 1$ holds for $s\in(0,t)$. This gives that
        \begin{equation*}
            \|\hat{w}(t,u_0,h)-u_*\|\leq\varepsilon/2+\delta+C\delta,
        \end{equation*}
        which implies the claim \eqref{eq irre claim}.

            \vspace{0.3em}

        Consequently, taking Lemma \ref{lemma support} into account, we obtain that for such $u_0\in B_H(0,R)$,
        \begin{equation*}
            \Pb\left(\|u(t,u_0,W)-u_*\|\leq \varepsilon\right)\geq \Pb\left(\|W-\hat\zeta_n\|_{C([0,t];L^2(a,b))}\leq \delta\right):=p_n>0.
        \end{equation*}
        Here we use $\hat\zeta_n\in H^1(0,t; \mathsf{P}_NL^2(a,b))$ and $\hat{\zeta}(0)=0$ for any $t\geq T$ and $1\leq n\leq m$. Accordingly, we conclude that
         \begin{equation*}
            \inf_{u_0\in B_H(0,R)}\Pb(\|u(t,u_0,W)-u_*\|\leq \varepsilon)\geq \min_{1\leq n\leq m}\{p_n\}>0.
        \end{equation*}
        This completes the proof of \eqref{eq irreducibility}, i.e. Proposition \ref{prop irreducibility}.
    \end{proof}

    \vspace{0.6em}

    Finally, it remains to prove Lemma \ref{lemma support}  below.
    \begin{proof}[Proof of Lemma \ref{lemma support}]
    Fix any $N\in\N^+$ and $\varepsilon>0$.  
    Let us write the noise as the orthogonal decomposition
    \begin{equation*}
        W(t)=W_N(t)+W_{>N}(t),\quad 
        W_N(t)=\sum_{1\leq j\leq N} b_j\beta_j(t)\psi_j,\quad
        W_{>N}(t)=\sum_{j>N} b_j\beta_j(t)\psi_j.
    \end{equation*}
    The two processes $W_N$ and $W_{>N}$ are independent Gaussian processes.   Since $b_j\neq 0$ for $1\le j\le N$, the covariance of $W_N$ is nondegenerate on
    $\mathsf P_NL^2(a,b)$.  
    Hence its Cameron--Martin space is
    \begin{equation*}
        H_{W_N}=\left\{h\in H^1(0,T;\mathsf P_NL^2(a,b)):h(0)=0\right\},
    \end{equation*}
    and by the classical support theorem for Gaussian measures (e.g.
    \cite[Theorem~2.6.6]{Bogachev-98}),
    \begin{equation*}
        \supp\mathcal D(W_N)
        =\overline{H_{W_N}}^{\,C([0,T];L^2(a,b))}.
    \end{equation*}
    Thus for any $h\in H^1(0,T;\mathsf P_NL^2(a,b))$ with $h(0)=0$ and $\delta>0$,
    \begin{equation*}
        \Pb\left(\|W_N-h\|_{C([0,T];L^2(a,b))}\leq\delta/2\right)>0.
    \end{equation*}
    
    Meanwhile, as $W_{>N}$ is a continuous Gaussian process, one has 
    \begin{equation*}
        \Pb\left(\|W_{>N}\|_{C([0,T];L^2(a,b))}\leq\delta/2\right)>0.
    \end{equation*}
    Consequently, the independence of $W_N$ and $W_{>N}$ implies that
    \begin{align*}
        &\Pb\left(\|W-h\|_{C([0,T];L^2(a,b))}\leq\delta\right)
        \\
        &\geq 
        \Pb\left(\|W_N-h\|_{C([0,T];L^2(a,b))}\leq\delta/2\right)
        \Pb\left(\|W_{>N}\|_{C([0,T];L^2(a,b))}\leq\delta/2\right)
        >0,
    \end{align*}
    which completes the proof.
    \end{proof}

    \section{Exponential mixing for the Allen--Cahn equation}\label{Sec 5}

    With the preparations from the previous sections, we are now in a position to prove the Main Theorem. Specifically, the abstract criterion (Theorem~\ref{lemma HM}) reduces mixing to verifying a set of conditions, including the existence of a Lyapunov function, the asymptotic strong Feller property and a weak form of irreducibility. These conditions are established through Lemma~\ref{lemma L bdd}, Proposition~\ref{prop ASF} and Proposition~\ref{prop irreducibility}.

    \begin{theorem}\label{lemma HM}{\rm(}\hspace{-0.1mm}{\rm\cite[Theorems 3.4, 4.5]{HM-08})} Let $z=z(t,x)$ be a stochastic semiflow on a Hilbert space $X$ with $C^1$-dependence on $x\in X$. Define $\{P_t\}_{t\geq 0}$, $\{P_t^*\}_{t\geq }$ as the associated Markov semigroups. Assume that
    \begin{itemize}
        \item [(1)] there exist constants $C,\gamma>0$ and a decreasing function $\xi\colon [0,1]\rightarrow[0,1]$ with $\xi(1)<1$ such that for any $u_0\in X$, $r\in[1/4,3]$ and $t\in [0,1]$,
        \begin{equation}\label{eq HM1}
            \E\exp\left(r\gamma\|z(t,x)\|^2\right)\left(1+\|\nabla_{x} z(t,x)\|)\leq C\exp(r\gamma\xi(t)\|x\|^2\right);
        \end{equation}

        \item[(2)] there exist constants $C,\alpha>0$ such that for any Fréchet differentiable function $f\colon X\rightarrow \R$, $x\in X$ and $t\geq 0$,
        \begin{equation}\label{eq HM2}
            \|\nabla P_t f(x)\|\leq C\exp\left(\tfrac{\gamma}{2}\|x\|^2\right)\left(\sqrt{P_t(|f|^2)(x)}+e^{-\alpha t}\sqrt{P_t(\|\nabla f\|^2)(x)})\right);
        \end{equation}
        \item[(3)] for any $\varepsilon,R>0$ and $r\in(0,1)$, there exists a constant $T>0$ such that for any $t\geq T$, 
        \begin{equation}\label{eq HM3}
            \inf_{x,x'\in B_X(R)}\sup_{\Gamma\in\mathscr{C}(P_t^*\delta_{x},P_t^*\delta_{x'})}\Gamma\left(y,y'\in X\times X:\rho_r(y,y')\leq \varepsilon\right)>0,
        \end{equation}
    where $\rho_r$ is metric on $X$ defined as
    \begin{equation*}
        \rho_s(x,y):=\inf_{\substack{l\in C^1([0,1];X),\\l(0)=x, \,l(1)=y}}\int_0^1\exp\left(s\gamma\|l(t)\|^2\right)\|l'(t)\|dt\quad \text{for}\quad s\in(0,1].
    \end{equation*}    
    \end{itemize}
    Then there exist constants $C,\alpha>0$  such that
    \begin{equation*}
        \rho_1\left(P_t^*\mu,P_t^*\mu'\right)\leq C e^{-\alpha t}\rho_1(\mu,\mu')\quad\forall\,\mu,\mu' \in \mathcal{P}(X),\; t\geq 0.
    \end{equation*}
    In particular, $\{P_t^*\}_{t\geq 0}$ admits a unique invariant measure  $\mu_*\in\mathcal{P}(X)$ satisfying
    \begin{equation*}
        \left|P_tf(x)-\int_X fd\mu_*\right|\leq Ce^{-\alpha t}\exp\left(\gamma\|x\|^2\right)\left\|f-\int_X fd\mu_*\right\|_{\gamma}
    \end{equation*}
    for any $f\in\mathcal{O}_{\gamma}$,  $x\in X$ and $t\geq 0$, where
    \begin{equation*}
        \mathcal{O}_{\gamma}=\left\{f\in C^1(X):\|f\|_{\gamma}<\infty\right\},\quad \|f\|_{\gamma}=\sup_{x\in X} \exp\left(-\gamma\|x\|^2\right)\left(|f(x)|+\|\nabla f(x)\|\right).
    \end{equation*}
    \end{theorem}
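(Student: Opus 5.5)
The statement is quoted from Hairer and Mattingly \cite[Theorems 3.4, 4.5]{HM-08}, so in the paper it is enough to check that conditions (1)--(3) are exactly their hypotheses. If I had to reconstruct the argument, I would follow the \emph{weak Harris theorem} strategy. One fixes a contracting distance $d$, shows that $P_t^*$ is a contraction in $d$ for nearby points, shows that bounded sets are ``$d$-small'', and then combines these with a Lyapunov structure. The result is a spectral-gap estimate in a weighted Wasserstein distance, which is finally compared with $\rho_1$.

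\emph{Step 1: local contraction from \eqref{eq HM2}.} I would take $d_\delta(x,y)=1\wedge\delta^{-1}\rho_r(x,y)$ for a small $r\in(0,1)$. By Kantorovich duality, the distance $d_\delta(P_t^*\delta_x,P_t^*\delta_y)$ is a supremum over functions $f$ that are $1$-Lipschitz for $d_\delta$. For such $f$, one writes $P_tf(x)-P_tf(y)=\int_0^1\langle\nabla P_tf(l(s)),l'(s)\rangle ds$ along a near-optimal path $l$ for $\rho_r$.

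In \eqref{eq HM2} one has $|f|\le 1$ and $\|\nabla f\|\le\delta^{-1}e^{r\gamma\|\cdot\|^2}$. The moment bound \eqref{eq HM1}, iterated via the Markov property and using $\xi(t)<1$, controls $\sqrt{P_t(e^{2r\gamma\|\cdot\|^2})}$ by $Ce^{r\gamma\|x\|^2}$. This should give
\[
d_\delta(P_t^*\delta_x,P_t^*\delta_y)\le\tfrac12 d_\delta(x,y)\quad\text{whenever }d_\delta(x,y)<1,
\]
for $t$ large and $\delta$ small.

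\emph{Step 2: smallness of balls and Lyapunov function.} Condition \eqref{eq HM3} says that for $x,x'\in B_X(R)$ there is a coupling putting positive mass $p$ on $\{\rho_r\le\varepsilon\}$. Choosing $\varepsilon<\delta$ gives $d_\delta(P_t^*\delta_x,P_t^*\delta_{x'})\le 1-p$, that is, $B_X(R)$ is $d_\delta$-small. Condition \eqref{eq HM1} yields the Lyapunov inequality $P_tV\le CV^{\xi(t)}$ for $V=e^{\gamma\|\cdot\|^2}$, hence $P_tV\le\kappa V+K$ with $\kappa<1$ at a fixed time $t_0$.

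\emph{Step 3: Harris argument.} With these three ingredients, the abstract weak Harris theorem gives contraction of $P_{t_0}^*$ in $\tilde d(x,y)=\sqrt{d_\delta(x,y)(2+\beta V(x)+\beta V(y))}$. The proof cases on whether $V(x)+V(y)$ is large (then the Lyapunov drift wins), whether $d_\delta<1$ (Step 1 applies), or whether both points lie in a ball (Step 2 applies). Iterating in $t_0$ and using \eqref{eq HM1} on $[0,t_0]$ gives exponential contraction for all $t$. Comparing $\tilde d$ with $\rho_1$ is possible because the weight $e^{\gamma\|l\|^2}$ in $\rho_1$ dominates both $\sqrt V$ and $\rho_r$ along paths; this upgrades the estimate to the stated $\rho_1$ contraction.

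\emph{Step 4: consequences.} Uniqueness and existence of $\mu_*$ follow from completeness of the Wasserstein space over $\rho_1$ and the fact that $(P_t^*\delta_0)_t$ is Cauchy. The observable estimate follows by duality, since $\|f\|_\gamma$ controls the $\rho_1$-Lipschitz constant of $f$, applied with $\mu=\delta_x$ and $\mu'=\mu_*$.

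The main obstacle is the bookkeeping of exponential weights in Steps 1 and 3. The prefactor $e^{\gamma\|x\|^2/2}$ in \eqref{eq HM2} must be absorbed by the path weight $e^{r\gamma\|l\|^2}$ in $\rho_r$ while the moments of $P_t$ stay bounded. This is exactly why \eqref{eq HM1} is required uniformly in $r\in[1/4,3]$ with a contracting exponent $\xi(t)$. I would first fix $r=1/2$ and track the exponents explicitly before optimizing.
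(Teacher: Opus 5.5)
The paper gives no argument of its own here (the result is imported from \cite{HM-08}), so your first sentence is all the paper does. The trouble is in your reconstruction, at two points.

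\emph{Step 1 fails as written.} For $|f|\le1$ and $\|\nabla f\|\le\delta^{-1}e^{r\gamma\|\cdot\|^2}$, \eqref{eq HM2} only gives
\[
\|\nabla P_tf(z)\|\le Ce^{\frac{\gamma}{2}\|z\|^2}+C\delta^{-1}e^{-\alpha t}e^{\frac{\gamma}{2}\|z\|^2}\bigl(P_t(e^{2r\gamma\|\cdot\|^2})(z)\bigr)^{1/2}.
\]
Contraction of $d_\delta$ requires this to be at most $\tfrac12\delta^{-1}e^{r\gamma\|z\|^2}$ for every $z$, since pairs with $d_\delta(x,y)<1$ occur at arbitrarily large norm. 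The first term forces $r\ge\frac12$, so small $r$ is excluded. With your bound $(P_te^{2r\gamma\|\cdot\|^2})^{1/2}\le Ce^{r\gamma\|z\|^2}$, the second term exceeds the target by the unbounded factor $e^{\frac{\gamma}{2}\|z\|^2}$, for every $r$ (including $r=\frac12$) and every $t$. You need $r\in(\frac12,1)$ together with the \emph{decay} of the exponent. Iterating \eqref{eq HM1} (with Jensen once the exponent falls below $\frac14$, and the drift $P_1V^s\le\kappa V^s+K$ for uniformity in $t$) gives $P_t(e^{2r\gamma\|\cdot\|^2})\le C_ne^{2r\gamma\xi(1)^n\|\cdot\|^2}$ for $t\ge n$. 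Then take $n$ with $r\xi(1)^n<r-\frac12$.

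\emph{The passage from $\tilde d$ to $\rho_1$ is not a comparison.} With $V=e^{\gamma\|\cdot\|^2}$ you have $\tilde d(x,0)\lesssim e^{\frac{\gamma}{2}\|x\|^2}$, while $\rho_1(x,0)\ge\int_0^{\|x\|}e^{\gamma s^2}ds$. So $\rho_1\not\lesssim\tilde d$ at large scales. Once $\rho_r(x,y)<\delta$, $\tilde d(x,y)^2=\delta^{-1}\rho_r(x,y)(2+\beta V(x)+\beta V(y))$, so $\tilde d/\rho_1\to\infty$ as $y\to x$. A $\tilde d$-contraction therefore gives neither side of $\rho_1(P_t^*\mu,P_t^*\mu')\le Ce^{-\alpha t}\rho_1(\mu,\mu')$.

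The missing step is a second use of \eqref{eq HM2}. By duality you need $g(t):=\sup_xe^{-\gamma\|x\|^2}\|\nabla P_t\varphi(x)\|\le Ce^{-\alpha t}$ for all $\varphi$ with $\|\nabla\varphi\|\le e^{\gamma\|\cdot\|^2}$. Run the Harris argument with $V^2$ instead of $V$; this is allowed, since \eqref{eq HM1} holds up to exponent $3$. Then $\rho_1\lesssim\tilde d$, which yields $|P_t\varphi(x)-\int\varphi\,d\mu_*|\le Ce^{-ct}e^{\gamma\|x\|^2}$. Now apply \eqref{eq HM2} to $P_{t+s}\varphi=P_s(P_t\varphi-\int\varphi\,d\mu_*)$, absorbing the prefactor with the same exponent decay. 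This gives $g(t+s)\le Ce^{-ct}+Ce^{-\alpha s}g(t)$, which closes for $s$ large. Step~4 is then fine.
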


    Invoking this criterion, we establish the Main Theorem in the following.

    \begin{proof}[Proof of the Main Theorem]
    From Section \ref{Sec 2.1}, equation \eqref{AC equation} defines a Feller family of Markov processes in $H$. Then according to Theorem \ref{lemma HM}, to complete the proof of the Main Theorem, it suffices to verify the conditions (1)-(3). 

    \vspace{0.3em}

    Here are the detailed verifications: let $B_0>0$ be arbitrarily given.
    \begin{itemize}[leftmargin=2em]
        \item[(1)] {\it Lyapunov function.} In view of a priori bound \eqref{eq L2 bdd}, there exist positive constants $\gamma_0=\gamma_0(B_0)$, $C=C(B_0)$ such that for any $\gamma\in(0,\gamma_0]$,
        \begin{equation*}
            \E\exp\left(\gamma\|u_t\|^2\right)\leq C\exp\left(\gamma e^{-\nu t}\|u_0\|^2\right)\quad \forall\,u_0\in H,\;t\geq 0.
     \end{equation*}
        Therefore, by taking $\gamma_*=\gamma_0/3$, $\xi(t)=e^{-\nu t}$ and noticing \eqref{eq J bound}, one has for any $r\in[1/4,3]$,
        \begin{equation*}
            \E\exp\left(r\gamma\|u(t,u_0\|^2\right)\left(1+\|\mathcal{J}_{0,t}\|\right)\leq C\exp\left(r\gamma\xi(t)\|u_0\|^2\right)\quad 
            \forall\, u_0\in H,\; t\in[0,1],
        \end{equation*}
        which verifies relation \eqref{eq HM1} for any $\gamma\in(0,\gamma_*]$.

        \vspace{0.3em}

        \item[(2)] {\it Asymptotic strong Feller property.} Invoking Proposition~\ref{prop ASF} and setting $\alpha=1$, for any $\gamma\in(0,\gamma_*]$, there exists a constant $N_1=N_1(\gamma)\in\N^*$ such that if the sequence $\{b_j\}_{j\in\N^+}$ in \eqref{eq noise W} satisfies
        \begin{equation*}
       \sum_{j\in\N^+} j^2b_j^2\leq B_0\quad \text{and}\quad b_j\neq 0\quad \text{for }1\leq j\leq N_1,
    \end{equation*}
    then the asymptotic strong Feller property \eqref{eq HM2} is satisfied.

        \vspace{0.3em}

        \item[(3)] {\it Irreducibility.} To verify inequality \eqref{eq HM3}, it suffices to first construct independent couplings and then apply Proposition \ref{prop irreducibility}. The proof is analogous to that of \cite[Theorem 5.5]{HM-08}, \cite[Theorem 2.3]{FGRT-15}.   Let $N_2\in\N^+$ be given by  Proposition \ref{prop irreducibility}, and assume that 
        \begin{equation*}
            b_j\neq 0\quad \text{for }1\leq j\leq N_2.
        \end{equation*}
        Note that $\sum_{j\in \N^+}b_j^2\leq B_0<\infty$. For any $u_0,\hat{u}_0\in H$ and $t\geq 0$, define $\hat{\Gamma}_t\in\mathscr{C}(P_t^*\delta_{u_0},P_t^*\delta_{\hat{u}_0})$ as
        \begin{equation*}
            \hat{\Gamma}_t(A_1\times A_2)=P_t(u_0,A_1)P_t(\hat{u}_0,A_2),\quad A_1,A_2\in\mathcal{B}(H),
        \end{equation*}
        where the notation $\mathscr{C}(\mu,\hat{\mu})$ denotes the set of couplings between two probability measures $\mu$ and $\hat{\mu}$.   Then we compute that for any $\varepsilon,R>0$ and $r\in(0,1)$,
        \begin{align*}
            &\inf_{u_0,\hat{u}_0\in B_H(0,R)}\sup_{\Gamma\in\mathscr{C}(P_t^*\delta_{u_0},P_t^*\delta_{\hat{u}_0})}\Gamma\left(v,\hat{v}\in H\times H:\rho_r(v,\hat{v})\leq \varepsilon\right)\\
            &\quad\geq \inf_{u_0,\hat{u}_0\in B_H(0,R)}\hat{\Gamma}_t\left(v,\hat{v}\in H\times H:\rho_r(v,\hat{v})\leq \varepsilon\right)\\
            &\quad\geq \inf_{u_0,\hat{u}_0\in B_H(0,R)}\hat{\Gamma}_t\left(v,\hat{v}\in B_H(u_*,1)\times  B_H(u_*,1):\|v-\hat{v}\|e^{2r\gamma(\|v\|^2+\|\hat{v}\|^2)}\leq \varepsilon\right)\\
            &\quad\geq \inf_{u_0,\hat{u}_0\in B_H(0,R)}\hat{\Gamma}_t\left(v,\hat{v}\in B_H(u_*,1)\times  B_H(u_*,1):\|v-u_*\|\lor\|\hat{v}-u_*\|\leq \tfrac{1}{2}\varepsilon e^{-4\gamma_*(1+\|u_*\|^2)}\right)\\
            &\quad\geq \left(\inf_{u_0\in B_H(0,R)}\Pb(\|u(t,u_0)-u_*\|\leq  \tfrac{1}{2}\varepsilon e^{-4\gamma_*(1+\|u_*\|^2)}\wedge1)\right)^2\\
            &\quad>0,
        \end{align*}
    provided that $t\geq T=T(\tfrac{1}{2}\varepsilon e^{-4\gamma_*(1+\|u_*\|^2)}\wedge1)$ established as in Proposition \ref{prop irreducibility}.        
    \end{itemize}

    \vspace{0.3em}

    Collecting these results and applying Theorem \ref{lemma HM}, we conclude that for $N_*=\max\{N_1(\gamma_*),N_2\}=N_*(B_0)$, if the sequence $\{b_j\}_{j\in\N^+}$ in \eqref{eq noise W} satisfies
    \begin{equation*}
       \sum_{j\in\N^+} j^2b_j^2\leq B_0\quad \text{and}\quad b_j\neq 0\quad \text{for }1\leq j\leq N_*,
    \end{equation*}
    then the Markov process associated to \eqref{AC equation} admits a unique invariant measure $\mu_*$. Additionally, the exponential mixing \eqref{eq exponential mixing} holds. This completes the proof of the Main Theorem.

    \end{proof}

    \section*{Appendix}\label{Appendix}

    \stepcounter{section}
    
    \numberwithin{equation}{subsection}
    \numberwithin{theorem}{subsection}
    
    \setcounter{equation}{0}
    \setcounter{theorem}{0}

    In this appendix, we summarize some useful supplementary probabilistic materials, as well as some technical proofs.
    
    \renewcommand\thesubsection{\Alph{subsection}}
 
    \subsection{Fenchel duality}\label{Sec A}

    To establish the stabilization result, we utilize a classical Fenchel duality result; see e.g. \cite{Rockafellar-67,ET-99}. For clarity and self-consistency, let us summarize the required result below. Let $X$ and $Y$ be two Hilbert spaces, and $F\colon X\rightarrow\R^+$, $G\colon Y\rightarrow\R^+\cup\{+\infty\}$ be two convex and lower semicontinuous functions. In addition, let $A\in\mathcal{L}(X;Y)$.

    We shall be concerned with the minimization problem
    \begin{equation}\label{eq problem J}
        \inf_{x\in X} J(x):=\inf_{x\in X} \left(F(x)+G(Ax)\right).
    \end{equation}
    Invoking the Fenchel--Rockafellar duality, it   enables us to construct a dual problem, namely 
    \begin{equation}\label{eq problem J*}
        \sup_{y\in Y} J^*(y):=\sup_{y\in Y} \left(-G^*(-y)-F^*(A^*y)\right),
    \end{equation}
    where $F^*,G^*$ are the corresponding Fenchel conjugates, and $A^*\in\mathcal{L}(Y;X)$ is the dual of $A$.

    With these settings, the Fenchel--Rockafellar duality theorem is stated as follows; see e.g. \cite{Rockafellar-67,ET-99} for more general results.
    \begin{proposition}\label{prop F-R duality}
        Assume there exists $x\in X$ such that $G$ is finite and continuous at $Ax$, and
        \begin{equation*}
            \lim_{\|x\|_{X}\rightarrow\infty}J(x)=+\infty.
        \end{equation*}
       Then problems \eqref{eq problem J} and \eqref{eq problem J*} each have least one solution, and
        \begin{equation*}
            \inf_{x\in X}J(x)=\sup_{y\in Y}J^*(y).
        \end{equation*}
    \end{proposition}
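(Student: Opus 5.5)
This is the Fenchel--Rockafellar duality theorem in the Hilbert-space setting. I would prove it by establishing weak duality, existence of a primal minimizer, and then the absence of a duality gap together with dual attainment via a perturbation argument.

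\emph{Weak duality.} For any $x\in X$ and $y\in Y$, the Fenchel--Young inequality gives $F(x)+F^*(A^*y)\geq\langle A^*y,x\rangle$. Similarly, $G(Ax)+G^*(-y)\geq\langle -y,Ax\rangle$. Adding these two inequalities yields $J(x)\geq J^*(y)$, and hence $\inf J\geq\sup J^*$.

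\emph{Primal existence.} The assumption that $G$ is finite at some $Ax_0$ gives $J(x_0)<\infty$, so $\inf J$ is finite; it is also $\geq 0$ since $F,G\geq 0$. Coercivity of $J$ makes any minimizing sequence bounded, so it has a weakly convergent subsequence. Since $J$ is convex and lower semicontinuous, it is weakly lower semicontinuous, and the weak limit is a minimizer.

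\emph{No gap and dual attainment.} Introduce the perturbation function
$$\Phi(p):=\inf_{x\in X}\big(F(x)+G(Ax+p)\big),\qquad p\in Y.$$
It is convex, and $\Phi(0)=\inf J$. Because $G$ is continuous at $Ax_0$, there is a ball $B(0,r)$ on which $G(Ax_0+p)\leq M$. Therefore
$$\Phi(p)\leq F(x_0)+M\quad\text{for }p\in B(0,r),$$
so $\Phi$ is bounded above near $0$. A convex function with this property is continuous at $0$ and has nonempty subdifferential there. Take $-\bar y\in\partial\Phi(0)$; then $\Phi(p)\geq\Phi(0)-\langle\bar y,p\rangle$ for all $p$. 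Write $p=w-Ax$, where $w$ and $x$ vary independently. This gives, for all $x\in X$ and $w\in Y$,
$$F(x)+G(w)+\langle\bar y,w-Ax\rangle\geq\inf J.$$
Taking the infimum over $w$ turns the $w$-terms into $-G^*(-\bar y)$, and the infimum over $x$ turns the $x$-terms into $-F^*(A^*\bar y)$. Hence $J^*(\bar y)\geq\inf J$. Combined with weak duality, this gives equality and shows that $\bar y$ is a dual solution.

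I expect the main obstacle to be the third step: turning continuity of $G$ at one point into local boundedness, and hence subdifferentiability, of the value function $\Phi$. This is the only place where the qualification hypothesis enters. The rest is routine convex analysis, which I would cite from Ekeland--Temam or Rockafellar rather than reprove.
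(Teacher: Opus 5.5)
Your proof is correct. It is the standard perturbation-function proof of Fenchel--Rockafellar duality: Fenchel--Young gives weak duality, the direct method gives a primal minimizer, and for zero gap and dual attainment the convex value function $\Phi\geq 0$ is bounded above near $0$ by the qualification condition, hence subdifferentiable at $0$. The paper gives no proof of its own and simply defers to Rockafellar and Ekeland--Temam, which contain essentially this argument.
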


    \subsection{A priori moment estimates}\label{Sec B}

    This appendix collects some estimates for the Allen--Cahn equation and its Jacobian flow.  

    \begin{lemma}\label{lemma L bdd}
    For the equation give by \eqref{AC equation}-\eqref{eq psi-def}, assume that the sequence $\{b_j\}_{j\in\N^+}$ satisfies
    \begin{equation*}
       \sum_{j\in\N^+} j^2b_j^2<\infty.       
    \end{equation*}    
    Then there exist constants $c,C,\gamma_0>0$ such that the solution $u_t=u(t,u_0)$ of \eqref{AC equation} satisfies the following estimates.  
    \begin{itemize}
        \item [(1)] For any $\gamma\in(0,\gamma_0]$, $u_0\in H$ and $t\geq0$, 
         \begin{align}      \E\exp\left(\gamma\|u_t\|^2\right)&\leq C\exp\left(\gamma e^{-\nu t}\|u_0\|^2\right).\label{eq L2 bdd}
    \end{align}
        \item [(2)] For any $\gamma\in(0,\gamma_0]$, $u_0\in H$ and $n\in\N^+$, 
        \begin{align}      \E\exp\left(\gamma\sum_{0\leq k\leq n}\|u_k\|^2\right)\leq \exp\left(c\gamma \|u_0\|^2\right)\exp\left(Cn\right).\label{eq L2 bdd3} 
    \end{align}  
        \item [(3)]  For any $\gamma\in(0,\gamma_0]$, $u_0\in H$ and $n\in\N^+$, 
         \begin{equation}\label{eq L-inf bdd}        \E\left(\exp\left(\gamma\|u^2\|_{L^\infty((n-1/2,n)\times(0,\pi))}\right)|\mathcal{F}_{n-1}\right)\leq C\exp\left(c\gamma\|u_{n-1}\|^2\right).
    \end{equation}
    \end{itemize}    
    \end{lemma}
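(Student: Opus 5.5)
The three bounds all come from Itô's formula for $\|u_t\|^2$, with an exponential martingale inequality for (1)–(2) and parabolic smoothing for (3). Write $\mathcal{B}_0:=\sum_j b_j^2$, which is finite and is the trace of the noise covariance. Itô's formula gives
\[
d\|u\|^2=\bigl(-2\nu\|\partial_xu\|^2-2\|u\|_{L^4}^4+2\lambda\|u\|^2+\mathcal{B}_0\bigr)dt+2\langle u,dW\rangle .
\]
The destabilizing term is absorbed by the quartic term: $2\lambda\|u\|^2-\|u\|_{L^4}^4\le \pi\lambda^2$. Together with Poincaré ($\|\partial_x u\|\ge\|u\|$), this yields
\[
d\|u\|^2\le\bigl(-\nu\|u\|^2-\nu\|\partial_xu\|^2-\|u\|_{L^4}^4+K\bigr)dt+2\langle u,dW\rangle,\qquad K=\pi\lambda^2+\mathcal{B}_0 .
\]
The quadratic variation of the martingale is at most $4\max_j b_j^2\,\|u\|^2\,dt$, which is dominated by the dissipation $\nu\|\partial_x u\|^2$. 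So for $\gamma\le\gamma_0:=\nu/(8\max_j b_j^2)$ the standard exponential-martingale argument (as in Hairer--Mattingly) applies. For (1), set $Y_t=\gamma e^{\nu t}\|u_t\|^2$ and apply Itô to $e^{Y_t}$. The drift of $Y$ plus one half its quadratic variation is bounded by $\gamma e^{\nu t}K$ minus a nonnegative term once $\gamma e^{\nu t}$ is not too large. To avoid the growth of $e^{\nu t}$, I would do this on unit intervals: first get $\E e^{\gamma\|u_1\|^2}\le C e^{\gamma e^{-\nu}\|u_0\|^2}$, then iterate using the Markov property and Jensen, since $\gamma e^{-\nu}\le\gamma$, so the bound can be fed back in. The constants sum geometrically, which gives (1).

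For (2), integrate the inequality over $[0,n]$ to get
\[
\|u_n\|^2+\nu\int_0^n\|u\|^2\,ds\le\|u_0\|^2+Kn+M_n .
\]
The exponential martingale inequality gives
\[
\E\exp\Bigl(\gamma\sup_{t\le n}\bigl(M_t-\tfrac{\gamma_1}{2}\langle M\rangle_t\bigr)\Bigr)\le C
\]
with $\langle M\rangle\lesssim\int\|u\|^2$. This bounds $\E\exp(\gamma\nu\int_0^n\|u\|^2/2)$ by $e^{\gamma\|u_0\|^2+Cn}$. To pass from the time integral to the discrete sum $\sum_k\|u_k\|^2$, I would use the pathwise bound
\[
\|u_k\|^2\le e^{C}\bigl(\|u_s\|^2+1+\sup_{r\in[s,k]}|M_r-M_s|\bigr),\qquad s\in[k-1,k],
\]
average over $s\in[k-1,k]$, and apply Hölder; alternatively, condition step by step using (1) together with the tower property to get the product form $e^{c\gamma\|u_0\|^2}e^{Cn}$.

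For (3), the point is that $\|u\|_{L^\infty}^2\lesssim\|u\|_{H^1}^2$ in 1D. It therefore suffices to bound $\sup_{t\in[n-1/2,n]}\|u_t\|_{H^1}^2$ exponentially, conditionally on $\mathcal{F}_{n-1}$. Split $u=z+v$, where $z$ is the stochastic convolution started at time $n-1$. By $\sum j^2b_j^2<\infty$, $z$ has Gaussian tails in $C([n-1,n];H_0^1)$, and Fernique's theorem gives $\E e^{\gamma\sup\|z\|_{H^1}^2}<\infty$ for small $\gamma$. The remainder $v$ solves the random PDE $\partial_tv-\nu\partial_x^2v+(v+z)^3-\lambda(v+z)=0$ with $v_{n-1}=u_{n-1}$. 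For it I would use the energy identity for $E(v)=\int\frac\nu2|\partial_xv|^2+\frac14v^4$ weighted by $(t-n+1)$ (parabolic smoothing), together with the $L^2$ bound from (1), to get
\[
\sup_{[n-1/2,n]}\|v\|_{H^1}^2\le C\bigl(1+\|u_{n-1}\|^2+\sup\|z\|_{H^1}^{p}\bigr).
\]
\textbf{This is the main obstacle.} The cubic coupling terms make the $z$-dependence polynomial of possibly high degree, and that would destroy exponential integrability. To handle it I would instead apply Itô directly to the weighted energy $(t-n+1)E(u_t)$ of $u$ itself. The Itô correction is $\sum b_j^2\int(\nu|\partial_x\psi_j|^2+3u^2\psi_j^2)$, which is controlled by $\sum j^2b_j^2$ (this is exactly where the hypothesis is needed) plus $\|u\|_{L^4}^2$. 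I would then use the dissipation $\|\partial_t u\|^2$-type term $\|{-\nu}\partial_x^2u+u^3\|^2$ to dominate both this correction and the martingale's quadratic variation. With that, the same exponential-martingale argument as in (1) yields (3), with the $\sup$ controlled through a maximal inequality.
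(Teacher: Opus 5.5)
Your proposal is correct and follows the paper's overall architecture: It\^o on $\|u\|^2$ with exponential martingale bounds for (1), tower-property iteration for (2), and It\^o on a time-weighted higher energy plus the 1D embedding for (3). There are two differences of route and one step you should make explicit.

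\textbf{Part (1).} The paper does not iterate on unit intervals. It works on all of $[0,t]$ with the discounted martingale $\int_0^t e^{-\nu(t-s)}\,dM_s$, gets an exponential tail bound (Mattingly's Lemma A.1), and converts it into a moment bound. Your route, Gronwall for $\exp(\gamma e^{\nu t}\|u_t\|^2)$ on $[0,1]$ followed by Markov/Jensen iteration, reaches the same estimate.

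\textbf{Part (2).} Your ``alternative'' is exactly the paper's proof. The coefficient on $\|u_{n-k}\|^2$ accumulates to at most $\gamma\sum_k e^{-k\nu}$, which forces $\gamma\le(1-e^{-\nu})\gamma_1$. Your primary route through $\int_0^n\|u\|^2$ and pathwise increment bounds is unnecessary.

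\textbf{Part (3), choice of functional.} The paper applies It\^o to $(t-n+1)\|u_t\|_{H^1}^2$ rather than to $(t-n+1)E(u_t)$, and this choice is cleaner. The cubic term contributes $-6(t-n+1)\int u^2|\partial_x u|^2\le 0$. The It\^o correction is a deterministic multiple of $\sum_j j^2b_j^2$. The quadratic variation is absorbed by the weighted $\|u\|_{H^2}^2$ dissipation. Your Ginzburg--Landau energy also works, but it produces two extra terms: the $u$-dependent correction you mention, and the term $\lambda(t-n+1)\bigl(\nu\|\partial_xu\|^2+\|u\|_{L^4}^4\bigr)$, which you did not list. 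Both can be absorbed into a multiple of $E(u)$ plus $C(1+\|u\|^2)$.

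\textbf{Part (3), the step to make explicit.} Differentiating the weight $(t-n+1)$ produces $\int_{n-1}^t E(u_s)\,ds$. The weighted dissipation cannot absorb this term near $t=n-1$, so ``the same exponential-martingale argument as in (1)'' does not by itself close (3).

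The paper handles the analogous term $C_*\int_{n-1}^t\|u_s\|_{H^1}^2\,ds$ with a separate conditional bound derived from the $L^2$ balance:
$\E\bigl(\exp(\gamma\nu\int_{n-1}^n\|u_s\|_{H^1}^2\,ds)\,\big|\,\mathcal{F}_{n-1}\bigr)\le C\exp(\gamma\|u_{n-1}\|^2)$.
It then combines this with the exponential martingale bound for the weighted functional by Cauchy--Schwarz.

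Your initial $L^2$ inequality already carries $\nu\|\partial_xu\|^2+\|u\|_{L^4}^4$ in the dissipation. So the same conditional exponential bound holds for $\int_{n-1}^n E(u_s)\,ds$ (and for $\int\|u_s\|^2ds$). You need to state it and carry out the Cauchy--Schwarz split before taking $\sup_{[n-1/2,n]}$ and using $\|u\|_{L^\infty}^2\lesssim\|\partial_x u\|^2$.
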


    \begin{proof}[Proof of Lemma \ref{lemma L bdd}]
    For $k=0,1$, let us set
    \begin{equation*}        \mathcal{E}_k:=\sum_{j\in\N^+}j^{2k}b_j^2.
     \end{equation*}
    For ease of notation, we use $C\geq 1$  to denote generic constants that may vary from line to line below, which are independent of $u,t,n$.

    \vspace{0.6em}
    
    \noindent {\it Point (1).} Applying the Itô formula to $\varphi(u)=\|u\|^2$, one has
    \begin{equation*}
        d\|u_t\|^2=-2\nu\|u_t\|^2_{H^1}dt-2\|u_t^2\|^2dt+2\lambda\|u_t\|^2dt+\mathcal{E}_0dt+2\sum_{j\in\N^+}b_j\langle u_t,\psi_j\rangle d\beta_j(t).
    \end{equation*}
    It then follows that
        \begin{align*}
        d\|u_t\|^2\leq -2\nu\|u_t\|^2dt+Cdt+2\sum_{j\in\N^+}b_j\langle u_t,\psi_j\rangle d\beta_j(t).
    \end{align*}    
    This implies that 
    \begin{align*}    
      d(\|u_t\|^2e^{\nu t})\leq  Ce^{\nu t}dt+ 2\sum_{j\in\N^+}b_je^{\nu t}\langle u_t,\psi_j\rangle d\beta_j(t)-\nu e^{\nu t}\|u_t\|^2dt,
    \end{align*}
    and 
    \begin{align*}
        \|u_t\|^2-e^{-\nu t}\|u_0\|^2-C\nu^{-1}\leq2\sum_{j\in\N^+}b_j\int_0^te^{-\nu (t-s)}\langle u_s,\psi_j\rangle d\beta_j(s)-\nu\int_0^te^{-\nu(t-s)}\|u_s\|^2ds.
    \end{align*}

    Let 
    \begin{equation*}
        M_t=2\sum_{j\in\N^+}b_j\int_0^t\langle u_s,\psi_j\rangle d\beta_j(s),
    \end{equation*}
    then 
    \begin{equation*}
        \langle M\rangle_t=4\sum_{j\in\N^+}b_j^2\int_0^t\langle u_s,\psi_j\rangle^2ds\leq 4\mathcal{E}_0\int_0^t\|u_s\|^2ds.
    \end{equation*}
    
    Consequently, for any
    \begin{equation}\label{eq H1-8}
        0<\gamma\leq \frac{\nu}{4\mathcal{E}_0}:=\gamma_1,
    \end{equation}
     we derive that
    \begin{align*}
        \|u_t\|^2-e^{-\nu t}\|u_0\|^2-C\nu^{-1}\leq \int_0^te^{-\nu (t-s)}dM_s-\gamma\int_0^te^{-\nu (t-s)}d\langle M\rangle_s.
    \end{align*}
    Thus by \cite[Lemma A.1]{Mattingly-02b}, the desired inequality \eqref{eq L2 bdd}  follows from that for any $K>0$,
    \begin{equation*}
        \Pb\left(\|u_t\|^2-e^{-\nu t}\|u_0\|^2-C\nu^{-1}\geq K\right)\leq e^{-2\gamma K}
    \end{equation*}
    and the fact that a random variable $X$ satisfies $\E X\leq 2$ provided $\Pb(X\geq K)\leq K^{-2}$ for any $K\geq 0$.

    \vspace{0.6em}

    \noindent {\it Point (2).} Using \eqref{eq L2 bdd}, there exists $C_1>0$ such that
        \begin{align*}
            \E\exp\left(\gamma\sum_{0\leq k\leq n}\|u_k\|^2\right)&=\E\left(\exp\left(\gamma\sum_{0\leq k\leq n-1}\|u_k\|^2\right)\E( e^{\gamma\|u_n\|^2}|\mathcal{F}_{n-1})\right)\\
            &\leq C_1\E\exp\left(\gamma\sum_{0\leq k\leq n-1}\|u_k\|^2+\gamma e^{-\nu}\|u_{n-1}\|^2\right).
        \end{align*}
        Applying this procedure repeatedly, one has
         \begin{align*}
            \E\exp\left(\gamma\sum_{0\leq k\leq n}\|u_k\|^2\right)\leq C_1^n\E\exp\left(c_1\gamma\|u_0\|^2\right),
        \end{align*}
        where 
        \begin{equation*}
            c_1:=\sum_{k\in\N}e^{-\nu k}=(1-e^{-\nu})^{-1},
        \end{equation*}
        provided that
        \begin{equation*}
            0<\gamma\leq \gamma_2:=c_1^{-1}\gamma_1.
        \end{equation*}
        In particular, \eqref{eq L2 bdd3} is thus derived with
        \begin{equation*}
            C=C_2:=\ln (C_1+1)\quad \text{and }\quad c=c_1=(1-e^{-\nu})^{-1}.
        \end{equation*}

    \vspace{0.6em}
    
    \noindent {\it Point (3).}   For any $n\in\N^+$, $n-1\leq t\leq n$ and  $\varphi(t,u)=(t-n+1)\|u\|^2_{H^1}$, applying the Itô formula, we have
    \begin{equation}\label{eq H1-1}
    \begin{aligned}
        d\varphi(t,u_t)&=\|u_t\|^2_{H^1}dt+2(t-n+1)\langle -\partial_x^2u_t,du_t\rangle+(t-n+1)\sum_{j\in\N^+}b_j^2\|\psi_j\|^2_{H^1}dt\\
        &=\|u_t\|^2_{H^1}dt-2\nu (t-n+1)\|u_t\|^2_{H^2}dt-2(t-n+1)\langle \partial_xu_t,(3u_t^2-\lambda)\partial_xu_t\rangle dt\\
        &\quad+\pi^2(b-a)^{-2}\mathcal{E}_1(t-n+1)dt+ 2(t-n+1)\sum_{j\in\N^+}b_j\langle -\partial_x^2u_t,\psi_j\rangle d\beta_j(t).
    \end{aligned}
    \end{equation}  
    Set 
    \begin{equation*}
        M_{n,t}=2\sum_{j\in\N^+}b_j\int_{n-1}^t(s-n+1)\langle -\partial_x^2u_s,\psi_j\rangle d\beta_j(s),\quad  n-1\leq t\leq n.
    \end{equation*}
    
    Then for any $n-1\leq t\leq n$,
    \begin{align*}
        \langle M_n\rangle_t&=4\sum_{j\in\N^+}b_j^2\int_{n-1}^t(s-n+1)^2\langle \partial_x^2u_s,\psi_j\rangle ^2ds\leq 4\sum_{j\in\N^+}b_j^2\int_{n-1}^t(s-n+1)\|u_s\|^2_{H^2}ds\\
        &\leq 4\mathcal{E}_0\int_{n-1}^t(s-n+1)\|u_s\|^2_{H^2}ds. 
    \end{align*}

    By \eqref{eq H1-1}, it follows that there exist generic positive constants 
    \begin{equation*}
        C_*=1+2\lambda,\quad C_1=4\mathcal{E}_0,\quad C_2=\pi^2(b-a)^{-2}\mathcal{E}_1,
    \end{equation*}
    such that for any $\gamma>0$, 
    \begin{equation}\label{eq H1-2}
    \begin{aligned}
        &(t-n+1)\|u_t\|_{H^1}^2-C_*\int_{n-1}^t\|u_s\|_{H^1}^2ds+(2\nu-\gamma C_1)\int_{n-1}^t(s-n+1)\|u_s\|_{H^2}^2ds-C_2\\
        &\quad\leq M_{n,t}-\gamma\langle M_n\rangle_t.      
    \end{aligned}
    \end{equation}

    This implies that by taking
    \begin{equation}\label{eq H1-7}
        \gamma_3:=\frac{\nu}{C_1},
    \end{equation}
    it follows that for $n\in\N^+$ and $\gamma\in(0,\gamma_3]$,
    \begin{align*}        \Pb\left(\sup_{t\in[n-1,n]}\left\{(t-n+1)\|u_t\|_{H^1}^2-C_*\int_{n-1}^t\|u_s\|_{H^1}^2ds+\nu\int_{n-1}^t(s-n+1)\|u_s\|_{H^2}^2ds\right\}\geq K\right)\\
    \leq Ce^{-2\gamma K},\quad\;
    \end{align*}
    which implies    
    \begin{align} \label{eq H1-5}       \E\left(\sup_{t\in[n-1,n]}\left\{(t-n+1)\|u_t\|_{H^1}^2-C_*\int_{n-1}^t\|u_s\|_{H^1}^2ds+\nu\int_{n-1}^t(s-n+1)\|u_s\|_{H^2}^2ds\right\}\right)\leq C.
    \end{align}

    \vspace{0.6em}

    Applying Itô formula to $\varphi(u)=\|u\|^2$, one has
    \begin{equation*}
        d\|u_t\|^2=-2\nu\|u_t\|^2_{H^1}+2\lambda\|u_t\|^2dt-2\|u_t^{2}\|^2dt+\mathcal{E}_0dt+d\hat{M}_{s,t},
    \end{equation*}
         where
    \begin{align*}
        \hat{M}_{s,t}=2\sum_{j\in\N^+}b_j\int_s^t\langle u_r,\psi_j\rangle d\beta_j(r),\quad\langle \hat{M}_{s}\rangle_t=4\sum_{j\in\N^+}b_j^2\int_s^t\langle u_r,\psi_j\rangle^2 dr\leq 4\mathcal{E}_0\int_s^t\|u_r\|_{H^1}^2dr.
    \end{align*}    
    Note that there exists a constant  $C_3=\mathcal{E}_0+\pi\lambda^2$ such that
        \begin{align*}
        d\|u_t\|^2&=-2\nu\|u_t\|^2_{H^1}+2\lambda\|u_t\|^2dt-2\|u_t^{2}\|^2dt+\mathcal{E}_0dt+dM_{s,t}\\
        &\leq -2\nu\|u_t\|^2_{H^1}-\|u_t^{2}\|^2dt+C_3dt+dM_{s,t}.
    \end{align*}
    
    This implies that for any
    \begin{equation*}
        0<\gamma\leq \frac{\nu}{4\mathcal{E}_0}=\gamma_1,
    \end{equation*}
    we have
    \begin{equation*}
        \|u_t\|^2-\|u_s\|^2+\nu\int_s^t\|u_r\|^2_{H^1}dr+\int_s^t\|u_r^2\|^2dr-C_3(t-s)\leq \hat{M}_{s,t}-\gamma\langle \hat{M}_{s}\rangle_t. 
    \end{equation*}

    Therefore, for any $K>0$,
    \begin{equation*}
        \Pb\left(\sup_{t\geq s}\left\{\|u_t\|^2-\|u_s\|^2+\nu\int_s^t\|u_r\|^2_{H^1}dr+\int_s^t\|u_r^2\|^2dr-C_3(t-s)\right\}\geq K\right)\leq e^{-2\gamma K},
    \end{equation*}
    which implies that for any $n\in\N^+$ and $\gamma\in(0,\gamma_1]$,    \begin{equation}\label{eq H1-6} 
        \E\left(\exp\left(\gamma\nu\int_{n-1}^{n}\|u_t\|^2_{H^1}dt\right)|\mathcal{F}_{n-1}\right)\leq C\exp\left(\gamma\|u_{n-1}\|^2\right).
    \end{equation}

    Note that 
    \begin{align*}
        \E\exp\left(\gamma\|u\|^2_{C([n-1/2,n];H^1)}\right)&\leq \E\exp\left(2\gamma\sup_{t\in[n-1,n]}\{(t-n+1)\|u_t\|^2_{H^1}\}\right)\\
        &=\E\Biggl(\exp\left(2\gamma\left(\sup_{t\in[n-1,n]}\{(t-n+1)\|u_t\|^2_{H^1}\}-C_*\int_{n-1}^n\|u_t\|^2_{H^1}dt\right)\right)\\
        &\qquad \times\exp\left(2\gamma C_*\int_{n-1}^{n}\|u_t\|^2_{H^1}dt\right)\Biggr)\\
        &\leq \left(\E\exp\left(4\gamma\left(\sup_{t\in[n-1,n]}\{(t-n+1)\|u_t\|^2_{H^1}\}-C_*\int_{n-1}^{n}\|u_t\|^2_{H^1}dt\right)\right)\right)^{\frac{1}{2}}\\
        &\quad \times \left(\E\exp\left(4\gamma C_*\int_{n-1}^{n}\|u_t\|^2_{H^1}dt\right)\right)^{\frac{1}{2}}.
    \end{align*}

    Thus, taking \eqref{eq H1-8}, \eqref{eq H1-7}, \eqref{eq H1-5} and \eqref{eq H1-6} into account, for any 
    \begin{equation*}
        0<\gamma\leq \frac{\gamma_3}{4}\wedge \frac{\gamma_1}{4C_*}, 
    \end{equation*}
    we conclude that
    \begin{align*}        \E\left(\exp\left(\gamma\|u\|^2_{C([n-1/2,n];H^1)}\right)|\mathcal{F}_{n-1}\right)&\leq C\exp\left(2C_*\nu^{-1}\gamma\|u_n\|^2\right).
    \end{align*}

    Note that for any $w\in H_0^1(0,\pi)$,
    \begin{align*}
        \|w^2\|_{L^\infty(0,\pi)}\leq \|w\|_{L^\infty(0,\pi)}^2 \leq C\|w\|_{H^1}^2.
    \end{align*}
    
    Thus, one has 
    \begin{equation*}
        \|u^2\|_{L^\infty((n-1/2,n)\times(0,\pi))}\leq C\|u\|_{C([n-1/2,n];H^1)}^2,
    \end{equation*}
    which implies the desired inequality \eqref{eq L-inf bdd}.

    \end{proof}

    Next we also derive some bounds for the linearized equations, summarized in the following.

     \begin{lemma}\label{lemma J bounds}
      For any $u_0\in H$ and $0\leq s<t$, the solutions of \eqref{J equation} and \eqref{J* equation} satisfy that
        \begin{equation}\label{eq J bound}
            \|\mathcal{J}_{s,t}\xi\|\leq e^{\lambda(t-s)}\|\xi\|,\quad \|\mathcal{J}^*_{s,t}\xi\|\leq e^{\lambda(t-s)}\|\xi\| \quad\forall\,\xi\in H.
        \end{equation}
        Moreover, let $\mathcal{J}^{(2)}_{s,t}(\xi,\xi')$ be the solution of
        \begin{equation}\label{J2 equation}
        \begin{cases}
         \partial_t\mathcal{J}^{(2)}_{s,t}(\xi,\xi')-\nu \partial_x^2\mathcal{J}^{(2)}_{s,t}(\xi,\xi')+(3u_t^2-\lambda)\mathcal{J}^{(2)}_{s,t}(\xi,\xi')+6u_t\mathcal{J}_{s,t}\xi\mathcal{J}_{s,t}\xi'=0,\\         
        \mathcal{J}^{(2)}_{s,s}(\xi,\xi')=0,
        \end{cases}
        \end{equation}
        then there exists a constant $C=C(\nu)>0$ such that        \begin{equation}\label{eq J_2 bound}
            \|\mathcal{J}^{(2)}_{s,t}(\xi,\xi')\|\leq Ce^{3\lambda(t-s)}\|\xi\|\|\xi'\|\quad\forall\,\xi,\xi'\in H.
        \end{equation}
    \end{lemma}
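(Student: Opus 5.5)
The plan is to run energy estimates on the linear equations \eqref{J equation}, \eqref{J* equation} and \eqref{J2 equation}, using only the sign structure of the potential $3u_t^2-\lambda\geq-\lambda$.

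\emph{First estimate, forward flow.} Set $\rho_t=\mathcal{J}_{s,t}\xi$ and test \eqref{J equation} with $\rho_t$. This gives
\begin{equation*}
\tfrac12\partial_t\|\rho_t\|^2=-\nu\|\partial_x\rho_t\|^2-\int_0^\pi(3u_t^2-\lambda)\rho_t^2\,dx\leq\lambda\|\rho_t\|^2 .
\end{equation*}
Gronwall then yields $\|\rho_t\|\leq e^{\lambda(t-s)}\|\xi\|$. The computation is justified for smooth approximations of $u$ and $\xi$ and passes to the limit by continuous dependence. Alternatively, one works directly in the weak formulation, since $\rho\in L^2H^1_0$ and $u^2\rho^2$ is integrable for $t>s\geq0$ (one may also start at $s+\epsilon$).

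\emph{First estimate, adjoint flow.} Set $\eta_r=\mathcal{J}^*_{r,t}\xi$, which solves the backward equation \eqref{J* equation}. Differentiating in $r$ gives
\begin{equation*}
-\tfrac12\partial_r\|\eta_r\|^2=-\nu\|\partial_x\eta_r\|^2-\int(3u_r^2-\lambda)\eta_r^2\leq\lambda\|\eta_r\|^2 .
\end{equation*}
A backward Gronwall argument gives the same bound. One could also deduce it from the duality $\|\mathcal{J}^*_{s,t}\|=\|\mathcal{J}_{s,t}\|$.

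\emph{Second derivative.} Let $\zeta_t=\mathcal{J}^{(2)}_{s,t}(\xi,\xi')$, $a_t=\mathcal{J}_{s,t}\xi$ and $b_t=\mathcal{J}_{s,t}\xi'$. Testing \eqref{J2 equation} with $\zeta_t$ gives
\begin{equation*}
\tfrac12\partial_t\|\zeta\|^2+\nu\|\partial_x\zeta\|^2+3\|u\zeta\|^2\leq\lambda\|\zeta\|^2+6\int|u\,\zeta\, a\,b|\,dx .
\end{equation*}
This is the main obstacle, because $u_t$ is not controlled uniformly and the constant must depend only on $\nu$. The plan is to absorb the factor $u$ into the good term $3\|u\zeta\|^2$ by Young's inequality:
\begin{equation*}
6\int|u\zeta ab|\leq3\|u\zeta\|^2+3\|ab\|^2 .
\end{equation*}
This leaves
\begin{equation*}
\|ab\|^2\leq\|a\|_{L^\infty}^2\|b\|^2\leq C\|a\|\|a\|_{H^1}\|b\|^2
\end{equation*}
by Agmon's inequality. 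It therefore remains to control $\int_s^t\|a_r\|\|a_r\|_{H^1}\,dr$. Integrating the energy identity for $a$ gives
\begin{equation*}
\nu\int_s^t e^{2\lambda(t-r)}\|\partial_xa_r\|^2\,dr\lesssim e^{2\lambda(t-s)}\|\xi\|^2 .
\end{equation*}
Combining this with Cauchy--Schwarz, $\int_s^t\|a\|\|a\|_{H^1}\leq(\int\|a\|^2)^{1/2}(\int\|a\|_{H^1}^2)^{1/2}$, yields a factor $C(\nu)e^{2\lambda(t-s)}\|\xi\|^2$, up to polynomial factors in $t-s$ that are absorbed into the exponentials. Multiplying by $\sup_r\|b_r\|^2\leq e^{2\lambda(t-s)}\|\xi'\|^2$ and applying Gronwall with weight $e^{2\lambda(t-r)}$ gives
\begin{equation*}
\|\zeta_t\|^2\leq C(\nu)e^{6\lambda(t-s)}\|\xi\|^2\|\xi'\|^2 .
\end{equation*}
This is \eqref{eq J_2 bound}.

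The one point needing care is that the polynomial-in-time factors are absorbed into the exponent. If they are not, the constant has to be adjusted, for instance by using $(t-s)e^{\lambda(t-s)}\leq\lambda^{-1}e^{2\lambda(t-s)}$. The statement allows $C$ to depend on $\nu$, and on $\lambda$ implicitly through this adjustment.
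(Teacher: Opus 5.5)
Your argument follows the paper's proof. You use energy estimates with $3u_t^2-\lambda\geq-\lambda$ for $\mathcal{J}_{s,t}$ and $\mathcal{J}^*_{s,t}$. For $\mathcal{J}^{(2)}_{s,t}$ you absorb the factor $u$ into the good term $3\|u\zeta\|^2$ by Young, bound $\|ab\|^2\leq\|a\|_{L^\infty}^2\|b\|^2$, and close with the $L^2_tH^1_x$ bound on $a=\mathcal{J}_{s,\cdot}\xi$ and Gronwall.

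There is one place where you deviate, and it costs you the stated form of the constant. Agmon's inequality produces the factor $\bigl(\int_s^t\|a_r\|^2dr\bigr)^{1/2}$. This is only bounded by $(t-s)^{1/2}e^{\lambda(t-s)}\|\xi\|$ or by $(2\lambda)^{-1/2}e^{\lambda(t-s)}\|\xi\|$. Neither can be absorbed into the exponential with a constant depending on $\nu$ alone. The statement does specify $C=C(\nu)$, so your closing remark that $\lambda$-dependence is allowed is a misreading. It would be harmless in Appendix C, where $t-s\leq\delta<1/2$, but it is not what is claimed.

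The fix is one line and is what the paper does. For $a\in H^1_0(0,\pi)$, use $\|a\|_{L^\infty}^2\leq\pi\|\partial_xa\|^2$; equivalently, apply Poincaré, $\|a\|\leq\|\partial_xa\|$, right after Agmon. Then you only need $\int_s^t\|\partial_xa_r\|^2dr\leq(2\nu)^{-1}e^{2\lambda(t-s)}\|\xi\|^2$. Together with $\|b_r\|^2\leq e^{2\lambda(r-s)}\|\xi'\|^2$, your Gronwall step gives $\|\zeta_t\|^2\leq 6\pi e^{2\lambda(t-s)}\|\xi'\|^2\int_s^t\|\partial_xa_r\|^2dr\leq 3\pi\nu^{-1}e^{4\lambda(t-s)}\|\xi\|^2\|\xi'\|^2$. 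This is the claim with $C=\sqrt{3\pi/\nu}$, with no time or $\lambda$ factors left to absorb.
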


        \begin{proof}[Proof of Lemma \ref{lemma J bounds}]
        To derive the first estimate, we have
         \begin{align*}
        \partial_t\|\mathcal{J}_{s,t}\xi\|^2&=-2\nu\|\mathcal{J}_{s,t}\xi\|_{H^1}^2+2\lambda\|\mathcal{J}_{s,t}\xi\|^2-6\|u_t\mathcal{J}_{s,t}\xi\|^2\\
        &\leq -2\nu\|\mathcal{J}_{s,t}\xi\|_{H^1}^2+2\lambda\|\mathcal{J}_{s,t}\xi\|^2,
    \end{align*}
        which implies that the first estimate in \eqref{eq J bound}. The estimate for $\mathcal{J}^*_{s,t}\xi$ can be calculated similarly. Meanwhile, note that        
        \begin{equation*}
            \int_s^t\|\mathcal{J}_{s,r}\xi\|^2_{H^1}dr\leq \frac{\lambda}{\nu}\int_s^t\|\mathcal{J}_{s,r}\xi\|^2dr\leq\nu^{-1}e^{2\lambda(t-s)}\|\xi\|^2.
        \end{equation*}

        For the second,  using the Young’s inequality and Sobolev embedding theorem,
    \begin{align*}
        \partial_t\|\mathcal{J}^{(2)}_{s,t}(\xi,\xi')\|^2&=-2\nu\|\mathcal{J}^{(2)}_{s,t}(\xi,\xi')\|^2_{H^1}+2\lambda\|\mathcal{J}^{(2)}_{s,t}(\xi,\xi')\|^2-6\|u_t\mathcal{J}^{(2)}_{s,t}(\xi,\xi')\|^2\\
        &\quad+6\langle u_t\mathcal{J}^{(2)}_{s,t}(\xi,\xi'),\mathcal{J}_{s,t}\xi\mathcal{J}_{s,t}\xi'\rangle\\
        &\leq 2\lambda\|\mathcal{J}^{(2)}_{s,t}(\xi,\xi')\|^2+3\|\mathcal{J}_{s,t}\xi\mathcal{J}_{s,t}\xi'\|^2\\
        &\leq 2\lambda\|\mathcal{J}^{(2)}_{s,t}(\xi,\xi')\|^2+3\|\mathcal{J}_{s,t}\xi\|_{L^\infty(0,\pi)}^2\|\mathcal{J}_{s,t}\xi'\|^2\\
        &\leq 2\lambda\|\mathcal{J}^{(2)}_{s,t}(\xi,\xi')\|^2+C\|\mathcal{J}_{s,t}\xi\|_{H^1}^2\|\mathcal{J}_{s,t}\xi'\|^2.
    \end{align*}
    Thus applying the Gronwall's ineqaulity, we obtain
    \begin{align*}
        \|\mathcal{J}^{(2)}_{s,t}(\xi,\xi')\|^2\leq Ce^{2\lambda(t-s)}\int_s^t\|\mathcal{J}_{s,r}\xi\|_{H^1}^2\|\mathcal{J}_{s,r}\xi'\|^2dr\leq C\nu^{-1}e^{6\lambda(t-s)}\|\xi\|^2\|\xi'\|^2.
    \end{align*}

    \end{proof}

    \subsection{Verification of the asymptotic strong Feller}\label{Sec C} Below we present a detailed proof of Lemma \ref{prop rho_n 2}. The proof is based on the construction of control $v$ by \eqref{eq vn def}, \eqref{eq v def}, an application of the Malliavin calculus and some energy estimates, which consists of two parts.  The proof combines the strategies of \cite[Proposition 4.15]{HM-06} and \cite[Proposition 2.6]{FGRT-15}. 
    
    In this appendix, let $B_0>0$ be arbitrarily given, and we use $C=C(B_0)>0$ to denote constants which are independent of $u,n,s,t,N,\delta,\beta$ and may vary form line to line.  

    \subsubsection{ Proof of inequality \eqref{eq rho_n 2} } 
     We first show that, under the assumptions of Lemma \ref{prop rho_n 2},  for any $\gamma,\varepsilon>0$, there exist constants $N\in \N^+$, $\delta\in(0,1/2)$ and $\beta>0$ such that,
        \begin{equation}\label{eq rho_n 3}
            \E\|\rho_n\|^{2}\leq \varepsilon^n\exp\left(\gamma\|u_0\|^2\right)\quad \forall\,u_0\in H,\;n\in\N^+.
        \end{equation}

    \vspace{0.3em}

       Recall that $\rho_0=\xi\in H$, $\|\xi\|=1$. Let $\gamma,\varepsilon\in(0,1)$ be given. We further introduce two parameters $\bar{\gamma}\in(0,\gamma)$ and $\bar{\varepsilon}\in(0,\varepsilon)$ to be fixed.  Define a function $\Phi$ as
       \begin{equation*}
           \Phi(x,y):=\begin{cases}
               x/y\quad&\text{for }y\neq 0,\\
               0\quad&\text{for }y= 0.
           \end{cases}
       \end{equation*}

        Invoking \eqref{eq rho_n 1}, there exists $N=N(\bar{\gamma},\bar{\varepsilon})$ $\delta=\delta(\bar{\gamma},\bar{\varepsilon})$ and $\beta=\beta(\bar{\gamma},\bar{\varepsilon})$ such that 
        \begin{equation}\label{eq rho_n 2-1}
            \E\left(\|\rho_n\|^{4}|\mathcal{F}_{n-1}\right)\leq \bar{\varepsilon}e^{\bar{\gamma}\|u_{n-1}\|^2}\|\rho_{n-1}\|^{4}\quad \forall\,u_0\in H,\;n\in\N^+.
        \end{equation}       
        For each $k\in\N^+$, set
        \begin{equation*}
            X_k:=\Phi(\|\rho_k\|,\|\rho_{k-1}\|)^2\exp\left(-\tfrac{\bar{\gamma}}{2}\|u_{k-1}\|^2\right),\quad Y_k:=\exp\left(\tfrac{\bar{\gamma}}{2}\|u_{k-1}\|^2\right).
        \end{equation*}

       Repeated use of conditional expectation with \eqref{eq rho_n 2-1} implies
    \begin{align*}
        \E\left(\prod_{1\leq k\leq n}X_k^2\right)&=\E\left(\E\left(\prod_{1\leq k\leq n}X_k^2|\mathcal{F}_{n-1}\right)\right)=\E\left(\prod_{1\leq k\leq n-1}X_k^2\E\left(X_n^2|\mathcal{F}_{n-1}\right)\right)\\
       &\leq \bar{\varepsilon}\E\left(\prod_{1\leq k\leq n-1}X_k^2\right)\leq\cdots\leq \bar{\varepsilon}^n.
    \end{align*}
    On the other hand, noting that for any $n\in\N$ and $t\geq n$,  $\mathbf{1}_{\{\|\rho_n\|=0\}}\rho_t=0$, we have
    \begin{equation*}
        \prod_{1\leq k\leq n}X_kY_k=\|\rho_n\|^2.
    \end{equation*}
    
    Consequently, using a priori bound \eqref{eq L2 bdd3}, there exist constants $C_1,c_1,\gamma_0>0$ such that for any $\bar{\gamma}\in(0,\gamma_0]$,
    \begin{align*}
        \E\|\rho_n\|^2\leq \left(\E\prod_{1\leq k\leq n}X_k^2\right)^{\frac{1}{2}}\left(\E\prod_{1\leq k\leq n}Y_k^2\right)^{\frac{1}{2}}\leq \bar{\varepsilon}^n\exp\left(c_1\bar{\gamma}\|u_0\|^2\right)\exp\left(C_1n\right).
    \end{align*}
    Therefore, the desired inequality \eqref{eq rho_n 2} at integer times follows by taking
    \begin{equation*}
        \bar{\varepsilon}=\exp(-C_1)\varepsilon,\quad \bar{\gamma}=(c_1+1)^{-1}(\gamma\wedge\gamma_0).
    \end{equation*}

    \vspace{0.6em}

    Making use of relation \eqref{eq rho_n 3}, we are now able to prove \eqref{eq rho_n 2} for any $t>0$. Let $\gamma,\alpha>0$ be arbitrarily given, and set $\varepsilon=e^{-2\alpha}$. In view of the definition of $v$ given by \eqref{eq vn def},\eqref{eq v def}, it follows that for each $n\in\N^+$,
    \begin{equation*}
        \rho_t=\begin{cases}
            \mathcal{J}_{n-1,t}\rho_{n-1},\quad &\text{for }n-1\leq t\leq n-\delta,\\
            \mathcal{J}_{n-\delta,t}\rho_{n-\delta}-\mathcal{A}_{n-\delta,t}v_n,\quad &\text{for }n-\delta\leq t\leq n.
        \end{cases}
    \end{equation*}
    This implies that for any $t\in[n-1,n-\delta]$, by \eqref{eq J bound},
    \begin{equation*}
        \|\rho_t\|\leq  e^{\lambda}\|\rho_{n-1}\|,
    \end{equation*}
    and for any $t\in[n-\delta,n]$,
    \begin{align*}
        \|\rho_t\|&\leq  e^{\lambda}\|\rho_{n-1}\|+\|\mathcal{A}_{n-\delta,t}\|_{L^2(\mathcal{H};H)}\|v_n\|_{\mathcal{H}}\\
        &\leq e^{\lambda}\|\rho_{n-1}\|+\max_{j\in\N^+}\{|b_j|\}\left(\int_{n-\delta}^n\|\mathcal{J}_{n-\delta,t}\|^2dt\right)^{1/2}\|v_n\|_{\mathcal{H}}\\
        &\leq C\left(\beta^{-1/2}+1\right)\|\rho_{n-1}\|,
    \end{align*}
    which completes the proof of \eqref{eq rho_n 2} by using \eqref{eq rho_n 3}. Here recall that the random operator $\mathcal{A}_{s,t}$ is given by \eqref{eq A def}, and we use the estimate on $v_n$ established in \eqref{eq vn estimate}.

    \subsubsection{ Proof of inequality \eqref{eq v} }
    To begin with, we first have the following estimates, which can be proved using arguments similar to those in \cite{HM-06}.

    \begin{lemma}\label{lemma AM}
        For any $u_0\in H$, $0\leq s<t$,  $N\in\N^+$ and $\beta>0$, it follows that
        \begin{align*}
            \|\mathcal{A}^*_{s,t,N}(\mathcal{M}_{s,t,N}+\beta)^{-1/2}\|_{\mathcal{L}(H;\mathcal{H}_{s,t})}&\leq 1,\\
            \|(\mathcal{M}_{s,t,N}+\beta)^{-1/2}\mathcal{A}_{s,t,N}\|_{\mathcal{L}(\mathcal{H}_{s,t};H)}&\leq 1,\\
            \|(\mathcal{M}_{s,t,N}+\beta)^{-1/2}\|_{\mathcal{L}(H;H)}&\leq \beta^{-1/2}.
        \end{align*}
    \end{lemma}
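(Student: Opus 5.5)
The statement is Lemma~\ref{lemma AM}. All three bounds reduce to spectral calculus for the nonnegative self-adjoint operator $\mathcal{M}_{s,t,N}=\mathcal{A}_{s,t,N}\mathcal{A}^*_{s,t,N}$ on $H$; no information about the Allen--Cahn dynamics is needed beyond boundedness of $\mathcal{A}_{s,t,N}$. First I would record that $\mathcal{A}_{s,t,N}\in\mathcal{L}(\mathcal{H}_{s,t};H)$ for every realization. This follows from \eqref{eq J bound} and the boundedness of $B\mathsf{P}_N$, since $\|\mathcal{A}_{s,t,N}v\|\le \max_{j\le N}|b_j|\,e^{\lambda(t-s)}(t-s)^{1/2}\|v\|_{\mathcal{H}}$. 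Consequently $\mathcal{M}_{s,t,N}$ is bounded, self-adjoint and satisfies
\[
\langle \mathcal{M}_{s,t,N}\xi,\xi\rangle=\|\mathcal{A}^*_{s,t,N}\xi\|_{\mathcal{H}}^2\ge 0 .
\]
Hence $\mathcal{M}_{s,t,N}+\beta\ge\beta>0$ is invertible, and $(\mathcal{M}_{s,t,N}+\beta)^{-1/2}$ is well defined by functional calculus as a bounded self-adjoint operator.

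For the third bound, the spectrum of $\mathcal{M}_{s,t,N}+\beta$ lies in $[\beta,\infty)$. The function $x\mapsto x^{-1/2}$ is bounded by $\beta^{-1/2}$ there, which gives $\|(\mathcal{M}_{s,t,N}+\beta)^{-1/2}\|\le\beta^{-1/2}$.

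For the first bound, I write $R=(\mathcal{M}_{s,t,N}+\beta)^{-1/2}$ and take $\xi\in H$. Then
\[
\|\mathcal{A}^*_{s,t,N}R\xi\|_{\mathcal{H}}^2=\langle \mathcal{M}_{s,t,N}R\xi,R\xi\rangle=\langle R\mathcal{M}_{s,t,N}R\xi,\xi\rangle .
\]
Since $R$ commutes with $\mathcal{M}_{s,t,N}$, we have $R\mathcal{M}_{s,t,N}R=\mathcal{M}_{s,t,N}(\mathcal{M}_{s,t,N}+\beta)^{-1}$. This equals $f(\mathcal{M}_{s,t,N})$ with $f(x)=x/(x+\beta)\in[0,1)$ on $[0,\infty)$, so its norm is at most $1$ and the first bound follows.

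The second operator is exactly the adjoint of the first, $(\mathcal{A}^*_{s,t,N}R)^*=R\mathcal{A}_{s,t,N}$, using that $R$ is self-adjoint. An operator and its adjoint have the same norm, so the second bound follows at once.

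There is no real obstacle in this argument. The only points needing care are the commutation of $R$ with $\mathcal{M}_{s,t,N}$, which holds because both are functions of the same self-adjoint operator, and the fact that all bounds hold pathwise, uniformly in $u_0$, $s$, $t$ and $N$.
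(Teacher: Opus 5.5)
Your proposal is correct and matches the intended argument. The paper gives no proof of its own and defers to Hairer--Mattingly \cite{HM-06}, whose proof is the same: the third bound comes from positivity of $\mathcal{M}_{s,t,N}$, the first from $\|\mathcal{A}^*_{s,t,N}(\mathcal{M}_{s,t,N}+\beta)^{-1/2}\xi\|_{\mathcal{H}}^2=\langle \mathcal{M}_{s,t,N}(\mathcal{M}_{s,t,N}+\beta)^{-1}\xi,\xi\rangle\le\|\xi\|^2$, and the second by taking adjoints. Your pathwise boundedness check for $\mathcal{A}_{s,t,N}$ via \eqref{eq J bound} is a welcome detail that makes the functional calculus legitimate.
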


    \begin{proof}[Proof of \eqref{eq v}]
      Recall that $\mathcal{D}$ denotes the Malliavin derivative operator. By the Riesz representation theorem, for any $j\in\N^+$ and $t>0$, $\mathcal{D}$ can be formulated as
    \begin{equation*}
        \langle \mathcal{D}F,v\rangle_{\mathcal{H}}=\int_{\R^+}\langle \mathcal{D}_tF,v(t)\rangle_{\mathcal{H}}dt=\sum_{j\in\N^+}\int_{\R^+}\langle \mathcal{D}_t^jF,\langle v(t),\psi_j\rangle\psi_j\rangle_{\mathcal{H}}dt\quad \text{for }F\in L^2(\Omega;H),\;v\in\mathcal{H}.
    \end{equation*}
    In particular, we have 
    \begin{equation*}
        \mathcal{D}_s^ju_t=b_j\mathcal{J}_{s,t}\psi_j,\quad \forall\,0<s<t,\;j\in\N^+. 
    \end{equation*}
    Additionally, for any $F\in\mathcal{F}_{s}$,
    \begin{equation*}
        \mathcal{D}_tF=0\quad a.s.\quad \forall\,t>s.
    \end{equation*}

    With these preparations, we are now ready to show \eqref{eq v}. In view of the construction of the control by \eqref{eq vn def},\eqref{eq v def}, by the generalized Itô isometry, see e.g. \cite{Nualart-06}, it follows that
        \begin{align*}
            \E\left(\int_{\R^+}v(t)dW(t)\right)^2&=\E\|v\|^2_{\mathcal{H}}+\E\int_{\R^+}\int_{\R^+}\text{Tr}\left(\mathcal{D}_sv(t)\mathcal{D}_tv(s)\right)dsdt\\
            &=\sum_{n\in\N^+}\E\|v_n\|^2_{\mathcal{H}}+\sum_{n\in\N^+}\E\int_{n-\delta}^n\int_{n-\delta}^n\text{Tr}\left(\mathcal{D}_sv_n(t)\mathcal{D}_tv_n(s)\right)dsdt\\
            &\leq \sum_{n\in\N^+}\E\|v_n\|^2_{\mathcal{H}}+\sum_{n\in\N^+}\E\int_{n-\delta}^n\int_{n-\delta}^n\|\mathcal{D}_sv_n(t)\|^2_{\R^{N\times N}}dsdt.
        \end{align*}

        To bound the first term, invoking Lemma \ref{lemma AM} and \eqref{eq J bound}, we compute that
        \begin{equation}\label{eq vn estimate}
        \begin{aligned}
            \|v_n\|_{\mathcal{H}}&=\|\mathcal{A}_{n-\delta,n,N}^*(\mathcal{M}_{n-\delta,n,N}+\beta)^{-1}\mathcal{J}_{n-\delta,n}\rho_{n-\delta})\|_{\mathcal{H}}\\
            &\leq \|\mathcal{A}^*_{n-\delta,n,N}(\mathcal{M}_{n-\delta,n,N}+\beta)^{-1/2}\|_{\mathcal{L}(H;\mathcal{H}_{n-\delta,n})}\|(\mathcal{M}_{n-\delta,n,N}+\beta)^{-1/2}\|_{\mathcal{L}(H;H)}\\
            &\quad\times\|\mathcal{J}_{n-\delta,n}\|_{\mathcal{L}(H;H)}\|\rho_{n-\delta}\|\\
            &\leq \beta^{-1/2}\|\mathcal{J}_{n-\delta,n}\|_{\mathcal{L}(H;H)}\|\mathcal{J}_{n-1,n-\delta}\|_{\mathcal{L}(H;H)}\|\rho_{n-1}\|\\
            &\leq e^\lambda\beta^{-1/2}\|\rho_{n-1}\|.
        \end{aligned}
        \end{equation}    
    As a result, taking \eqref{eq rho_n 2} into account, we obtain
    \begin{align*}
        \sum_{n\in\N^+}\E\|v_n\|^2_{\mathcal{H}}\leq e^{2\lambda}\beta^{-1}\sum_{n\in\N}\E\|\rho_n\|^2\leq C\beta^{-1}\exp\left(\gamma\|u_0\|^2\right).
    \end{align*}

    To bound the second term, we compute an explicit expression for $\mathcal{D}_sv(t)$. By the Malliavin product rule, for any $1\leq j\leq N$ and $s\in[n-\delta,n]$, there holds
    \begin{align*}
        \mathcal{D}_s^jv_n&=\mathcal{A}_{n-\delta,n,N}^*(\mathcal{M}_{n-\delta,n,N}+\beta)^{-1}(\mathcal{D}_s^j\mathcal{J}_{n-\delta,n})\rho_{n-\delta}\\
        &\quad +\mathcal{A}_{n-\delta,n,N}^*(\mathcal{D}_s^j(\mathcal{M}_{n-\delta,n,N}+\beta)^{-1})\mathcal{J}_{n-\delta,n}\rho_{n-\delta}\\
        &\quad +(\mathcal{D}_s^j\mathcal{A}_{n-\delta,n,N}^*)(\mathcal{M}_{n-\delta,n,N}+\beta)^{-1}\mathcal{J}_{n-\delta,n}\rho_{n-\delta}.  \end{align*}
    In addition, one has
    \begin{align*}
        \mathcal{D}_s^j(\mathcal{M}_{n-\delta,n,N}+\beta)^{-1}&=-(\mathcal{M}_{n-\delta,n,N}+\beta)^{-1}((\mathcal{D}_s^j\mathcal{A}_{n-\delta,n,N})\mathcal{A}_{n-\delta,n,N}^*\\
        &\quad +\mathcal{A}_{n-\delta,n,N}(\mathcal{D}_s^j\mathcal{A}_{n-\delta,n,N}^*))(\mathcal{M}_{n-\delta,n,N}+\beta)^{-1}.
    \end{align*}

    To calculate the expressions of $\mathcal{D}_s^j\mathcal{A}_{n-\delta,n,N}$ and $\mathcal{D}_s^j\mathcal{J}_{n-\delta,n}$, let us note that $\mathcal{D}_r^j\mathcal{J}_{s,t}\xi$ with $r\in[0,t]$ satisfies:
     \begin{equation*}
    \begin{cases}
         \partial_t\mathcal{D}_r^j\mathcal{J}_{s,t}\xi-\nu \partial_x^2\mathcal{D}_r^j\mathcal{J}_{s,t}\xi+(3u_t^2-\lambda)\mathcal{D}_r^j\mathcal{J}_{s,t}\xi+6u_tb_j\mathcal{J}_{r,t}\psi_j\mathcal{J}_{s,t}\xi=0,\quad t>s, \\         
        \mathcal{D}_r^j\mathcal{J}_{s,r\wedge s}\xi=0.
    \end{cases}
    \end{equation*}
    From the variation of constants formula and the expression \eqref{J2 equation} for the process $\mathcal{J}^{(2)}_{s,t}$, we get
    \begin{equation*}
        \mathcal{D}_r^j\mathcal{J}_{s,t}\xi=\begin{cases}
            \mathcal{J}^{(2)}_{r,t}(b_j\psi_j,\mathcal{J}_{s,r}\xi)\quad &\text{for }s\leq r\leq t,\\
            \mathcal{J}^{(2)}_{s,t}(b_j\mathcal{J}_{r,s}\psi_j,\xi)\quad &\text{for }0<r\leq s.
        \end{cases}
    \end{equation*}
    The term  $\mathcal{D}_s^j\mathcal{A}_{n-\delta,n,N}$ thus has the form of, for $h\in\mathcal{H}_{s,t}$, 
    \begin{align*}
        \mathcal{D}_r^j\mathcal{A}_{s,t,N}h&=\int_{s}^t\mathcal{D}_r^j\mathcal{J}_{\tau,t}B\mathsf{P}_Nh(\tau)d\tau\\
        &=\int_{s}^r\mathcal{J}^{(2)}_{r,t}(b_j\psi_j,\mathcal{J}_{\tau,r}B\mathsf{P}_Nh(\tau))d\tau+\int_{r}^t\mathcal{J}^{(2)}_{\tau,t}(b_j\mathcal{J}_{r,\tau}\psi_j,B\mathsf{P}_Nh(\tau))d\tau.
    \end{align*}

   Meanwhile, for the term $\mathcal{D}_r^j\mathcal{A}^*_{s,t,N}$, one has note that
    \begin{equation*}
        \mathcal{D}_r^j\mathcal{A}^*_{s,t,N}=(\mathcal{D}_r^j\mathcal{A}_{s,t,N})^*,\quad\|\mathcal{D}_r^j\mathcal{A}^*_{s,t,N}\|_{\mathcal{L}(H;\mathcal{H}_{s,t})}=\|\mathcal{D}_r^j\mathcal{A}_{s,t,N}\|_{\mathcal{L}(\mathcal{H}_{s,t};H)}.
    \end{equation*}
    
    \vspace{0.6em}
   Summing up these relations and using \eqref{eq J_2 bound}, we derive that, for any $s\in[n-\delta,n]$, 
    \begin{align*}
      \|\mathcal{D}_s^jv_n\|_{L^2(n-\delta,n;\R^{N})}&\leq \beta^{-1/2}\|\mathcal{D}_s^j\mathcal{J}_{n-\delta,n}\|\|\rho_{n-\delta}\|+\beta^{-1}\|\mathcal{D}_s^j\mathcal{A}_{n-\delta,n,N}\|_{\mathcal{L}(\mathcal{H}_{s,t};H)}\|\mathcal{J}_{n-\delta,n}\|\|\rho_{n-\delta}\|\\
      &\quad +2\beta^{-1}\|\mathcal{D}_s^j\mathcal{A}^*_{n-\delta,n,N}\|_{\mathcal{L}(H;\mathcal{H}_{s,t})}\|\mathcal{J}_{n-\delta,n}\|\|\rho_{n-\delta}\|\\
      &\leq \beta^{-1/2}\|\mathcal{D}_s^j\mathcal{J}_{n-\delta,n}\|\|\rho_{n-\delta}\|+3\beta^{-1}\|\mathcal{D}_s^j\mathcal{A}_{n-\delta,n,N}\|_{\mathcal{L}(\mathcal{H}_{s,t};H)}\|\mathcal{J}_{n-\delta,n}\|\|\rho_{n-\delta}\|\\
      &\leq C|b_j|\beta^{-1/2}e^{3\lambda\delta}\|\rho_{n-\delta}\|+C|b_j|\beta^{-1}e^{5\lambda\delta}\|\rho_{n-\delta}\|.
    \end{align*}

    Finally, we conclude that
    \begin{align*}
        \sum_{n\in\N^+}\E\int_{n-\delta}^n\int_{n-\delta}^n\|\mathcal{D}_sv_n(t)\|^2_{\R^{N\times N}}dsdt&=\sum_{n\in\N^+}\sum_{1\leq j\leq N}\int_{n-\delta}^n\E\|\mathcal{D}^j_sv_n(t)\|^2_{L^2(n-\delta,n;\R^N)}ds\\
        &\leq C\delta\beta^{-2}e^{10\lambda\delta}\sum_{j\in\N^+}b_j^2\sum_{n\in\N^+}\E\|\rho_{n-\delta}\|^2\\
        &\leq C\delta\beta^{-2}e^{10\lambda\delta}\sum_{j\in\N^+}b_j^2\sum_{n\in\N}\E\|\rho_{n}\|^2,
    \end{align*}
    which implies the desired inequality \eqref{eq v} by \eqref{eq rho_n 2}.

    \end{proof}

    \subsection{Support lemma}\label{Sec D}

    To establish the irreducibility, let us recall some useful results regarding general Gaussian measures. Let $X$ be a separable Banach space and $X^*$ be its dual space (the space of all continuous linear functionals on $X$). 
    \begin{definition} {\rm(\hspace{-0.01mm}\cite[Definition 2.2.1]{Bogachev-98})} 
        A probability measure $\mu\in \mathcal{P}(X)$ is called a symmetric Gaussian measure if for any $F\in X^*$, the measure $\mu\circ F^{-1}\in\mathcal{P}(\R)$ is a Gaussian measure with zero mean.  The Cameron--Martin space (also called reproducing kernel Hilbert space) of $\mu$ is defined by
        \begin{equation*}
            H_{\mu}:=\{h\in X:\|h\|_{H_{\mu}}<\infty\},\quad 
            \|h\|_{H_{\mu}}:=\sup\{F(h):F\in X^*,\quad\int_XF(x)^2\mu(dx)\leq 1\}.
        \end{equation*}
        Additionally, $H_{\mu}$ is a separable Hilbert space endowed with inner product $L^2(X,\mu)$.
    \end{definition}
    \begin{example}\label{ex BM}
        Let $T>0$, $X=C([0,T])$ and $\beta(t)$ be an one-dimensional stand Brownian. Then for any $c\neq 0$, $\mu=\mathcal{D}(c\beta)\in\mathcal{P}(X)$ is a symmetric Gaussian measure, and the corresponding Cameron--Martin space is given by
        \begin{equation*}
            H_{\mu}=\{f\in C([0,T]): f\text{ is absolutely
 continuous},\quad f(0)=0,\quad f'\in L^2(0,T)\},
        \end{equation*}
        and 
        \begin{equation*}
            \|f\|_{ H_{\mu}}=|c|^{-1}\|f'\|_{L^2(0,T)}.
        \end{equation*}
    \end{example}

    \begin{lemma}\label{lemma support2}{\rm(\hspace{-0.01mm}\cite[Theorem 3.6.1]{Bogachev-98})}        
    Let $\mu\in\mathcal{P}(X)$ be a symmetric Gaussian measure. Then the topological support of $\mu$ coincides with $\overline{H}_{\mu}^X$, that is, for any $h\in \overline{H}_{\mu}^X$ and $\varepsilon>0$,
        \begin{equation*}
            \mu(B_X(h,\varepsilon))>0.
        \end{equation*}
    \end{lemma}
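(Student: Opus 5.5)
The plan is to prove the two inclusions separately: first $\overline{H}_{\mu}^X\subset\supp\mu$, which is all that is used in Lemma~\ref{lemma support}, and then the reverse inclusion. Three standard facts about symmetric Gaussian measures on a separable Banach space will be taken as given: the Cameron--Martin theorem, Fernique's integrability theorem, and the representation of $H_\mu$ via the covariance operator.

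\emph{Step 1: small balls around the origin have positive mass.} Fix $\varepsilon>0$. Since $X$ is separable, it is covered by countably many balls $B_X(a_k,\varepsilon/2)$, so some $a$ satisfies $\mu(B_X(a,\varepsilon/2))>0$. Let $\xi,\eta$ be independent with law $\mu$. Since $\mu$ is symmetric Gaussian, $(\xi-\eta)/\sqrt2$ again has law $\mu$; one checks this on every $F\in X^*$, which suffices because cylinder sets generate $\mathcal B(X)$ when $X$ is separable. On the event $\{\xi,\eta\in B_X(a,\varepsilon/2)\}$ we have $\|(\xi-\eta)/\sqrt2\|<\varepsilon/\sqrt2$, and therefore
\begin{equation*}
\mu\bigl(B_X(0,\varepsilon)\bigr)\geq \mu\bigl(B_X(a,\varepsilon/2)\bigr)^2>0 .
\end{equation*}

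\emph{Step 2: balls centred in $H_\mu$, then in its closure.} For $h\in H_\mu$, the Cameron--Martin theorem states that the shifted measure $\mu(\cdot-h)$ is equivalent to $\mu$. Hence $\mu(B_X(h,\varepsilon))=\mu(\cdot - h)(B_X(0,\varepsilon))$ is positive because $\mu(B_X(0,\varepsilon))>0$ by Step 1. Now let $h\in\overline{H}_\mu^X$ and choose $h'\in H_\mu$ with $\|h-h'\|<\varepsilon/2$. Then $B_X(h,\varepsilon)\supset B_X(h',\varepsilon/2)$, which has positive measure. This proves $\overline{H}_\mu^X\subset\supp\mu$.

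\emph{Step 3: the reverse inclusion.} Let $x_0\notin\overline{H}_\mu^X$. Since $H_\mu$ is a linear subspace, its closure is a closed subspace, so Hahn--Banach gives $F\in X^*$ with $F|_{H_\mu}=0$ and $F(x_0)\neq0$. By Fernique's theorem the Bochner integral $R_\mu F:=\int_X F(x)\,x\,\mu(dx)$ is well defined; it lies in $H_\mu$, and $F(R_\mu F)=\int_X F^2\,d\mu$. Since $F$ vanishes on $H_\mu$, this gives $\int_X F^2\,d\mu=0$, so $F=0$ $\mu$-almost surely. Thus $\mu$ is concentrated on $\ker F$, and the open neighbourhood $\{|F-F(x_0)|<|F(x_0)|/2\}$ of $x_0$ is $\mu$-null, so $x_0\notin\supp\mu$.

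The main obstacle is the Gaussian machinery in Steps 2 and 3, namely the Cameron--Martin quasi-invariance and the identification of $R_\mu F$ as an element of $H_\mu$; I would quote both from \cite{Bogachev-98} rather than reprove them. Step 1 itself is elementary, since it uses only separability and the stability of $\mu$ under the rotation $(\xi,\eta)\mapsto(\xi-\eta)/\sqrt2$.
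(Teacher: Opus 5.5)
Your proof is correct. The paper gives no argument of its own here; it only quotes \cite[Theorem 3.6.1]{Bogachev-98}. Yours is therefore a self-contained replacement that rests on the Cameron--Martin theorem and Fernique's theorem.

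The distinctive ingredient is Step~1. The rotation invariance $(\xi-\eta)/\sqrt2\sim\mu$ gives $\mu(B_X(0,\varepsilon))\geq\mu(B_X(a,\varepsilon/2))^2>0$ by a purely elementary argument. Combined with quasi-invariance, this yields $\overline{H}_\mu^X\subset\supp\mu$, which is the only inclusion used in Lemma~\ref{lemma support}, without Step~3 or Fernique.

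A standard alternative avoids small-ball estimates entirely. The support $S$ is closed and nonempty, and Cameron--Martin forces $S+h=S$ for $h\in H_\mu$, hence by closedness for all $h\in\overline{H}_\mu^X$. Once Step~3 gives $S\subset\overline{H}_\mu^X$, any $s\in S$ yields $\overline{H}_\mu^X=s+\overline{H}_\mu^X\subset S$. That route is shorter, but it reaches the needed inclusion only through the reverse one, whereas yours keeps the two directions independent.

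Two minor points:
\begin{enumerate}
\item In Step~2 the relevant measure is $A\mapsto\mu(A+h)$, the law of $\xi-h$. Read literally, $\mu(\cdot-h)$ evaluated on $B_X(0,\varepsilon)$ gives $\mu(B_X(-h,\varepsilon))$. This is harmless because $-h\in H_\mu$.
\item In Step~3 you need not quote $R_\mu F\in H_\mu$ from the literature. For every $G\in X^*$ with $\int_X G^2\,d\mu\leq1$, Cauchy--Schwarz gives $G(R_\mu F)=\int_X FG\,d\mu\leq\bigl(\int_X F^2\,d\mu\bigr)^{1/2}$. With the paper's definition of $\|\cdot\|_{H_\mu}$ this makes the membership immediate, so Fernique (for the Bochner integral) is the only external input in that step.
\end{enumerate}
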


     \subsection{Auxiliary proofs for control problems}\label{Sec E} In this appendix, we collect some technical proofs for the steady-state controllability, including the proofs of Lemma \ref{lemma Kalman} and Lemma \ref{lemma L-functional}.

    \begin{proof}[Proof of Lemma \ref{lemma Kalman}]
        We first verify \eqref{eq Kalman}. Noting that
        \begin{equation*}
            (E,D(\tau)E)=\left(\begin{matrix}
            \mathbf{0}_m &  B(\tau)\\
            I_m & \mathbf{0}_m
        \end{matrix}\right),
        \end{equation*}
        thus it suffices to show
        \begin{equation}\label{eq Kalman2}
            {\rm rank }\left (B(\tau) \right)=m.
        \end{equation}

        To this end, let us recall a standard unique continuation result. Let $\theta\in \R$ and $\varphi\in H_0^1(0,\pi)\cap C^2([0,\pi])$ be the solution of 
        \begin{equation*}
            A(\tau)\varphi=\theta\varphi,\quad \varphi(0)=\varphi(\pi)=0,
        \end{equation*}
        such that
        \begin{equation*}
            \varphi(x)=0\quad\text{for }x\in(a,b).
        \end{equation*}
        Then 
        \begin{equation*}
            \varphi\equiv0\quad\text{on }[0,\pi].
        \end{equation*}

        Invoking this observation, we now apply the arguments as in \cite{BT-04} to deduce \eqref{eq Kalman2}. We claim that the eigenfunctions $\{e_j(\tau,\cdot)\}_{1\leq j\leq m}$ are linearly independent on $(a,b)$. Otherwise, there exist constants $\beta_1(\tau),\cdots,\beta_m(\tau)$ which are not all zero such that
        \begin{equation*}
            \varphi(\tau,\cdot):=\sum_{1\leq j\leq m}\beta_j(\tau) e_j(\tau,\cdot)=0\quad \text{on }(a,b).
        \end{equation*}
        Suppose that $\beta_k(\tau)\neq 0$ for some $1\leq k\leq m$. It follows that
        \begin{equation*}
           A(\tau)\left(\varphi(\tau,\cdot)-\sum_{1\leq j\neq k\leq m}\beta_j(\tau)e_j(\tau,\cdot)\right)=\lambda_k(\tau)\left(\varphi(\tau,\cdot)-\sum_{1\leq j\neq k\leq m}\beta_j(\tau)e_j(\tau,\cdot)\right).
        \end{equation*}
        Thus  by the unique  continuation property, we derive that
        \begin{equation*}
            \varphi(\tau,\cdot)-\sum_{1\leq j\neq k\leq m}\beta_j(\tau)e_j(\tau,\cdot)=\beta_k(\tau)e_k(\tau,\cdot)\equiv0,
        \end{equation*}
        which contradicts the definition of $e_k(\tau,\cdot)$. This implies our claim.

        To complete the proof of \eqref{eq Kalman2}, it suffices to show that the vectors $(b_{1j}(\tau),\cdots,b_{mj} (\tau))^{\rm T}$, $1\leq j\leq m$, are linearly independent. Let $\beta_1(\tau),\cdots,\beta_m(\tau)\in\R$ be such that
        \begin{equation*}
            \sum_{1\leq j\leq m}\beta_j(\tau) \left(b_{1j}(\tau),\cdots,b_{mj} (\tau)\right)^{\rm T}=0,
        \end{equation*} 
        which gives 
        \begin{equation*}
            \sum_{1\leq k\leq m}\beta_k(\tau)\langle\mathbf1_{[a,b]}(\cdot) e_k(\tau,\cdot),e_j(\tau,\cdot)\rangle=0\quad\text{for }1\leq j\leq m.
        \end{equation*}
        By multiplying each of these identities by $\beta_j(\tau)$ and summing up over $j=1,\cdots,m$, we obtain 
        \begin{equation*}
             \left\langle\mathbf1_{[a,b]}(\cdot)\sum_{1\leq j\leq m}\beta_j(\tau)e_j(\tau,\cdot),\sum_{1\leq j\leq m}\beta_j(\tau)e_j(\tau,\cdot)\right\rangle=0.
        \end{equation*}
        Therefore, this implies that
        \begin{equation*}
            \sum_{1\leq j\leq m}\beta_j(\tau)e_j(\tau,\cdot)=0\quad \text{on }(a,b),
        \end{equation*}
        and hence
        \begin{equation*}
            \beta_j(\tau)=0\quad\forall\, 1\leq j\leq m,\;\tau\in[0,1].
        \end{equation*}
        This completes the proof of \eqref{eq Kalman2}.

        \vspace{0.3em}

        Consequently, the Kalman condition \eqref{eq Kalman} implies a pole shifting result; see e.g. \cite[Theorem 10.1]{Coron-07}. Specifically, there exists $K(\tau)\in \R^{m\times 2m}$ such that the matrix $D(\tau)+EK(\tau)$ admits $-1$ as an eigenvalue with order $2m$. Moreover, by \cite[Theorem 4.6]{Khalil-92}, for each $\tau\in[0,1]$, there exists a symmetric and positive definite matrix $Q(\tau)\in \R^{2m\times 2m}$ such that \eqref{eq Q def} holds. In addition, given that the mapping $\tau\mapsto D(\tau)$ is $C^1$-smooth, we can choose suitable controllers $K(\tau)$ such that the mapping $\tau\mapsto Q(\tau)$ is $C^1$-smooth.

    \end{proof}

    \vspace{0.6em}

    \begin{proof}[Proof of Lemma \ref{lemma L-functional}]
        To establish \eqref{eq V estimate1}, for any $v(\cdot)=\sum_{j\in\N^+}v_j(t)e_j(\varepsilon t,\cdot)\in H_0^1(0,\pi)\cap H^2(0,\pi)$, one has
        \begin{equation*}
            \|v\|_{H^1}^2=\sum_{j,k\in\N^+}v_j(t)v_k(t)\langle \partial_xe_j(\varepsilon t,\cdot),\partial_x e_k(\varepsilon t,\cdot)  \rangle.
        \end{equation*}
        Integrating by parts and using the definition of $e_j$, we compute
        \begin{align*}
            \langle \partial_xe_j(\varepsilon t,\cdot),\partial_x e_k(\varepsilon t,\cdot)  \rangle=-\langle \partial^2_xe_j(\varepsilon t,\cdot),e_k(\varepsilon t,\cdot)  \rangle=\nu^{-1}\langle (\lambda-3\bar{y}(\varepsilon t)^2-\lambda_j(\varepsilon t))e_j(\varepsilon t,\cdot),e_k(\varepsilon t,\cdot)\rangle,
        \end{align*}
        and thus
        \begin{equation*}
            \|v\|_{H^1}^2=\frac{3}{\nu}\|\bar{y}(\varepsilon t)v\|^2+\frac{\lambda}{\nu}\|v\|^2-\frac{1}{\nu}\sum_{j\in\N^+}\lambda_j(\varepsilon t)v_j(t)^2.
        \end{equation*}
    Since $\bar{y}$ is bounded on $[0,\varepsilon^{-1}]\times [0,\pi]$ uniformly in $\varepsilon\in(0,1]$, there exist generic constants $C_1,C_2>0$ such that 
    \begin{equation*}
        \|v\|_{H^1}^2\leq C_1\left(\|v\|^2-\sum_{j>m}\lambda_j(\varepsilon t)v_j(t)^2\right)\leq C_2 V(t,\xi,v).
    \end{equation*}
    Conversely, there exist $C_3,C_4>0$ such that 
    \begin{align*}
      -\sum_{j>m}\lambda_j(\varepsilon t)v_j(t)^2&=\nu \|v\|_{H^1}^2+\sum_{1\leq j\leq m}\lambda_j(\varepsilon t)v_j(t)^2-3\|\bar{y}(\varepsilon t)v\|^2-\lambda\|v\|^2\\
      &\leq \nu\|v\|_{H^1}^2+C_3\|v\|^2\leq C_4\|v\|_{H^1}^2.
    \end{align*}
    Applying \eqref{eq V estimate}, we conclude easily that \eqref{eq V estimate1} holds.

   Meanwhile, there exist generic constants $C_5,C_6>0$ such that
    \begin{align*}
        \|v\|_{H^1}^2&\leq C_5\left(\sum_{1\leq j\leq m}v_j(t)^2-\sum_{j>m}\lambda_j(\varepsilon t)v_j(t)^2\right)\leq C_6\left( \sum_{1\leq j\leq m }v_j(t)^2+\sum_{j\in \N^+}\lambda_j(\varepsilon t)^2v_j(t)^2\right)\\
        &=C_6\left(\sum_{1\leq j\leq m }v_j(t)^2+\|A(\varepsilon t)v\|^2\right),
    \end{align*}
    which implies \eqref{eq V estimate2}.
        
    \end{proof}

    \normalem
    \bibliographystyle{plain}
    \bibliography{References}

\begin{thebibliography}{10}

\bibitem{AM-22}
P.~Alphonse and J.~Martin.
\newblock Stabilization and approximate null-controllability for a large class of diffusive equations from thick control supports.
\newblock {\em ESAIM Control Optim. Calc. Var.}, 28(16):30 pp, 2022.

\bibitem{BT-04}
V.~Barbu and R.~Triggiani.
\newblock Internal stabilization of {N}avier-{S}tokes equations with finite-dimensional controllers.
\newblock {\em Indiana Univ. Math. J.}, 53(5):1443--1494, 2004.

\bibitem{BBPS-22d}
J.~Bedrossian, A.~Blumenthal, and S.~Punshon-Smith.
\newblock Almost-sure exponential mixing of passive scalars by the stochastic {Navier-Stokes} equations.
\newblock {\em Ann. Probab.}, 50(1):241–303, 2022.

\bibitem{BBPS-22c}
J.~Bedrossian, A.~Blumenthal, and S.~Punshon-Smith.
\newblock Lagrangian chaos and scalar advection in stochastic fluid mechanics.
\newblock {\em J. Eur. Math. Soc. (JEMS)}, 24(6):1893--1990, 2022.

\bibitem{Bogachev-98}
V.~I. Bogachev.
\newblock {\em Gaussian measures}.
\newblock American Mathematical Society, Providence, RI, 1998.

\bibitem{BKL-02}
J.~Bricmont, A.~Kupiainen, and R.~Lefevere.
\newblock Exponential mixing of the 2{D} stochastic {N}avier-{S}tokes dynamics.
\newblock {\em Comm. Math. Phys.}, 230(1):87–132, 2002.

\bibitem{CLXZ-24}
Y.~Chen, Z.~Liu, S.~Xiang, and Z.~Zhang.
\newblock Local large deviations for randomly forced nonlinear wave equations with localized damping.
\newblock {\em arXiv:2409.11717}, 2024.

\bibitem{CX-26}
Y.~Chen and S.~Xiang.
\newblock {Donsker-Varadhan} large deviation principle for locally damped and randomly forced {NLS} equations.
\newblock {\em Ann. Henri Poincaré}, 2026, early access.

\bibitem{CXZZ-25}
Y.~Chen, S.~Xiang, Z.~Zhang, and J.-C. Zhao.
\newblock Exponential mixing for the randomly forced {NLS} equation.
\newblock {\em arXiv:2506.10318}, 2025.

\bibitem{CR-25}
W.~Cooperman and K.~Rowan.
\newblock Exponential scalar mixing for the {2D Navier–Stokes} equations with degenerate stochastic forcing.
\newblock {\em Invent. math.}, To appear.

\bibitem{Coron-02}
J.-M. Coron.
\newblock Local controllability of a 1-d tank containing a fluid modeled by the shallow water equations.
\newblock {\em ESAIM Control Optim. Calc. Var.}, 8:513--554, 2002.

\bibitem{Coron-07}
J.-M. Coron.
\newblock {\em Control and nonlinearity}.
\newblock American Mathematical Society, Providence, RI, 2007.

\bibitem{CKX-25}
J.-M. Coron, J.~Krieger, and S.~Xiang.
\newblock Wave maps from circle to {R}iemannian manifold: global controllability is equivalent to homotopy.
\newblock {\em arXiv:2509.12779}, 2025.

\bibitem{CT-04}
J.-M. Coron and E.~Trélat.
\newblock Global steady-state controllability of one-dimensional semilinear heat equations.
\newblock {\em SIAM J. Control Optim.}, 43(2):549--569, 2004.

\bibitem{CX-25}
J.-M. Coron and S.~Xiang.
\newblock Global controllability to harmonic maps of the heat flow from a circle to a sphere.
\newblock {\em J. Math. Pures Appl. (9)}, 204:Paper No. 103761, 47 pp, 2025.

\bibitem{CZH-21}
M.~Coti~Zelati and M.~Hairer.
\newblock A noise-induced transition in the {L}orenz system.
\newblock {\em Commun. Math. Phys.}, 383:2243--2274, 2021.

\bibitem{DPZ-96}
G.~Da~Prato and J.~Zabczyk.
\newblock {\em {E}rgodicity for infinite dimensional systems}.
\newblock Cambridge University Press, Cambridge, 1996.

\bibitem{DZZ-08}
T.~Duyckaerts, X.~Zhang, and E.~Zuazua.
\newblock On the optimality of the observability inequalities for parabolic and hyperbolic systems with potentials.
\newblock {\em Ann. Inst. H. Poincaré C Anal. Non Linéaire}, 25(1):1--41, 2008.

\bibitem{EM-01}
W.~E and J.~C. Mattingly.
\newblock Ergodicity for the {N}avier-{S}tokes equation with degenerate random forcing: finite-dimensional approximation.
\newblock {\em Comm. Pure Appl. Math.}, 54(11):1386–1402, 2001.

\bibitem{EMS-01}
W.~E, J.~C. Mattingly, and Y.~Sinai.
\newblock Gibbsian dynamics and ergodicity for the stochastically forced {N}avier-{S}tokes equation.
\newblock {\em Comm. Math. Phys.}, 224(1):83–106, 2001.

\bibitem{ET-99}
I.~Ekeland and R.~Témam.
\newblock {\em Convex analysis and variational problems}.
\newblock Society for Industrial and Applied Mathematics (SIAM), Philadelphia, PA, 1999.

\bibitem{Emanuilov-95}
O.~Yu. {\`E}manuilov.
\newblock Controllability of parabolic equations.
\newblock {\em Mat. Sb.}, 186(6):879--900, 1995.

\bibitem{FCZ-00}
E.~Fernández-Cara and E.~Zuazua.
\newblock The cost of approximate controllability for heat equations: the linear case.
\newblock {\em Adv. Differential Equations}, 5(4-6):465--514, 2000.

\bibitem{FM-95}
F.~Flandoli and B.~Maslowski.
\newblock Ergodicity of the 2-{D} {N}avier-{S}tokes equation under random perturbations.
\newblock {\em Comm. Math. Phys.}, 172(1):119–141, 1995.

\bibitem{FGRT-15}
J.~F\"{o}ldes, N.~E. Glatt-Holtz, G.~Richards, and E.~Thomann.
\newblock Ergodic and mixing properties of the {B}oussinesq equations with a degenerate random forcing.
\newblock {\em J. Funct. Anal.}, 269(8):2427–2504, 2015.

\bibitem{FLZ-19}
X.~Fu, Q.~L{\"u}, and X.~Zhang.
\newblock {\em Carleman estimates for second order partial differential operators and applications: a unified approach}.
\newblock Springer, Cham, 2019.

\bibitem{GLLL-25}
F.~Gong, Y.~Liu, Y.~Liu, and Z.~Liu.
\newblock Asymptotic stability for non-equicontinuous {M}arkov semigroups.
\newblock {\em Commun. Math. Stat.}, 2025, early access.

\bibitem{GLLL-24}
F.~Gong, Y.~Liu, Y.~Liu, and Z.~Liu.
\newblock Ergodicity for eventually continuous {Markov--Feller semigroups on Polish spaces}.
\newblock {\em Sci. China Math.}, 2026, early access.

\bibitem{HM-06}
M.~Hairer and J.~C. Mattingly.
\newblock Ergodicity of the 2{D} {N}avier-{S}tokes equations with degenerate stochastic forcing.
\newblock {\em Ann. of Math. (2)}, 164(3):993--1032, 2006.

\bibitem{HM-08}
M.~Hairer and J.~C. Mattingly.
\newblock Spectral gaps in {W}asserstein distances and the 2{D} stochastic {N}avier-{S}tokes equations.
\newblock {\em Ann. Probab.}, 36(6):2050--2091, 2008.

\bibitem{HM-11b}
M.~Hairer and J.~C. Mattingly.
\newblock A theory of hypoellipticity and unique ergodicity for semilinear stochastic {PDE}s.
\newblock {\em Electron. J. Probab.}, 16:658--738, 2011.

\bibitem{HZ-24}
M.~Hairer and W.~Zhao.
\newblock Ergodicity of {2D singular stochastic Navier--Stokes} equations.
\newblock {\em Probab. Math. Phys.}, 6(3):777--818, 2025.

\bibitem{Hale-88}
J.~K. Hale.
\newblock {\em Asymptotic behavior of dissipative systems}.
\newblock American Mathematical Society, Providence, RI, 1988.

\bibitem{Henry-81}
D.~Henry.
\newblock {\em Geometric theory of semilinear parabolic equations}.
\newblock Springer-Verlag, Berlin-New York, 1981.

\bibitem{Khalil-92}
H.~K. Khalil.
\newblock {\em Nonlinear systems}.
\newblock Macmillan Publishing Company, New York, 1992.

\bibitem{KNS-20}
S.~Kuksin, V.~Nersesyan, and A.~Shirikyan.
\newblock Exponential mixing for a class of dissipative {PDE}s with bounded degenerate noise.
\newblock {\em Geom. Funct. Anal.}, 30(1):126--187, 2020.

\bibitem{KS-02}
S.~Kuksin and A.~Shirikyan.
\newblock Coupling approach to white-forced nonlinear {PDEs}.
\newblock {\em J. Math. Pures Appl. (9)}, 81(6):567--602, 2002.

\bibitem{Lions-88}
J.-L. Lions.
\newblock {\em Contr\^{o}labilit\'{e} exacte, perturbations et stabilisation de syst\`{e}mes distribu\'{e}s. Tome 1}.
\newblock Masson, Paris, 1988.

\bibitem{Lions-92}
J.-L. Lions.
\newblock Remarks on approximate controllability.
\newblock {\em J. Anal. Math.}, 59:103--116, 1992.

\bibitem{LWXZZ-24}
Z.~Liu, D.~Wei, S.~Xiang, Z.~Zhang, and J.-C. Zhao.
\newblock Exponential mixing for random nonlinear wave equations: weak dissipation and localized control.
\newblock {\em arXiv:2407.15058}, 2024.

\bibitem{MY-02}
N.~Masmoudi and L.-S. Young.
\newblock Ergodic theory of infinite dimensional systems with applications to dissipative parabolic {PDE}s.
\newblock {\em Comm. Math. Phys.}, 227(3):461--481, 2002.

\bibitem{Mattingly-02b}
J.~C. Mattingly.
\newblock The dissipative scale of the stochastics {Navier-Stokes} equation: regularization and analyticity.
\newblock {\em J. Statist. Phys.}, 108(5-6):1157--1179, 2002.

\bibitem{Mattingly-02}
J.~C. Mattingly.
\newblock Exponential convergence for the stochastically forced {N}avier-{S}tokes equations and other partially dissipative dynamics.
\newblock {\em Comm. Math. Phys.}, 230(3):421--462, 2002.

\bibitem{MP-06}
J.~C. Mattingly and É. Pardoux.
\newblock Malliavin calculus for the stochastic 2{D} {N}avier-{S}tokes equation.
\newblock {\em Comm. Pure Appl. Math.}, 59(12):1742--1790, 2006.

\bibitem{NZZ-24}
V.~Nersesyan, D.~Zhang, and C.~Zhou.
\newblock On the chaotic behavior of the {L}agrangian flow of the {2D Navier-Stokes} system with bounded degenerate noise.
\newblock {\em arXiv:2406.17612}, 2024.

\bibitem{NZ-24}
V.~Nersesyan and M.~Zhao.
\newblock Polynomial mixing for the white-forced {Navier-Stokes} system in the whole space.
\newblock {\em arXiv:2410.15727}, 2024.

\bibitem{Nualart-06}
D.~Nualart.
\newblock {\em The {M}alliavin calculus and related topics}.
\newblock Springer-Verlag, Berlin, second edition, 2006.

\bibitem{PZZ-24}
X.~Peng, J.~Zhai, and T.~Zhang.
\newblock Ergodicity for {2D Navier-Stokes} equations with a degenerate pure jump noise.
\newblock {\em arXiv:2405.00414}, 2024.

\bibitem{Rockafellar-67}
R.~T. Rockafellar.
\newblock Duality and stability in extremum problems involving convex functions.
\newblock {\em Pacific J. Math.}, 21:167--187, 1967.

\bibitem{Shi-15}
A.~Shirikyan.
\newblock Control and mixing for 2{D} {N}avier-{S}tokes equations with space-time localised noise.
\newblock {\em Ann. Sci. \'{E}c. Norm. Sup\'{e}r}, 48(2):253--280, 2015.

\bibitem{Shi-21}
A.~Shirikyan.
\newblock Controllability implies mixing {II}. {C}onvergence in the dual-{L}ipschitz metric.
\newblock {\em J. Eur. Math. Soc. (JEMS)}, 23(4):1381--1422, 2021.

\bibitem{TWX-20}
E.~Trélat, G.~Wang, and Y.~Xu.
\newblock Characterization by observability inequalities of controllability and stabilization properties.
\newblock {\em Pure Appl. Anal.}, 59(1):93--122, 2020.

\bibitem{Xiang-23}
S.~Xiang.
\newblock Small-time local stabilization of the two-dimensional incompressible {Navier-Stokes} equations.
\newblock {\em Ann. Inst. H. Poincaré C Anal. Non Linéaire}, 40(6):1487--1511, 2023.

\bibitem{Xiang-24}
S.~Xiang.
\newblock Quantitative rapid and finite time stabilization of the heat equation.
\newblock {\em ESAIM Control Optim. Calc. Var.}, 30:Paper No. 40, 25 pp, 2024.

\end{thebibliography}

    \end{document}